\newcommand{\black}{\color{black}}
\newtheorem{theorem}{Theorem}[section]
\newtheorem{lemma}[theorem]{Lemma}
\theoremstyle{definition}
\newtheorem{definition}[theorem]{Definition}
\theoremstyle{remark}
\newtheorem{remark}[theorem]{Remark}
\numberwithin{equation}{section} 
\def\dist{\operatorname{dist}}
\def\supp{\operatorname{supp}}
\def\BMO{\operatorname{BMO}}
\def\loc{\operatorname{loc}}
\def\A{\mathcal{A}}
\def\B{\mathcal{B}}
\def\M{\mathcal{M}}
\def\N{\mathbb{N}}
\def\D{\mathcal{D}}
\def\F{\mathcal{F}}
\def\Z{\mathbb{Z}}
\def\R{\mathbb{R}}
\def\Rn{\mathbb{R}^n}
\def\Sn{\mathbb{S}^{n-1}} 
\def\S{\mathcal{S}}
\def\b{\mathbf{b}}
\def\m{\mathfrak{m}}
\def\p{\mathfrak{p}}
\def\s{\mathfrak{s}}
\DeclareMathOperator*{\esssup}{ess\,sup}
\DeclareMathOperator*{\essinf}{ess\,inf}
\renewcommand{\emptyset}{\text{\textup{\O}}}
\begin{document}

\author{Mingming Cao}
\address{Mingming Cao\\
Instituto de Ciencias Matem\'aticas CSIC-UAM-UC3M-UCM\\
Con\-se\-jo Superior de Investigaciones Cient{\'\i}ficas\\
C/ Nicol\'as Cabrera, 13-15\\
E-28049 Ma\-drid, Spain} \email{mingming.cao@icmat.es}

\author{Honghai Liu}
\address{Honghai Liu\\
School of Mathematics and Information Science\\
Henan Polytechnic University\\
Jiaozuo 454000\\
People's Republic of China} \email{hhliu@hpu.edu.cn}

\author{Zengyan Si}
\address{Zengyan Si\\
School of Mathematics and Information Science\\
Henan Polytechnic University\\
Jiaozuo 454000\\
People's Republic of China} \email{zengyan@hpu.edu.cn}

\author{K\^{o}z\^{o} Yabuta}
\address{K\^{o}z\^{o} Yabuta\\
Research Center for Mathematics and Data Science\\
Kwansei Gakuin University\\
Gakuen 2-1, Sanda 669-1337\\
Japan}\email{kyabuta3@kwansei.ac.jp}

\thanks{The first author acknowledges financial support from Spanish Ministry of Science and Innovation through the Juan de la Cierva-Formaci\'{o}n 2018 (FJC2018-038526-I), through the ``Severo Ochoa Programme for Centres of Excellence in R\&D'' (CEX2019-000904-S), and through PID2019-107914GB-I00, and from the Spanish National Research Council through the ``Ayuda extraordinaria a Centros de Excelencia Severo Ochoa'' (20205CEX001). The third author is supported  by  Natural Science Foundation of Henan (No. 202300410184). Part of this work was carried out while the first author was visiting the Hausdorff Institute for Mathematics, Bonn (Germany). The first author expresses his gratitude to this institution}

\date{\today}

\subjclass[2010]{42B20, 42B25}

\keywords{Rubio de Francia extrapolation,
Quantitative weighted estimates,
Multilinear Muckenhoupt weights, 
Bilinear Bochner-Riesz means,
Bilinear rough singular integrals,
Multilinear Fourier multipliers, 
Weighted jump inequalities, 
Littlewood-Paley theory}

\begin{abstract}
In recent years, sharp or quantitative weighted inequalities have attracted considerable attention on account of $A_2$ conjecture solved by Hyt\"{o}nen. Advances have greatly improved conceptual understanding of classical objects such as Calder\'{o}n-Zygmund operators. However, plenty of operators do not fit into the class of Calder\'{o}n-Zygmund operators and fail to be bounded on all $L^p(w)$ spaces for $p \in (1, \infty)$ and $w \in A_p$. In this paper we develop Rubio de Francia extrapolation with quantitative bounds to investigate quantitative weighted inequalities for operators beyond the (multilinear) Calder\'{o}n-Zygmund theory. We mainly establish a quantitative multilinear limited range extrapolation in terms of exponents $p_i \in (\p_i^-, \p_i^+)$ and weights $w_i^{p_i} \in A_{p_i/\p_i^-} \cap RH_{(\p_i^+/p_i)'}$, $i=1, \ldots, m$, which refines a result of Cruz-Uribe and Martell. We also present an extrapolation from multilinear operators to the corresponding commutators. Additionally, our result is quantitative and allows us to extend special quantitative estimates in the Banach space setting to the quasi-Banach space setting. Our proof is based on an off-diagonal extrapolation result with quantitative bounds. Finally, we present various applications to illustrate the utility of extrapolation by concentrating on quantitative weighted estimates for some typical multilinear operators such as bilinear Bochner-Riesz means, bilinear rough singular integrals, and multilinear Fourier multipliers. In the linear case, based on the Littlewood-Paley theory, we include weighted jump and variational inequalities for rough singular integrals. 
\end{abstract}

\title[Limited range extrapolation with quantitative bounds and applications]
{Limited range extrapolation with quantitative bounds and applications}

\maketitle
\tableofcontents

\section{Introduction}
In the last two decades, it has been of great interest to obtain sharp weighted norm inequalities for operators $T$, which concerns   estimates of the form 
\begin{align}\label{Tsharp}
\|T\|_{L^p(w) \to L^p(w)} \leq C_{n, p, T} \, [w]_{A_p}^{\alpha_p(T)}, \quad\forall p \in (1, \infty), \, w \in A_p, 
\end{align}
where the positive constant $C_{n, p, T}$ depends only on $n$, $p$, and $T$, and the exponent $\alpha_p(T)$ is optimal such that \eqref{Tsharp} holds. This kind of estimates gives the exact rate of growth of the weights norm. The first result  was given by Buckley \cite{Buc} for the Hardy-Littlewood maximal operator $M$ that 
\begin{align}\label{eq:Buckley}
\|M\|_{L^p(w) \to L^p(w)} \le C_{n, p} \, [w]_{A_p}^{\frac{1}{p-1}},\quad\forall p \in (1, \infty), \, w \in A_p, 
\end{align}
and the exponent $\frac{1}{p-1}$ is the best possible. The problem \eqref{Tsharp} for singular integrals gained new momentum from certain important applications to PDE. In the borderline case, a long-standing regularity problem for the solution of Beltrami equation on the plane was conjectured by Astala, Iwaniec, and Saksman \cite{AIS}, and first settled by Petermichl and Volberg \cite{PV} based on the sharp weighted estimate for the Ahlfors-Beurling operator $B$ with $\alpha_2(B)=1$. Then a question arose whether \eqref{Tsharp} with $\alpha_2(T)=1$ holds for the general Calder\'{o}n-Zygmund operators $T$, which is known as the $A_2$ conjecture. Focusing on the critical case $p=2$ results from a quantitative version of Rubio de Francia extrapolation due to Dragi$\check{\rm c}$evi\'{c} et al. \cite{DGPP}. 

Since then, many remarkable publications came to enrich the literature in this area. Petermichl \cite{Pet-1} applied the method of Bellman function to obtain \eqref{Tsharp} for Hilbert transform $H$ by showing $\alpha_2(H)=1$. The same estimate holds for Riesz transforms $R_j$ on $\Rn$, see \cite{Pet-2}. Later on, Lacey, Petermichl, and Reguera \cite{LPR} investigated Haar shift operators $S_{\tau}$ with parameter $\tau$ in order to present a unified approach to obtain the sharp weighted estimates for $B$, $H$, and $R_j$, by proving $\alpha_2(S_{\tau})=1$ and noting that such three kinds of operators can be obtained by appropriate averaging of Haar shifts, see \cite{DV, Pet, PTV}. By means of local mean oscillation and extrapolation with sharp constants \cite{DGPP}, Lerner \cite{Ler11} established the sharp estimates \eqref{Tsharp} for Littlewood-Paley operators $S$ with $\alpha_p(S)=\max\{\frac12, \frac{1}{p-1}\}$, and Cruz-Uribe et al. \cite{CMP12} gave an alternative and simpler proof of \eqref{Tsharp} for $B$, $H$, and $R_j$. In 2012, Hyt\"{o}nen \cite{Hyt} fully solved the $A_2$ conjecture by showing a resulting representation of an arbitrary Calder\'{o}n-Zygmund operator as an average of dyadic shifts over random dyadic systems. Significantly, it opened the study of dyadic analysis in the fields including the multilinear theory, the multiparameter theory, and the non-homogeneous theory. In particular, in terms of sharp weighted estimates, it promoted the development of sparse domination for varieties of operators. To sum up, there are three kinds of sparse domination: identities with suitable averaging, pointwise dominations, and bilinear forms. The specific type depends on the singularity of operators. For example, the Calder\'{o}n-Zygmund operator \cite{Hyt} and Riesz potential \cite{CG} can be recovered from dyadic operators by averaging over dyadic grids. The pointwise sparse dominations hold for the Calder\'{o}n-Zygmund operators \cite{Ler16} and the corresponding commutators \cite{LOR}, the multilinear Calder\'{o}n-Zygmund operators \cite{DHL}, the multilinear pseudo-differential operators \cite{CXY}, and the multilinear Littlewood-Paley operators with minimal regularity \cite{CY}. Additionally, the sparse domination with a bilinear form goes to singular non-integral operators \cite{BFP}, Bochner-Riesz multipliers \cite{BBL, LMR}, rough operators \cite{CCDO}, and oscillatory integrals \cite{LS}.

As aforementioned, one of the most useful and powerful tools in the weighted theory is the celebrated Rubio de Francia extrapolation theorem \cite{RdF}, which states that if a given operator $T$ is bounded on $L^{p_0}(w_0)$ for some $p_0 \in [1, \infty)$ and for all $w_0 \in A_{p_0}$, then $T$ is bounded on $L^p(w)$ for all $p \in (1, \infty)$ and for all $w \in A_p$.  Indeed, extrapolation theorems allow us to reduce the general weighted $L^p$ estimates for certain operators to a suitable case $p=p_0$, for example, see \cite{CXY} for the Coifman-Fefferman's inequality for $p_0 = 1$, \cite{CMP12, Hyt} for the Calder\'{o}n-Zygmund operators for $p_0 = 2$, \cite{CMP12, Ler11} for square functions for $p_0=3$, and \cite{LMPT} for fractional integral operators for $p_0 \in (1, n/\alpha)$ with $0<\alpha<n$. Even more, the technique of extrapolation can refine some weighted estimates, see \cite{CMP05} for the Sawyer conjecture, \cite{LOP, LOP1} for the weak Muckenhoupt-Wheeden conjecture, and \cite{CXY, OPR} for the local exponential decay estimates. Another interesting point is that by  means of extrapolation, the vector-valued inequalities immediately follows from the corresponding scalar-valued estimates.

Over the years, Rubio de Francia's result has been extended and complemented in different manners, see \cite{CMP11} and the references therein. Using the boundedness of the Hardy-Littlewood maximal operator instead of the Muckenhoupt weights, Cruz-Uribe and Wang \cite{CW}  presented  extrapolation in variable Lebesgue spaces, which was improved to generalized Orlicz spaces \cite{CH} and general Banach function spaces \cite{CMM}.  It is worth mentioning that the latter was stated in measure spaces and for general Muckenhoupt bases. This leads lots of applications, such as the well-posedness of the Dirichlet problem in the upper half-space whenever the boundary data belongs to different function spaces, the weighted boundedness of layer potential operators on domains, and the local $Tb$ theorem for square functions in non-homogeneous spaces. Recently, 
a longstanding problem about extrapolation for multilinear Muckenhoupt classes of weights was solved by Li, Martell, and Ombrosi \cite{LMO} by introducing some new multilinear Muckenhoupt classes $A_{\vec{p}, \vec{r}}$ (cf. Definition \ref{def:Apr}), which contains the multivariable nature and is a generalization of the classes $A_{\vec{p}}$ introduced in \cite{LOPTT} (cf. \eqref{eq:w-Ap} below). Shortly afterwards, it was improved to the case with infinite exponents in \cite{LMMOV} and with a quantitative bound in  \cite{Nie}. On the other hand, Hyt\"onen and Lappas \cite{HL-1, HL-2} established a ``compact version'' of Rubio de Francia's extrapolation theorem, which allows one to extrapolate the compactness of an operator from just one space to the full range of weighted spaces, provided that the operator is bounded. This result has been extended to the multilinear setting \cite{COY} by means of weighted interpolation for multilinear compact operators and weighted Fr\'{e}chet-Kolmogorov characterization of compactness in the non-Banach case.

Motivated by the work above, the mail goal of this paper is to establish multivariable Rubio de Francia extrapolation with quantitative bounds in order to investigate quantitative weighted inequalities for multilinear operators beyond the multilinear  Calder\'{o}n-Zygmund theory. We focus on the limited range extrapolation with exponents $p_i \in (\p_i^-, \p_i^+)$ and weights $w_i^{p_i} \in A_{p_i/\p_i^-} \cap RH_{(\p_i^+/p_i)'}$, $i=1, \ldots, m$, which is quite different from \cite{Nie} for $\vec{w}=(w_1, \ldots, w_m) \in A_{\vec{p}}$ (or general $A_{\vec{p}, \vec{r}}$).  The main reason why we study it is that plenty of operators are beyond the Calder\'{o}n-Zygmund theory so that they may not be bounded on all $L^p(w)$ spaces for $p \in (1, \infty)$ and $w \in A_p$. This is the case for operators with the strong singularity, such as Bochner-Riesz means \cite{BBL}, rough singular integrals \cite{Wat}, Riesz transforms and square functions associated with second-order elliptic operators \cite{AM-1}, operators associated with the Kato conjecture \cite{AM-3}, and singular ``non-integral" operators \cite{BFP}. As well as the classes $A_p$ are natural for the Calder\'{o}n-Zygmund operators and characterize the weighted boundedness of Hardy-Littlewood maximal operators, the classes $A_{\vec{p}}$ are also the natural ones for multilinear Calder\'on-Zygmund operators and the multilinear Hardy-Littlewood maximal operators (cf. Theorem \ref{thm:CZO}). In the multilinear setting, there are also many operators so that weighted inequalities holds for limited ranges. For multilinear Fourier multipliers, it is interesting that different forms of Sobolev regularity appear to determine whether product of scalar weights or multiple weights $A_{\vec{p}}$ could be used. Fujita and Tomita \cite{FT, FT14} proved that whenever the symbol satisfies a product type Sobolev regularity, the weighted boundedness of multilinear Fourier multipliers holds for $\vec{w} \in A_{p_1/r_1} \times \cdots \times A_{p_m/r_m}$ but does not hold for $\vec{w} \in A_{(p_1/r_1, \ldots, p_m/r_m)}$, while the latter is valid under the classical Sobolev regularity. Other examples include strongly singular bilinear Calder\'{o}n-Zygmund operators \cite[Corollary 3.2]{BCN}, bilinear differential operators associated with fractional Leibniz rules \cite[Theorem 1.1]{CN}, bilinear pseudo-differential operators with symbols in the H\"{o}rmander classes \cite[Remark 3.4]{MRS}, and so on.

In order to state our main result we need some notation. More definitions and notation are given in Section \ref{sec:pre}. Given $1 \le \p_- < \p_+ \le \infty$ and $p \in [\p_-, \p_+]$ with $p \neq \infty$, considering Lemma \ref{lem:weights}, for any $w^p \in A_{p/\p_-} \cap RH_{(\p_+/p)'}$, we define 
\begin{align}\label{def:wpApRH}
[w^p]_{A_{p/\p_-} \cap RH_{(\p_+/p)'}} 
:= 
\begin{cases}
[w^{p(\p_+/p)'}]_{A_{\tau_p}}, & p < \p_+, 
\\
\max\{[w^p]_{A_{p/\p_-}}, [w^p]_{RH_{(\p_+/p)'}}\}, & p =\p_+, 
\end{cases}
\end{align}
where $\tau_p := \big(\frac{\p_+}{p}\big)' \big(\frac{p}{\p_-} -1\big) +1$. Throughout this paper, given $p_i, q_i \in [\p_i^-, \p_i^+]$, we always denote 
\begin{align*}
\gamma_i(p_i, q_i) := 
\begin{cases}
\max\big\{1, \frac{\tau_{q_i} - 1}{\tau_{p_i} -1} \big\}, & q_i<\p_i^+, 
\\[4pt]
\frac{q_i}{\tau_{p_i} -1} \big(\frac{1}{\p_i^-} - \frac{1}{\p_i^+} \big), & q_i = \p_i^+.
\end{cases}
\end{align*}
Let $\F$ denote a family of $(m+1)$-tuples $(f, f_1, \ldots, f_m)$ of non-negative measurable functions. We would like to present an abstract methodology for extrapolation. We will see that extrapolation enables us to obtain vector-valued inequalities and weak-type estimates from extrapolation results immediately. In the current paper, we mainly apply this methodology to obtain quantitative weighted norm inequalities for plenty of operators. 

Our main result is formulated as follows. 
\begin{theorem}\label{thm:lim}
Given $m \geq 1$, let $\F$ be a family of extrapolation (m+1)-tuples. Let $1 \le \p_i^- < \p_i^+ \le \infty$ for each $i=1, \ldots, m$. Assume that for each $i=1, \ldots, m$, there exists an exponent $q_i \in (0, \infty)$ with $q_i \in [\p_i^-, \p_i^+]$ such that  for all weights $v_i^{q_i} \in A_{q_i/\p_i^-} \cap RH_{(\p_i^+/q_i)'}$, $i=1, \ldots, m$, 
\begin{equation}\label{eq:lim-1}
\|f\|_{L^q(v^q)} 
\leq \prod_{i=1}^m \Phi_i \big([v_i^{q_i}]_{A_{q_i/\p_i^-} \cap RH_{(\p_i^+/q_i)'}}\big)  
\|f_i\|_{L^{q_i}(v_i^{q_i})}, \quad (f, f_1, \ldots, f_m) \in \F,
\end{equation}
where $\frac1q = \sum_{i=1}^m \frac{1}{q_i}$, $v= \prod_{i=1}^m v_i$, and $\Phi_i : [1, \infty) \to [1, \infty)$ is an increasing function. Then for all exponents $p_i \in (\p_i^{-}, \p_i^{+})$ and all weights $w_i^{p_i} \in A_{p_i/\p_i^{-}} \cap RH_{(\p_i^{+}/p_i)'}$, $i=1, \ldots, m$,
\begin{equation}\label{eq:lim-2}
\|f\|_{L^p(w^p)} 
\leq \prod_{i=1}^m \mathfrak{C}_i \, \Phi_i 
\big(C_i \, [w_i^{p_i}]_{A_{p_i/\p_i^-} \cap RH_{(\p_i^+/p_i)'}}^{\gamma_i(p_i, q_i)}\big) 
\|f_i\|_{L^{p_i}(w_i^{p_i})}, \quad (f, f_1, \ldots, f_m) \in \F,
\end{equation}
where $\frac1p = \sum_{i=1}^m \frac{1}{p_i}$, $w=\prod_{i=1}^m w_i$, $\mathfrak{C}_i := 2^{\max\{\frac{\tau_{p_i}}{p_i}, \frac{\tau'_{p_i}}{q_i}\}}$, and $C_i$ depends only on $n$, $p_i$, $q_i$, $\p_i^-$, and $\p_i^+$. 

Moreover, for the same family of exponents and weights, and for all exponents $r_i \in (\p_i^-, \p_i^+)$,
\begin{equation}\label{eq:lim-3}
\bigg\| \Big(\sum_k |f^k|^r \Big)^{\frac1r}\bigg\|_{L^p(w^p)}
\leq \prod_{i=1}^m \mathfrak{C}'_i \, \Phi_i 
\big(C'_i \, [w_i^{p_i}]_{A_{p_i/\p_i^-} \cap RH_{(\p_i^+/p_i)'}}^{\gamma_i(p_i, r_i) \gamma_i(r_i, q_i)}\big) 
\bigg\| \Big(\sum_k |f^k_i|^{r_i} \Big)^{\frac{1}{r_i}}\bigg\|_{L^{p_i}(w_i^{p_i})},
\end{equation}
for all $\{(f^k, f^k_1, \cdots, f^k_m)\}_k \subset \F$, where $\frac{1}{r} = \sum_{i=1}^m \frac{1}{r_i}$, $\mathfrak{C}'_i := 2^{\max\{\frac{\tau_{p_i}}{p_i}, \frac{\tau'_{p_i}}{r_i}\} + \max\{\frac{\tau_{r_i}}{r_i}, \frac{\tau'_{r_i}}{q_i}\}}$, and the constant $C'_i$ depends only on $n$, $p_i$, $q_i$, $r_i$, $\p_i^-$, and $\p_i^+$. 
\end{theorem}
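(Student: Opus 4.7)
My plan is to reduce Theorem \ref{thm:lim} to a quantitative \emph{scalar off-diagonal} limited range extrapolation and then iterate the scalar statement one coordinate at a time. The backbone is the identity in \eqref{def:wpApRH}: for $p<\p_+$ it converts the limited range condition $w^p\in A_{p/\p_-}\cap RH_{(\p_+/p)'}$ into the ordinary condition $w^{p(\p_+/p)'}\in A_{\tau_p}$ with the same constant, making the classical Rubio de Francia algorithm available in a form to which Buckley's sharp bound \eqref{eq:Buckley} applies.

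\emph{Step 1: Scalar off-diagonal extrapolation.} I would first prove that if, for some $q_0\in[\p_-,\p_+]$,
\[
\|f\|_{L^{q_0}(u^{q_0})}\le \Phi\big([u^{q_0}]_{A_{q_0/\p_-}\cap RH_{(\p_+/q_0)'}}\big)\,\|g\|_{L^{q_0}(u^{q_0})}
\]
for all admissible $u$, then for every $p_0\in(\p_-,\p_+)$ and every admissible $w$ at exponent $p_0$ the same inequality holds with $\Phi$ evaluated at $C\,[w^{p_0}]_{A_{p_0/\p_-}\cap RH_{(\p_+/p_0)'}}^{\gamma(p_0,q_0)}$. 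The proof runs the classical two-sided Rubio de Francia iteration in the spaces $L^{\tau'_{p_0}}(W)$ and $L^{\tau_{p_0}}(W)$, where $W:=w^{p_0(\p_+/p_0)'}\in A_{\tau_{p_0}}$; since the relevant maximal operators obey \eqref{eq:Buckley} in $W$, a careful tracking of the sharp Buckley exponents (together with the dual iteration) yields the $\max\{1,(\tau_{q_0}-1)/(\tau_{p_0}-1)\}$ branch of $\gamma(p_0,q_0)$. The endpoint case $q_0=\p_+$, where the dual algorithm breaks down, is treated by a Kolmogorov-type argument that produces the second branch.

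\emph{Step 2: Iteration to the multilinear case.} To upgrade \eqref{eq:lim-1} to \eqref{eq:lim-2} I would change the exponents $q_i\to p_i$ one coordinate at a time. Fix $j$, freeze $p_i,w_i$ for $i<j$ and $q_i,v_i$ for $i>j$, and regroup the multilinear inequality as a scalar off-diagonal inequality in the single variable $f_j$, with the remaining $f_i$ absorbed into the left-hand function and a composite weight built from the frozen data. By construction that composite weight satisfies the appropriate $A\cap RH$ condition at the exponent dictated by the frozen weights and $v_j$, so Step 1 produces the factor $\mathfrak{C}_j\,\Phi_j(C_j\,[w_j^{p_j}]^{\gamma_j(p_j,q_j)})$. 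Iterating over $j=1,\ldots,m$ yields \eqref{eq:lim-2}; the constants $\mathfrak{C}_i$ are exactly those coming from the geometric series in the Rubio de Francia algorithm.

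\emph{Step 3: Vector-valued inequality and main obstacle.} For \eqref{eq:lim-3} I would apply Steps 1--2 twice. Consider the family $\F^{\mathrm{vec}}$ of tuples $\big((\sum_k|f^k|^r)^{1/r},(\sum_k|f_1^k|^{r_1})^{1/r_1},\ldots,(\sum_k|f_m^k|^{r_m})^{1/r_m}\big)$ indexed by $\{(f^k,f_1^k,\ldots,f_m^k)\}_k\subset\F$. At exponents $p=r$ and $p_i=r_i$ Fubini identifies the relevant $L^{r_i}$ norms with $\ell^{r_i}$ norms of scalar quantities, so applying Steps 1--2 to the original family $\F$ delivers the $L^{r_i}$ vector-valued inequality at the cost of the factor $\gamma_i(r_i,q_i)$. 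A second application of Steps 1--2, now with $\F^{\mathrm{vec}}$ as the base family and with base exponents $r_i$, produces the further factor $\gamma_i(p_i,r_i)$; composing the two gives $\gamma_i(p_i,r_i)\gamma_i(r_i,q_i)$ as stated. The principal technical obstacle lies entirely in Step 1, specifically in the sharp bookkeeping that isolates the two branches of $\gamma_i$: the generic case $q_i<\p_i^+$ requires both directions of the Rubio de Francia algorithm to run simultaneously in $W$ with matching Buckley exponents, whereas the endpoint $q_i=\p_i^+$ forces a one-sided algorithm paired with a reverse H\"older/Kolmogorov estimate in order to recover the linear-in-$\tau_{p_i}$ exponent $q_i(\tau_{p_i}-1)^{-1}(1/\p_i^--1/\p_i^+)$ without loss.
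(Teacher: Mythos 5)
Your proposal follows essentially the same route as the paper: a quantitative scalar limited-range off-diagonal extrapolation lemma (the paper's Theorem \ref{thm:lim-off}) proved by the Rubio de Francia algorithm in $L^{\tau_p}(w^{p(\p_+/p)'})$ with Buckley's sharp bound, followed by a coordinate-by-coordinate iteration that freezes all but one weight and absorbs the frozen factors into the left-hand function, and a double application of \eqref{eq:lim-2} for the vector-valued estimate. The only cosmetic differences are inside the scalar lemma: the paper runs a single one-sided iteration in each of the cases $q<q_0$ and $q>q_0$ (rather than a simultaneous two-sided algorithm) and handles the endpoint $q_0=\p_+$ via the $RH_\infty$ structure of the weight $(\mathcal{R}h)^{1/\p_+-1/\p_-}$ rather than a Kolmogorov-type argument, but these lead to the same exponents $\gamma_i(p_i,q_i)$.
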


Some comments are in order. The limited range extrapolation, arising naturally in the study of the Riesz transforms and other operators associated to elliptic differential operators, was first established by Auscher and Martell \cite{AM-1}, which was extended to the multivariable case \cite[Theorem 1.3]{CM}. Compared with the latter, Theorem \ref{thm:lim} gives the quantitative weighted bounds, which in turn covers the multivariable extrapolation in \cite[Theorem 6.1]{Duo} and \cite[Theorem 1.1]{GM} by taking $\p_i^-=1$ and $\p_i^+=\infty$, $i=1, \ldots, m$. As a result of Theorem \ref{thm:lim} we can extend weighted estimates only valid in the Banach range to the quasi-Banach range. For example, weighted norm inequalities for the commutators of multilinear operators $T$ with $\BMO$ functions, more singular than operators $T$, were just proved in the case $p \ge 1$ \cite{BMMST} since one used the trick of so-called Cauchy integral and Minkowski's inequality. We will use Theorem \ref{thm:lim} to deal with this problem and obtain a quantitative extrapolation from operators to the corresponding commutators with full ranges (cf. Theorem \ref{thm:lim-Tb}). The sharp weighted estimate \eqref{eq:Buckley} and sharp reverse H\"{o}lder's inequality in Lemma \ref{lem:RH} enable us to get the quantitative weights norm in \eqref{eq:lim-2} and \eqref{eq:lim-Tb-2}. Concerning the proof of Theorem \ref{thm:lim}, we borrow the ideas from \cite{CM, Duo}, which essentially reduce the multilinear problem to a linear one by acting on one function at a time. Indeed, we prove a limited range, off-diagonal extrapolation theorem with sharp weighted bounds (cf. Theorem \ref{thm:lim-off}), whose proof is distinct from and much simpler than that in \cite{CM} because it only needs to define a Rubio de Francia iteration algorithm each time we consider the case $q<q_0$ or $q>q_0$. When the exponents are greater than one, we can obtain quantitative $A_p$ and off-diagonal extrapolation (cf. Theorems \ref{thm:Ap} and \ref{thm:off}) by showing a ``product-type embedding" theorem (cf. Theorems \ref{thm:Apvw} and \ref{thm:off-vw}), respectively, which is quite different from the embedding technique used in \cite[Proposition 3.18]{CMM} to get extrapolation on general weighted Banach function spaces. Moreover, based on $A_p$ extrapolation and interpolation, we present an extrapolation from weak type inequalities to strong type estimates (cf. Theorem \ref{thm:weakAp}). This allows us to obtain quantitative weighted strong estimates from weak (1, 1) type.

In order to present an extrapolation theorem for commutators, let us introduce relevant notation and some definitions. Given a function $b \in L^1_{\loc}(\Rn)$, we say that $b \in \BMO$ if
\begin{equation*}
\|b\|_{\BMO} :=\sup_{Q} \fint_{Q} |b(x)-b_Q| \, dx < \infty.
\end{equation*}
where the supremum is taken over the collection of all cubes $Q \subset \Rn$ and $b_Q :=\fint_Q b\, dx$.

Let $T$ be an operator from $X_1  \times \cdots \times X_m$ into $Y$, where $X_1, \ldots, X_m$ are some normed spaces and and $Y$ is a quasi-normed space. Given $\vec{f} := (f_1, \ldots ,f_m) \in X_1 \times \cdots \times X_m$, $\b=(b_1, \ldots, b_m)$ of measurable functions, and $k \in \N$, we define, whenever it makes sense, the $k$-th order commutator of $T$ in the $i$-th entry of $T$ as 
\begin{align*}
[T, \b]_{k e_i} (\vec{f})(x) 
:= T(f_1,\ldots, (b_i(x)-b_i)^k f_i, \ldots, f_m)(x), \quad 1 \leq i \leq m, 
\end{align*}
where $e_i$ is the basis of $\Rn$ with the $i$-th component being $1$ and other components being $0$. Then, for a multi-index $\alpha = (\alpha_1, \ldots, \alpha_m) \in \N^m$, we define
\begin{align*}
[T, \b]_{\alpha}:= [\cdots[[T, \b]_{\alpha_1 e_1}, \b]_{\alpha_2 e_2} \cdots, \b]_{\alpha_m e_m}.
\end{align*}
In particular, if $T$ is an $m$-linear operator with a kernel representation of the form 
\begin{equation*}
T(\vec{f})(x) 
:= \int_{\R^{nm}} K(x, \vec{y}) f_1(y_1) \cdots f_m(y_m) \, d\vec{y}, 
\end{equation*}
then one can write $[T, \b]_{\alpha}$ as 
\begin{equation*}
[T, \b]_{\alpha}(\vec{f})(x) 
:= \int_{\R^{nm}} \prod_{i=1}^m (b_i(x)-b_i(y_i))^{\alpha_i} K(x, \vec{y}) f_1(y_1) \cdots f_m(y_m) \, d\vec{y}. 
\end{equation*}

\begin{theorem}\label{thm:lim-Tb}
Let $T$ be an $m$-linear operator and let $1 \le \p_i^{-}<\p_i^{+} \le \infty$, $i=1, \ldots, m$, be such that $\frac{1}{\p_+} := \sum_{i=1}^m \frac{1}{\p_i^+}<1$. Assume that for each $i=1, \ldots, m$, there exists an exponent $q_i \in (0, \infty)$ with $q_i \in [\p_i^{-}, \p_i^{+}]$ such that for all weights $v_i^{q_i} \in A_{q_i/\p_i^{-}} \cap RH_{(\p_i^{+}/q_i)'}$, $i=1, \ldots, m$, we have
\begin{equation}\label{eq:lim-Tb-1}
\| T(\vec{f})\|_{L^q(v^q)} 
\leq \prod_{i=1}^m \Phi_i \big([v_i^{q_i}]_{A_{q_i/\p_i^{-}} \cap RH_{(\p_i^{+}/q_i)'}}\big) \|f_i\|_{L^{q_i}(v_i^{q_i})},
\end{equation}
where $\vec{f}=(f_1, \ldots, f_m)$, $\frac{1}{q} = \sum_{i=1}^m \frac{1}{q_i}$, $v= \prod_{i=1}^m v_i$, and $\Phi_i : [1, \infty) \to [1, \infty)$ is an increasing function. Then for all exponents $p_i \in (\p_i^{-}, \p_i^{+})$, all weights $w_i^{p_i} \in A_{p_i/\p_i^{-}} \cap RH_{(\p_i^{+}/p_i)'}$, for all functions $\b=(b_1,\ldots, b_m) \in \BMO^m$, and for each multi-index $\alpha \in \N^m$, 
\begin{equation}\label{eq:lim-Tb-2}
\|[T, \b]_{\alpha}(\vec{f})\|_{L^p(w^p)} 
\leq C_0 \prod_{i=1}^m \widetilde{\Phi}_i \big(C'_i \, [w_i^{p_i}]_{A_{p_i/\p_i^{-}} \cap RH_{(\p_i^{+}/p_i)'}}^{\gamma_i(p_i, s_i)}\big) 
\|b_i\|_{\BMO}^{\alpha_i} \|f_i\|_{L^{p_i}(w_i^{p_i})}, 
\end{equation} 
whenever $s_i \in (\p_i^-, \p_i^+)$, $i=1, \ldots, m$, satisfy $\frac1s := \sum_{i=1}^m \frac{1}{s_i} \le 1$, where $\frac{1}{p} = \sum_{i=1}^m \frac{1}{p_i}$, $w=\prod_{i=1}^m w_i$, 
$\widetilde{\Phi}_i(t) := t^{\alpha_i \max\{1, \frac{1}{\tau_{s_i}-1}\}} \Phi_i(C_i \, t^{\gamma_i(s_i, q_i)})$, $C_i$ depends only on $n$, $s_i$, $q_i$, $\p_i^-$, and $\p_i^+$, $C'_i$ depends only on $n$, $p_i$, $s_i$, $\p_i^-$, and $\p_i^+$, and $C_0$ depends only on $\alpha$, $n$, $p_i$, $q_i$, $s_i$, $\p_i^-$, and $\p_i^+$. 

Moreover, for the same family of exponents $\vec{p}$, weights $\vec{w}$, functions $\b$, multi-index $\alpha$, and for all exponents $r_i \in (\p_i^-, \p_i^+)$,
\begin{align}\label{eq:lim-Tb-3}
\bigg\| \Big(\sum_k | [T, \b]_{\alpha}(\vec{f}^k)|^r \Big)^{\frac1r} \bigg\|_{L^p(w^p)}
&\leq C  \prod_{i=1}^m \widetilde{\Phi}_i \big(C''_i \, [w_i^{p_i}]_{A_{p_i/\p_i^{-}} \cap RH_{(\p_i^{+}/p_i)'}}^{\gamma_i(p_i, r_i) \gamma_i(r_i, s_i)}\big) 
\\ \nonumber 
&\qquad\qquad \times \|b_i\|_{\BMO}^{\alpha_i}
\bigg\| \Big(\sum_k |f^k_i|^{r_i} \Big)^{\frac{1}{r_i}}\bigg\|_{L^{p_i}(w_i^{p_i})},
\end{align}
where $\vec{f}^k=(f_1^k, \ldots, f_m^k)$, $\frac{1}{r} = \sum_{i=1}^m \frac{1}{r_i}$, $C$ depends only on $\alpha$, $n$, $p_i$, $q_i$, $r_i$, $s_i$, $\p_i^-$, and $\p_i^+$, and $C''_i$ depends only on $n$, $p_i$, $r_i$, $s_i$, $\p_i^-$, and $\p_i^+$. 
\end{theorem}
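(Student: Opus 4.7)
The strategy is in three stages: (a) use Theorem \ref{thm:lim} to transfer the hypothesis \eqref{eq:lim-Tb-1} from the exponents $q_i$ to an auxiliary choice of exponents $s_i \in (\p_i^-,\p_i^+)$ with $\tfrac{1}{s}=\sum_i \tfrac{1}{s_i}\le 1$; (b) prove the desired commutator inequality for $[T,\b]_\alpha$ at these auxiliary exponents by a multilinear Coifman--Rochberg--Weiss analytic Cauchy trick; (c) invoke Theorem \ref{thm:lim} once more, now applied to the family of tuples built from $[T,\b]_\alpha$, to propagate the bound to arbitrary $p_i\in(\p_i^-,\p_i^+)$ and to the vector-valued case. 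The assumption $\tfrac{1}{\p_+}<1$ is exactly what guarantees that such $s_i$ exist (one can take each $s_i$ close enough to $\p_i^+$).

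Stage (a) is immediate: applying Theorem \ref{thm:lim} to the family $\F=\{(|T(\vec f)|,f_1,\ldots,f_m)\}$ yields, for every $v_i^{s_i}\in A_{s_i/\p_i^-}\cap RH_{(\p_i^+/s_i)'}$, the bound
\begin{equation*}
\|T(\vec f)\|_{L^s(v^s)}\le \prod_{i=1}^m \mathfrak{C}_i\,\Phi_i\!\Big(C_i\,[v_i^{s_i}]_{A_{s_i/\p_i^-}\cap RH_{(\p_i^+/s_i)'}}^{\gamma_i(s_i,q_i)}\Big)\,\|f_i\|_{L^{s_i}(v_i^{s_i})}.
\end{equation*}

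Stage (b) is the heart of the argument. Introduce the analytic family
\begin{equation*}
T_{\vec z}(\vec f)(x):=e^{-\sum_{i=1}^m z_i b_i(x)}\,T(e^{z_1 b_1}f_1,\ldots,e^{z_m b_m}f_m)(x),\qquad \vec z\in\CC^m,
\end{equation*}
so that differentiating $\alpha_i$ times in $z_i$ at $\vec z=0$ produces, up to sign, $[T,\b]_\alpha(\vec f)(x)$, and the multivariate Cauchy integral formula gives
\begin{equation*}
[T,\b]_\alpha(\vec f)(x) = \frac{\alpha!}{(2\pi i)^m}\oint_{|z_1|=\varepsilon_1}\!\!\!\cdots\!\oint_{|z_m|=\varepsilon_m} T_{\vec z}(\vec f)(x)\prod_{i=1}^m \frac{dz_i}{z_i^{\alpha_i+1}}.
\end{equation*}
Setting $\widetilde v_i:=v_i\,e^{-\Re(z_i)b_i}$, the exponentials cancel exactly to give
\begin{equation*}
\|T_{\vec z}(\vec f)\|_{L^s(v^s)}=\big\|T(e^{z_1 b_1}f_1,\ldots,e^{z_m b_m}f_m)\big\|_{L^s(\prod_i \widetilde v_i^s)},\qquad \|e^{z_i b_i}f_i\|_{L^{s_i}(\widetilde v_i^{s_i})}=\|f_i\|_{L^{s_i}(v_i^{s_i})}.
\end{equation*}
The essential ingredient is a quantitative $\BMO$-stability of the limited-range class: there exists a dimensional constant $c>0$ such that whenever
\begin{equation*}
|z_i|\le \varepsilon_i:=c\,\|b_i\|_{\BMO}^{-1}\,[v_i^{s_i}]_{A_{s_i/\p_i^-}\cap RH_{(\p_i^+/s_i)'}}^{-\max\{1,\,1/(\tau_{s_i}-1)\}},
\end{equation*}
one has $\widetilde v_i^{s_i}\in A_{s_i/\p_i^-}\cap RH_{(\p_i^+/s_i)'}$ with characteristic bounded by $2[v_i^{s_i}]_{A_{s_i/\p_i^-}\cap RH_{(\p_i^+/s_i)'}}$. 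Plugging stage (a) into the Cauchy representation, using $s\ge 1$ to pass $\|\cdot\|_{L^s(v^s)}$ inside the contour by Minkowski's integral inequality, and absorbing the $\varepsilon_i^{-\alpha_i}$ factors, produces the anchor estimate
\begin{equation*}
\|[T,\b]_\alpha(\vec f)\|_{L^s(v^s)}\le C_\alpha\prod_{i=1}^m \|b_i\|_{\BMO}^{\alpha_i}\,[v_i^{s_i}]^{\alpha_i\max\{1,1/(\tau_{s_i}-1)\}}\,\Phi_i\!\big(C_i\,[v_i^{s_i}]^{\gamma_i(s_i,q_i)}\big)\,\|f_i\|_{L^{s_i}(v_i^{s_i})},
\end{equation*}
which is exactly \eqref{eq:lim-1} at exponents $s_i$ with $\Phi_i$ replaced by the prescribed $\widetilde\Phi_i$ (the $\BMO$-norms being inert constants).

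For stage (c), apply Theorem \ref{thm:lim} once more to the family $\F=\{(|[T,\b]_\alpha(\vec f)|,f_1,\ldots,f_m)\}$ with base exponents $s_i$ and increasing functions $\widetilde\Phi_i$: estimate \eqref{eq:lim-2} delivers \eqref{eq:lim-Tb-2} with the exponent $\gamma_i(p_i,s_i)$ appearing (in place of $\gamma_i(p_i,q_i)$), while \eqref{eq:lim-3} delivers \eqref{eq:lim-Tb-3} with the composed exponent $\gamma_i(p_i,r_i)\gamma_i(r_i,s_i)$, as claimed. The main obstacle I expect is the quantitative $\BMO$-perturbation lemma for the class $A_{p/\p_-}\cap RH_{(\p_+/p)'}$ with the sharp exponent $\max\{1,1/(\tau_{s_i}-1)\}$ on the characteristic; this is what fixes the precise shape of $\widetilde\Phi_i$ and must be extracted from the sharp reverse H\"older inequality (Lemma \ref{lem:RH}) combined with John--Nirenberg's exponential integrability. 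Once that is in place, the remaining bookkeeping---composition of the $\gamma_i$ exponents and extraction of the $\|b_i\|_{\BMO}^{\alpha_i}$ factors from the extrapolation---is routine.
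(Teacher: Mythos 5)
Your proposal is correct and follows essentially the same route as the paper: extrapolate the hypothesis to exponents $s_i$ with $\frac1s\le 1$ via Theorem \ref{thm:lim}, establish the commutator bound at those exponents, and extrapolate again with $\widetilde{\Phi}_i$. The only difference is that your stage (b) — the Cauchy-integral conjugation argument together with the quantitative $\BMO$-perturbation of the weight class at radius $\varepsilon_i\simeq \|b_i\|_{\BMO}^{-1}[v_i^{s_i}]^{-\max\{1,1/(\tau_{s_i}-1)\}}$ — is exactly what the paper delegates to the cited Theorem \ref{thm:TTb} combined with the openness Lemma \ref{lem:open} (which supplies the $\eta_i$ with $\eta_i'\simeq [v_i^{s_i(\p_i^+/s_i)'}]_{A_{\tau_{s_i}}}^{\max\{1,1/(\tau_{s_i}-1)\}}$), so you are reproving that black box rather than citing it.
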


\begin{remark}\label{rem:ss}
Let us see the existence of $s_i \in (\p_i^-, \p_i^+)$, $i=1, \ldots, m$, satisfying $\frac1s := \sum_{i=1}^m \frac{1}{s_i} \le 1$. Indeed, by means of Theorem \ref{thm:lim}, the estimate \eqref{eq:lim-Tb-1} can be improved to all exponents $s_i \in (\p_i^-, \p_i^+)$, $i=1, \ldots, m$. Given $s_i \in (\p_i^-, \p_i^+)$, $i=1, \ldots, m$, there holds 
\begin{align*}
\frac1s = \sum_{i=1}^m \frac{1}{s_i}
= \sum_{i=1}^m \bigg(\frac{1}{s_i} - \frac{1}{\p_i^+} \bigg) 
+ \sum_{i=1}^m \frac{1}{\p_i^+}
\to \frac{1}{\p_+} < 1, 
\quad\text{ if } s_i \to \p_i^+, \, i=1, \ldots, m.  
\end{align*}
This means that whenever $\p_+>1$, one can always choose $s_i$ (for example, sufficiently close to $\p_i^+$) such that $\frac1s \le 1$. 

To illustrate the existence, we present a special case: 
\[
\frac{1}{\p_-} - \frac{1}{\p_+} < \p_i^+ \bigg(\frac{1}{\p_i^-} - \frac{1}{\p_i^+}\bigg), \quad i=1, \ldots, m, 
\]  
where $\frac{1}{\p_{\pm}} := \sum_{i=1}^m \frac{1}{\p_i^{\pm}}$. In this scenario, picking 
\[
s_i := \p_i^- \bigg[1 + \bigg(\frac{1}{\p_-} - \frac{1}{\p_+}\bigg)\bigg], \quad i=1, \ldots, m, 
\]
we easily verify that $s_i \in (\p_i^-, \p_i^+)$ and 
\begin{equation*}
\frac1s = \sum_{i=1}^m \frac{1}{s_i}
=\frac{1}{\p_-} \bigg[\frac{1}{\p_-} + \bigg( 1- \frac{1}{\p_+}\bigg)\bigg]^{-1} 
<1, 
\end{equation*} 
provided $\p_+>1$. 
\end{remark}

\begin{remark}\label{rem:pp}
Let $T$ be an $m$-linear operator. 
If the hypotheses \eqref{eq:lim-1} and \eqref{eq:lim-Tb-1} are assumed for $T$ and all exponents $q_i \in (\p_i^-, \p_i^+)$, then 
we will get better estimates. This means the following extrapolation: Assume that for all exponents $p_i \in (\p_i^{-}, \p_i^{+})$ and all weights $w_i^{p_i} \in A_{p_i/\p_i^{-}} \cap RH_{(\p_i^{+}/p_i)'}$, $i=1, \ldots, m$, 
\begin{equation*}
\|T(\vec{f})\|_{L^p(w^p)} 
\leq \prod_{i=1}^m \Phi_i 
\big[w_i^{p_i}]_{A_{p_i/\p_i^-} \cap RH_{(\p_i^+/p_i)'}}\big) \|f_i\|_{L^{p_i}(w_i^{p_i})},
\end{equation*}
where $\frac{1}{p} = \sum_{i=1}^m \frac{1}{p_i}$ and $w=\prod_{i=1}^m w_i$. Then for all exponents $p_i, r_i \in (\p_i^{-}, \p_i^{+})$ and all weights $w_i^{p_i} \in A_{p_i/\p_i^{-}} \cap RH_{(\p_i^{+}/p_i)'}$, $i=1, \ldots, m$, 
\begin{equation*}
\bigg\| \Big(\sum_k |T(\vec{f}^k)|^r \Big)^{\frac1r}\bigg\|_{L^p(w^p)}
\leq C_0 \prod_{i=1}^m \Phi_i \big(C_i \, [w_i^{p_i}]_{A_{p_i/\p_i^-} \cap RH_{(\p_i^+/p_i)'}}^{\gamma_i(p_i, r_i)}\big) 
\bigg\| \Big(\sum_k |f^k_i|^{r_i} \Big)^{\frac{1}{r_i}}\bigg\|_{L^{p_i}(w_i^{p_i})},
\end{equation*}
where $\vec{f}^k=(f_1^k, \ldots, f_m^k)$, $\frac{1}{r} = \sum_{i=1}^m \frac{1}{r_i}$, $C_0$ and $C_i$ depend only on $n$, $p_i$, $r_i$, $\p_i^-$, and $\p_i^+$. 

Moreover, for the same family of exponents $\vec{p}$ and weights $\vec{w}$, for all functions $\b=(b_1,\ldots, b_m) \in \BMO^m$, and for each multi-index $\alpha \in \N^m$, 
\begin{align*}
&\|[T, \b]_{\alpha}(\vec{f})\|_{L^p(w^p)} 
\leq C'_0 \prod_{i=1}^m \widetilde{\Phi}_i \big(C'_i \, [w_i^{p_i}]_{A_{p_i/\p_i^{-}} \cap RH_{(\p_i^{+}/p_i)'}}^{\gamma_i(p_i, s_i)}\big) 
\|b_i\|_{\BMO}^{\alpha_i} \|f_i\|_{L^{p_i}(w_i^{p_i})}, 
\\ 
&\bigg\| \Big(\sum_k | [T, \b]_{\alpha}(\vec{f}^k)|^r \Big)^{\frac1r} \bigg\|_{L^p(w^p)}
\leq C''_0  \prod_{i=1}^m \widetilde{\Phi}_i \big(C''_i \, [w_i^{p_i}]_{A_{p_i/\p_i^{-}} \cap RH_{(\p_i^{+}/p_i)'}}^{\gamma_i(p_i, r_i) \gamma_i(r_i, s_i)}\big) 
\\ \nonumber 
&\qquad\qquad\qquad\qquad\qquad\qquad\qquad\qquad \times \|b_i\|_{\BMO}^{\alpha_i}
\bigg\| \Big(\sum_k |f^k_i|^{r_i} \Big)^{\frac{1}{r_i}}\bigg\|_{L^{p_i}(w_i^{p_i})},
\end{align*}
whenever $\frac1s := \sum_{i=1}^m \frac{1}{s_i} \le 1$ with $s_i \in (\p_i^-, \p_i^+)$, where $\widetilde{\Phi}_i(t) := t^{\alpha_i \max\{1, \frac{1}{\tau_{s_i}-1}\}} \Phi_i(C_i \, t)$, $C'_0$ depends only on $\alpha$, $n$, $p_i$, $s_i$, $\p_i^-$, and $\p_i^+$, $C'_i$ depends only on $n$, $p_i$, $s_i$, $\p_i^-$, and $\p_i^+$, and $C''_0$ depends only on $\alpha$, $n$, $p_i$, $r_i$, $s_i$, $\p_i^-$, and $\p_i^+$, and $C''_i$ depends only on $n$, $p_i$, $r_i$, $s_i$, $\p_i^-$, and $\p_i^+$. The proof is the same as that of Theorems \ref{thm:lim} and \ref{thm:lim-Tb}. Details are left to the reader. 
\end{remark}

The rest of the paper is organized as follows. In Section \ref{sec:pre}, we present some preliminaries and auxiliary results including the embedding and factorization of Muckenhoupt weights. Section \ref{sec:quant} includes quantitative weighted estimates for various operators. Section \ref{sec:proof} is devoted to showing Theorems \ref{thm:lim} and \ref{thm:lim-Tb} by means of a limited range off-diagonal extrapolation and extrapolation for commutators with Banach ranges. We also establish ``product-type embedding" theorems to deduce quantitative $A_p$ and off-diagonal extrapolation. In Section \ref{sec:app}, we include many applications of Theorems \ref{thm:lim} and \ref{thm:lim-Tb}. First, we give quantitative weighted norm inequalities for the bilinear Bochner-Riesz means of order $\delta$ and commutators, where we utilize the $A_{p_1} \times A_{p_2}$ weights when $\delta \ge n-1/2$, and the $A_{p_1/\p_1^-} \cap RH_{(\p_1^+/p_1)'} \times A_{p_2/\p_2^-} \cap RH_{(\p_2^+/p_2)'}$ weights when $0<\delta<n-1/2$. The same weights conditions are used for the bilinear rough singular integrals for $\Omega \in L^{\infty}(\Sn)$ and $L^q(\Sn)$ with $q \in (1, \infty)$, respectively. Additionally, under the minimal Sobolev regularity, we obtain the quantitative weighted bounds for the $m$-linear Fourier multipliers, the corresponding higher order commutators, and vector-valued inequalities, which only hold for product of scalar weights as mentioned before. Beyond that, after presenting quantitative weighted Littlewood-Paley theory, we establish weighted jump and variational inequalities for rough operators with $\Omega \in L^q(\Sn)$ with $q \in (1, \infty)$. The proof also needs quantitative weighted estimates for rough singular integrals $T_{\Omega}$ and rough maximal operators $M_{\Omega}$, see Section \ref{sec:quant}. They contain many applications to Harmonic Analysis since variation inequalities not only immediately yield the pointwise convergence of the family of operators without using the Banach principle, but also can be used to measure the speed of convergence. Finally, we end up Section \ref{sec:app} with Riesz transforms associated to Schr\"{o}dinger operators.

\section{Preliminaries and auxiliary results}\label{sec:pre}

A measurable function $w$ on $\Rn$ is called a weight if $0<w(x)<\infty$ for a.e.~$x \in \Rn$. For $p \in (1, \infty)$, we define the Muckenhoupt class $A_p$ as the collection of all weights $w$ on $\Rn$ satisfying
\begin{equation*}
[w]_{A_p}:=\sup_{Q} \bigg(\fint_{Q} w\, dx \bigg) \bigg(\fint_{Q}w^{1-p'}\, dx \bigg)^{p-1}<\infty,
\end{equation*}
where the supremum is taken over all cubes $Q \subset \Rn$. As for the case $p=1$, we say that $w\in A_1$ if
\begin{equation*}
[w]_{A_1} :=\sup_{Q} \bigg(\fint_Q w\, dx\bigg) \esssup_Q w^{-1}<\infty.
\end{equation*}
Then, we define $A_{\infty} :=\bigcup_{p\geq 1}A_p$ and $[w]_{A_{\infty}}=\inf_{p>1} [w]_{A_p}$.

Given $1 \le p \le \infty$ and $0<q \le \infty$, we say that $w \in A_{p,q}$ if it satisfies
\begin{align*}
[w]_{A_{p,q}} 
:= \sup_{Q} \bigg(\fint_{Q} w^q \, dx \bigg)^{\frac1q} \bigg(\fint_{Q} w^{-p'} dx\bigg)^{\frac{1}{p'}} < \infty, 
\end{align*}
where one has to replace the first term by $\esssup_Q w$ when $q=\infty$ and the second term by $\esssup_Q w^{-1}$ when $p=1$. One can easily check that $w \in A_{p, q}$ if and only if $w^q \in A_{1+q/p'}$ if and only if $w^{-p'} \in A_{1+p'/q}$ with 
\begin{align*}
[w]_ {A_{p, q}}
=[w^q]_{A_{1+q/p'}}^{\frac1q} 
=[w^{-p'}]_{A_{1+p'/q}}^{\frac{1}{p'}}, 
\quad\text{when }\, 
1<p \le \infty, \, 0<q<\infty. 
\end{align*}
If $p=1$ and $0<q<\infty$, then $w \in A_{p, q}$ if and only if $w^q \in A_1$ with $[w]_{A_{p, q}} = [w^q]_{A_1}^{\frac1q}$. If $1<p \le \infty$ and $q=\infty$, $w \in A_{p, q}$ if and only if $w^{-p'} \in A_1$ with $[w]_{A_{p, q}}=[w^{-p'}]_{A_1}^{\frac{1}{p'}}$. 

For $s\in(1,\infty]$, the reverse H\"{o}lder class $RH_s$ is the collection of all weights $w$ such that
\begin{equation*}
[w]_{RH_s} := \sup_{Q} \bigg(\fint_Q w^s\,dx\bigg)^{\frac1s} \bigg(\fint_Q w\,dx\bigg)^{-1}<\infty.
\end{equation*}
When $s=\infty$, $(\fint_Q w^s\,dx)^{1/s}$ is understood as $(\esssup_{Q}w)$. Define $RH_1 := \bigcup\limits_{1<s \le \infty} RH_s$. Then we see that $RH_1=A_{\infty}$ (cf. \cite[Theorem 7.3.3]{Gra}).

\subsection{Muchenhoupt weights} 
The Hardy-Littlewood maximal operator $M$ is defined by 
\[
Mf(x) := \sup_{Q \ni x} \fint_Q |f(y)| \, dy, 
\]
where the supremum is taken over all cubes $Q \subset \Rn$ containing $x$. 
We begin with the following estimate concerning the growth of $C_{n, p}$ in \eqref{eq:Buckley} with respect to $n$ and $p$. 

\begin{lemma}\label{lem:sharp}
For any $p \in (1, \infty)$ and $w \in A_p$, 
\begin{align}\label{eq:sharp}
\|M\|_{L^p(w) \to L^p(w)} \le 2^n \cdot 3^{n(\frac{p}{p-1}+\frac{6}{p})} \, [w]_{A_p}^{\frac{1}{p-1}}. 
\end{align}
\end{lemma}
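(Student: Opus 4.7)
The plan is to reduce the problem to a dyadic model by the standard shifted-grid argument, then apply a Calder\'on--Zygmund stopping-time decomposition together with H\"older's inequality with the dual weight $\sigma:=w^{1-p'}$, while carefully bookkeeping the $n$- and $p$-dependent constants.

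First, I would invoke the one-third trick of Okikiolu/Garnett: there exist $3^n$ shifted dyadic grids $\mathcal{D}^t$, $t\in\{0,1/3,2/3\}^n$, such that every cube $Q\subset\R^n$ is contained in some cube $Q^*\in\bigcup_t\mathcal{D}^t$ with $\ell(Q^*)\le 3\ell(Q)$. This yields the pointwise domination
\[
Mf(x)\le 3^n\sum_{t}M^{\mathcal{D}^t}_{d} f(x),
\]
and, after taking $L^p(w)$-norms and using Minkowski, reduces the question to the sharp weighted bound for a single dyadic maximal operator $M^{\mathcal{D}}_{d}$, losing only explicit powers of $2^n$ and $3^n$. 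This accounts for the initial dimensional factors in the stated constant.

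Second, for $M^{\mathcal{D}}_{d}$, at each level $2^k$, I would select the family $\{Q_j^k\}$ of maximal dyadic cubes with $\fint_{Q_j^k}|f|>2^k$; these are pairwise disjoint, cover $\{M^{\mathcal{D}}_{d}f>2^k\}$, and satisfy $\fint_{Q_j^k}|f|\le 2^n 2^k$. H\"older's inequality with $\sigma=w^{1-p'}$ gives
\[
\Bigl(\fint_{Q_j^k}|f|\Bigr)^p\le \Bigl(\frac{1}{\sigma(Q_j^k)}\int_{Q_j^k}|f|^p\sigma^{1-p}\Bigr)\Bigl(\fint_{Q_j^k}\sigma\Bigr)^{p},
\]
and, combined with the $A_p$ relation $(\fint_Q w)(\fint_Q\sigma)^{p-1}\le[w]_{A_p}$, one obtains a bound of the form $w(Q_j^k)\le [w]_{A_p}|Q_j^k|^p\sigma(Q_j^k)^{1-p}$. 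Inserting this into the layer-cake formula $\|M^{\mathcal{D}}_{d}f\|_{L^p(w)}^p=p\int_0^\infty\lambda^{p-1}w(\{M^{\mathcal{D}}_{d}f>\lambda\})\,d\lambda$, and comparing with the $L^{p'}(\sigma)$-boundedness of the dyadic maximal operator with respect to $\sigma$ (via the Carleson-type embedding, whose norm contributes the factor $p'=p/(p-1)$), one arrives at the sharp exponent $[w]_{A_p}^{1/(p-1)}$ on the right-hand side.

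Third, I would aggregate the constants. The shifted-grid reduction gives powers of $3^n$ and, after the sum over $t$ is absorbed into the $L^p$ norm, a further factor $3^{n/p}$. The Calder\'on--Zygmund stopping-time argument contributes the $2^n$ prefactor (from $\fint_{Q_j^k}|f|\le 2^n 2^k$), while the Carleson-type embedding for $M_\sigma$ on $L^{p'}(\sigma)$ contributes dimensional factors of the form $3^{np'}$ and a tail factor of the form $3^{n/p}$ from the reverse-type step on $\sigma\in A_{p'}$ with $[\sigma]_{A_{p'}}=[w]_{A_p}^{1/(p-1)}$. Matching these against the bound in the statement, the exponent $p'=p/(p-1)$ encodes the H\"older/dual-weight step, and the $6/p$ records the cumulative dimensional cost of the Carleson embedding and its rearrangements.

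The main obstacle is \emph{not} any single step — each is standard — but the precise accounting of constants to land exactly on $2^n\cdot 3^{n(p/(p-1)+6/p)}$. In particular, one must be careful that the shifted-grid loss, the $2^n$ from the maximality of stopping cubes, and the dimensional factors in the $L^{p'}(\sigma)$ boundedness of $M_\sigma$ do not double-count, and that the self-improvement consumed by the $A_p\Rightarrow A_{p-\varepsilon}$ (or $RH_{1+\delta}$) type estimate does not introduce unwanted dependencies on $[w]_{A_p}$ beyond the advertised exponent $1/(p-1)$.
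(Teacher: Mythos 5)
Your route is genuinely different from the paper's: the paper follows Grafakos/Lerner and uses the pointwise composition bound $Mf \le 2^n M^c f \le 2^n\, 3^{np'}\,[w]_{A_p}^{1/(p-1)}\, M_w^c\big(M_{\sigma}^c(f\sigma^{-1})^{p-1}w^{-1}\big)^{1/(p-1)}$ together with the \emph{universal} estimate $\|M_\mu^c\|_{L^p(\mu)\to L^p(\mu)}\le 24^{n/p}$ for an arbitrary weight $\mu$ (no $A_p$ input), so the constant $2^n\cdot 3^{n(p'+6/p)}$ falls out from $2^n\cdot 3^{np'}\cdot 24^{2n/p}$ and $24^2<3^6$. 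However, as written your argument has a genuine gap at its central step. After reducing to $\sum_{k,j}\sigma(Q_j^k)\big(\tfrac{1}{\sigma(Q_j^k)}\int_{Q_j^k}|f\sigma^{-1}|\,d\sigma\big)^p$, you invoke a ``Carleson-type embedding'' for the measure $\sigma$ over the stopping family $\{Q_j^k\}$. That embedding requires the packing condition $\sum_{Q_j^k\subseteq R}\sigma(Q_j^k)\le A\,\sigma(R)$. The stopping cubes are sparse with respect to \emph{Lebesgue} measure, not with respect to $\sigma$; transferring sparseness to $\sigma$ needs a quantitative $A_\infty$ (or reverse H\"older) property of $\sigma$, whose constant is controlled only by $[\sigma]_{A_{p'}}=[w]_{A_p}^{1/(p-1)}$ and therefore re-enters the power counting. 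This can be made to work (it is essentially the Hyt\"onen--P\'erez mixed $A_p$--$A_\infty$ route, and the exponents do recombine to $1/(p-1)$), but you neither state nor verify this, and you simultaneously flag the self-improvement step as something that ``should not'' introduce extra dependence without showing why it doesn't. As it stands the middle of the proof does not close.

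The second problem is that the entire content of this lemma beyond Buckley's classical theorem is the \emph{explicit} constant $2^n\cdot 3^{n(p/(p-1)+6/p)}$, and your accounting of it is not carried out and is internally inconsistent with your own argument. In the dyadic route the $p'$-blow-up near $p=1$ comes from the Carleson embedding constant $(p')^p$ (a factor $p'$ after the $p$-th root), not from a factor $3^{np'}$; conversely, the shifted-grid reduction and the $\ell^1$ summation over the $3^n$ grids produce dimensional factors that do not tend to $1$ as $p\to\infty$, whereas the stated constant tends to $2^n\cdot 3^n$. So it is not clear that your route even yields a constant dominated by the stated one for all $p$, and asserting that the pieces ``match'' the target is not a proof. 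To fix the argument you would either have to (i) supply the $\sigma$-Carleson condition quantitatively and then actually sum the constants, or (ii) switch to the paper's organization, where the only inputs are the universal $L^p(\mu)$ bound for centered maximal functions and a single application of the $A_p$ condition.
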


\begin{proof}
We follow the proof of \cite[Theorem 7.1.9]{Gra} to track the precise constants. Given a weight $w$, the centered weighted Hardy-Littlewood maximal operator $M_w^c$ is defined by 
\[
M_w^c f(x) := \sup_{Q \ni x} \frac{1}{w(Q)} \int_Q |f(y)| \, dw(y), 
\]
where the supremum is taken over all cubes $Q \subset \Rn$ centered at $x$. Let $M^c$ denote $M_w^c$ when $w \equiv 1$. It was proved in \cite[p.509]{Gra} that 
\begin{align}
\|M_w^c\|_{L^1(w) \to L^{1, \infty}(w)} \le 24^n 
\quad\text{ and }\quad
\|M_w^c\|_{L^{\infty}(w) \to L^{\infty}(w)} \le 1, 
\end{align}
which together with interpolation theorem gives that for any weight $w$, 
\begin{align}\label{eq:Mwc}
\|M_w^c\|_{L^p(w) \to L^p(w)} \le 24^{\frac{n}{p}}, \quad\forall p \in (1, \infty). 
\end{align}

To proceed, we fix $w \in A_p$ with $p \in (1, \infty)$, and set $\sigma:=w^{-\frac{1}{p-1}}$. As shown in \cite[p. 508]{Gra} that 
\begin{align*}
Mf(x) 
\le 2^n M^c f(x) 
\le 2^n \cdot 3^{\frac{np}{p-1}} [w]_{A_p}^{\frac{1}{p-1}} 
M_w^c\big(M_{\sigma}^c(f \sigma^{-1})^{p-1} w^{-1} \big)(x)^{\frac{1}{p-1}}. 
\end{align*}
which along with \eqref{eq:Mwc} in turn implies 
\begin{align*}
\|M\|_{L^p(w) \to L^p(w)} 
& \le 2^n \cdot 3^{\frac{np}{p-1}} [w]_{A_p}^{\frac{1}{p-1}} 
\|M_w^c\|_{L^{p'}(w) \to L^{p'}(w)}^{\frac{1}{p-1}} 
\|M_{\sigma}^c\|_{L^p(\sigma) \to L^p(\sigma)}  
\\
&\le 2^n \cdot 3^{\frac{np}{p-1}} \cdot 24^{\frac{n}{p'(p-1)} + \frac{n}{p}} [w]_{A_p}^{\frac{1}{p-1}} 
< 2^n \cdot 3^{n(\frac{p}{p-1}+\frac{6}{p})} [w]_{A_p}^{\frac{1}{p-1}}. 
\end{align*}
The proof is complete.  
\end{proof}

Based on the weighted boundedness of Hardy-Littlewood maximal operator above, one can establish Rubio de Francia extrapolation theorem below, whose proof was contained in \cite{CMP11}. 

\begin{theorem}\label{thm:RdF}
For any $p \in (1, \infty)$ and $w \in A_p$, there exists an operator $\mathcal{R}: L^p(w) \to L^p(w)$ such that for every non-negative function $h \in L^p(w)$,
\begin{list}{\rm (\theenumi)}{\usecounter{enumi}\leftmargin=1.2cm \labelwidth=1cm \itemsep=0.2cm \topsep=.2cm \renewcommand{\theenumi}{\alph{enumi}}}

\item\label{eq:RdF-1} $h \le \mathcal{R} h$;

\item\label{eq:RdF-2} $\|\mathcal{R} h\|_{L^p(w)} \le 2 \|h\|_{L^p(w)}$;

\item\label{eq:RdF-3} $\mathcal{R}h \in A_1$ with $[\mathcal{R} h]_{A_1} \le 2 \|M\|_{L^p(w) \to L^p(w)}$.

\end{list}
\end{theorem}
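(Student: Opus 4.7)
The proof I envision is the classical Rubio de Francia iteration algorithm, now set up carefully so that the constants in (b) and (c) come out as stated. Write $\|M\| := \|M\|_{L^p(w) \to L^p(w)}$, which is finite by Lemma \ref{lem:sharp} since $w \in A_p$. Define, for every non-negative $h \in L^p(w)$,
\begin{equation*}
\mathcal{R} h(x) := \sum_{k=0}^{\infty} \frac{M^k h(x)}{2^k \, \|M\|^k},
\end{equation*}
where $M^0 h := h$ and $M^k h := M(M^{k-1} h)$ for $k \geq 1$. All summands are non-negative, so the series is well-defined (possibly $+\infty$ pointwise a priori), and part \eqref{eq:RdF-1} is immediate by keeping only the $k=0$ term.

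For \eqref{eq:RdF-2} I would apply Minkowski's inequality in $L^p(w)$ term by term and use the defining property $\|M\phi\|_{L^p(w)} \le \|M\| \cdot \|\phi\|_{L^p(w)}$ inductively to get $\|M^k h\|_{L^p(w)} \le \|M\|^k \|h\|_{L^p(w)}$. This yields
\begin{equation*}
\|\mathcal{R} h\|_{L^p(w)} \le \sum_{k=0}^{\infty} \frac{\|M\|^k \|h\|_{L^p(w)}}{2^k \|M\|^k} = 2 \|h\|_{L^p(w)},
\end{equation*}
which in particular also guarantees that $\mathcal{R} h$ is finite a.e.

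For \eqref{eq:RdF-3} the key identity is the shift-of-index argument. Since $M$ is sublinear and all the summands are non-negative, one has pointwise a.e.
\begin{equation*}
M(\mathcal{R} h)(x) \le \sum_{k=0}^{\infty} \frac{M^{k+1} h(x)}{2^k \|M\|^k} = 2 \|M\| \sum_{j=1}^{\infty} \frac{M^j h(x)}{2^j \|M\|^j} \le 2 \|M\| \, \mathcal{R} h(x),
\end{equation*}
which is exactly the $A_1$ condition with $[\mathcal{R} h]_{A_1} \le 2 \|M\|$.

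The only delicate point is justifying that $M$ commutes with the infinite sum in the estimate for $M(\mathcal{R} h)$. I would handle this by noting that since each $M^k h \ge 0$, the monotone convergence theorem applied to the partial sums $\sum_{k=0}^{N} M^k h /(2^k \|M\|^k)$ gives $M$ of the limit equals the limit of $M$ on the partial sums; then the sublinearity of $M$ on each partial sum yields the displayed inequality. I do not anticipate any further obstacle — the entire argument is a two-line consequence of the boundedness of $M$ on $L^p(w)$ provided by Lemma \ref{lem:sharp}.
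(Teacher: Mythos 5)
Your proof is correct and is precisely the standard Rubio de Francia iteration algorithm that the paper itself invokes by citing \cite{CMP11} rather than writing out; the normalization by $2^k\|M\|^k$ yields exactly the constants $2$ in (b) and $2\|M\|_{L^p(w)\to L^p(w)}$ in (c). The interchange of $M$ with the sum is justified as you say (countable sublinearity of $M$ on non-negative functions via monotone convergence of the averages), so there is no gap.
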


Let us recall the sharp reverse H\"{o}lder's inequality.
\begin{lemma}\label{lem:RH}
Let $p \in(1, \infty)$ and $w \in A_p$. Then there holds 
\begin{align}\label{eq:sharpRH}
\bigg(\fint_{Q} w^{1+ \gamma_w} dx \bigg)^{\frac{1}{1+\gamma_w}} \le 2 \fint_Q w \, dx,
\end{align}
for every cube $Q$, where
\begin{equation}\label{eq:gaw}
\gamma_w=
\begin{cases}
\frac{1}{2^{n+1}[w]_{A_1}}, & p=1, \\
\frac{1}{2^{n+1+2p}[w]_{A_p}}, &p \in (1, \infty), \\
\frac{1}{2^{n+11}[w]_{A_{\infty}}}, &p=\infty.
\end{cases}
\end{equation}
In particular, for any measurable subset $E \subset Q$, 
\begin{align}\label{eq:EQEQ}
w(E)/w(Q) \le 2 (|E|/|Q|)^{\frac{\gamma_w}{1+\gamma_w}}. 
\end{align}
\end{lemma}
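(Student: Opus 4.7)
The plan is to follow the sharp reverse H\"{o}lder strategy of Hyt\"{o}nen and P\'{e}rez, which proceeds through an iterated Calder\'{o}n-Zygmund stopping-time decomposition of $w$ on $Q$ followed by a geometric summation argument; the three values of $\gamma_w$ in \eqref{eq:gaw} will emerge from three different ways of quantifying how rapidly $w$ can concentrate inside $Q$.

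First I would fix a cube $Q$ and set $\lambda := 2 \fint_Q w \, dx$. Applying the Calder\'{o}n-Zygmund decomposition to $w$ relative to the dyadic subcubes of $Q$ at height $\lambda$ yields a pairwise disjoint maximal family $\{Q_j\}$ with $\lambda < \fint_{Q_j} w \le 2^n \lambda$ and with $w(x) \le \lambda$ for a.e.\ $x \in Q \setminus \bigcup_j Q_j$. Iterating this decomposition within each $Q_j$ at the new level $2\fint_{Q_j} w$ produces a nested generation tree $\{Q_\alpha^{(k)}\}_{k,\alpha}$ in which the averages of $w$ grow geometrically between consecutive generations. Splitting $\int_Q w^{1+\gamma} dx$ along the tree and using that on the difference set $Q_\alpha^{(k)} \setminus \bigcup_\beta Q_\beta^{(k+1)}$ the weight is pointwise bounded by $2^{n+1} \fint_{Q_\alpha^{(k)}} w$, one sees that the integral is controlled by a geometric series whose typical term is $(2^n \lambda)^{\gamma k} w(Q_\alpha^{(k)})$. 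Convergence, and hence the desired bound $\int_Q w^{1+\gamma} dx \le 2 \lambda^\gamma w(Q)$, reduces to establishing at each generation a bound of the form
\begin{equation*}
\sum_\beta w(Q_\beta^{(k+1)}) \le \eta \, w(Q_\alpha^{(k)}) \quad \text{with} \quad \eta \cdot 2^{n\gamma} \le \tfrac12.
\end{equation*}

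The three choices of $\gamma_w$ then correspond to three ways of producing such an $\eta$. The stopping criterion always gives the packing bound $\big|\bigcup_\beta Q_\beta^{(k+1)}\big| \le |Q_\alpha^{(k)}|/2$. When $w \in A_1$, combining this with $w(Q_\beta^{(k+1)}) \le [w]_{A_1} |Q_\beta^{(k+1)}| \essinf_{Q_\alpha^{(k)}} w$ yields $\eta \le 1 - c/[w]_{A_1}$, and solving the constraint above gives $\gamma_w = 1/(2^{n+1}[w]_{A_1})$. When $w \in A_p$ with $1 < p < \infty$, one applies the standard H\"{o}lder consequence of the $A_p$ condition, namely $|E|/|Q| \le [w]_{A_p}^{1/p} (w(E)/w(Q))^{1/p}$, to $E = \bigcup_\beta Q_\beta^{(k+1)}$; a careful tracking of the $1/p$-th root leads to $\eta \le 1 - c/(2^p[w]_{A_p})$ and, upon optimizing, to $\gamma_w = 1/(2^{n+1+2p}[w]_{A_p})$. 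When $w \in A_\infty$, the same scheme applied with the Fujii-Wilson $A_\infty$ constant produces $\gamma_w = 1/(2^{n+11}[w]_{A_\infty})$. The consequence \eqref{eq:EQEQ} then follows immediately from \eqref{eq:sharpRH} via H\"{o}lder's inequality with exponents $1+\gamma_w$ and $(1+\gamma_w)/\gamma_w$:
\begin{equation*}
w(E) \le \bigg(\int_Q w^{1+\gamma_w} dx\bigg)^{\frac{1}{1+\gamma_w}} |E|^{\frac{\gamma_w}{1+\gamma_w}} \le 2 \, w(Q) \bigg(\frac{|E|}{|Q|}\bigg)^{\frac{\gamma_w}{1+\gamma_w}}.
\end{equation*}

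The main obstacle I expect is the bookkeeping in the $A_p$ case for $1 < p < \infty$: the $1/p$-th root coming from H\"{o}lder's inequality must be inverted with care so that the resulting exponent $\gamma_w$ has the clean form $1/(2^{n+1+2p}[w]_{A_p})$ stated in \eqref{eq:gaw} and does not degenerate as $p \to 1^+$ or $p \to \infty$. The $A_1$ and $A_\infty$ cases are essentially one-line consequences of the respective definitions once the stopping-time tree is in place, and the only conceptual input---that the weight mass of the selected cubes at generation $k+1$ is a fixed fraction of the corresponding mass at generation $k$---is the same in all three regimes.
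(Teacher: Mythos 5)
For the part of the lemma that the paper actually proves, namely \eqref{eq:EQEQ}, your argument is identical to the paper's: H\"older's inequality on $\fint_Q \mathbf{1}_E\, w\,dx$ with exponents $1+\gamma_w$ and $(1+\gamma_w)/\gamma_w$, followed by \eqref{eq:sharpRH}. For \eqref{eq:sharpRH} itself the paper gives no proof at all --- it simply cites \cite{CGPSZ, HP12, LOP} --- so your stopping-time sketch is an attempt to reprove the cited result, and the route you choose (iterated local Calder\'on--Zygmund decomposition, geometric summation, with the decay factor $\eta$ supplied by the relevant weight condition) is indeed the standard one behind those references. The packing bound $\big|\bigcup_\beta Q_\beta^{(k+1)}\big|\le |Q_\alpha^{(k)}|/2$ and the overall summation scheme are correct.

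There are, however, two concrete problems in the sketch. First, in both the $A_1$ and $A_p$ cases you apply the comparison inequality to the \emph{selected} set $E=\bigcup_\beta Q_\beta^{(k+1)}$. The $A_p$ condition gives $(|E|/|Q|)^p\le [w]_{A_p}\,w(E)/w(Q)$, which is a \emph{lower} bound on $w(E)/w(Q)$; applied to $E$ itself it says the selected cubes carry at least a certain fraction of the mass, which is the opposite of what you need. The step only works when applied to the complement: from $|Q_\alpha^{(k)}\setminus E|\ge |Q_\alpha^{(k)}|/2$ one gets $w(Q_\alpha^{(k)}\setminus E)\ge 2^{-p}[w]_{A_p}^{-1}w(Q_\alpha^{(k)})$, hence $\sum_\beta w(Q_\beta^{(k+1)})\le \big(1-2^{-p}[w]_{A_p}^{-1}\big)w(Q_\alpha^{(k)})$; similarly your displayed $A_1$ inequality $w(Q_\beta^{(k+1)})\le [w]_{A_1}|Q_\beta^{(k+1)}|\essinf_{Q_\alpha^{(k)}}w$ does not follow from the $A_1$ condition (the essential infimum over the smaller cube is \emph{larger}), and again the correct argument bounds $w$ of the unselected set from below. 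Second, the entire content of this ``sharp'' lemma is the explicit values $2^{n+1}$, $2^{n+1+2p}$, $2^{n+11}$ in \eqref{eq:gaw}, which feed into the quantitative bounds of Lemma \ref{lem:open} and beyond; your sketch defers exactly this verification (``upon optimizing'', ``careful tracking''), so as written it does not establish the stated constants. Since these constants are taken verbatim from the cited literature, the honest options are either to cite them as the paper does or to carry out the geometric-series bookkeeping in full.
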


\begin{proof}
The estimate \eqref{eq:sharpRH} was proved in \cite{CGPSZ, HP12, LOP}. Let us prove \eqref{eq:EQEQ}. If we set $r:=1+\gamma_w$, then \eqref{eq:sharpRH} implies that for any measurable subset $E \subset Q$, 
\begin{align*}
\frac{w(E)}{|Q|} = \fint_{Q} \mathbf{1}_E \, w \, dx 
\le \bigg(\fint_Q \mathbf{1}_E^{r'} \, dx \bigg)^{\frac{1}{r'}} 
\bigg(\fint_Q w^r dx\bigg)^{\frac1r} 
\le 2\bigg(\frac{|E|}{|Q|}\bigg)^{\frac{\gamma_w}{1+\gamma_w}} \frac{w(Q)}{|Q|}. 
\end{align*}
This shows \eqref{eq:EQEQ}. 
\end{proof}

\begin{lemma}\label{lem:open}
For any $q \in (1, \infty)$ and $v \in A_q$, there exist $\gamma \in (0, 2^{-n-3})$ and $q_0 \in (1, q)$ such that 
\begin{equation}\label{eq:vava}
\begin{aligned}
& q_0 = \frac{q}{1+\varepsilon}, \quad
\frac{(q-1)\gamma}{q(1+\gamma)'} < \varepsilon < \frac{q-1}{(1+\gamma)'}, \quad 
(1+\gamma)' \simeq [v]_{A_q}^{\max\{1, \frac{1}{q-1}\}}, 
\\ 
& [v^{1+\gamma}]_{A_q} \le 2^{q(1+\gamma)} [v]_{A_q}^{1+\gamma},  
\quad\text{ and }\quad 
[v]_{A_{q_0}} \le 2^q [v]_{A_q}. 
\end{aligned}
\end{equation} 
\end{lemma}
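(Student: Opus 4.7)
The plan is to derive all five conclusions from the sharp reverse Hölder inequality (Lemma~\ref{lem:RH}) applied jointly to $v$ and its dual weight $\sigma:=v^{1-q'}$, which satisfies $\sigma\in A_{q'}$ with $[\sigma]_{A_{q'}}=[v]_{A_q}^{1/(q-1)}$.

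First I would define
$$\gamma_v:=\frac{1}{2^{n+1+2q}[v]_{A_q}},\qquad \gamma_\sigma:=\frac{1}{2^{n+1+2q'}[v]_{A_q}^{1/(q-1)}},\qquad \gamma:=\min\{\gamma_v,\gamma_\sigma\}.$$
Since $q>1$ and $[v]_{A_q}\ge 1$, this forces $\gamma<2^{-n-3}$; and because $1/\gamma=\max\{2^{n+1+2q}[v]_{A_q},\,2^{n+1+2q'}[v]_{A_q}^{1/(q-1)}\}$, one reads off $(1+\gamma)'=1+1/\gamma\simeq [v]_{A_q}^{\max\{1,1/(q-1)\}}$, giving the third condition. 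Lemma~\ref{lem:RH} combined with Hölder's inequality yields $\fint_Q w^{1+\gamma}\le 2^{1+\gamma}(\fint_Q w)^{1+\gamma}$ for each $w\in\{v,\sigma\}$. Since $v^{(1+\gamma)(1-q')}=\sigma^{1+\gamma}$, multiplying the $v$-bound by the $(q-1)$-th power of the $\sigma$-bound yields
$$[v^{1+\gamma}]_{A_q}\le 2^{q(1+\gamma)}[v]_{A_q}^{1+\gamma},$$
which is the fourth condition.

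Next I would set $\varepsilon:=(q-1)\gamma/(q+\gamma)$ and $q_0:=q/(1+\varepsilon)$. A short computation, using $q>1$ for the upper bound and $\gamma^2<q$ for the lower, verifies that $\varepsilon$ lies strictly in the interval $\bigl((q-1)\gamma^2/[q(1+\gamma)],\,(q-1)\gamma/(1+\gamma)\bigr)=\bigl((q-1)\gamma/[q(1+\gamma)'],\,(q-1)/(1+\gamma)'\bigr)$. The crucial algebraic consequence of this specific choice is $(q-1)(q_0'-1)=1+\gamma$, which follows from $q_0'-1=(1+\varepsilon)/(q-1-\varepsilon)$. Hence $v^{1-q_0'}=\sigma^{(q-1)(q_0'-1)}=\sigma^{1+\gamma}$, and the reverse Hölder bound for $\sigma$ together with the identity $(q_0-1)(q_0'-1)=1$ gives
$$[v]_{A_{q_0}}=\sup_Q\Bigl(\fint_Q v\Bigr)\Bigl(\fint_Q\sigma^{1+\gamma}\Bigr)^{q_0-1}\le 2^{(1+\gamma)(q_0-1)}[v]_{A_q}=2^{q-1}[v]_{A_q}\le 2^q[v]_{A_q},$$
using $(1+\gamma)(q_0-1)=(q-1)(q_0'-1)(q_0-1)=q-1$ in the penultimate step.

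The main obstacle is simply the algebraic bookkeeping: the specific value $\varepsilon=(q-1)\gamma/(q+\gamma)$ must be checked both to sit inside the prescribed open interval and to collapse $v^{1-q_0'}$ to exactly $\sigma^{1+\gamma}$, which is the only power of $\sigma$ directly controlled by the sharp reverse Hölder inequality with the chosen $\gamma$. Once these two identities are in hand, the four weighted inequalities follow from routine applications of Lemma~\ref{lem:RH} and Hölder's inequality.
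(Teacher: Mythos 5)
Your proposal is correct and follows essentially the same route as the paper: both apply the sharp reverse H\"older inequality to $v$ and to $v^{1-q'}$, take $\gamma$ as the minimum of the two resulting exponents, and choose $\varepsilon=(q-1)\gamma/(q+\gamma)$ (equivalently $q_0=(q+\gamma)/(1+\gamma)$), with the same interval check and the same multiplication of the two reverse H\"older bounds for the $A_q$ estimates. Your direct use of the identity $(q-1)(q_0'-1)=1+\gamma$ for the last inequality is a slightly cleaner bookkeeping (yielding $2^{q-1}$ rather than $2^q$), but it is the same computation the paper performs via $\frac{1}{q_0-1}=\frac{1+\gamma}{q-1}$.
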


\begin{proof}
Let $q \in (1, \infty)$ and $v \in A_q$. Then, $v^{1-q'} \in A_{q'}$, and by Lemma \ref{lem:RH}, 
\begin{align}
\label{eq:ga-1} \bigg(\fint_{Q} v^{1+ \gamma_1} dx \bigg)^{\frac{1}{1+\gamma_1}} 
&\le 2 \fint_Q v \, dx,
\\
\label{eq:ga-2} \bigg(\fint_{Q} v^{-\frac{1+ \gamma_2}{q-1}} dx \bigg)^{\frac{1}{1+\gamma_2}} 
&\le 2 \fint_Q v^{-\frac{1}{q-1}} \, dx,
\end{align}
for any cube $Q \subset \Rn$, where 
\begin{align}\label{eq:ga-3}
\gamma_1 := \frac{1}{2^{n+1+2q}[v]_{A_q}} \quad\text{ and }\quad 
\gamma_2 := \frac{1}{2^{n+1+2q}[v^{1-q'}]_{A_{q'}}}. 
\end{align}
Setting 
\begin{align}\label{eq:gaqq}
\gamma := \min\{\gamma_1, \gamma_2\} < 2^{-n-3} \quad\text{and}\quad 
q_0 := \frac{q}{1+\varepsilon} = \frac{q+\gamma}{1+\gamma} \in (1, q), 
\end{align}
we see that 
\[
\frac{(q-1)\gamma}{q(1+\gamma)'}
<\varepsilon = \frac{(q-1) \gamma}{q+\gamma}
< \frac{(q-1)\gamma}{1+\gamma} 
= \frac{q-1}{(1+\gamma)'}, 
\]
and use Jensen's inequality and \eqref{eq:ga-1}--\eqref{eq:ga-3} to obtain 
\begin{equation*}
(1+\gamma)' \simeq \max\{[v]_{A_q}, [v^{1-q'}]_{A_{q'}}\}
= [v]_{A_q}^{\max\{1, \frac{1}{q-1}\}}, 
\end{equation*}
\begin{align*}
\bigg(\fint_Q v^{1+ \gamma} dx \bigg)^{\frac{1}{1+\gamma}} 
\bigg(\fint_Q v^{-\frac{1+ \gamma}{q-1}} dx \bigg)^{\frac{q-1}{1+\gamma}} 
&\le \bigg(\fint_Q v^{1+ \gamma_1} dx \bigg)^{\frac{1}{1+\gamma_1}} 
\bigg(\fint_Q v^{-\frac{1+ \gamma_2}{q-1}} dx \bigg)^{\frac{q-1}{1+\gamma_2}} 
\\ 
&\le 2^q \bigg(\fint_Q v \, dx \bigg) \bigg(\fint_Q v^{-\frac{1}{q-1}} \, dx\bigg)^{q-1} , 
\end{align*}
and 
\begin{align*}
\bigg(\fint_Q v \, dx \bigg) \bigg(\fint_Q v^{-\frac{1}{q_0-1}} dx \bigg)^{q_0-1} 
&= \bigg(\fint_Q v \, dx \bigg) \bigg(\fint_Q v^{-\frac{1+ \gamma}{q-1}} dx \bigg)^{\frac{q-1}{1+\gamma}} 
\\ 
&\le \bigg(\fint_Q v^{1+ \gamma} dx \bigg)^{\frac{1}{1+\gamma}} 
\bigg(\fint_Q v^{-\frac{1+ \gamma}{q-1}} dx \bigg)^{\frac{q-1}{1+\gamma}},  
\end{align*}
which immediately implies \eqref{eq:vava}. 
\end{proof}

\begin{lemma}\label{lem:fac}
The following properties hold: 
\begin{list}{\rm (\theenumi)}{\usecounter{enumi}\leftmargin=1.2cm \labelwidth=1cm \itemsep=0.2cm \topsep=.2cm \renewcommand{\theenumi}{\alph{enumi}}}

\item\label{list:fac-1} Let $1 \le p \le p_0 < \infty$. Then for any $u \in A_p$ and $v \in A_1$,
\begin{align*}
uv^{p-p_0} \in A_{p_0} \quad\text{with}\quad [u v^{p-p_0}]_{A_{p_0}} \le [u]_{A_p} [v]_{A_1}^{p_0-p}.
\end{align*}

\item\label{list:fac-2} Let $1 \le q_0, q_1<\infty$. Then for any $w_0 \in A_{q_0}$, $w_1 \in A_{q_1}$, and $\theta \in[0, 1]$, 
\begin{align*}
[w]_{A_q} \le [w_0]_{A_{q_0}}^{(1-\theta) \frac{q}{q_0}} [w_1]_{A_{q_1}}^{\theta \frac{q}{q_1}}, 
\end{align*}
where $\frac1q = \frac{1-\theta}{q_0} + \frac{\theta}{q_1}$ and $w^{\frac1q} = w_0^{\frac{1-\theta}{q_0}} w_1^{\frac{\theta}{q_1}}$. In particular, for any $1 \le p_0 < p<\infty$, $u \in A_p$, and $v \in A_1$, 
\begin{equation*}
u^{\frac{p_0-1}{p-1}} v^{\frac{p-p_0}{p-1}} \in A_{p_0} 
\quad\text{ with }\quad 
\big[u^{\frac{p_0-1}{p-1}} v^{\frac{p-p_0}{p-1}} \big]_{A_{p_0}} 
\le [u]_{A_p}^{\frac{p_0-1}{p-1}} [v]_{A_1}^{\frac{p-p_0}{p-1}}. 
\end{equation*}
\end{list} 
\end{lemma}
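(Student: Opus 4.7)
For part (\ref{list:fac-1}), the case $p=p_0$ is trivial, so assume $1 \le p < p_0$. The strategy is to compute the dual weight $\sigma := (uv^{p-p_0})^{1-p_0'}$ explicitly and then bound the two factors in the $A_{p_0}$ characteristic separately: the first by the pointwise $A_1$ information on $v$, the second by Hölder's inequality combined with the $A_p$ information on $u$. A direct calculation shows
\[
\sigma = u^{1-p_0'} v^{(p_0-p)/(p_0-1)} = (u^{1-p'})^{1-\theta}\, v^{\theta},
\qquad \theta := \frac{p_0-p}{p_0-1} \in [0,1),
\]
when $p>1$; applying Hölder with exponents $1/(1-\theta)$ and $1/\theta$ gives
$\fint_Q \sigma \le \bigl(\fint_Q u^{1-p'}\bigr)^{1-\theta}\bigl(\fint_Q v\bigr)^{\theta}$. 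For $\fint_Q uv^{p-p_0}$, I would use that $v \in A_1$ implies $v^{-1}\le [v]_{A_1}/\fint_Q v$ a.e.\ on $Q$, hence $v^{p-p_0}\le [v]_{A_1}^{p_0-p}\bigl(\fint_Q v\bigr)^{-(p_0-p)}$ pointwise. Multiplying the two estimates, the powers of $\fint_Q v$ cancel exactly because $(p_0-1)\theta = p_0-p$, which leaves precisely $[u]_{A_p}[v]_{A_1}^{p_0-p}$. For the endpoint $p=1$ the same scheme works after replacing the Hölder step by the pointwise $A_1$ bound on $u^{-1/(p_0-1)}$.

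For part (\ref{list:fac-2}), I would run the classical interpolation argument with two applications of Hölder. Set $a := (1-\theta)q/q_0$ and $b := \theta q/q_1$; the defining relation for $q$ gives $a+b = 1$ and $aq_0 + bq_1 = q$. Writing $w = w_0^a w_1^b$, Hölder yields
\[
\fint_Q w \le \Big(\fint_Q w_0\Big)^a \Big(\fint_Q w_1\Big)^b.
\]
For the dual factor, I would write $w^{1-q'} = (w_0^{1-q_0'})^c (w_1^{1-q_1'})^d$ with $c := a(q_0-1)/(q-1)$ and $d := b(q_1-1)/(q-1)$; the identity $aq_0+bq_1=q$ is exactly what forces $c+d=1$, so a second application of Hölder is legitimate. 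Raising to the $(q-1)$th power and using $c(q-1) = a(q_0-1)$ and $d(q-1) = b(q_1-1)$, the product telescopes into
\[
[(\tfrac{1}{|Q|}\!\!\int_Q w_0)(\tfrac{1}{|Q|}\!\!\int_Q w_0^{1-q_0'})^{q_0-1}]^a \cdot [(\tfrac{1}{|Q|}\!\!\int_Q w_1)(\tfrac{1}{|Q|}\!\!\int_Q w_1^{1-q_1'})^{q_1-1}]^b \le [w_0]_{A_{q_0}}^a [w_1]_{A_{q_1}}^b,
\]
which is the asserted bound. Boundary cases $q_0=1$ or $q_1=1$ are handled by substituting the pointwise $A_1$ bound in place of Hölder on that factor, exactly as in part (\ref{list:fac-1}).

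The ``in particular'' assertion is then obtained by specializing part (\ref{list:fac-2}) with $w_0 = u$, $q_0 = p$, $w_1 = v$, $q_1 = 1$, and $\theta = \frac{p-p_0}{p_0(p-1)} \in (0,1)$. A routine check confirms that $1/p_0 = (1-\theta)/p + \theta$ and that $p_0(1-\theta)/p = (p_0-1)/(p-1)$ and $p_0\theta = (p-p_0)/(p-1)$, which produces both the claimed weight and the claimed constant. The only real obstacle throughout is exponent bookkeeping: one must confirm that $a+b=1$, that $c+d=1$, and that the $(q-1)$-powers line up so as to recover the full $A_{q_i}$ characteristics; all of this reduces to elementary manipulations with the single identity $1/q = (1-\theta)/q_0 + \theta/q_1$.
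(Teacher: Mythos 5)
Your proposal is correct and follows essentially the same route as the paper's proof: in part (a) you split the two factors of the $A_{p_0}$ characteristic, handle the averaged factor by Hölder with the conjugate pair built from $\frac{p_0-1}{p-1}$, and use the pointwise $A_1$ bound on $v$ for the other (the paper keeps $\esssup_Q v^{-1}$ and cancels at the end, which is only a cosmetic difference); in part (b) your double-Hölder interpolation with $c+d=1$ forced by $aq_0+bq_1=q$ is exactly the paper's computation, and your specialization $\theta=\frac{p-p_0}{p_0(p-1)}$ matches. No gaps.
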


\begin{proof}
We begin with showing part \eqref{list:fac-1}. Let $u \in A_p$ and $v \in A_1$. For each cube $Q$,
\begin{align}\label{eq:fac-1}
\fint_Q u v^{p-p_0} \, dx
\le \bigg(\fint_Q u \, dx\bigg) \,  (\esssup_Q v^{-1})^{p_0-p}.
\end{align}
Set $r=\frac{p'-1}{p'_0-1} = \frac{p_0-1}{p-1} \ge 1$. Then $r'=\frac{p_0-1}{p_0-p}$, and by H\"{o}lder's inequality,
\begin{align}\label{eq:fac-2}
\bigg(\fint_Q (u v^{p-p_0})^{1-p'_0} dx \bigg)^{p_0-1}
&\le \bigg(\fint_Q u^{(1-p'_0)r} dx \bigg)^{\frac{p_0-1}{r}}
\bigg(\fint_Q v^{(p-p_0)(1-p'_0)r'} dx \bigg)^{\frac{p_0-1}{r'}}
\\ \nonumber
&=\bigg(\fint_Q u^{1-p'} \, dx \bigg)^{p-1}  \bigg(\fint_Q v \, dx \bigg)^{p_0-p}.
\end{align}
Then it follows from \eqref{eq:fac-1} and \eqref{eq:fac-2} that $[u v^{p-p_0}]_{A_{p_0}} \le [u]_{A_p} [v]_{A_1}^{p_0-p}$.

Next, let us prove part \eqref{list:fac-2}. Note that $\frac1q = \frac{1-\theta}{q_0} + \frac{\theta}{q_1}$, and then 
\begin{align*}
\frac{1-\theta}{\frac{q_0}{q_0-1}} + \frac{\theta}{\frac{q_1}{q_1-1}}
=(1-\theta) \bigg(1-\frac{1}{q_0}\bigg) + \theta \bigg(1-\frac{1}{q_1}\bigg)
=1-\frac{1-\theta}{q_0} - \frac{\theta}{q_1}
=\frac{q-1}{q}. 
\end{align*}
Thus, H\"{o}lder's inequality gives  
\begin{align}\label{qqw-1}
\fint_Q w\, dx 
=\fint_Q \Big(w_0^{\frac{1-\theta}{q_0}} w_1^{\frac{\theta}{q_1}}\Big)^q dx 
\le \bigg(\fint_Q w_0 \, dx\bigg)^{(1-\theta)\frac{q}{q_0}} \bigg(\fint_Q w_1 \, dx \bigg)^{\theta\frac{q}{q_1}} 
\end{align}
and
\begin{align}\label{qqw-2}
\bigg(\fint_Q w^{-\frac{1}{q-1}}\, dx\bigg)^{q-1}
&= \bigg(\fint_Q \Big(w_0^{-\frac{1-\theta}{q_0}} w_1^{-\frac{\theta}{q_1}} \Big)^{\frac{q}{q-1}} dx \bigg)^{q-1}
\\ \nonumber 
&\le \bigg(\fint_Q w_0^{-\frac{1}{q_0-1}} \, dx\bigg)^{(q_0-1)(1-\theta)\frac{q}{q_0}} 
\bigg(\fint_Q w_1^{-\frac{1}{q_1-1}} \, dx \bigg)^{(q_1-1)\theta\frac{q}{q_1}}. 
\end{align}
By definition, \eqref{qqw-1}, and \eqref{qqw-2}, we immediately obtain $[w]_{A_q} \le [w_0]_{A_{q_0}}^{(1-\theta) \frac{q}{q_0}} [w_1]_{A_{q_1}}^{\theta \frac{q}{q_1}}$. To conclude the proof, it suffices to pick 
\[
w_0 := u, \quad 
w_1 := v, \quad 
q_0 := p, \quad
q_1 := 1,\quad 
q := p_0, \quad 
w := u^{\frac{p_0-1}{p-1}} v^{\frac{p-p_0}{p-1}}, \quad
\theta := \frac{p-p_0}{p_0(p-1)}, 
\]
and note that $w_0^{\frac{1-\theta}{q_0}} w_1^{\frac{\theta}{q_1}} 
=u^{\frac{p_0-1}{p_0(p-1)}} v^{\frac{p-p_0}{p_0(p-1)}}
=w^{\frac{1}{p_0}}
=w^{\frac1q}$, 
and 
\begin{align*}
\frac{1-\theta}{q_0} + \frac{\theta}{q_1} 
&=\frac{p_0-1}{p_0(p-1)} + \frac{p-p_0}{p_0(p-1)}
=\frac{1}{p_0} 
= \frac1q. 
\end{align*}
The proof is complete. 
\end{proof}

We sum up some of the properties of these classes in the following result.

\begin{lemma}\label{lem:weights}
The following statements hold:
\begin{list}{\rm (\theenumi)}{\usecounter{enumi}\leftmargin=1.2cm \labelwidth=1cm \itemsep=0.2cm \topsep=.2cm \renewcommand{\theenumi}{\alph{enumi}}}

\item\label{list:ApRH-1} For any $w_1, w_2 \in A_1$,  $w:=w_1^{1/s} w_{2}^{1-p} \in A_p \cap RH_s$ for all $1 \leq p < \infty$ and $1<s \leq \infty$. Moreover,  
\begin{align}
\max\{[w]_{A_p}, [w]_{RH_s} \}
\le [w_1]_{A_1}^{\frac1s} [w_2]_{A_1}^{p-1}. 
\end{align}

\item\label{list:ApRH-2} Given $1 \leq p < \infty$ and $1 \leq s < \infty$, $w \in A_p \cap RH_s$ if and only if $w^s \in A_{\tau}$. Moreover,
\begin{align}
[w^s]_{A_{\tau}} \le [w]_{A_p}^s [w]_{RH_s}^s \quad\text{and}\quad
\max\big\{[w]_{A_p}^s, [w]_{RH_s}^s \big\} \le [w^s]_{A_{\tau}},
\end{align}
where $\tau=s(p-1)+1$.

\item\label{list:ApRH-3} Let $1 \le \p_- < \p_+ \le \infty$ and $p \in (\p_-, \p_+)$. Then $w^p \in A_{p/\p_-} \cap RH_{(\p_+/p)'}$ if and only if $w^{-p'} \in A_{p'/\p'_+} \cap RH_{(\p'_-/p')'}$ with
\begin{align}
[w^{p(\p_+/p)'}]_{A_{\tau_p}} = [w^{-p'(\p'_-/p')'}]_{A_{\tau'_p}}^{\tau_p -1},
\end{align}
where $\tau_p= \big(\frac{\p_+}{p}\big)' \big(\frac{p}{\p_-} -1\big) +1$.

\item\label{list:ApRH-4} Given $1 \le \p_- < \p_+ \le \infty$, $p \in (\p_-, \p_+)$, and $w^p \in A_{p/\p_-} \cap RH_{(\p_+/p)'}$, there exists $\widetilde{\p}_- \in (\p_-, p)$ such that $w^p \in A_{p/\widetilde{\p}_-} \cap RH_{(\p_+/p)'}$ with 
\begin{align}\label{eq:wawa}
[w^{p(\p_+/p)'}]_{A_{\widetilde{\tau}_p}} 
\le 2^{\tau_p} [w^{p(\p_+/p)'}]_{A_{\tau_p}} 
\quad\text{ and }\quad 
\frac{\frac{1}{\widetilde{\p}_-}}{\frac{1}{\widetilde{\p}_-} - \frac1p} 
< (1+2^{-n-3}) \frac{\frac{1}{\p_-}}{\frac{1}{\p_-} - \frac1p}, 
\end{align}
where $\tau_p= \big(\frac{\p_+}{p}\big)' \big(\frac{p}{\p_-} -1\big) +1$ and $\widetilde{\tau}_p = \big(\frac{\p_+}{p}\big)' \big(\frac{p}{\widetilde{\p}_-} -1\big) +1$. 
\end{list}
\end{lemma}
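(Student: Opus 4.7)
\medskip

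The plan is to prove the four parts in order, each by direct computation exploiting the definitions together with earlier results in this section.

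For part \eqref{list:ApRH-1}, the strategy is pure bookkeeping. Since $w_i \in A_1$ gives $\fint_Q w_i\,dx \le [w_i]_{A_1}\,\essinf_Q w_i$, for any cube $Q$ I bound $\fint_Q w\,dx$ by $[w_1]_{A_1}^{1/s}(\essinf_Q w_1)^{1/s}\,(\essinf_Q w_2)^{1-p}$ (using that $w_2^{1-p}$ is a pointwise power), then bound $\fint_Q w^{1-p'}\,dx$ and $\fint_Q w^s\,dx$ by the same kind of H\"older + $A_1$ argument, using $s\ge 1$ on the $w_1$-factor and the pointwise bound on the $w_2$-factor. Multiplying out yields both $[w]_{A_p}$ and $[w]_{RH_s}$ bounded by $[w_1]_{A_1}^{1/s}[w_2]_{A_1}^{p-1}$.

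For part \eqref{list:ApRH-2}, I would unfold both characterizations. The defining quantity for $[w^s]_{A_\tau}$ is $\big(\fint_Q w^s\big)\big(\fint_Q w^{-s/(\tau-1)}\big)^{\tau-1}$, and since $\tau-1=s(p-1)$ we have $s/(\tau-1)=1/(p-1)$, so the second factor becomes $\big(\fint_Q w^{1-p'}\big)^{s(p-1)}$. For $\Rightarrow$ I factor $\fint_Q w^s \le [w]_{RH_s}^s(\fint_Q w)^s$, and for $\Leftarrow$ I use Jensen $(\fint_Q w)^s\le \fint_Q w^s$ twice: once paired with $(\fint_Q w^{1-p'})^{s(p-1)}$ to recover $[w]_{A_p}^s\le [w^s]_{A_\tau}$, and once with the trivial bound $(\fint_Q w^{1-p'})^{p-1}\ge (\fint_Q w)^{-1}$ (again Jensen) to get $[w]_{RH_s}^s\le [w^s]_{A_\tau}$.

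For part \eqref{list:ApRH-3}, I would reduce everything to classical $A_q$ duality, namely $v\in A_q\Leftrightarrow v^{1-q'}\in A_{q'}$ with $[v]_{A_q}=[v^{1-q'}]_{A_{q'}}^{q-1}$. By part \eqref{list:ApRH-2}, $w^p\in A_{p/\p_-}\cap RH_{(\p_+/p)'}$ is equivalent to $v:=w^{p(\p_+/p)'}\in A_{\tau_p}$. Applying the duality to $v$ I get $v^{1-\tau'_p}\in A_{\tau'_p}$. It then remains to verify, by a short direct calculation, first that the exponent $p(\p_+/p)'(1-\tau'_p)$ equals $-p'(\p'_-/p')'$ (both reduce to $-p\p_-/(p-\p_-)$), and second that $\tau'_p$ coincides with the index $(\p'_-/p')'(p'/\p'_+-1)+1$ associated to the dual data $(\p'_+,\p'_-)$. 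Another application of part \eqref{list:ApRH-2} then finishes the equivalence, with the stated equality of weight constants inherited from $A_q$ duality.

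For part \eqref{list:ApRH-4}, I would apply Lemma \ref{lem:open} to $v:=w^{p(\p_+/p)'}\in A_{\tau_p}$, obtaining $q_0\in (1,\tau_p)$ with $[v]_{A_{q_0}}\le 2^{\tau_p}[v]_{A_{\tau_p}}$ and $q_0=\tau_p/(1+\varepsilon)$ with $\varepsilon=(\tau_p-1)\gamma/(\tau_p+\gamma)$, $\gamma<2^{-n-3}$. Define $\widetilde{\p}_-$ by requiring $\widetilde{\tau}_p=q_0$, i.e.\ by solving $(\p_+/p)'(p/\widetilde{\p}_--1)+1=q_0$; since $q_0>1$ and $q_0<\tau_p$ this places $\widetilde{\p}_-\in(\p_-,p)$, and part \eqref{list:ApRH-2} gives the first bound in \eqref{eq:wawa}. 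A direct computation with the explicit $\varepsilon$ above collapses the ratio to $(\widetilde{\tau}_p-1)/(\tau_p-1)=1/(1+\gamma)$, equivalently
\[
\tfrac{1}{\widetilde{\p}_-}-\tfrac{1}{p}=\tfrac{1}{1+\gamma}\big(\tfrac{1}{\p_-}-\tfrac{1}{p}\big).
\]
Substituting this into the target inequality and clearing denominators reduces it to $(1/\p_- -1/p)(2^{-n-3}/p - \gamma/\p_-)>0$, which holds because $\gamma<2^{-n-3}$ and $\p_-<p$. The only real subtlety in the whole lemma is this last algebraic reduction in \eqref{list:ApRH-4}, linking the openness parameter $\gamma$ from Lemma \ref{lem:open} to the prescribed geometric gain $(1+2^{-n-3})$ on the ratio; everything else is essentially a controlled unpacking of definitions.
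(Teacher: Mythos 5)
Parts \eqref{list:ApRH-1}--\eqref{list:ApRH-3} of your proposal are correct and run along the same lines as the paper's proof; in \eqref{list:ApRH-3} you route the computation through the classical duality $[v]_{A_q}=[v^{1-q'}]_{A_{q'}}^{q-1}$ applied to $v=w^{p(\p_+/p)'}$, but the two exponent identities you then have to verify are exactly \eqref{eq:ttaauu}, so this is the same calculation in different packaging. The skeleton of part \eqref{list:ApRH-4} also matches the paper: apply Lemma \ref{lem:open} to $v=w^{p(\p_+/p)'}\in A_{\tau_p}$, define $\widetilde{\p}_-$ by $\widetilde{\tau}_p=q_0$, and use $(\widetilde{\tau}_p-1)/(\tau_p-1)=1/(1+\gamma)$, i.e. $\frac{1}{\widetilde{\p}_-}-\frac1p=\frac{1}{1+\gamma}\big(\frac{1}{\p_-}-\frac1p\big)$.

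Your final algebraic reduction in \eqref{list:ApRH-4} is, however, wrong as written. Set $a=\frac{1}{\p_-}$, $b=\frac1p$, $\delta=2^{-n-3}$. The identity above gives
\begin{equation*}
\frac{\frac{1}{\widetilde{\p}_-}}{\frac{1}{\widetilde{\p}_-}-\frac1p}
=\frac{(1+\gamma)b+(a-b)}{a-b}
=\frac{a+\gamma b}{a-b},
\end{equation*}
so after multiplying through by $a-b>0$ the target inequality is equivalent to $\gamma b<\delta a$, that is, $\frac{\gamma}{p}<\frac{2^{-n-3}}{\p_-}$. This does follow from $\gamma<2^{-n-3}$ and $\frac1p<\frac{1}{\p_-}$. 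What you wrote instead is $\frac{2^{-n-3}}{p}-\frac{\gamma}{\p_-}>0$, with $p$ and $\p_-$ interchanged; that is a strictly stronger statement and is \emph{not} implied by $\gamma<2^{-n-3}$ and $\p_-<p$ (for instance $\p_-=1$, $p=4$, $\gamma=2^{-n-4}$ gives $\frac{2^{-n-3}}{p}=2^{-n-5}<2^{-n-4}=\frac{\gamma}{\p_-}$). So the conclusion you want is true and your method reaches it, but the displayed justification of the last step is invalid; swapping the roles of $p$ and $\p_-$ in your reduced inequality repairs it.
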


\begin{proof}
Parts \eqref{list:ApRH-1}--\eqref{list:ApRH-3} are essentially contained in \cite{AM-1, JN}. We present a detailed proof to track the weight norms. To show \eqref{list:ApRH-1}, we fix $1 \leq p < \infty$, $1<s \leq \infty$, and let $w_1, w_2 \in A_1$. By Jensen's inequality, 
\begin{align}\label{ess-1}
\fint_Q w \, dx 
\le \bigg(\fint_Q w_1^{\frac1s} \, dx\bigg) \big(\esssup_Q w_2^{-1}\big)^{p-1}
\le \bigg(\fint_Q w_1 \, dx\bigg)^{\frac1s} \big(\esssup_Q w_2^{-1}\big)^{p-1}
\end{align}
and 
\begin{align}\label{ess-2}
\bigg(\fint_Q w^{-\frac{1}{p-1}} \, dx \bigg)^{p-1}
= \bigg(\fint_Q w_1^{-\frac{1}{s(p-1)}} w_2 \, dx \bigg)^{p-1}
\le \big(\esssup_Q w_1^{-1}\big)^{\frac1s} \bigg(\fint_Q w_2 \, dx\bigg)^{p-1}, 
\end{align}
when $p=1$, the inequality \eqref{ess-2} is replaced by 
\begin{align}\label{ess-3}
\esssup_Q w^{-1}
= \big(\esssup_Q w_1^{-1} \big)^{\frac1s}. 
\end{align}
Then it follows from \eqref{ess-1}--\eqref{ess-3} that 
\[
[w]_{A_p} \le [w_1]_{A_1}^{\frac1s} [w_2]_{A_1}^{p-1}.
\]
Moreover, by definition and Jensen's inequality, we have 
\begin{align*}
\bigg(\fint_Q w^s \, dx \bigg)^{\frac1s} 
&= \bigg(\fint_Q w_1 w_2^{s(1-p)} \, dx \bigg)^{\frac1s} 
\le \bigg(\fint_Q w_1 \, dx \bigg)^{\frac1s} \big(\esssup_Q w_2^{-1} \big)^{p-1}
\\ 
&\le [w_1]_{A_1}^{\frac1s} [w_2]_{A_1}^{p-1} \big(\essinf_Q w_1^{\frac1s} \big) \bigg(\fint_Q w_2 \, dx\bigg)^{1-p}
\\ 
&\le [w_1]_{A_1}^{\frac1s} [w_2]_{A_1}^{p-1} \big(\essinf_Q w_1^{\frac1s} \big) \bigg(\fint_Q w_2^{1-p} \, dx\bigg)
\\ 
&\le [w_1]_{A_1}^{\frac1s} [w_2]_{A_1}^{p-1} \bigg(\fint_Q w_1^{\frac1s} w_2^{1-p} \, dx\bigg)
\\ 
&= [w_1]_{A_1}^{\frac1s} [w_2]_{A_1}^{p-1} \bigg(\fint_Q w \, dx\bigg), 
\end{align*}
when $s=\infty$, the above still holds since $(\fint_Q w^s \, dx)^{\frac1s}$ is replaced by $\esssup_Q w$. 
This means 
\[
[w]_{RH_s} \le [w_1]_{A_1}^{\frac1s} [w_2]_{A_1}^{p-1}.
\]

Let us next show \eqref{list:ApRH-2}. Assume first that $w \in A_p \cap RH_s$. Note that for any cube $Q$,
\begin{align}\label{eq:wstau}
\bigg(\fint_Q w^{s(1-\tau')} dx \bigg)^{\tau-1}
=\bigg(\fint_Q w^{1-p'} dx \bigg)^{s(p-1)}.
\end{align}
This implies
\begin{align*}
\bigg(\fint_Q w^s \, dx \bigg) \bigg(\fint_Q w^{s(1-\tau')} dx \bigg)^{\tau-1}
\le [w]_{RH_s}^s \bigg(\fint_Q w \, dx \bigg)^s \bigg(\fint_Q w^{1-p'} dx \bigg)^{s(p-1)},
\end{align*}
and hence,
\begin{align}\label{eq:wsat-1}
[w^s]_{A_{\tau}} \le [w]_{A_p}^s [w]_{RH_s}^s .
\end{align}
On the other hand, assuming $w^s \in A_{\tau}$, we deduce by Jensen's inequality and \eqref{eq:wstau},
\begin{align}\label{eq:QW-1}
\bigg(\fint_Q w \, dx \bigg) \bigg(\fint_Q w^{1-p'} dx \bigg)^{p-1}
\le \bigg(\fint_Q w^s \, dx \bigg)^{\frac1s} \bigg(\fint_Q w^{s(1-\tau')} dx \bigg)^{\frac{\tau-1}{s}},
\end{align}
and
\begin{align}\label{eq:QW-2}
\bigg(\fint_Q w^s \, dx \bigg)^{\frac1s}
&\le [w^s]_{A_{\tau}}^{\frac1s} \bigg(\fint_Q w^{s(1-\tau')} dx \bigg)^{-\frac{\tau-1}{s}}
\\ \nonumber
&= [w^s]_{A_{\tau}}^{\frac1s} \bigg(\fint_Q w^{1-p'} dx \bigg)^{-(p-1)}
\le [w^s]_{A_{\tau}}^{\frac1s} \bigg(\fint_Q w dx \bigg),
\end{align}
which follows from
\begin{align*}
1 = \bigg(\fint_Q w^{\frac1p} w^{-\frac1p} dx \bigg)^p
\le \bigg(\fint_Q w \, dx \bigg) \bigg(\fint_Q w^{1-p'} dx \bigg)^{p-1}.
\end{align*}
Then, \eqref{eq:QW-1} and \eqref{eq:QW-2} imply
\begin{align}\label{eq:wsat-2}
[w]_{A_p} \le [w^s]_{A_{\tau}}^{\frac1s} \quad\text{ and }\quad [w]_{RH_s} \le [w^s]_{A_{\tau}}^{\frac1s}.
\end{align}
Hence, \eqref{list:ApRH-2} follows from \eqref{eq:wsat-1} and \eqref{eq:wsat-2}.

We turn to the proof of \eqref{list:ApRH-3}. One can check that
\begin{align}\label{eq:ttaauu}
\bigg(\frac{\p_+}{p}\bigg)' (\tau'_p - 1) = \bigg(\frac{\p'_-}{p'} \bigg)'(p' -1) \quad\text{ and }\quad
\tau'_p = \bigg(\frac{\p'_-}{p'} \bigg)' \bigg(\frac{p'}{\p'_+} - 1 \bigg) +1 .
\end{align}
Then it follows that
\begin{align*}
-p'(\p'_-/p')' (1-(\tau'_p)') = p(p'-1) (\p'_-/p')' (\tau_p - 1) = p(\p_+/p)',
\end{align*}
and for any cube $Q$,
\begin{align*}
&\bigg(\fint_Q w^{p(\p_+/p)'} dx\bigg) \bigg(\fint_Q w^{p(\p_+/p)' (1 - \tau'_p)} dx\bigg)^{\tau_p -1}
\\
&\quad= \bigg[\bigg(\fint_Q w^{-p'(\p'_-/p')' (1-(\tau'_p)')} dx\bigg)^{\tau'_p -1}
\bigg(\fint_Q w^{-p'(\p'_-/p')'} dx\bigg) \bigg]^{\tau_p -1},
\end{align*}
which implies
\begin{align*}
[w^{p(\p_+/p)'}]_{A_{\tau_p}} = [w^{-p(\p'_-/p')'}]_{A_{\tau'_p}}^{\tau_p -1}.
\end{align*}

Finally, let us demonstrate \eqref{list:ApRH-4}. By part \eqref{list:ApRH-2}, there holds $v := w^{p(\p_+/p)'} \in A_{\tau_p}$, which along with \eqref{eq:vava} and \eqref{eq:gaqq} applied to exponents $q=\tau_p$  and $q_0=\widetilde{\tau}_p$, to arrive at the first estimate in \eqref{eq:wawa} and 
\begin{align*}
\widetilde{\p}_- = \frac{p}{1+\frac{\tau_p - 1}{(\p_+/p)' (1+\gamma)}}
= \frac{1}{\frac1p + \frac{1}{1+\gamma}(\frac{1}{\p_-} - \frac1p)} \in (\p_-, p). 
\end{align*}
Moreover, 
\begin{align*}
\frac{\frac{1}{\widetilde{\p}_-}}{\frac{1}{\widetilde{\p}_-} - \frac1p} 
= (1+\gamma) \frac{\p_-}{\widetilde{\p}_-} \frac{\frac{1}{\p_-}}{\frac{1}{\p_-} - \frac1p} 
< (1+2^{-n-3}) \frac{\frac{1}{\p_-}}{\frac{1}{\p_-} - \frac1p}, 
\end{align*}
This proves the second estimate in \eqref{eq:wawa} and completes the proof. 
\end{proof}

\subsection{Multilinear Muckenhoupt weights}
The multilinear maximal operator is defined by
\begin{equation}\label{eq:def-M}
\M(\vec{f})(x):= \sup_{Q \ni x} \prod_{i=1}^m \fint_Q |f_i(y_i)| dy_i,
\end{equation}
where the supremum is taken over all cubes $Q$ containing $x$.

We are going to present the definition of the multilinear Muckenhoupt classes $A_{\vec{p}, \vec{r}}$ introduced in \cite{LMO, LMMOV}.  Given $\vec{p}=(p_1, \ldots, p_m)$ with $1 \le p_1, \ldots, p_m \le \infty$ and $\vec{r}=(r_1, \ldots, r_{m+1})$ with $1 \le r_1, \ldots, r_{m+1} < \infty$, we say that $\vec{r} \preceq \vec{p}$ whenever
\begin{align*}
r_i  \le p_i,\, i=1, \ldots, m, \text{ and } r'_{m+1} \ge p, \text{ where } \frac1p:=\frac{1}{p_1}+\dots+\frac{1}{p_m}.
\end{align*}
Analogously, we say that $\vec{r} \prec \vec{p}$ if $r_i<p_i$ for each $i=1, \ldots, m$, and $r'_{m+1}>p$.

\begin{definition}\label{def:Apr}
Let $\vec{p}=(p_1,\ldots,p_m)$ with $1\leq p_1, \ldots, p_m \le \infty$ and let $\vec{r}=(r_1, \ldots, r_{m+1})$ with $1 \le r_1, \ldots, r_{m+1} < \infty$ such that $\vec{r} \preceq \vec{p}$. Suppose that $\vec{w}=(w_1,\ldots,w_m)$ and each $w_i$ is a weight on $\Rn$. We say that $\vec{w} \in A_{\vec{p}, \vec{r}}$ if
\begin{align}\label{eq:w-Apr}
[\vec{w}]_{A_{\vec{p}, \vec{r}}}
:=\sup_Q \bigg(\fint_Q w^{\frac{r'_{m+1}p}{r'_{m+1}-p}}\, dx\bigg)^{\frac{1}{p}-\frac{1}{r'_{m+1}}}
\prod_{i=1}^m \bigg(\fint_Q w_i^{\frac{r_i p_i}{r_i-p_i}}dx \bigg)^{\frac{1}{r_i}-\frac{1}{p_i}}< \infty,
\end{align}
where $\frac1p = \sum_{i=1}^m \frac{1}{p_i}$, $w=\prod_{i=1}^m w_i$, and the supremum is taken over all cubes $Q \subset \Rn$. When $p=r'_{m+1}$, the term corresponding to $w$ needs to be replaced by $\esssup_Q w$ and, analogously, when $p_i=r_i$, the term corresponding to $w_i$ should be $\esssup_Q w_i^{-1}$. Also, if $p_i = \infty$, the term corresponding to $w_i$ becomes $\big(\fint_Q w_i^{-r_i} dx\big)^{\frac{1}{r_i}}$. If $p=\infty$, one will necessarily have $r_{m+1} = 1$ and $p_1= \cdots = p_m=\infty$, hence the term corresponding to $w$ must be $\esssup_Q w$ while the terms corresponding to $w_i$ become $\big(\fint_Q w_i^{-r_i} dx\big)^{\frac{1}{r_i}}$.  When $r_{m+1} = 1$ and $p<\infty$ the term corresponding to $w$ needs to be replaced by $\big(\fint_Q w^p dx\big)^{\frac1p}$.
\end{definition}

Denote $A_{\vec{p}} := A_{\vec{p}, (1, \ldots, 1)}$ in Definition \ref{def:Apr}, that is, 
\begin{align}\label{eq:w-Ap}
[\vec{w}]_{A_{\vec{p}}}
:= \sup_Q \bigg(\fint_Q w^p\, dx\bigg)^{\frac{1}{p}}
\prod_{i=1}^m \bigg(\fint_Q w_i^{-p'_i}dx \bigg)^{\frac{1}{p'_i}}< \infty,
\end{align}
where $\frac1p = \sum_{i=1}^m \frac{1}{p_i}$ and $w=\prod_{i=1}^m w_i$. We would like to observe that our definition of the classes $A_{\vec{p}}$ and $A_{\vec{p}, \vec{r}}$ is slightly different to that in \cite{LOPTT} and \cite{LMO}. Essentially, they are the same since picking $w_i=v_i^{p_i}$ for every $i=1, \ldots, m$ in \eqref{eq:w-Apr} and \eqref{eq:w-Ap}, we see that $\vec{v}=(v_1, \ldots, v_m)$ belongs to $A_{\vec{p}, \vec{r}}$ in \cite{LMO} and to $A_{\vec{p}}$ in \cite{LOPTT}, respectively.

\begin{lemma}\label{lem:ApRHApr}
Let $1 \le \p_i^{-}<\p_i^{+} \le \infty$, $i=1, \ldots, m$. Assume that $p_i \in [\p_i^-, \p_i^+]$ and $w_i^{p_i} \in A_{p_i/\p_i^-} \cap RH_{(\p_i^+/p_i)'}$, $i=1, \ldots, m$. Then $\vec{w}=(w_1, \ldots, w_m) \in A_{\vec{q}, \vec{r}}$ with
\begin{align*}
[\vec{w}]_{A_{\vec{q}, \vec{r}}}
\le \prod_{i=1}^m [w_i^{p_i(\p_i^+/p_i)'}]_{A_{\tau_{p_i}}}^{\frac{1}{p_i} - \frac{1}{\p_i^+}}, 
\end{align*}
for any $\vec{q}=(q_1, \ldots, q_m)$ with $1 \le q_1, \ldots, q_m < \infty$ and $\vec{r} = (r_1, \ldots, r_{m+1})$ with $1 \le r_1, \ldots, r_{m+1} < \infty$ such that $\vec{r} \preceq \vec{q}$, and
\begin{align}\label{eq:rqpp}
\frac1q - \frac{1}{r'_{m+1}} = \sum_{i=1}^m \bigg(\frac{1}{p_i} - \frac{1}{\p_i^+}\bigg),
\qquad \frac{1}{r_i} - \frac{1}{q_i} = \frac{1}{\p_i^-} - \frac{1}{p_i}, \quad i=1, \ldots, m.
\end{align}
\end{lemma}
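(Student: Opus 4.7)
The strategy is to pass through the single-weight characterization of Lemma~\ref{lem:weights}\eqref{list:ApRH-2}: by assumption $W_i := w_i^{p_i(\p_i^+/p_i)'} \in A_{\tau_{p_i}}$, and I will show each factor in $[\vec w]_{A_{\vec q,\vec r}}$ can be converted into an expression involving only $W_i$ and $\fint_Q W_i$. First I would simplify the defining exponents: since $\tfrac{1}{r_i}-\tfrac{1}{q_i}=\tfrac{1}{\p_i^-}-\tfrac{1}{p_i}$ gives $\tfrac{r_iq_i}{r_i-q_i}=-\tfrac{p_i\p_i^-}{p_i-\p_i^-}=:-\beta_i$, and $\tfrac{1}{q}-\tfrac{1}{r_{m+1}'}=\tfrac{1}{p}-\tfrac{1}{\p_+}$ gives $\tfrac{r_{m+1}'q}{r_{m+1}'-q}=\tfrac{p\p_+}{\p_+-p}=:\alpha_0$, so
\[
[\vec w]_{A_{\vec q,\vec r}}=\sup_Q\Bigl(\fint_Q w^{\alpha_0}\,dx\Bigr)^{1/\alpha_0}\prod_{i=1}^{m}\Bigl(\fint_Q w_i^{-\beta_i}\,dx\Bigr)^{1/\beta_i}.
\]
Note the right-hand side depends only on $\vec p$ and $\vec\p^{\pm}$ (not on $\vec q,\vec r$), which is already conceptually satisfying.

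Next I would handle the product weight $w^{\alpha_0}=\prod_i w_i^{\alpha_0}$ by H\"older's inequality with exponents $\theta_i:=\alpha_0/(p_i\sigma_i)$, where $\sigma_i:=(\p_i^+/p_i)'$, so that $w_i^{\alpha_0/\theta_i}=w_i^{p_i\sigma_i}=W_i$. A direct computation shows
\[
\sum_{i=1}^m\theta_i=\frac{p\p_+}{\p_+-p}\sum_{i=1}^m\Bigl(\frac{1}{p_i}-\frac{1}{\p_i^+}\Bigr)=\frac{p\p_+}{\p_+-p}\Bigl(\frac{1}{p}-\frac{1}{\p_+}\Bigr)=1,
\]
so H\"older yields $\bigl(\fint_Q w^{\alpha_0}\bigr)^{1/\alpha_0}\le\prod_i\bigl(\fint_Q W_i\bigr)^{1/(p_i\sigma_i)}=\prod_i\bigl(\fint_Q W_i\bigr)^{1/p_i-1/\p_i^+}$.

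For the product terms, the key identity is $w_i^{-\beta_i}=W_i^{1-\tau_{p_i}'}$: indeed $-\beta_i/(p_i\sigma_i)$ equals $-1/(\tau_{p_i}-1)=1-\tau_{p_i}'$ after using $\tau_{p_i}-1=\sigma_i(p_i-\p_i^-)/\p_i^-$. The $A_{\tau_{p_i}}$ condition on $W_i$ then gives
\[
\Bigl(\fint_Q W_i^{1-\tau_{p_i}'}\Bigr)^{\tau_{p_i}-1}\le [W_i]_{A_{\tau_{p_i}}}\Bigl(\fint_Q W_i\Bigr)^{-1},
\]
and raising to the power $1/(\beta_i(\tau_{p_i}-1))=1/p_i-1/\p_i^+$ (another short calculation) yields
\[
\Bigl(\fint_Q w_i^{-\beta_i}\Bigr)^{1/\beta_i}\le [W_i]_{A_{\tau_{p_i}}}^{1/p_i-1/\p_i^+}\Bigl(\fint_Q W_i\Bigr)^{-(1/p_i-1/\p_i^+)}.
\]
Multiplying the bounds for all factors, the $\fint_Q W_i$ contributions cancel exactly, leaving $\prod_i[W_i]_{A_{\tau_{p_i}}}^{1/p_i-1/\p_i^+}$, which by \eqref{def:wpApRH} is the claimed bound.

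The main obstacle is keeping the exponent bookkeeping clean; the identifications $\beta_i(\tau_{p_i}-1)=p_i\p_i^+/(\p_i^+-p_i)$ and $\theta_i/\alpha_0=1/p_i-1/\p_i^+$ are what make the cancellation work and should be displayed explicitly in the actual proof. The remaining issue is the degenerate cases corresponding to the \emph{essential supremum} clauses in Definition~\ref{def:Apr}: when $p_i=\p_i^-$ (so $r_i=q_i$), when $p_i=\p_i^+$ or $\p_i^+=\infty$ (so $q=r_{m+1}'$ or $p_i=\infty$), and when $p=\infty$. In each of these, either $\beta_i^{-1}=0$ or $\alpha_0^{-1}=0$, and the corresponding term should be replaced by an $\esssup$; these are handled by passing to the limit in H\"older's inequality or by using the $A_1$/$A_\infty$ ends of the $A_{\tau_{p_i}}$ characterization, and do not affect the constants. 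Details in these limit cases should be remarked on but are routine.
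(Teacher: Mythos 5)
Your proposal is correct and follows essentially the same route as the paper: both proofs reduce the $A_{\vec q,\vec r}$ expression via \eqref{eq:rqpp} to a quantity depending only on $\vec p$ and $\vec\p^{\pm}$, apply H\"older's inequality to the product-weight factor with the conjugate exponents $p_i(\p_i^+/p_i)'$ (whose reciprocals sum to $\tfrac1p-\tfrac1{\p_+}$), and identify the remaining negative-power averages with the dual part of the $A_{\tau_{p_i}}$ condition for $W_i=w_i^{p_i(\p_i^+/p_i)'}$ (the paper packages this last step as the identity $[w_i^{t_i}]_{A_{\tau_{p_i}}}=[w_i]_{A_{s_i,t_i}}^{t_i}$, which is the same cancellation you carry out by hand). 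The exponent identities you flag ($\beta_i(\tau_{p_i}-1)=p_i(\p_i^+/p_i)'$, etc.) match the paper's \eqref{eq:tipi}, and your treatment of the degenerate $\esssup$ cases is consistent with what the paper leaves implicit.
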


\begin{proof}
By Lemma \ref{lem:weights} part \eqref{list:ApRH-2}, one has
\begin{align*}
[w_i^{p_i(\p_i^+/p_i)'}]_{A_{\tau_{p_i}}}
\le \Big([w_i^{p_i}]_{A_{p_i/\p_i^-}} [w_i^{p_i}]_{RH_{(\p_i^+/p_i)'}} \Big)^{p_i(\p_i^+/p_i)'},
\end{align*}
where $\tau_{p_i} := \big(\frac{\p_i^+}{p_i} \big)' \big(\frac{p_i}{\p_i^-} -1 \big) +1$, $i=1, \ldots, m$.  Set
\begin{align}\label{eq:tspp}
\frac{1}{t_i} := \frac{1}{p_i} - \frac{1}{\p_i^+}  \quad\text{ and }\quad
\frac{1}{s_i} := 1- \bigg(\frac{1}{\p_i^-} - \frac{1}{p_i} \bigg), \quad i=1, \ldots, m.
\end{align}
Then it is easy to check that
\begin{align}\label{eq:tipi}
t_i = p_i(\p_i^+/p_i)' \quad\text{ and }\quad
s'_i = t_i (\tau'_{p_i} -1),  \quad i=1, \ldots, m,
\end{align}
which gives
\begin{align}\label{eq:witi}
[w_i^{t_i}]_{A_{\tau_{p_i}}}
&= \sup_{Q} \bigg(\fint_Q w_i^{t_i}\, dx\bigg) \bigg(\fint_Q w_i^{-t_i(\tau'_{p_i}-1)}\bigg)^{\tau_{p_i}-1}
\\ \nonumber
&=\sup_{Q} \bigg[\bigg(\fint_Q w_i^{t_i}\, dx\bigg)^{\frac{1}{t_i}} \bigg(\fint_Q w_i^{-s'_i}\bigg)^{\frac{1}{s'_i}}\bigg]^{t_i}
=[w_i]_{A_{s_i, t_i}}^{t_i}.
\end{align}
On the other hand, let $\vec{q}=(q_1, \ldots, q_m)$ with $1 \le q_1, \ldots, q_m < \infty$ and $\vec{r}=(r_1, \ldots, r_{m+1})$ with $1\le r_1, \ldots, r_{m+1}<\infty$ such that $\vec{r} \preceq \vec{q}$ and \eqref{eq:rqpp} holds. It follows from \eqref{eq:rqpp} and \eqref{eq:tspp} that
\begin{align}\label{eq:qrtt}
\frac1q-\frac{1}{r'_{m+1}} = \frac{1}{t_1}+\cdots+\frac{1}{t_m}
\quad\text{ and }\quad
\frac{1}{r_i} - \frac{1}{q_i} = \frac{1}{s'_i}, \quad i=1, \ldots, m.
\end{align}
Thus, writing $w=\prod_{i=1}^m w_i$, we use \eqref{eq:qrtt} and H\"{o}lder's inequality to obtain
\begin{align}\label{eq:wrq}
&\bigg(\fint_Q w^{\frac{r'_{m+1}q}{r'_{m+1}-q}}\, dx\bigg)^{\frac{1}{q}-\frac{1}{r'_{m+1}}}
\prod_{i=1}^m \bigg(\fint_Q w_i^{\frac{r_i q_i}{r_i - q_i}}dx \bigg)^{\frac{1}{r_i}-\frac{1}{q_i}}
\\ \nonumber
&\quad\le \prod_{i=1}^m \bigg(\fint_Q w_i^{t_i}\, dx\bigg)^{\frac{1}{t_i}}
\bigg(\fint_Q w_i^{-s'_i}dx \bigg)^{\frac{1}{s'_i}}
\le  \prod_{i=1}^m [w_i]_{A_{s_i, t_i}}.
\end{align}
As a consequence, collecting \eqref{eq:tspp}, \eqref{eq:tipi}, \eqref{eq:witi}, and \eqref{eq:wrq}, we conclude that
\begin{align*}
[\vec{w}]_{A_{\vec{q}, \vec{r}}}
\le \prod_{i=1}^m [w_i]_{A_{s_i, t_i}}
= \prod_{i=1}^m [w_i^{t_i}]_{A_{\tau_{p_i}}}^{\frac{1}{t_i}}
= \prod_{i=1}^m [w_i^{p_i(\p_i^+/p_i)'}]_{A_{\tau_{p_i}}}^{\frac{1}{p_i} - \frac{1}{\p_i^+}}.
\end{align*}
This completes the proof.
\end{proof}

\begin{lemma}\label{lem:ApRH-m}
Let $1 \le \p_i^{-}<\p_i^{+} \le \infty$, $i=1, \ldots, m$. Assume that $p_i \in [\p_i^-, \p_i^+]$ and $w_i^{p_i} \in A_{p_i/\p_i^-} \cap RH_{(\p_i^+/p_i)'}$, $i=1, \ldots, m$. Write $w=\prod_{i=1}^m w_i$. Then $w^p \in A_{p/\p_-} \cap RH_{(\p_+/p)'}$ with
\begin{align}\label{eq:wppp}
[w^{p(\p_+/p)'}]_{A_{\tau_p}}
\le \prod_{i=1}^m [w_i^{p_i(\p_i^+/p_i)'}]_{A_{\tau_{p_i}}}^{\frac{\frac{1}{p_i} - \frac{1}{\p_i^+}}{\frac1p - \frac{1}{\p_+}}},
\end{align}
where $\frac1p= \frac{1}{p_1} + \cdots+ \frac{1}{p_m}$ and $\frac{1}{\p_{\pm}} = \frac{1}{\p_1^{\pm}} + \cdots + \frac{1}{\p_m^{\pm}}$. In particular, if we take 
\begin{align}\label{eq:wwpp}
w_{m+1} := w^{-1}, \quad 
p_{m+1} := p', \quad 
\p_{m+1}^- := \p'_+, 
\quad\text{ and }\quad 
\p_{m+1}^+ := \p'_-, 
\end{align} 
then it follows 
\begin{align}\label{eq:wpppp}
[w_{m+1}^{p_{m+1}(\p_{m+1}^+/p_{m+1})'}]_{A_{\tau_{p_{m+1}}}} 
\le \prod_{i=1}^m [w_i^{p_i(\p_i^+/p_i)'}]_{A_{\tau_{p_i}}}^{\frac{\frac{1}{p_i} - \frac{1}{\p_i^+}}{\frac{1}{\p_-} - \frac1p}}. 
\end{align}
\end{lemma}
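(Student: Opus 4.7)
The plan is to reduce the $A_{\tau_p}$-characteristic of $w^{p(\p_+/p)'}$ to an $A_{s,t}$-characteristic, exactly as in the chain \eqref{eq:tipi}--\eqref{eq:witi} from the proof of Lemma~\ref{lem:ApRHApr}, and then factor through H\"older's inequality using $w = \prod_i w_i$. To set up, I introduce
$$\frac{1}{t_i} := \frac{1}{p_i} - \frac{1}{\p_i^+}, \qquad \frac{1}{s_i'} := \frac{1}{\p_i^-} - \frac{1}{p_i}, \qquad i=1,\ldots,m,$$
and their aggregates $1/t := 1/p - 1/\p_+$ and $1/s' := 1/\p_- - 1/p$. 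The hypothesis on $\p_\pm$ forces $\sum_i 1/t_i = 1/t$ and $\sum_i 1/s_i' = 1/s'$, while a direct computation (identical to the one producing \eqref{eq:witi}) gives $t_i = p_i(\p_i^+/p_i)'$, $s_i' = t_i(\tau_{p_i}' - 1)$, and accordingly
$$[w_i^{p_i(\p_i^+/p_i)'}]_{A_{\tau_{p_i}}} = [w_i]_{A_{s_i, t_i}}^{t_i}, \qquad [w^{p(\p_+/p)'}]_{A_{\tau_p}} = [w]_{A_{s,t}}^{t}.$$

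To prove \eqref{eq:wppp}, since $w^t = \prod_i w_i^t$ and the exponents $\{t_i/t\}_i$ satisfy $\sum_i t/t_i = 1$, the generalized H\"older inequality yields
$$\left(\fint_Q w^{t}\,dx\right)^{1/t} \le \prod_{i=1}^m \left(\fint_Q w_i^{t_i}\,dx\right)^{1/t_i},$$
and analogously, using $\{s_i'/s'\}_i$,
$$\left(\fint_Q w^{-s'}\,dx\right)^{1/s'} \le \prod_{i=1}^m \left(\fint_Q w_i^{-s_i'}\,dx\right)^{1/s_i'}.$$
Multiplying these two bounds, taking supremum over cubes $Q$, and raising to the $t$-th power gives \eqref{eq:wppp} because $t/t_i = (1/p_i - 1/\p_i^+)/(1/p - 1/\p_+)$.

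For the ``in particular'' claim I choose $w_{m+1}$, $p_{m+1}$, and $\p_{m+1}^\pm$ as in \eqref{eq:wwpp}. The duality identities in \eqref{eq:ttaauu} then give $\tau_{p_{m+1}} = \tau_p'$ and $p_{m+1}(\p_{m+1}^+/p_{m+1})' = p'(\p_-'/p')'$, so Lemma~\ref{lem:weights}\eqref{list:ApRH-3} converts the left-hand side of \eqref{eq:wpppp} into $[w^{p(\p_+/p)'}]_{A_{\tau_p}}^{1/(\tau_p - 1)}$. Raising \eqref{eq:wppp} to the $1/(\tau_p - 1)$ power and using the elementary identity $(\tau_p - 1)(1/p - 1/\p_+) = 1/\p_- - 1/p$, which follows from $\tau_p - 1 = (\p_+/p)'(p/\p_- - 1)$ together with $(\p_+/p)' = \p_+/(\p_+ - p)$, converts the exponent $(1/p_i - 1/\p_i^+)/(1/p - 1/\p_+)$ into $(1/p_i - 1/\p_i^+)/(1/\p_- - 1/p)$ and produces \eqref{eq:wpppp}. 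There is no genuine analytic obstacle; the only real care is the bookkeeping of exponents needed to verify this last identity and the various correspondences among $\tau_p$, $\tau_{p_{m+1}}$, and their duals.
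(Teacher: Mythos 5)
Your proposal is correct and follows essentially the same route as the paper: the paper's proof also splits $[w^{p(\p_+/p)'}]_{A_{\tau_p}}$ into the two factors $\mathscr{S}_1=(\fint_Q w^{t})^{1/t}$ and $\mathscr{S}_2=(\fint_Q w^{-s'})^{1/s'}$, applies H\"older's inequality to each with the same exponents $t_i$, $s_i'$, and derives \eqref{eq:wpppp} from \eqref{eq:wppp} via \eqref{eq:ttaauu}, Lemma \ref{lem:weights}\eqref{list:ApRH-3}, and the identity $(\tau_p-1)(\tfrac1p-\tfrac{1}{\p_+})=\tfrac{1}{\p_-}-\tfrac1p$. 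The only cosmetic difference is that you package the intermediate quantities as $A_{s,t}$-characteristics as in Lemma \ref{lem:ApRHApr}, whereas the paper manipulates the averages directly.
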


\begin{proof}
Set
\begin{align}
\label{eq:rr} \frac1r & := \frac1p - \frac{1}{\p_+} = \sum_{i=1}^m \Big(\frac{1}{p_i} -\frac{1}{\p_i^+} \Big)
=: \sum_{i=1}^m \frac{1}{r_i},
\\
\label{eq:ss} \frac1s & := \frac{1}{\p_-} - \frac1p = \sum_{i=1}^m \Big(\frac{1}{\p_i^-} - \frac{1}{p_i} \Big)
=: \sum_{i=1}^m \frac{1}{s_i}.
\end{align}
Observe that
\begin{align}\label{eq:pipi}
p_i(\p_i^+/p_i)' = \frac{1}{\frac{1}{p_i} - \frac{1}{\p_i^+}}
\quad\text{ and }\quad
\frac{p_i(\p_i^+/p_i)'}{\tau_{p_i}-1} = \frac{1}{\frac{1}{\p_i^-} - \frac{1}{p_i}}, \quad i=1, \ldots, m.
\end{align}
With \eqref{eq:rr}--\eqref{eq:pipi} in hand, we use H\"{o}lder's inequality to obtain
\begin{align}\label{eq:S1}
\mathscr{S}_1
&:= \bigg(\fint_Q w^{p(\p_+/p)'} dx \bigg)^{\frac{1}{p(\p_+/p)'}}
=\bigg(\fint_Q w^{\frac{1}{\frac1p - \frac{1}{\p_+}}} dx \bigg)^{\frac1p - \frac{1}{\p_+}}
\\ \nonumber
&= \bigg(\fint_Q \Big(\prod_{i=1}^m w_i \Big)^r dx \bigg)^{\frac1r}
\le \prod_{i=1}^m \bigg(\fint_Q w_i^{r_i} dx \bigg)^{\frac{1}{r_i}}
\\ \nonumber
&= \prod_{i=1}^m \bigg(\fint_Q w_i^{\frac{1}{\frac{1}{p_i} - \frac{1}{\p_i^+}}} dx \bigg)^{\frac{1}{p_i} - \frac{1}{\p_i^+}}
= \prod_{i=1}^m \bigg(\fint_Q w_i^{p_i(\p_i^+/p_i)'} dx \bigg)^{\frac{1}{p_i} - \frac{1}{\p_i^+}}.
\end{align}
Analogously, we have
\begin{align}\label{eq:S2}
\mathscr{S}_2
&:= \bigg(\fint_Q w^{-\frac{p(\p_+/p)'}{\tau_p - 1}} dx \bigg)^{\frac{\tau_p - 1}{p(\p_+/p)'}}
= \bigg(\fint_Q w^{-\frac{1}{\frac{1}{\p_-} - \frac1p}} dx \bigg)^{\frac{1}{\p_-}-\frac1p}
\\ \nonumber
&= \bigg(\fint_Q \Big(\prod_{i=1}^m w_i^{-1} \Big)^s dx \bigg)^{\frac1s}
\le \prod_{i=1}^m \bigg(\fint_Q w_i^{-s_i} dx \bigg)^{\frac{1}{s_i}}
\\ \nonumber
&= \prod_{i=1}^m \bigg(\fint_Q w_i^{-\frac{1}{\frac{1}{\p_i^-} - \frac{1}{p_i}}} dx \bigg)^{\frac{1}{\p_i^-} - \frac{1}{p_i}}
= \prod_{i=1}^m \bigg(\fint_Q w_i^{-\frac{p_i(\p_i^+/p_i)'}{\tau_{p_i} - 1}} dx \bigg)^{\tau_{p_i}(\frac{1}{p_i} - \frac{1}{\p_i^+})}.
\end{align}
Then gathering \eqref{eq:S1} and \eqref{eq:S2}, we arrive at
\begin{align*}
&\bigg(\fint_Q w^{p(\p_+/p)'} dx \bigg)
\bigg(\fint_Q w^{-\frac{p(\p_+/p)'}{\tau_p - 1}} dx \bigg)^{\tau_p - 1}
= (\mathscr{S}_1 \times \mathscr{S}_2 )^{p(\p_+/p)'}
\\
&\quad\le \prod_{i=1}^m \bigg[\bigg(\fint_Q w_i^{p_i(\p_i^+/p_i)'} dx \bigg)
\bigg(\fint_Q w_i^{-\frac{p_i(\p_i^+/p_i)'}{\tau_{p_i} - 1}} dx \bigg)^{\tau_{p_i}-1} \bigg]^{\frac{\frac{1}{p_i} - \frac{1}{\p_i^+}}{\frac1p - \frac{1}{\p_+}}},
\end{align*}
which immediately gives \eqref{eq:wppp}. 

To proceed, we note that by \eqref{eq:ttaauu} and \eqref{eq:wwpp}, 
\begin{equation*}
\tau_{p_{m+1}} 
= \bigg(\frac{\p_{m+1}^+}{p_{m+1}} \bigg)' \bigg(\frac{p_{m+1}}{\p_{m+1}^-} -1 \bigg) +1
= \bigg(\frac{\p'_-}{p'} \bigg)' \bigg(\frac{p'}{\p'_+} -1 \bigg) +1
= \tau'_p, 
\end{equation*} 
which along with Lemma \ref{lem:weights} part \eqref{list:ApRH-3} and \eqref{eq:wppp} yields  
\begin{align*}
[w_{m+1}^{p_{m+1}(\p_{m+1}^+/p_{m+1})'}]_{A_{\tau_{p_{m+1}}}} 
= [w^{-p' (\p'_-/p')'}]_{A_{\tau'_p}} 
= [w^{p (\p_+/p)'}]_{A_{\tau_p}}^{\frac{1}{\tau_p-1}}  
\le \prod_{i=1}^m [w_i^{p_i(\p_i^+/p_i)'}]_{A_{\tau_{p_i}}}^{\frac{\frac{1}{p_i} - \frac{1}{\p_i^+}}{\frac{1}{\p_-} - \frac1p}}. 
\end{align*}
This shows \eqref{eq:wpppp}. 
\end{proof}

\subsection{Multilinear Calder\'{o}n-Zygmund operators}
Given $\delta>0$, we say that a function $K: \R^{n(m+1)} \setminus \{x=y_1=\cdots=y_m\} \to \mathbb{C}$ is a $\delta$-Calder\'{o}n-Zygmund kernel, if there exists a constant $A>0$ such that 
\begin{align*}
|K(x,\vec{y})| & \le  \frac{A}{\big(\sum_{j=1}^{m}|x-y_j|\big)^{mn}},
\\ 
|K(x,\vec{y}) - K(x',\vec{y})| & \le \frac{A \, |x-x'|^{\delta}}{\big(\sum_{j=1}^{m}|x-y_j|\big)^{mn+\delta}}, 
\end{align*}
whenever $|x-x'| \leq \frac12 \max\limits_{1\leq j \leq m}|x-y_j|$, and for each $i=1,\ldots,m$, 
\begin{align*}
|K(x,\vec{y}) - K(x,y_1,\ldots,y'_i,\ldots,y_m)| & \le \frac{A \, |y_i-y'_i|^{\delta}}{\big(\sum_{j=1}^{m}|x-y_j|\big)^{mn+\delta}}, 
\end{align*}
whenever
$|y_i-y_i'| \leq \frac12 \max\limits_{1\leq j \leq m}|x-y_j|$.

An $m$-linear operator $T: \S(\Rn) \times \cdots \times \S(\Rn) \to \S'(\Rn)$ is called a $\delta$-Calder\'{o}n-Zygmund operator if there exists a $\delta$-Calder\'{o}n-Zygmund kernel $K$ such that 
\begin{align*}
T(\vec{f})(x) =\int_{(\Rn)^m} K(x, \vec{y}) f_1(y_1)\cdots f_m(y_m) d\vec{y},  
\end{align*}
whenever $x \not\in \bigcap_{i=1}^m \supp(f_i)$ and $\vec{f}=(f_1,\ldots,f_m) \in \mathscr{C}_c^{\infty}(\Rn) \times \cdots \times \mathscr{C}_c^{\infty}(\Rn)$, and $T$ can be boundedly extended from $L^{q_1}(\Rn) \times \cdots \times L^{q_m}(\Rn)$ to $L^q(\Rn)$ for some $\frac1q=\frac{1}{q_1}+\cdots+\frac{1}{q_m}$ with $1<q_1,\ldots,q_m<\infty$.

Given a symbol $\sigma$, the $m$-linear Fourier multiplier $T_{\sigma}$ is defined by
\begin{align*}
T_{\sigma}(\vec{f})(x) := \int_{(\Rn)^m} \sigma(\vec{\xi})
e^{2\pi i x \cdot (\xi_1+\cdots+\xi_m)} \widehat{f}_1(\xi_1)
\cdots \widehat{f}_m(\xi_m) d\vec{\xi},
\end{align*}
for all $f_i \in \S(\Rn)$, $i=1,\ldots,m$. The operator $T_{\sigma}$ is called an $m$-linear Coifman-Meyer multiplier, if the symbol $\sigma \in \mathscr{C}^s(\R^{nm} \setminus \{0\})$ satisfies 
\begin{align*}
\big|\partial_{\xi_1}^{\alpha_1} \cdots \partial_{\xi_m}^{\alpha_m}  \sigma(\vec{\xi}) \big|
&\leq C_{\alpha} (|\xi_1|+\cdots+|\xi_m|)^{-\sum_{i=1}^m |\alpha_i|}, \quad \forall \vec{\xi} \in \R^{nm} \setminus\{0\}, 
\end{align*}
for each multi-indix $\alpha=(\alpha_1,\ldots,\alpha_m)$ with $|\alpha|=\sum_{i=1}|\alpha_i| \le mn+1$. 

It was shown in \cite[Proposition 6]{GT} that Coifman-Meyer multipliers are examples of multilinear Calder\'{o}n-Zygmund operators. 

Below, the sharp weighted inequality for multilinear Calder\'{o}n-Zygmund operators was given in \cite[Theorem 1.4]{LMS} with $p \ge 1$ and extended to the case $p<1$ in \cite[Corollary 4.4]{Nie}. 
\begin{theorem}\label{thm:CZO}
Let $T$ be an $m$-linear Calder\'{o}n-Zygmund operator. Then for all $1 < p_1, \ldots p_m < \infty$ and $\vec{w} \in A_{\vec{p}}$, 
\begin{align*}
\|T\|_{L^{p_1}(w_1^{p_1}) \times \cdots \times L^{p_m}(w_m^{p_m}) \to L^p(w^p)} 
\lesssim [\vec{w}]_{A_{\vec{p}}}^{\max\{p, p'_1, \ldots, p'_m\}}, 
\end{align*}
where $w=\prod_{i=1}^m w_i$ and $\frac1p=\sum_{i=1}^m \frac{1}{p_i}$.
\end{theorem}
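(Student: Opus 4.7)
My plan is to reduce the weighted estimate for $T$ to a sharp weighted estimate for a multilinear sparse operator, and then to handle the latter by a parallel corona/stopping-time argument driven by the weights $w_i^{p_i}$ and the dual weight $w^{-p'}$.

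The first step is to invoke the pointwise multilinear sparse domination of Dami\'an, Hormozi and Li (cited as \cite{DHL} in the introduction): for each $\vec{f}\in \C_c^\infty(\Rn)^m$ there exists a sparse family $\S \subset \D$ (depending on $T$ and $\vec{f}$) such that
\begin{equation*}
|T(\vec{f})(x)| \lesssim \A_{\S}(\vec{f})(x) := \sum_{Q\in\S} \Big(\prod_{i=1}^m \fint_Q |f_i|\,dy_i\Big) \mathbf{1}_Q(x).
\end{equation*}
Since the implicit constant depends only on $T$, the theorem reduces to establishing the same weighted bound for $\A_\S$ uniformly over sparse families $\S$.

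I would then separate the Banach case $p \ge 1$ from the quasi-Banach case $p < 1$. For $p\ge 1$, I would dualize against $g\in L^{p'}(w^{-p'})$ and estimate the $(m+1)$-linear sparse form
\begin{equation*}
\Lambda_{\S}(\vec{f},g) := \sum_{Q\in\S} |Q| \Big(\prod_{i=1}^m \langle |f_i|\rangle_Q\Big) \langle |g|\rangle_Q
\end{equation*}
by running a \emph{parallel} stopping-time decomposition on the $m+1$ measures $w_1^{p_1}\,dx,\dots,w_m^{p_m}\,dx$, $w^{-p'}\,dx$. On each corona cube, only one of these measures is ``active'' (its weighted average has just doubled relative to the parent), while the other $m$ averages are essentially frozen; pulling out the frozen averages with the help of the definition \eqref{eq:w-Ap} of $[\vec{w}]_{A_{\vec{p}}}$, summing over coronae by sparseness, and using the sharp reverse H\"older inequality of Lemma \ref{lem:RH} together with the sharp Buckley bound of Lemma \ref{lem:sharp} for the weighted maximal functions associated to the active index produces the exponent $\max\{p,p'_1,\dots,p'_m\}$.

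For the quasi-Banach range $p<1$ I would appeal to the quantitative multilinear $A_{\vec{p}}$ extrapolation of Nieraeth (which is also the specialization $\p_i^-=1$, $\p_i^+=\infty$ of our Theorem \ref{thm:lim}, via Lemma \ref{lem:ApRHApr}): extrapolating from a convenient Banach tuple $(q_1,\dots,q_m)$ with $q\ge 1$, where the sharp constant has already been obtained above, yields the bound for $(p_1,\dots,p_m)$ with $\vec{w}\in A_{\vec{p}}$, and a direct computation of the power $\gamma_i(p_i,q_i)$ confirms that the exponent $\max\{p,p'_1,\dots,p'_m\}$ persists. The principal obstacle is precisely obtaining this \emph{maximal} exponent rather than a sum: a naive application of H\"older's inequality on the sparse form gives a loss of $[\vec{w}]_{A_{\vec{p}}}^{p+p'_1+\cdots+p'_m}$, and extracting only the maximum requires the parallel stopping construction so that in each generation only one index contributes a power of the weight, while the remaining $m$ averages are controlled by a geometric series in the sparse parameter.
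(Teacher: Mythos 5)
The paper does not actually prove Theorem \ref{thm:CZO}: it is quoted from the literature, with the case $p\ge 1$ attributed to \cite[Theorem 1.4]{LMS} and the extension to $p<1$ to \cite[Corollary 4.4]{Nie}. Your architecture --- pointwise sparse domination from \cite{DHL}, a sharp bound for the $(m+1)$-linear sparse form in the Banach range, and quantitative multilinear extrapolation for $p<1$ --- is exactly the architecture of those cited proofs, so in that sense you are reconstructing the intended argument rather than diverging from it. The corona/parallel-stopping-time sketch for the sparse form is a legitimate known route to the exponent $\max\{p,p_1',\dots,p_m'\}$ (the actual organization in \cite{LMS} is somewhat different, resting on a multilinear Carleson-embedding argument, but both hinge on the same mechanism of letting only one weight be ``active'' per generation plus sharp reverse H\"older); since you only sketch it, I cannot certify every detail, but the strategy is sound.

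There is, however, one genuine error you should fix: the parenthetical claim that the quasi-Banach case follows from ``the specialization $\p_i^-=1$, $\p_i^+=\infty$ of Theorem \ref{thm:lim}, via Lemma \ref{lem:ApRHApr}.'' Theorem \ref{thm:lim} extrapolates from, and concludes with, hypotheses of the form $w_i^{p_i}\in A_{p_i}$ for each $i$ \emph{separately}, i.e.\ it operates on the product class $A_{p_1}\times\cdots\times A_{p_m}$, which is a \emph{proper} subclass of $A_{\vec p}$; Lemma \ref{lem:ApRHApr} provides only the one-way embedding of product weights into the multilinear classes, not the converse. The introduction of the paper emphasizes precisely this distinction between its limited-range product-weight extrapolation and the genuinely multilinear $A_{\vec p,\vec r}$ extrapolation of \cite{LMO, Nie}. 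So for general $\vec w\in A_{\vec p}$ with $p<1$ you must rely solely on Nieraeth's quantitative $A_{\vec p}$ extrapolation (your primary citation), and drop the claim that Theorem \ref{thm:lim} can substitute for it. A second, minor point: the duality step at the endpoint $p=1$ needs the usual $L^1$--$L^\infty$ pairing rather than a literal $g\in L^{p'}(w^{-p'})$; this is routine but worth stating.
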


\begin{theorem}[{\cite[Theorem 4.3]{BMMST}}]\label{thm:TTb}
Let $T$ be an $m$-linear operator. Fix $\theta_i >0$ and $r_i \in (1, \infty)$, $i=1, \ldots, m$. Let $\frac1p=\sum_{i=1}^m \frac{1}{p_i} \le 1$ with $1<p_1, \ldots, p_m<\infty$. Assume that there exist increasing functions $\Psi_i : [1, \infty) \to [0, \infty)$ such that for all $v_i^{\theta_i} \in A_{r_i}$, $i=1, \ldots, m$,
\begin{align}
\|T(\vec{f})\|_{L^p(v^p)}
\le \prod_{i=1}^m \Psi_i([v_i^{\theta_i}]_{A_{r_i}}) \|f_i\|_{L^{p_i}(v_i^{p_i})}, 
\end{align}
where $v=\prod_{i=1}^m v_i$. Then, for all weights $w_i^{\eta_i \theta_i} \in A_{s_i}$ with some $\eta_i \in (1, \infty)$, for all $\b = (b_1, \ldots, b_m) \in \BMO^m$, and for each multi-index $\alpha \in \N^m$,
\begin{align}
\|[T, \b]_{\alpha}(\vec{f})\|_{L^p(w^p)}
\le \alpha! \prod_{i=1}^m \delta_i^{-\alpha_i} \Psi_i \big(4^{\delta_i \theta_i} [w_i^{\eta_i \theta_i}]_{A_{r_i}}^{\frac{1}{\eta_i}} \big) \|b_i\|_{\BMO}^{\alpha_i} \|f_i\|_{L^{p_i}(w_i^{p_i})}, 
\end{align}
where $w=\prod_{i=1}^m w_i$, and $\delta_i = \min\{1, r_i-1\}/(\eta'_i \theta_i)$, $i=1, \ldots, m$.
\end{theorem}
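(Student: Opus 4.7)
The plan is to adapt the Coifman--Rochberg--Weiss conjugation trick to the multilinear setting via Cauchy's integral formula, combined with a quantitative John--Nirenberg estimate. By multilinearity of the commutator in each $b_i$, I first normalize: replacing each $b_i$ by $b_i/\|b_i\|_{\BMO}$ and pulling out the resulting factor $\prod_i\|b_i\|_{\BMO}^{\alpha_i}$, I may assume $\|b_i\|_{\BMO}=1$. For $\vec{z}=(z_1,\ldots,z_m)\in\CC^m$ I introduce the analytic family
\begin{equation*}
T^{\b}_{\vec{z}}(\vec{f})(x) := e^{\sum_{i=1}^m z_i b_i(x)}\, T\bigl(e^{-z_1 b_1}f_1,\ldots,e^{-z_m b_m}f_m\bigr)(x),
\end{equation*}
and observe that $\partial_{z_1}^{\alpha_1}\cdots\partial_{z_m}^{\alpha_m} T^{\b}_{\vec{z}}\big|_{\vec z=0}(\vec{f}) = [T,\b]_\alpha(\vec{f})$. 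The multivariable Cauchy formula on the polytorus $\{|z_i|=\delta_i\}$ then gives
\begin{equation*}
[T,\b]_\alpha(\vec{f})(x) = \frac{\alpha!}{(2\pi i)^m}\oint_{|z_1|=\delta_1}\!\!\!\cdots\!\oint_{|z_m|=\delta_m}\frac{T^{\b}_{\vec{z}}(\vec{f})(x)}{z_1^{\alpha_1+1}\cdots z_m^{\alpha_m+1}}\,dz_1\cdots dz_m.
\end{equation*}
Since $p\geq 1$, Minkowski's inequality moves the $L^p(w^p)$ norm inside the iterated contour integrals, producing $\|[T,\b]_\alpha(\vec{f})\|_{L^p(w^p)}\leq \alpha!\,\prod_i\delta_i^{-\alpha_i}\,\sup_{|z_i|=\delta_i}\|T^{\b}_{\vec z}(\vec f)\|_{L^p(w^p)}$ after integrating the modulus of the Cauchy kernel.

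The second step evaluates the inner norm. Writing $\widetilde{w}_i:=e^{\Re(z_i)b_i}w_i$ and $\widetilde w=\prod_i\widetilde w_i$, a direct computation shows $\|T^{\b}_{\vec{z}}(\vec{f})\|_{L^p(w^p)}=\|T(e^{-z_1 b_1}f_1,\ldots,e^{-z_m b_m}f_m)\|_{L^p(\widetilde{w}^p)}$, and the modulus $|e^{-z_i b_i}|^{p_i}$ exactly cancels the $e^{p_i\Re(z_i)b_i}$ in $\widetilde w_i^{p_i}$, giving $\|e^{-z_i b_i}f_i\|_{L^{p_i}(\widetilde w_i^{p_i})}=\|f_i\|_{L^{p_i}(w_i^{p_i})}$. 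Invoking the hypothesis with the weights $\widetilde w_i$ reduces the problem to controlling $[\widetilde w_i^{\theta_i}]_{A_{r_i}}=[e^{\theta_i\Re(z_i)b_i}w_i^{\theta_i}]_{A_{r_i}}$. Factoring this weight as $(e^{\eta'_i\theta_i\Re(z_i)b_i})^{1/\eta'_i}(w_i^{\eta_i\theta_i})^{1/\eta_i}$ and applying the $A_r$-interpolation bound of Lemma~\ref{lem:fac}\eqref{list:fac-2} with $q_0=q_1=q=r_i$, $\theta=1/\eta_i$, yields
\begin{equation*}
[\widetilde w_i^{\theta_i}]_{A_{r_i}}\leq [e^{\eta'_i\theta_i\Re(z_i)b_i}]_{A_{r_i}}^{1/\eta'_i}\,[w_i^{\eta_i\theta_i}]_{A_{r_i}}^{1/\eta_i}.
\end{equation*}

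The heart of the argument is the quantitative John--Nirenberg estimate: for $g\in\BMO$ with $\|g\|_{\BMO}\leq 1$ and $|s|\leq\min\{1,r-1\}$, one has $[e^{sg}]_{A_r}\leq 4^{|s|}$ (up to an absolute dimensional constant), a consequence of the sharp reverse H\"older inequality of Lemma~\ref{lem:RH}. With the choice $\delta_i=\min\{1,r_i-1\}/(\eta'_i\theta_i)$ prescribed in the theorem, the radii on the contour satisfy $|\eta'_i\theta_i\Re(z_i)|\leq\min\{1,r_i-1\}$, so the John--Nirenberg step yields $[e^{\eta'_i\theta_i\Re(z_i)b_i}]_{A_{r_i}}\leq 4^{\eta'_i\theta_i\delta_i}$ and hence $[\widetilde w_i^{\theta_i}]_{A_{r_i}}\leq 4^{\delta_i\theta_i}[w_i^{\eta_i\theta_i}]_{A_{r_i}}^{1/\eta_i}$. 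Monotonicity of $\Psi_i$ then produces the factor $\Psi_i(4^{\delta_i\theta_i}[w_i^{\eta_i\theta_i}]_{A_{r_i}}^{1/\eta_i})$. Collecting everything and restoring the $\|b_i\|_{\BMO}^{\alpha_i}$ factors closes the proof. The main obstacle is pinning down the precise constant ``$4$'' in the John--Nirenberg step, which requires the sharp form of the reverse H\"older inequality with explicit dependence on the $A_r$ characteristic; once this is in place, the rest is a clean assembly of Cauchy's formula, Minkowski's inequality, and the $A_r$-factorization of Lemma~\ref{lem:fac}.
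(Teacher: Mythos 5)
The paper does not actually prove this theorem: it is quoted from \cite{BMMST}, and the introduction explicitly identifies the method of that reference as ``the trick of so-called Cauchy integral and Minkowski's inequality,'' which is precisely your conjugation argument. Your proof is therefore structurally the same as the source's; the only loose end is the one you flag yourself, namely that the quantitative John--Nirenberg bound $[e^{sg}]_{A_r}\le 4^{|s|}$ for $\|g\|_{\BMO}\le1$ only holds once $|s|$ is below a \emph{dimensional} multiple of $\min\{1,r-1\}$, so strictly speaking the admissible radius $\delta_i$ must carry a hidden constant $c_n^{-1}$ (as it does in the original reference) for the stated factor $4^{\delta_i\theta_i}$ to come out exactly.
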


Let us record Marcinkiewicz-Zygmund inequalities contained in \cite[Proposition 5.3]{CMO}. 
\begin{lemma}\label{lem:MZ}
Let $0 < p, q_1, \ldots, q_m < r < 2$ or $r=2$ and $0<p, q_1, \ldots, q_m <\infty$.  Let $\mu_1, \ldots, \mu_m$ and $\nu$ be arbitrary $\sigma$-finite measures on $\Rn$. Let $T$ be an $m$-linear operator. Then, there exists a constant $C>0$ such that the following estimates hold: 
\begin{enumerate}
\item[(i)] If $T$ is bounded from $L^{q_1}(\mu_1) \times \cdots \times L^{q_m}(\mu_m)$ to $L^p(\nu)$, then 
\begin{align*}
\bigg\| \bigg(\sum_{k_1, \ldots, k_m} |T(f^1_{k_1}, \ldots, f^m_{k_m})|^r \bigg)^{\frac1r}\bigg\|_{L^p(\nu)}
\le C \|T\| \prod_{i=1}^m \bigg\| \bigg(\sum_{k_i} |f^i_{k_i}|^r \bigg)^{\frac1r}\bigg\|_{L^{q_i}(\mu_i)}, 
\end{align*}
where $\|T\| := \|T\|_{L^{q_1}(\mu_1) \times \cdots \times L^{q_m}(\mu_m) \to L^p(\nu)}$. 

\item[(ii)] If $T$ is bounded from $L^{q_1}(\mu_1) \times \cdots \times L^{q_m}(\mu_m)$ to $L^{p, \infty}(\nu)$, then 
\begin{align*}
\bigg\| \bigg(\sum_{k_1, \ldots, k_m} |T(f^1_{k_1}, \ldots, f^m_{k_m})|^r \bigg)^{\frac1r}\bigg\|_{L^{p, \infty}(\nu)}
\le C \|T\|_{\rm weak} \prod_{i=1}^m \bigg\| \bigg(\sum_{k_i} |f^i_{k_i}|^r \bigg)^{\frac1r}\bigg\|_{L^{q_i}(\mu_i)}, 
\end{align*}
where $\|T\|_{\rm weak} := \|T\|_{L^{q_1}(\mu_1) \times \cdots \times L^{q_m}(\mu_m) \to L^{p, \infty}(\nu)}$. 
\end{enumerate}
\end{lemma}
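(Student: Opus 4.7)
The plan is to use the classical randomization method based on the Khintchine--Kahane inequality applied to symmetric $r$-stable random variables, which reduces vector-valued estimates to scalar-valued ones. Let $\{\xi^i_{k}\}_{k\in\Z}$, $i=1,\ldots,m$, be independent families of i.i.d.\ symmetric $r$-stable random variables on a probability space $(\Omega,\mathbb{P})$ (Rademacher or Gaussian variables when $r=2$), and fix an auxiliary exponent $0<s<\min\{p,q_1,\ldots,q_m\}$, which is possible precisely under the hypotheses (with $s<r$ forced when $r<2$ and otherwise arbitrary). The fundamental identity to exploit is the Khintchine--Kahane equivalence
\[
\Big(\sum_{k}|a_k|^r\Big)^{1/r} \simeq \Big(\mathbb{E}\Big|\sum_{k}a_k\xi^i_k\Big|^s\Big)^{1/s},
\]
valid for $0<s<r$ in the $r$-stable case and for all $0<s<\infty$ when $r=2$.

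First I apply this equivalence iteratively in each of the $m$ indices, combined with the $m$-linearity of $T$, to obtain the pointwise equivalence
\[
\Big(\sum_{\vec{k}}|T(\vec{f}_{\vec{k}})(x)|^r\Big)^{1/r} \simeq \big(\mathbb{E}|T(F_1,\ldots,F_m)(x)|^s\big)^{1/s}, \quad F_i(\omega,\cdot):=\sum_{k_i}f^i_{k_i}\xi^i_{k_i}(\omega).
\]
Taking the $L^p(\nu)$-norm of both sides and invoking Minkowski's integral inequality in $L^{p/s}(\nu)$ over the probability variable (allowed since $p\ge s$) yields
\[
\Big\|\Big(\sum_{\vec{k}}|T(\vec{f}_{\vec{k}})|^r\Big)^{1/r}\Big\|_{L^p(\nu)}^{s} \lesssim \mathbb{E}\,\|T(F_1,\ldots,F_m)\|_{L^p(\nu)}^{s}.
\]

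Next, I apply the boundedness hypothesis pointwise in $\omega$, and by the mutual independence of the $m$ families $\{\xi^i_k\}_k$ the resulting expectation factorizes, giving
\[
\mathbb{E}\,\|T(F_1,\ldots,F_m)\|_{L^p(\nu)}^{s} \le \|T\|^{s}\prod_{i=1}^m\mathbb{E}\,\|F_i\|_{L^{q_i}(\mu_i)}^{s}.
\]
For each factor, since $s<q_i$ Jensen's inequality (with the concave power $s/q_i$) moves $\mathbb{E}$ inside the $L^{q_i}(\mu_i)$-integral; applying Khintchine--Kahane again with exponent $q_i<r$ produces
\[
\mathbb{E}\,\|F_i\|_{L^{q_i}(\mu_i)}^{s} \lesssim \Big\|\Big(\sum_{k_i}|f^i_{k_i}|^r\Big)^{1/r}\Big\|_{L^{q_i}(\mu_i)}^{s},
\]
which after taking $s$-th roots concludes part~(i).

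For the weak-type statement (ii) the same scheme applies, with the $L^p(\nu)$-norm replaced by the $L^{p,\infty}(\nu)$ quasi-norm; one uses the Minkowski/Kolmogorov-type equivalence available in Lorentz spaces (valid since $p>s$, so $L^{p,\infty}$ is $s$-normable), which plays the same role as Minkowski's integral inequality above. The main obstacle throughout is the coupling of thresholds $s<\min\{p,q_1,\ldots,q_m,r\}$, which is exactly the regime where $r$-stable moments are finite and Khintchine--Kahane is effective; this is why one needs all exponents strictly below $r$ in the sub-Gaussian range. A secondary technical point is verifying measurability and the admissibility of Fubini for the random fields $F_i$, and, in the weak-type case, handling the absence of subadditivity of the quasi-norm by routing through the scalar Kolmogorov condition.
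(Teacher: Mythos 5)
The paper does not prove this lemma at all: it is quoted verbatim from [CMO, Proposition 5.3] (Carando--Mazzitelli--Ombrosi), so there is no internal proof to compare against. Your randomization strategy is exactly the one used in that reference, and the outer layers of your argument (Minkowski in $L^{p/s}(\nu)$, factorization of the expectation by independence of the $m$ families, Jensen with the concave power $s/q_i$ followed by the scalar one-variable Khintchine identity with exponent $q_i<r$, and the $s$-normability of $L^{p,\infty}$ for the weak-type case) are all correct and correctly justified by the hypotheses.

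The gap is in your first display, the claimed pointwise two-sided equivalence $\big(\sum_{\vec k}|a_{\vec k}|^r\big)^{1/r}\simeq\big(\mathbb{E}\big|\sum_{\vec k}a_{\vec k}\,\xi^1_{k_1}\cdots\xi^m_{k_m}\big|^s\big)^{1/s}$ for the decoupled $r$-stable chaos. For $r<2$ and $m\ge 2$ the upper bound direction is false: a product of two independent symmetric $r$-stable variables has tail $\sim t^{-r}\log t$, strictly heavier than $r$-stable, so for the diagonal array $a_{jk}=\delta_{jk}N^{-1/r}$ one gets $(\mathbb{E}|X|^s)^{1/s}\simeq(\log N)^{1/r}\to\infty$ while the $\ell^r$ norm stays equal to $1$. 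Fortunately your argument only uses the lower bound $\big(\sum_{\vec k}|a_{\vec k}|^r\big)^{1/r}\lesssim(\mathbb{E}|X|^s)^{1/s}$, but that inequality is itself a nontrivial lemma which does not follow from ``applying the equivalence iteratively in each index'': after conditioning on $\xi^2,\dots,\xi^m$ the scalar identity reduces matters to bounding $\mathbb{E}\|b(\omega')\|_{\ell^r}^s$ from below by the $\ell^r$ norm of all coefficients, and the naive comparisons ($\ell^s\hookrightarrow\ell^r$, or Jensen) all point the wrong way because $\mathbb{E}|\xi|^r=\infty$. One genuinely needs an anti-concentration/Paley--Zygmund argument (as in [CMO]) to show that $\|b(\omega')\|_{\ell^r}\gtrsim\|a\|_{\ell^r}$ on an event of probability bounded below, and then induct on $m$. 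For $r=2$ your iteration is fine, since the $L^2$ norm of the decoupled Gaussian (or Rademacher) chaos equals $\|a\|_{\ell^2}$ exactly and all its moments are equivalent by hypercontractivity. So: right approach, but the single substantive probabilistic input for $r<2$ is misstated in one direction and unproved in the direction you actually use.
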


\section{Quantitative weighted estimates}\label{sec:quant}
The goal of this section is to establish quantitative weighted estimates for (rough) maximal operators and singular integrals. We begin with the following interpolation result with change of measures due to Stein and Weiss \cite{SW}, which plays an important role in dealing with weighted estimates. 

\begin{theorem}[\cite{SW}]\label{thm:SW}
Let $p_0, p_1 \in [1, \infty]$, and let $w_0$ and $w_1$ be weights. If the sublinear operator $T$ satisfies 
\begin{align*}
\|Tf\|_{L^{p_i}(w_i)} \le K_i \|f\|_{L^{p_i}(w_i)}, \quad i=0, 1, 
\end{align*}
then for any $\theta \in (0, 1)$, 
\begin{align*}
\|Tf\|_{L^p(w)} \le K \|f\|_{L^p(w)} \quad\text{with}\quad K \le K_0^{1-\theta} K_1^{\theta}, 
\end{align*}
where $\frac1p=\frac{1-\theta}{p_0} + \frac{\theta}{p_1}$ and $w^{\frac1p}=w_0^{\frac{1-\theta}{p_0}} w_1^{\frac{\theta}{p_1}}$. 
\end{theorem}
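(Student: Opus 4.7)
The plan is to invoke the classical Stein--Weiss strategy, which rests on complex interpolation for an analytic family over the closed strip $S:=\{z\in\CC:0\le\Re(z)\le 1\}$. By density I may restrict to simple $f$, and by duality on $L^p(w)$ it suffices, for simple $f$ with $\|f\|_{L^p(w)}=1$ and simple $g$ with $\|g\|_{L^{p'}(w^{1-p'})}=1$, to prove $\bigl|\int Tf\,g\,dx\bigr|\le K_0^{1-\theta}K_1^{\theta}$.

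First I would set $\phi(z):=p\bigl(\tfrac{1-z}{p_0}+\tfrac{z}{p_1}\bigr)$ and $\psi(z):=p'\bigl(\tfrac{1-z}{p_0'}+\tfrac{z}{p_1'}\bigr)$, which satisfy $\phi(\theta)=\psi(\theta)=1$ together with $\Re\phi(j+it)=p/p_j$ and $\Re\psi(j+it)=p'/p_j'$ for $j=0,1$. Then I would choose affine functions $A(z),B(z),C(z),D(z)$ vanishing at $z=\theta$ and define
\[
f_z:=|f|^{\phi(z)}\sign(f)\,w_0^{A(z)}w_1^{B(z)},\qquad g_z:=|g|^{\psi(z)}\sign(g)\,w_0^{C(z)}w_1^{D(z)},
\]
with the coefficients in $A,B,C,D$ pinned down by requiring, via the factorization $w^{1/p}=w_0^{(1-\theta)/p_0}w_1^{\theta/p_1}$, the pointwise identities $|f_z|^{p_j}w_j\equiv|f|^pw$ and $|g_z|^{p_j'}w_j^{1-p_j'}\equiv|g|^{p'}w^{1-p'}$ on $\Re(z)=j$. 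This is a short linear-algebra computation, and by construction $f_\theta=f$, $g_\theta=g$, while $\|f_z\|_{L^{p_j}(w_j)}=\|g_z\|_{L^{p_j'}(w_j^{1-p_j'})}=1$ whenever $\Re(z)=j$.

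The function $F(z):=\int T(f_z)\,g_z\,dx$ is then holomorphic on the interior of $S$, continuous on $S$, and of admissible growth in $\operatorname{Im}(z)$, because $f$ and $g$ are simple so that both $f_z$ and $g_z$ are finite sums of entire-in-$z$ scalars multiplied by fixed indicator functions. On $\Re(z)=j$, H\"older's inequality together with the hypothesis on $T$ gives
\[
|F(z)|\le\|T(f_z)\|_{L^{p_j}(w_j)}\,\|g_z\|_{L^{p_j'}(w_j^{1-p_j'})}\le K_j,
\]
so Hadamard's three-lines lemma yields $|F(\theta)|\le K_0^{1-\theta}K_1^{\theta}$. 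Since $F(\theta)=\int Tf\,g\,dx$, taking the supremum over admissible $g$ closes the argument.

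The principal obstacle is that complex interpolation natively requires $T$ to be linear, whereas the hypothesis only gives sublinearity. I would handle this by the classical linearization used in Stein--Weiss: fixing a measurable unimodular function $\varepsilon(x)=\overline{\sign(Tf(x))}$ so that $|Tf(x)|=\varepsilon(x)Tf(x)$, one works throughout with the modified pairing $\int T(f_z)\,\varepsilon g_z\,dx$, whose endpoint bounds are unchanged. Analyticity of the new $F(z)$ is then recovered either by first establishing the statement for linear $T$---where analyticity of $T(f_z)$ in $z$ is immediate from $f_z$ being a $z$-analytic linear combination of fixed simple functions---and then extending to the sublinear case by expressing the target operator as a pointwise supremum (or monotone limit) of linear ones, a reduction which is standard for the operators to which the Stein--Weiss theorem is applied in this paper.
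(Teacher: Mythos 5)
The paper supplies no proof of this statement: it is quoted directly from Stein--Weiss \cite{SW}, so the comparison can only be with the classical argument, and your proposal is essentially that argument (complex interpolation over the strip, with the change of measure encoded in the analytic families $f_z,g_z$). The skeleton is sound: the duality reduction, the choice of $\phi,\psi$, the normalizations $|f_z|^{p_j}w_j=|f|^pw$ and $|g_z|^{p_j'}w_j^{1-p_j'}=|g|^{p'}w^{1-p'}$ on $\Re(z)=j$, and the endpoint H\"older bounds are all correct; in particular the three real constraints on each affine exponent (two boundary lines plus vanishing at $\theta$) are compatible exactly because $\frac{1-\theta}{p_0}+\frac{\theta}{p_1}=\frac1p$, which your ``short linear-algebra computation'' would confirm.

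Two steps, however, do not hold as written. First, the analyticity claim is wrong: $f_z$ is \emph{not} a finite sum of entire scalars times fixed indicator functions, because the factors $w_0(x)^{A(z)}w_1(x)^{B(z)}$ depend jointly on $x$ and $z$. Hence even for linear $T$ the analyticity of $z\mapsto T(f_z)$ is not immediate from linearity alone. The standard repair is either to prove analyticity of the scalar function $F(z)$ directly via Morera's theorem and Fubini (using the endpoint boundedness of $T$ and dominated convergence on the strip), or to strip the weight powers out of $f_z$ and absorb them into an analytic family of operators $T_z(\cdot)=T\big(w_0^{A(z)}w_1^{B(z)}\,\cdot\,\big)$, to which Stein's interpolation theorem for analytic families applies. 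Second, your handling of sublinearity conflates two issues: the unimodular factor $\varepsilon=\overline{\sign(Tf)}$ merely turns $\int |Tf|\,|g|\,dx$ into a bilinear pairing and does nothing to restore analyticity of $z\mapsto T(f_z)$ when $T$ is only sublinear. Your fallback---prove the linear case and then realize $T$ as a supremum or monotone limit of linear (or Banach-space-valued linear) operators---is the right move and does cover every application in this paper (differences of truncations, square functions, maximal and variational operators), but it does not prove the theorem for an \emph{abstract} sublinear $T$ as stated; for that one must invoke the real-variable interpolation of sublinear operations in the spirit of Calder\'on and Zygmund, which is a genuinely different proof. You should either restrict the statement to operators admitting such a linearization or cite that extension explicitly.
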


The sharp maximal function$M^{\#}$ is defined by  
\begin{align*}
M^{\#}f(x) := \sup_{Q \ni x} \fint_Q |f - f_Q| \, dy, 
\quad\text{ where }\quad f_Q := \fint_Q f\, dy. 
\end{align*}

The following Fefferman-Stein inequality was shown in \cite[Remark 1.9]{CP}. 
\begin{lemma}\label{lem:MM}
For every $p \in (0, \infty)$ and $w \in A_{\infty}$, 
\begin{align*}
\|Mf\|_{L^p(w)} & \le C_{n, p} \, [w]_{A_{\infty}} \|M^{\#}f\|_{L^p(w)}, 
\end{align*}
whenever $Mf \in L^p(w)$ or $f \in L^{\infty}_c(\Rn)$. 
\end{lemma}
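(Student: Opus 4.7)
The strategy is to run the classical Fefferman--Stein good-$\lambda$ argument and extract the dependence on $[w]_{A_\infty}$ through the sharp reverse Hölder inequality of Lemma \ref{lem:RH}. After the standard reduction to a dyadic model $M_d, M^{\#}_d$ (covering $\Rn$ by finitely many adjacent dyadic grids), one has the unweighted good-$\lambda$ inequality
\begin{equation*}
\bigl|\{x \in Q_0 : M_d f(x) > 2\lambda,\ M^{\#}_d f(x) \le \gamma \lambda\}\bigr| \le c_n\, \gamma\, |Q_0|,
\end{equation*}
for every dyadic cube $Q_0$, height $\lambda > 0$, and parameter $\gamma \in (0,1)$, produced by a Calder\'on--Zygmund stopping-time argument at level $\lambda$. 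Applying \eqref{eq:EQEQ} with $\delta_w := \gamma_w/(1+\gamma_w) \asymp [w]_{A_\infty}^{-1}$ and summing over the maximal dyadic cubes contained in $\{M_d f > \lambda\}$ upgrades this to
\begin{equation*}
w\bigl(\{M_d f > 2\lambda,\ M^{\#}_d f \le \gamma\lambda\}\bigr) \le 2(c_n \gamma)^{\delta_w}\, w\bigl(\{M_d f > \lambda\}\bigr).
\end{equation*}

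I would then multiply by $p\lambda^{p-1}$, integrate over $\lambda \in (0,\infty)$, and split the level sets of $M_d f$ according to the size of $M^{\#}_d f$, obtaining
\begin{equation*}
\|M_d f\|_{L^p(w)}^p \le 2^{p+1}(c_n \gamma)^{\delta_w}\, \|M_d f\|_{L^p(w)}^p + (2/\gamma)^p\, \|M^{\#}_d f\|_{L^p(w)}^p.
\end{equation*}
The a priori hypothesis that either $Mf \in L^p(w)$ or $f \in L^\infty_c(\Rn)$ (in the latter case, first truncate $M_d f \wedge N$ and let $N \to \infty$ at the end) is used precisely to absorb the first right-hand term into the left-hand side.

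The main obstacle, and the step I expect to require the most care, is picking $\gamma$ so that the surviving constant is \emph{linear} in $[w]_{A_\infty}$ as claimed, rather than exponential. A naive optimization $2^{p+1}(c_n\gamma)^{\delta_w} = 1/2$ forces $\gamma^{-1}$ to behave like $e^{c_p/\delta_w}$, which is exponential in $[w]_{A_\infty}$. To recover the linear bound I would apply the self-improving good-$\lambda$ refinement of Cruz-Uribe--P\'erez \cite{CP}: iterate the estimate on truncated level sets $\{M_d f > N\}$ and bootstrap, summing a geometric series whose common ratio is controlled by $\delta_w$, so that exactly one factor of $1/\delta_w \asymp [w]_{A_\infty}$ survives at the end. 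An equivalent route bypasses good-$\lambda$ altogether via a sparse domination $Mf \lesssim \sum_{Q \in \mathcal{S}} \langle f\rangle_Q \mathbf{1}_Q$ combined with a weighted Carleson embedding, in which the single power of $[w]_{A_\infty}$ emerges from one application of the sharp reverse Hölder exponent $1+\gamma_w$.
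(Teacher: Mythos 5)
The paper itself does not prove this lemma; it simply cites \cite{CP} (Canto--P\'erez, not Cruz-Uribe--P\'erez as you write), so the real question is whether your sketch closes the gap on its own. Your skeleton (dyadic reduction, good-$\lambda$, upgrade to weighted level sets via \eqref{eq:EQEQ}, absorption) is the standard and correct one, and you correctly diagnose that the classical good-$\lambda$ estimate $c_n\gamma|Q_0|$ forces $\gamma\asymp e^{-c_p/\delta_w}$ and hence an exponential, not linear, dependence on $[w]_{A_\infty}$.

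The gap is in how you propose to repair this. The actual mechanism in \cite{CP} is not an iteration ``across truncated level sets $\{M_df>N\}$ summing a geometric series whose ratio is controlled by $\delta_w$''; as described, that scheme does not produce a single surviving factor of $1/\delta_w$, and you give no argument that it converges to the claimed bound. What is needed is a self-improvement of the good-$\lambda$ inequality \emph{at a fixed level} $\lambda$, of John--Nirenberg type: iterating the Calder\'on--Zygmund stopping time inside each maximal cube of $\{M_df>\lambda\}$ upgrades the measure estimate from $c_n\gamma|Q_0|$ to $c_1e^{-c_2/\gamma}|Q_0|$. Feeding this exponential decay into \eqref{eq:EQEQ} gives $w(\{M_df>2\lambda,\ M^{\#}_df\le\gamma\lambda\})\le 2c_1^{\delta_w}e^{-c_2\delta_w/\gamma}\,w(\{M_df>\lambda\})$, and now absorption only requires $\gamma\asymp\delta_w/p\asymp (p\,[w]_{A_\infty})^{-1}$, so the surviving constant $(2/\gamma)^p$ yields exactly one power of $[w]_{A_\infty}$ after taking $p$-th roots. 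Your alternative route is also not right as stated: the sparse domination $Mf\lesssim\sum_{Q\in\S}\langle f\rangle_Q\mathbf{1}_Q$ controls $Mf$ by averages of $f$ itself and says nothing about $M^{\#}f$; to make a sparse argument work you would need Lerner's local mean oscillation formula to dominate $f$ (hence $Mf$) by a sparse sum of $\inf_Q M^{\#}f$, together with the $A_\infty$ Carleson property $\sum_{Q\in\S,\,Q\subset R}w(Q)\lesssim[w]_{A_\infty}w(R)$, which is a different (and not ``one application of reverse H\"older'') ingredient.
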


We present a sharp weighted vector-valued Fefferman-Stein inequality. 
\begin{lemma}\label{lem:M-vector}
For any $1 < p, r < \infty$ and $w \in A_p$, 
\begin{align*}
\bigg\|\Big(\sum_k |M f_k|^r \Big)^{\frac1r}\Big\|_{L^p(w)} 
\lesssim [w]_{A_p}^{\max\{\frac1r, \frac{1}{p-1}\}} 
\bigg\|\Big(\sum_k |f_k|^r \Big)^{\frac1r}\bigg\|_{L^p(w)}.
\end{align*}
Moreover, the exponent $\max\{\frac1r, \frac{1}{p-1}\}$ is the best possible.
\end{lemma}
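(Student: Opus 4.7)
The plan is to establish this inequality by quantitative $A_p$ extrapolation (in the sharp form provided by Theorem \ref{thm:Ap} of the paper) from two carefully chosen endpoints, combining the outcomes to match the piecewise exponent $\max\{1/r, 1/(p-1)\}$. At the endpoint $p_0 = r$, applying Lemma \ref{lem:sharp} to each $f_k$, raising to the $r$-th power and summing in $k$ gives
\begin{equation*}
\bigg\|\Big(\sum_k |Mf_k|^r\Big)^{\frac1r}\bigg\|_{L^r(v)}^r
= \sum_k \|Mf_k\|_{L^r(v)}^r
\lesssim [v]_{A_r}^{\frac{r}{r-1}} \sum_k \|f_k\|_{L^r(v)}^r,
\end{equation*}
so the endpoint bound holds with constant $\lesssim [v]_{A_r}^{1/(r-1)}$ for every $v\in A_r$. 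Feeding the family $\F = \{((\sum_k|Mf_k|^r)^{1/r},(\sum_k|f_k|^r)^{1/r})\}$ into sharp $A_p$ extrapolation yields, for every $p \in (1,\infty)$ and $w \in A_p$, the vector-valued inequality with exponent $\max\{1/(r-1),1/(p-1)\}$. For $p \le r$ this exponent collapses to $1/(p-1) = \max\{1/r,1/(p-1)\}$, settling that half of the range.

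For $p > r$ the exponent $1/(r-1)$ produced by this extrapolation exceeds the target $1/r$, so I would anchor a second extrapolation at $p_0 = r+1$. Lemma \ref{lem:sharp} gives $\|M\|_{L^{r+1}(v) \to L^{r+1}(v)} \lesssim [v]_{A_{r+1}}^{1/r}$, which must be upgraded to an $\ell^r$-valued bound at $p_0 = r+1$. When $r = 2$ this is furnished directly by Lemma \ref{lem:MZ}; for $r \neq 2$ I would pass through an auxiliary $\ell^2$-valued bound and interpolate with the already-established $\ell^r$-bound at $p = r$. Either way, one obtains
\begin{equation*}
\bigg\|\Big(\sum_k |Mf_k|^r\Big)^{\frac1r}\bigg\|_{L^{r+1}(v)} \lesssim [v]_{A_{r+1}}^{\frac1r} \bigg\|\Big(\sum_k |f_k|^r\Big)^{\frac1r}\bigg\|_{L^{r+1}(v)}, \quad v \in A_{r+1},
\end{equation*}
and a second application of Theorem \ref{thm:Ap} propagates this to exponent $\max\{1/r,1/(p-1)\}$ for all $p \in (1,\infty)$. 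Taking the minimum of the two extrapolation outputs then delivers the sharp exponent $\max\{1/r, 1/(p-1)\}$ on the whole range $p \in (1,\infty)$.

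The main obstacle is the endpoint bound at $p_0 = r+1$ with exponent $1/r$: unlike at $p = r$, direct $\ell^r$-summation of Buckley's bound fails because the Lebesgue exponent exceeds $r$, and Marcinkiewicz-Zygmund alone only handles $r \le 2$; for larger $r$ one needs the intermediate $\ell^2$ step plus interpolation. For the sharpness assertion that $\max\{1/r,1/(p-1)\}$ cannot be lowered, I would exhibit two extremizing families on power weights $w(x) = |x|^a$ approaching the $A_p$ boundary: Buckley's classical single-function example $f = |x|^{-b}\mathbf{1}_{B(0,1)}$ forces exponent $1/(p-1)$ when $p \le r+1$, while a Fefferman-Stein-style family of indicator functions on a geometric sequence of disjoint cubes forces exponent $1/r$ when $p > r+1$, so neither term in the maximum admits improvement.
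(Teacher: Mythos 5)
Your argument for the range $1<p\le r$ is correct and genuinely different from the paper's: the decoupled endpoint bound at $p_0=r$ with constant $\lesssim[v]_{A_r}^{1/(r-1)}$ does follow from Lemma \ref{lem:sharp} by raising to the $r$-th power and summing in $k$, and Theorem \ref{thm:Ap} with $\Phi(t)=Ct^{1/(r-1)}$ then yields the exponent $\max\{\tfrac{1}{r-1},\tfrac{1}{p-1}\}$, which coincides with $\max\{\tfrac1r,\tfrac{1}{p-1}\}$ exactly when $p\le r$. You are also right that a single extrapolation from $p_0=r+1$ with constant $[v]_{A_{r+1}}^{1/r}$ would finish the whole lemma. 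The paper avoids endpoints altogether: it quotes the pointwise sparse domination $\big(\sum_k|Mf_k|^r\big)^{1/r}\lesssim\sum_{j\le 3^n}\A_{\S_j}^r\big(\big(\sum_k|f_k|^r\big)^{1/r}\big)$ from \cite{CLPR} and then the sharp scalar bound $\|\A_{\S}^{\gamma}\|_{L^p(w)\to L^p(w)}\lesssim[w]_{A_p}^{\max\{1/\gamma,1/(p-1)\}}$ from \cite{BH}, which handles all $p$ and $r$ at once.

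The gap in your proposal is precisely the endpoint at $p_0=r+1$, i.e.\ the hard Fefferman--Stein half of the inequality, and neither of your two repair routes works. First, Lemma \ref{lem:MZ} is a Marcinkiewicz--Zygmund theorem for (multi)linear operators; its proof relies on randomization and the identity $\sum_k\varepsilon_kT f_k=T\big(\sum_k\varepsilon_kf_k\big)$, which fails for the sublinear operator $M$ (if an $\ell^2$ bound for $M$ followed "directly" from the scalar bound, the Fefferman--Stein inequality would be a triviality, which it is not). Second, interpolating an auxiliary $\ell^2$-valued bound against the $\ell^r$-valued bound at $p=r$ cannot return an $\ell^r$-valued bound at $p=r+1$: under interpolation the sequence-space exponent moves as well, $[L^{r}(\ell^{r}),L^{q}(\ell^{2})]_{\theta}=L^{p_\theta}(\ell^{r_\theta})$ with $\tfrac{1}{r_\theta}=\tfrac{1-\theta}{r}+\tfrac{\theta}{2}$, so $r_\theta=r$ forces $\theta=0$ (or $r=2$, where the first issue still stands). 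Without that endpoint the second extrapolation never gets off the ground, so for $p>r$ you need a genuine new input --- the sparse domination the paper uses, or the dual Fefferman--Stein argument of \cite{CMP12} with constants tracked. Your sharpness examples are the standard ones and are fine; the paper simply inherits optimality from \cite[Theorem 1.12]{CMP12}.
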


\begin{proof}
This inequality was given in \cite[Theorem 1.12]{CMP12}. We here present a different proof. Let $r \in (1, \infty)$. It was proved in \cite[Theorem 1.11]{CLPR} that there exist $3^n$ dyadic lattices $\D_j$ and sparse families $\S_j \subset \D_j$ such that 
\begin{align}\label{Mfk-1}
\Big(\sum_k |M f_k(x)|^r \Big)^{\frac1r} 
\le C_{n, r} \sum_{j=1}^{3^n} \A_{\S_j}^r \bigg(\Big(\sum_k |f_k|^r \Big)^{\frac1r}\bigg)(x), \quad \text{a.e. } x \in \Rn, 
\end{align}
where 
\begin{align}\label{eq:AS}
\A_{\S}^r f(x) := \bigg(\sum_{Q \in \S} \langle |f| \rangle_Q^r \mathbf{1}_Q(x) \bigg)^{\frac1r}. 
\end{align}
It follows from \cite[Theorem2.1]{BH} that for all $p \in (1, \infty)$ and $w \in A_p$, 
\begin{align}\label{Mfk-2}
\|\A_{\S}^{\gamma} f\|_{L^p(w)} 
\le C_{n, p, \gamma} [w]_{A_p}^{\max\{\frac{1}{\gamma}, \frac{1}{p-1}\}} \|f\|_{L^p(w)}, 
\quad \gamma>0. 
\end{align}
Thus, \eqref{Mfk-1} and \eqref{Mfk-2} imply the desired estimate. 
\end{proof}

\begin{lemma}\label{lem:CZO}
Let $\mathscr{B}_1$ and $\mathscr{B}_2$ be Banach spaces, and let $\mathscr{L}(\mathscr{B}_1, \mathscr{B}_2)$ be the Banach space defined by all bounded linear operators from $\mathscr{B}_1$ to $\mathscr{B}_2$ with the operator norm $\|\cdot\|_{\mathscr{L}(\mathscr{B}_1, \mathscr{B}_2)}$. Let $T$ be a linear operator mapping $\mathscr{B}_1$-valued functions into $\mathscr{B}_2$-valued functions satisfying 
\begin{enumerate}
\item[(i)] $T$ is bounded from $L^2(\Rn, \mathscr{B}_1)$ into $L^2(\Rn, \mathscr{B}_2)$. 

\item[(ii)] There exists a kernel function $K(x) \in \mathscr{L}(\mathscr{B}_1, \mathscr{B}_2)$ such that 
\[
\|K(x-y) - K(x)\|_{\mathscr{L}(\mathscr{B}_1, \mathscr{B}_2)} \le C_K |y| |x|^{-n-1}, \quad 2|y| < |x|, 
\]
and for every $f \in L^2(\Rn, \mathscr{B}_1)$ with compact support, 
\[
Tf(x) = \int_{\Rn} K(x-y) f(y) \, dy, \qquad \text{a.e. } x \not\in \supp(f), 
\]
\end{enumerate}
Then for every $p \in (1, \infty)$ and $w \in A_p$, 
\[
\|Tf\|_{L^p(w, \mathscr{B}_2)} 
\lesssim [w]_{A_p}^{\frac72\max\{1, \frac{1}{p-1}\}} \|f\|_{L^p(w, \mathscr{B}_1)}. 
\]
\end{lemma}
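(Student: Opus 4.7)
The plan is to adapt the classical sharp maximal function argument of Alvarez--P\'erez to the Banach-space-valued setting, applying $M^{\#}$ to the scalar quantity $|Tf|_{\mathscr{B}_2}$ and then feeding the output into the weighted scalar machinery (Lemmas~\ref{lem:sharp}, \ref{lem:MM}, \ref{lem:open}) already assembled in this section. A preliminary ingredient I would use is the vector-valued Benedek--Calder\'on--Panzone theorem: hypotheses (i)--(ii) imply that $T$ is weak-$(1,1)$ from $L^1(\Rn,\mathscr{B}_1)$ to $L^{1,\infty}(\Rn,\mathscr{B}_2)$ and, by interpolation with (i), strong $(q,q)$ for $q\in(1,2]$, with an unweighted constant that is $O((q-1)^{-1})$ as $q\downarrow 1$. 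This is needed to run the local estimate below at an exponent close to~$1$.

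The main step is the pointwise sharp-function bound
\begin{equation*}
M^{\#}\bigl(|Tf|_{\mathscr{B}_2}\bigr)(x) \le C_q\, M_q\bigl(|f|_{\mathscr{B}_1}\bigr)(x),\qquad q\in(1,2],
\end{equation*}
where $M_q g := M(|g|^q)^{1/q}$. Fix a cube $Q\ni x$ and split $f=f\mathbf{1}_{2Q}+f\mathbf{1}_{(2Q)^c}=:f_1+f_2$. For the local piece, the $L^q$ boundedness from the previous paragraph gives $\bigl(\fint_Q |Tf_1|_{\mathscr{B}_2}^q\bigr)^{1/q}\lesssim M_q(|f|_{\mathscr{B}_1})(x)$. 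For the far piece, the Banach triangle inequality together with the H\"ormander condition (ii) yields, for all $y,y'\in Q$,
\begin{equation*}
\bigl|Tf_2(y)-Tf_2(y')\bigr|_{\mathscr{B}_2} \le \int_{(2Q)^c} \bigl\|K(y-z)-K(y'-z)\bigr\|_{\mathscr{L}(\mathscr{B}_1,\mathscr{B}_2)}\,|f(z)|_{\mathscr{B}_1}\,dz \lesssim M(|f|_{\mathscr{B}_1})(x),
\end{equation*}
so averaging over $Q$ and using $\bigl||a|_{\mathscr{B}_2}-|b|_{\mathscr{B}_2}\bigr|\le|a-b|_{\mathscr{B}_2}$ gives the claimed pointwise bound.

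Given the pointwise estimate, I would plug it into the weighted scheme. Assuming $Tf\in L^p(w,\mathscr{B}_2)$ a priori (justified by first truncating the kernel, applying the estimate, and passing to the limit using Step~1 and density), Lemma~\ref{lem:MM} gives
\begin{equation*}
\|Tf\|_{L^p(w,\mathscr{B}_2)} \le \bigl\|M(|Tf|_{\mathscr{B}_2})\bigr\|_{L^p(w)} \lesssim [w]_{A_\infty}\bigl\|M^{\#}(|Tf|_{\mathscr{B}_2})\bigr\|_{L^p(w)} \lesssim [w]_{A_p}\,\bigl\|M_q(|f|_{\mathscr{B}_1})\bigr\|_{L^p(w)},
\end{equation*}
using $[w]_{A_\infty}\le[w]_{A_p}$. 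To control the last factor, I would invoke Lemma~\ref{lem:open} (applied with exponent $p$) to obtain $q=1+\gamma\in(1,p)$ with $[w]_{A_{p/q}}\le 2^p[w]_{A_p}$ and $\gamma\simeq[w]_{A_p}^{-\max\{1,1/(p-1)\}}$. Since $\|M_q(|f|_{\mathscr{B}_1})\|_{L^p(w)}=\|M(|f|^q)\|_{L^{p/q}(w)}^{1/q}$, Buckley's bound (Lemma~\ref{lem:sharp}) gives a factor $[w]_{A_{p/q}}^{1/(p-q)}\lesssim[w]_{A_p}^{1/(p-q)}$. Tracking all the powers of $[w]_{A_p}$ (the one from $[w]_{A_\infty}$, the one from Buckley, plus the contribution coming from $1/(p-q)$ as $\gamma\to 0$ via Lemma~\ref{lem:open}) and bundling the unweighted $L^q$ constant $C_q\lesssim(q-1)^{-1}$ with the resulting weight factor produces the stated exponent $\tfrac{7}{2}\max\{1,1/(p-1)\}$.

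The main obstacle is precisely this bookkeeping: the unweighted $L^q$ constant blows up as $q\downarrow 1$, and one must tune $\gamma$ so that $(q-1)^{-1}$ is absorbed into a power of $[w]_{A_p}$ of the correct scaling, namely $\max\{1,1/(p-1)\}$. The openness quantification in Lemma~\ref{lem:open} is designed exactly for this, and the arithmetic of combining ($[w]_{A_\infty}$-factor) $+$ (Buckley exponent) $+$ (self-improvement loss) is where the fractional constant $7/2$ materializes.
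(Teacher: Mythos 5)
Your proposal follows essentially the same route as the paper's proof: both rest on the sharp-maximal estimate $M^{\#}(\|Tf\|_{\mathscr{B}_2})\lesssim C_r\,M_r(\|f\|_{\mathscr{B}_1})$ with $C_r=O((r-1)^{-1})$ as $r\downarrow1$ (the paper imports the weak $(1,1)$ bound and the $r\ge2$ case from Rubio de Francia--Ruiz--Torrea and interpolates, rather than re-running the local/far decomposition, but that is the same mechanism), followed by Lemma \ref{lem:MM}, Buckley's bound, and the quantitative openness of Lemma \ref{lem:open} to tune $r$ and absorb $(r-1)^{-1}$ into $[w]_{A_p}^{\max\{1,1/(p-1)\}}$. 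The bookkeeping you describe is exactly how the exponent $\tfrac72\max\{1,\tfrac{1}{p-1}\}$ arises in the paper.
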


\begin{proof}
It was shown in \cite[p. 41--42]{RRT} that 
\begin{align}
\label{CZO-1} & \|Tf\|_{L^{1, \infty}(\Rn, \mathscr{B}_2)} 
\lesssim C_T \, \|f\|_{L^1(\Rn, \mathscr{B}_1)}, 
\\
\label{CZO-2} & M^{\#}(\|Tf\|_{\mathscr{B}_2})(x) 
\lesssim C_T \, M_r(\|f\|_{\mathscr{B}_1})(x), \quad 2 \le r<\infty, \, x \in \Rn, 
\end{align}
where $C_T := \|T\|_{L^2(\Rn, \mathscr{B}_1) \to L^2(\Rn, \mathscr{B}_2)} + C_K$. Then interpolating between the assumption (i) and \eqref{CZO-2} yields that for any $1<r_0<r<2$, 
\begin{align}\label{CZO-3} 
\|Tf\|_{L^{r_0}(\Rn, \mathscr{B}_2)} 
&\lesssim C_T \, (r_0-1)^{-\frac{1}{r_0}} \|f\|_{L^{r_0}(\Rn, \mathscr{B}_1)}, 
\end{align}
where the implicit constant is independent of $r_0$. As argued in the proof of \eqref{CZO-2}, the inequality \eqref{CZO-3} implies for any $1<r_0<r<2$, 
\begin{align}\label{CZO-4} 
M^{\#}(\|Tf\|_{\mathscr{B}_2})(x) 
&\lesssim C_T \, (r_0-1)^{-\frac{1}{r_0}} M_r(\|f\|_{\mathscr{B}_1})(x), \quad x \in \Rn, 
\end{align}

Now let $p \in (1, \infty)$ and $w \in A_p$. Then, by Lemma \ref{lem:open}, there exists $\gamma \in (0, 2^{-n-3})$ and $ q_0 \in(1, p)$ such that $q_0 = \frac{p}{1+\varepsilon}$, $\frac{p-1}{p(1+\gamma)'}<\varepsilon<\frac{p-1}{(1+\gamma)'}$, $(1+\gamma)' \simeq [w]_{A_p}^{\max\{1, \frac{1}{p-1}\}}$, and $[w]_{A_{q_0}} \leq 2^p [w]_{A_p}$. Set $r:=p/q_0=1+\varepsilon$. If $r \ge 2$, it follows from Lemma \ref{lem:MM} and \eqref{CZO-2} that 
\begin{align*}
\|T f\|_{L^p(w, \mathscr{B}_2)}
&\le \|M(\|T f\|_{\mathscr{B}_2})\|_{L^p(w)}
\lesssim [w]_{A_p} \|M^{\#}(\|Tf\|_{\mathscr{B}_2})\|_{L^p(w)}
\\
&\lesssim [w]_{A_p} \|M_r(\|f\|_{\mathscr{B}_1})\|_{L^p(w)}
\lesssim [w]_{A_p} [w]_{A_{q_0}}^{\frac{1}{r(q_0-1)}} \|f\|_{L^p(w, \mathscr{B}_1)}
\\
&\lesssim [w]_{A_p}^{1+ \frac{1}{p-r}} \|f\|_{L^2(w, \mathscr{B}_1)} 
\le [w]_{A_p}^{\frac52 \max\{1, \frac{1}{p-1}\}} \|f\|_{L^p(w, \mathscr{B}_1)}, 
\end{align*}
since 
\begin{align*}
p-r=p-1-\varepsilon
>p-1-\frac{p-1}{(1+\gamma)'}
=\frac{p-1}{1+\gamma}
>\frac{p-1}{1+2^{-n-3}}
>\frac{p-1}{3/2}. 
\end{align*}
If $1<r<2$, we choose $r_0=1+\frac{p-1}{p(1+\gamma)'}$ and invoke Lemma \ref{lem:MM} and \eqref{CZO-4} to obtain that 
\begin{align*}
\|T f\|_{L^p(w, \mathscr{B}_2)}
&\lesssim \frac{1}{r_0-1} [w]_{A_p} \|M_r(\|f\|_{\mathscr{B}_1})\|_{L^p(w)}
\\
&\lesssim \frac{p(1+\gamma)'}{p-1} [w]_{A_p}^{1 + \frac{1}{p-r}} \|f\|_{L^p(w, \mathscr{B}_1)} 
\\
&\lesssim [w]_{A_p}^{\frac{7}{2} \max\{1, \frac{1}{p-1}\}} \|f\|_{L^p(w, \mathscr{B}_1)}. 
\end{align*}
This completes the proof. 
\end{proof}

\begin{lemma}\label{lem:phif}
Let $\varphi \in \S(\Rn)$ be such that $\int_{\Rn} \varphi \, dx=0$ and $\supp(\widehat{\varphi}) \subset \{\xi \in \Rn : c_1 \le |\xi| \le c_2\}$ for some $0<c_1<c_2 < \infty$. Set $\varphi_k(x) := 2^{kn} \varphi (2^k x)$ for any $k \in \Z$. Then for every $p \in(1, \infty)$ and $w \in A_p$, 
\begin{align}
\label{eq:phik-1} & \bigg\|\Big(\sum_{k \in \Z} |\varphi_k * f|^2\Big)^{\frac12}\bigg\|_{L^p(w)} 
\lesssim [w]_{A_p}^{\max\{\frac12, \frac{1}{p-1}\}} \|f\|_{L^p(w)}, 
\\
\label{eq:phik-33} & \bigg\|\sum_{k \in \Z} \varphi_k * f_k \bigg\|_{L^p(w)} 
\lesssim [w]_{A_p}^{\frac72 \max\{1, \frac{1}{p-1}\}}  
\bigg\|\Big(\sum_{k \in \Z} |f_k|^2\Big)^{\frac12}\bigg\|_{L^p(w)}.  
\\ 
\label{eq:phik-3} & \bigg\| \Big(\sum_{k \in \Z} |\varphi_k * f_k|^2\Big)^{\frac12} \bigg\|_{L^p(w)} 
\lesssim [w]_{A_p}^{\max\{\frac12, \frac{1}{p-1}\}} 
\bigg\|\Big(\sum_{k \in \Z} |f_k|^2\Big)^{\frac12}\bigg\|_{L^p(w)}.  
\end{align}
If we assume in addition that $\sum_{k \in \Z} |\widehat{\varphi}(2^{-k} \xi)|^2 = C_{\varphi}>0$ for all $\xi \neq 0$, then 
\begin{align}\label{eq:phik-2} 
\|f\|_{L^p(w)} \lesssim [w]_{A_p}^{\max\{1, \frac{1}{2(p-1)}\}} 
\bigg\|\Big(\sum_{k \in \Z} |\varphi_k * f|^2\Big)^{\frac12}\bigg\|_{L^p(w)}. 
\end{align}
\end{lemma}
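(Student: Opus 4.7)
My plan is to treat \eqref{eq:phik-1}, \eqref{eq:phik-33}, and \eqref{eq:phik-3} as three vector-valued Calder\'on--Zygmund estimates of increasing complexity, and then obtain \eqref{eq:phik-2} by duality from \eqref{eq:phik-1} through the Calder\'on reproducing formula.

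For \eqref{eq:phik-1}, I would view $Tf:=(\varphi_k*f)_{k\in\Z}$ as a vector-valued operator from $\CC$-valued to $\ell^2$-valued functions. The frequency support of $\widehat\varphi$ makes $\sum_k|\widehat\varphi(2^{-k}\xi)|^2$ uniformly bounded (only $O(\log(c_2/c_1))$ terms contribute at each $\xi\neq0$), so Plancherel gives $\|Tf\|_{L^2(\Rn,\ell^2)}\lesssim\|f\|_{L^2}$. The kernel $K(x)=(\varphi_k(x))_k$ satisfies $\|K(x)\|_{\ell^2}\lesssim|x|^{-n}$ and $\|K(x-y)-K(x)\|_{\ell^2}\lesssim|y|\,|x|^{-n-1}$, by splitting the sum at the scale $2^k\sim|x|^{-1}$ and using the rapid decay of $\varphi$. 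Lemma~\ref{lem:CZO} applied with $\mathscr B_1=\CC$, $\mathscr B_2=\ell^2$ then yields a first bound with exponent $\frac72\max\{1,1/(p-1)\}$. To sharpen to $\max\{1/2,1/(p-1)\}$, I would invoke the sparse $\ell^2$-form domination $\bigl(\sum_k|\varphi_k*f(x)|^2\bigr)^{1/2}\lesssim\sum_{j=1}^{3^n}\A_{\S_j}^2 f(x)$ (with $\A_\S^2$ as in \eqref{eq:AS}) valid for some sparse families $\S_j$; this is the square-function analogue of Lerner's sparse formula, proved by the usual stopping-time argument on a vector-valued Calder\'on--Zygmund decomposition. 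The sharp exponent then follows from \eqref{Mfk-2} with $\gamma=2$.

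For \eqref{eq:phik-33}, the synthesis map $S(f_k):=\sum_k\varphi_k*f_k$ is a vector-valued CZ operator from $L^2(\Rn,\ell^2)$ to $L^2(\Rn)$, whose $\mathscr L(\ell^2,\CC)$-valued kernel at $x$ has norm $(\sum_k|\varphi_k(x)|^2)^{1/2}\lesssim|x|^{-n}$ and the analogous smoothness bound. A direct application of Lemma~\ref{lem:CZO} with $\mathscr B_1=\ell^2$, $\mathscr B_2=\CC$ delivers exactly the stated exponent $\frac72\max\{1,1/(p-1)\}$; no sparse upgrade is needed. The estimate \eqref{eq:phik-3} concerns the diagonal operator $\widetilde T(f_k):=(\varphi_k*f_k)_k$, which is again a vector-valued CZ operator on $\ell^2$-valued functions, with kernel $K(x)$ acting diagonally on $\ell^2$ of operator norm $\sup_k|\varphi_k(x)|\lesssim|x|^{-n}$. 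Lemma~\ref{lem:CZO} with $\mathscr B_1=\mathscr B_2=\ell^2$ yields a first bound with exponent $\frac72\max\{1,1/(p-1)\}$; to reach $\max\{1/2,1/(p-1)\}$ I would run the stopping-time argument of \eqref{eq:phik-1} on the scalar majorant $F:=(\sum_k|f_k|^2)^{1/2}$, obtaining a diagonal sparse bound $\bigl(\sum_k|\varphi_k*f_k(x)|^2\bigr)^{1/2}\lesssim\sum_j\A_{\S_j}^2 F(x)$, and conclude via \eqref{Mfk-2}.

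Finally, \eqref{eq:phik-2} comes by duality. The normalization $\sum_k|\widehat\varphi(2^{-k}\xi)|^2=C_\varphi$ together with $\overline{\widehat{\varphi_k}}=\widehat{\widetilde{\varphi_k}}$, where $\widetilde{\varphi_k}(x):=\overline{\varphi_k(-x)}$, gives the Calder\'on reproducing identity $C_\varphi f=\sum_k\widetilde{\varphi_k}*\varphi_k*f$ in $L^2$. Pairing with $g\in L^{p'}(w^{1-p'})$ and transferring $\widetilde{\varphi_k}$ onto the dual side yields $C_\varphi\int fg\,dx=\sum_k\int(\varphi_k*f)(\varphi_k*g)\,dx$. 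Cauchy--Schwarz in $k$, H\"older, and \eqref{eq:phik-1} applied at the dual pair $(p',w^{1-p'})$, for which $[w^{1-p'}]_{A_{p'}}=[w]_{A_p}^{1/(p-1)}$, produce the factor $[w]_{A_p}^{\frac{1}{p-1}\max\{1/2,1/(p'-1)\}}=[w]_{A_p}^{\max\{1,1/(2(p-1))\}}$, and taking the supremum over $g$ with $\|g\|_{L^{p'}(w^{1-p'})}\le 1$ gives \eqref{eq:phik-2}. The principal obstacle is the sparse $\ell^2$-form domination underlying the sharp exponents in \eqref{eq:phik-1} and \eqref{eq:phik-3}: the scalar case is by now standard, but the diagonal version needed in \eqref{eq:phik-3} requires the stopping-time construction to be driven by the $\ell^2$-majorant, and one must monitor how the $\ell^2$-mass (rather than a scalar quantity) is redistributed across the stopping cubes---this is the genuinely delicate point.
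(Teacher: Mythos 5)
Your argument is correct in outline, and for \eqref{eq:phik-33} and \eqref{eq:phik-2} it coincides with the paper's proof (vector-valued Calder\'on--Zygmund theory via Lemma \ref{lem:CZO} with $\mathscr{B}_1=\ell^2$, $\mathscr{B}_2=\CC$, and the duality/reproducing-formula computation with the dual weight relation $[w^{1-p'}]_{A_{p'}}=[w]_{A_p}^{1/(p-1)}$, respectively). For \eqref{eq:phik-1} and \eqref{eq:phik-3} you take a genuinely different route. The paper proves \eqref{eq:phik-1} by checking that $\varphi/C'_\varphi$ lies in Wilson's class $\mathcal{C}_{\beta,1}$, so that $S_\varphi f\lesssim G_\beta f$ pointwise, and then quoting Lerner's sharp weighted bound for the intrinsic square function; you instead propose a sparse $\ell^2$-form domination $S_\varphi f\lesssim\sum_j\A_{\S_j}^2 f$ followed by \eqref{Mfk-2} with $\gamma=2$. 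Both yield $\max\{\tfrac12,\tfrac1{p-1}\}$; your route requires establishing (or quoting) the sparse domination of the square function, which is by now standard but is a nontrivial external input that you only sketch, whereas the paper's route is a one-line pointwise comparison plus a quoted sharp estimate. For \eqref{eq:phik-3} you are doing considerably more work than needed: the uniform pointwise bound $|\varphi_k*f_k|\lesssim Mf_k$ (which follows from $|\varphi(x)|\lesssim(1+|x|)^{-n-1}$) reduces the claim immediately to the sharp vector-valued Fefferman--Stein inequality of Lemma \ref{lem:M-vector} with $r=2$, giving exactly $\max\{\tfrac12,\tfrac1{p-1}\}$. This is what the paper does, and it entirely avoids the ``genuinely delicate point'' you flag about running a stopping-time construction driven by the $\ell^2$-majorant; in fact the diagonal sparse bound you want already follows from the domination \eqref{Mfk-1} applied to $\{Mf_k\}$, so no new stopping-time argument is required. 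I would recommend replacing that part of your plan with the pointwise maximal bound.
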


\begin{proof}
Since $\varphi \in \S(\Rn)$, one can check that there exists $C'_{\varphi}>0$ such that for any $\beta \in (0, 1]$ and any $y \in \Rn$, $|\varphi(x)| \le C'_{\varphi} (1+|x|)^{-n-\beta}$ and
\begin{align*}
|\varphi(x-y) - \varphi(x)| 
&\le C'_{\varphi} \bigg[\frac{|y|^{\beta}}{(1+|x|)^{n+1+\beta}} + \frac{|y|^{\beta}}{(1+|x-y|)^{n+1+\beta}}\bigg]. 
\end{align*}
Recalling that $\int_{\Rn} \varphi \, dx=0$, we see that $\varphi/C'_{\varphi} \in \mathcal{C}_{\beta, 1}$, which is defined in \cite[Definition 6.2]{Wil08}. Then by \cite[Theorem 6.3]{Wil08}, 
\begin{align}\label{eq:GGf}
S_{\varphi} f(x) 
:= \Big(\sum_{k \in \Z} |\varphi_k * f(x)|^2\Big)^{\frac12} 
\le \widetilde{\sigma}_{\beta, 1} f(x) 
\lesssim \sigma_{\beta}f(x) 
\lesssim G_{\beta} f(x), \quad x \in \Rn, 
\end{align}
where the implicit constant is independent of $f$ and $x$. Thus, \eqref{eq:phik-1} follows from \eqref{eq:GGf} and the sharp weighted estimate for $G_{\beta}$ in \cite[Theorem 1.1]{Ler11}.

To show \eqref{eq:phik-33}, we will use vector-valued singular integrals. By the support of $\widehat{\varphi}$, there exist $j_0, j_1 \in \N$ such that $\supp(\widehat{\varphi}_{j+k}) \cap \supp(\widehat{\varphi}_j) = \emptyset$ whenever $k \le -j_0-1$ or $k \ge j_1+1$. This and Plancherel's identity give 
\begin{align*}
\int_{\Rn} & \Big|\sum_{k \in \Z} \varphi_k*f_k(x) \Big|^2 dx
=\int_{\Rn} \Big|\sum_{k \in \Z} \widehat{\varphi}_k(\xi) \widehat{f}_k(\xi) \Big|^2 d\xi 
\\ 
&=\sum_{k, j \in \Z} \int_{\Rn} \widehat{\varphi}_k(2^{-k}\xi) \widehat{f}_k(\xi) \widehat{\varphi}_j(2^{-j}\xi) \overline{\widehat{f}_k(\xi)} d\xi 
\\ 
&=\sum_{j \in \Z} \sum_{k=j-j_0}^{j+j_1} \int_{\Rn} \widehat{\varphi}_k(2^{-k}\xi) \widehat{f}_k(\xi) \widehat{\varphi}_j(2^{-j}\xi) \overline{\widehat{f}_k(\xi)} d\xi 
\\ 
&=\sum_{j \in \Z} \sum_{k=-j_0}^{j_1} \int_{\Rn} \widehat{\varphi}_{j+k}(2^{-j-k}\xi) 
\widehat{\varphi}_j(2^{-j}\xi) \widehat{f}_{j+k}(\xi)  \overline{\widehat{f}_k(\xi)} d\xi 
\\ 
&\lesssim \sum_{k=-j_0}^{j_1} \int_{\Rn} \sum_{j \in \Z} |\widehat{f}_{j+k}(\xi)|  |\widehat{f}_k(\xi)| d\xi 
\\ 
&\lesssim \sum_{k=-j_0}^{j_1} \bigg(\int_{\Rn} \sum_{j \in \Z} |\widehat{f}_{j+k}(\xi)|^2 d\xi\bigg)^{\frac12} 
\bigg(\int_{\Rn} \sum_{j \in \Z} |\widehat{f}_j(\xi)|^2 d\xi\bigg)^{\frac12}
\\
&\lesssim \|\{f_k\}_{k \in \Z}\|_{L^2(\Rn, \ell^2)}^2. 
\end{align*}
This means that the operator $T$ defined by $T(\{f_k\}_{k \in \Z}) := \sum_{k \in \Z} \varphi_k*f_k$, is a bounded linear operator from $L^2(\Rn, \ell^2)$ to $L^2(\Rn)$, with the kernel $K(x)=\{\varphi_k(x)\}_{k \in \Z}$ satisfying $\|\nabla K(x)\|_{\mathscr{L}(\ell^2, \mathbb{C})} \lesssim |x|^{-n-1}$ for all $x \neq 0$. Hence, Lemma \ref{lem:CZO} implies \eqref{eq:phik-33}. 

Note that the inequality \eqref{eq:phik-3} is a consequence of Lemma \ref{lem:M-vector} and that $|\varphi_k * f_k| \lesssim Mf_k$ uniformly in $k \in \Z$. 

Finally, to get \eqref{eq:phik-2}, we use Parseval's identity and $\sum_{k \in \Z} |\widehat{\varphi}_k(\xi)|^2=\sum_{k \in \Z} |\widehat{\varphi}(2^{-k} \xi)|^2 = C_{\varphi}$ to get that for any $f, g \in L^2(\Rn)$, 
\begin{align*}
\int_{\Rn} \sum_{k \in \Z} \varphi_k*f(x) \, \varphi_k*g(x) \, dx 
=C_{\varphi} \int_{\Rn} f(x) g(x) \, dx. 
\end{align*}
Then it follows that for $g \in \S(\Rn)$ with $\|g\|_{L^{p'}(w^{1-p'})}=1$, 
\begin{align*}
\bigg|\int_{\Rn} f(x) g(x) \, dx\bigg|
&\lesssim \bigg|\int_{\Rn} \sum_{k \in \Z} \varphi_k*f(x) \, \varphi_k*g(x) \, dx\bigg|
\\ 
&\le \int_{\Rn} S_{\varphi}f(x) S_{\varphi} g(x) \, dx 
\le \|S_{\varphi}f\|_{L^p(w)} \|S_{\varphi} g\|_{L^{p'}(w^{1-p'})}
\\
&\lesssim [w^{1-p'}]_{A_{p'}}^{\max\{\frac12, \frac{1}{p'-1}\}} \|S_{\varphi}f\|_{L^p(w)} 
=[w]_{A_p}^{\max\{1, \frac{1}{2(p-1)}\}} \|S_{\varphi}f\|_{L^p(w)}, 
\end{align*}
where we have used \eqref{eq:phik-1} in the last inequality. This gives at once \eqref{eq:phik-2}. 
\end{proof}

\begin{lemma}\label{lem:Mar}
Given $\varepsilon>0$ and a pairwise disjoint family of cubes $\{Q_j\}$, we set 
\begin{align}\label{Me}
\Omega := \bigcup_j Q_j 
\quad\text{ and }\quad 
\mathfrak{M}_{\varepsilon}(x) 
:= \sum_j \frac{\ell(Q_j)^{n+\varepsilon}}{|x-x_{Q_j}|^{n+\varepsilon} + \ell(Q_j)^{n+\varepsilon}}, \quad x \in \Rn.
\end{align} 
Then $\|\mathfrak{M}_{\varepsilon}\|_{L^2(w)} \lesssim [w]_{A_2} w(\Omega)^{\frac12}$ for any $w \in A_2$.
\end{lemma}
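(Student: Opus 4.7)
The plan is to dualize and reduce the problem to Buckley's sharp $L^2(w^{-1})$-bound for the Hardy--Littlewood maximal operator $M$ (which is precisely Lemma \ref{lem:sharp} at $p=2$). By $L^2(w)$-duality, it suffices to prove, for every non-negative $g$ with $\|g\|_{L^2(w)}=1$,
\[
\int_{\Rn} \mathfrak{M}_{\varepsilon}(x)\, g(x)\, w(x)\, dx \lesssim_{n,\varepsilon} [w]_{A_2}\, w(\Omega)^{1/2}.
\]
We may assume the collection $\{Q_j\}$ is finite (then pass to a limit by monotone convergence), so that there are no a priori integrability issues.

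First I would dyadically decompose each bump $\chi_j(x) := \ell(Q_j)^{n+\varepsilon}/(|x - x_{Q_j}|^{n+\varepsilon} + \ell(Q_j)^{n+\varepsilon})$. Since $|x - x_{Q_j}| \sim 2^k\ell(Q_j)$ on the annulus $2^{k+1}Q_j \setminus 2^k Q_j$, one has the pointwise estimate
\[
\chi_j(x) \lesssim_n \sum_{k=0}^{\infty} 2^{-k(n+\varepsilon)}\, \mathbf{1}_{2^{k+1}Q_j}(x).
\]
For each $k$, the definition of $M$ gives $\fint_{2^{k+1}Q_j} gw \le M(gw)(y)$ for every $y \in 2^{k+1}Q_j$, and averaging the (constant in $y$) left-hand side over $y$ in the smaller cube $Q_j \subset 2^{k+1}Q_j$ yields
\[
\int_{2^{k+1}Q_j} g w\, dx \;\le\; |2^{k+1}Q_j|\cdot \fint_{Q_j} M(gw)\, dy \;=\; 2^{(k+1)n}\int_{Q_j} M(gw)\, dy.
\]
Summing in $k$ the extra $2^{-k\varepsilon}$ decay produces a convergent geometric series with constant $\sim 1/\varepsilon$, giving
\[
\int \chi_j g w\, dx \;\lesssim_{n,\varepsilon}\; \int_{Q_j} M(gw)\, dy.
\]

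Using the disjointness of the $\{Q_j\}$, Cauchy--Schwarz in $w$, and the identity $[w^{-1}]_{A_2} = [w]_{A_2}$, together with Lemma \ref{lem:sharp} (Buckley at $p=2$) applied to $w^{-1} \in A_2$:
\[
\int \mathfrak{M}_{\varepsilon} g w\, dx \lesssim_{n,\varepsilon} \int_{\Omega} M(gw)\, dy \le w(\Omega)^{1/2}\, \|M(gw)\|_{L^2(w^{-1})} \lesssim [w]_{A_2}\, \|gw\|_{L^2(w^{-1})}\, w(\Omega)^{1/2}.
\]
Since $\|gw\|_{L^2(w^{-1})} = \|g\|_{L^2(w)} = 1$, taking the supremum over admissible $g$ completes the proof.

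The main subtlety is the choice of auxiliary maximal function. The natural attempt to control $\int_{2^{k+1}Q_j}gw$ via the centered $w$-weighted maximal $M^w g$ would force one to estimate the ratio $w(2^{k+1}Q_j)/w(Q_j)$ through $A_2$-doubling, which contributes a factor $[w]_{A_2}\, 2^{2(k+1)n}$; the resulting series $\sum_k 2^{-k(n+\varepsilon)}\cdot 2^{2(k+1)n}$ diverges whenever $\varepsilon \le n$. Passing instead to the \emph{unweighted} Hardy--Littlewood maximal of $gw$ trades this bad doubling factor for the benign volume ratio $|2^{k+1}Q_j|/|Q_j| = 2^{(k+1)n}$, which combines with $2^{-k(n+\varepsilon)}$ to give the summable $2^{-k\varepsilon}$, and concentrates all $[w]_{A_2}$-dependence into the single Buckley application on the dual weight $w^{-1}$.
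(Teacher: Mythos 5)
Your proof is correct, but it follows a genuinely different route from the paper's. The paper proves the pointwise bound $\mathfrak{M}_{\varepsilon}(x) \lesssim \sum_j M\mathbf{1}_{Q_j}(x)^{(n+\varepsilon)/n}$ and then invokes the sharp weighted vector-valued Fefferman--Stein inequality (Lemma \ref{lem:M-vector}) with exponents $r=\frac{n+\varepsilon}{n}$ and $p=\frac{2(n+\varepsilon)}{n}$, using disjointness only at the very end to identify $\int(\sum_j\mathbf{1}_{Q_j})^2w\,dx$ with $w(\Omega)$; the exponent $\max\{\frac{n}{n+\varepsilon},\frac{n}{n+2\varepsilon}\}=\frac{n}{n+\varepsilon}$ on $[w]_{A_{2(n+\varepsilon)/n}}\le[w]_{A_2}$ then rescales to the linear dependence $[w]_{A_2}$. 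You instead dualize, perform an annular decomposition of each bump, pass to the unweighted maximal function of $gw$, and apply Buckley's bound (Lemma \ref{lem:sharp} at $p=2$) to the dual weight $w^{-1}$, using $[w^{-1}]_{A_2}=[w]_{A_2}$. Each step of yours checks out: the averaging trick $\int_{2^{k+1}Q_j}gw\le 2^{(k+1)n}\int_{Q_j}M(gw)\,dy$ is valid for the uncentered maximal operator as defined in the paper, the $2^{-k\varepsilon}$ decay makes the series summable, disjointness collapses $\sum_j\int_{Q_j}$ into $\int_\Omega$, and the Cauchy--Schwarz/duality bookkeeping is right. Your approach is more self-contained (it needs only the scalar Buckley estimate rather than the vector-valued sparse-domination machinery behind Lemma \ref{lem:M-vector}), and your closing remark correctly identifies why the naive weighted-maximal route fails; the paper's approach is shorter given that Lemma \ref{lem:M-vector} is already established for other purposes, and avoids duality altogether. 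Both arguments yield the same quantitative bound $[w]_{A_2}\,w(\Omega)^{1/2}$, with constants depending on $n$ and $\varepsilon$, which is all the lemma requires.
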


\begin{proof}
Note that 
\begin{align*}
\mathfrak{M}_{\varepsilon}(x)
\lesssim \sum_j \bigg[\frac{1}{(|x-x_{Q_j}|/\ell(Q_j))^n +1} \bigg]^{n+\varepsilon}
\lesssim \sum_j M\mathbf{1}_{Q_j}(x)^{\frac{n+\varepsilon}{n}}, 
\end{align*}
which together with Lemma \ref{lem:M-vector} gives that for any $w \in A_2$, 
\begin{align*}
&\bigg(\int_{\Rn} \mathfrak{M}_{\varepsilon}(x)^2\, w(x) \, dx \bigg)^{\frac{n}{2(n+\varepsilon)}}
\lesssim \bigg(\int_{\Rn} \Big(\sum_j M\mathbf{1}_{Q_j}(x)^{\frac{n+\varepsilon}{n}} \Big)^{\frac{n}{n+\varepsilon} \cdot \frac{2(n+\varepsilon)}{n}} w(x) \, dx \bigg)^{\frac{n}{2(n+\varepsilon)}} 
\\
&\quad \lesssim [w]_{A_{\frac{2(n+\varepsilon)}{n}}}^{\max\{\frac{n}{n+\varepsilon}, \frac{1}{\frac{2(n+\varepsilon)}{n}-1}\}}
\bigg(\int_{\Rn} \Big(\sum_j \mathbf{1}_{Q_j}(x) \Big)^{2} w(x) \, dx \bigg)^{\frac{n}{2(n+\varepsilon)}}
\le [w]_{A_2}^{\frac{n}{n+\varepsilon}} w(\Omega)^{\frac{n}{2(n+\varepsilon)}}, 
\end{align*}
where we have use the disjointness of $\{Q_j\}$. This implies the desired estimate. 
\end{proof}

Given $\Omega \in L^1(\Sn)$, the rough maximal operator $M_{\Omega}$ and singular integral $T_{\Omega}$ are defined by
\begin{align}
M_{\Omega} f(x) &:= \sup_{r>0} \fint_{B(0, r)} |\Omega(y')| |f(x-y)| \, dy, 
\\
T_{\Omega} f(x) &:= \mathrm{p. v. } \int_{\Rn} \frac{\Omega(y')}{|y|^n}f(x-y) \, dy. 
\end{align}

\begin{theorem}\label{thm:Tom}
Let $q \in (1, \infty)$ and $\Omega \in L^q(\Sn)$ be such that $\int_{\Sn} \Omega \, d\sigma=0$. Then for all $p \in (q', \infty)$ and for all $w \in A_{p/q'}$, 
\begin{align}
\label{MOTO-1} \|M_{\Omega}f\|_{L^p(w)}
&\lesssim [w]_{A_{p/q'}}^{\frac{1}{p-q'}} \|f\|_{L^p(w)}, 
\\
\label{MOTO-2} \|T_{\Omega}f\|_{L^p(w)}
&\lesssim [w]_{A_{p/q'}}^{\max\{1, \frac{1}{p/q'-1}\} + \max\{1, \frac{1}{p-q'}\}} \|f\|_{L^p(w)}. 
\end{align}
Moreover, the vector-valued inequality holds for $q>2$: 
\begin{align}
\label{MOTO-3} \bigg\|\Big(\sum_j |M_{\Omega}f_j|^2\Big)^{\frac12}\bigg\|_{L^p(w)}
&\lesssim [w]_{A_{p/q'}}^{\frac{1}{p-q'}} \bigg\|\Big(\sum_j |f_j|^2\Big)^{\frac12}\bigg\|_{L^p(w)}. 
\end{align}
\end{theorem}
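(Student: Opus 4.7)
The plan is to reduce each of the three bounds to building blocks already in the paper: Buckley's sharp estimate for $M$ (Lemma~\ref{lem:sharp}), the Fefferman-Stein sharp-function bound for $M$ (Lemma~\ref{lem:MM}), and the sharp weighted vector-valued maximal inequality (Lemma~\ref{lem:M-vector}), together with the classical rough-kernel pointwise sharp-function estimate.

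First I would prove~\eqref{MOTO-1} via H\"older's inequality on $\Sn$, which gives the pointwise control $M_{\Omega}f(x) \le \|\Omega\|_{L^q(\Sn)} M_{q'}f(x)$, where $M_{q'}f := M(|f|^{q'})^{1/q'}$. Writing $p_1 := p/q' > 1$ and applying Lemma~\ref{lem:sharp} to $M$ on $L^{p_1}(w)$ yields $\|M(|f|^{q'})\|_{L^{p_1}(w)} \lesssim [w]_{A_{p_1}}^{1/(p_1-1)} \||f|^{q'}\|_{L^{p_1}(w)}$, and taking $q'$-th roots converts the exponent $1/(q'(p_1-1))$ into $1/(p-q')$, matching the claim.

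Next I would address~\eqref{MOTO-2} by first establishing the Alvarez--P\'erez-type pointwise sharp-function estimate
\begin{align*}
M^{\#}(T_{\Omega}f)(x) \lesssim \|\Omega\|_{L^q(\Sn)} M_{q'}f(x).
\end{align*}
Over a cube $Q \ni x$, I would split $f = f\mathbf{1}_{2Q} + f\mathbf{1}_{(2Q)^c}$: the local piece is handled via H\"older and the unweighted $L^{q'} \to L^{q'}$ bound for $T_{\Omega}$ (obtained from an annular decomposition $K = \sum_k K_k$ with the Fourier-decay estimate $|\widehat{K_k}(\xi)| \lesssim \min\{|2^k\xi|, |2^k\xi|^{-\delta(q)}\}$ together with Cotlar--Stein), while the nonlocal piece is treated using the $L^{q'}$-averaged regularity of $K$. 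Once this pointwise bound is in hand, Lemma~\ref{lem:MM} gives $\|T_{\Omega}f\|_{L^p(w)} \le \|M(T_{\Omega}f)\|_{L^p(w)} \lesssim [w]_{A_{\infty}} \|M_{q'}f\|_{L^p(w)}$, and combining with~\eqref{MOTO-1} and $[w]_{A_\infty} \le [w]_{A_{p/q'}}$ yields the composite weight exponent. Matching the precise form $\max\{1, 1/(p/q'-1)\} + \max\{1, 1/(p-q')\}$ (rather than $1 + 1/(p-q')$) requires retaining the extra $\max$ with $1$ in the endpoint regime of both the Fefferman-Stein step and Buckley's bound, which one secures by treating separately the cases $p_1 \ge 2$ and $p_1 < 2$.

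Finally, for~\eqref{MOTO-3} with $q > 2$, I would set $r := 2/q' > 1$ and $g_j := |f_j|^{q'}$, so that the pointwise control gives $(\sum_j |M_{\Omega}f_j|^2)^{1/2} \lesssim [(\sum_j (Mg_j)^r)^{1/r}]^{r/2}$. Since $pr/2 = p_1 = p/q'$ and $w \in A_{p_1}$, Lemma~\ref{lem:M-vector} applied with exponents $(p_1, r)$ produces the desired bound; unwinding yields the exponent $\max\{1/2, 1/(p-q')\}$, coinciding with $1/(p-q')$ in the nontrivial range $p - q' \le 2$. The main technical obstacle is the rough-kernel pointwise sharp-function estimate underlying~\eqref{MOTO-2}: one must control the far-field contribution of $T_{\Omega}f$ without pointwise kernel bounds, which requires exploiting the annular Fourier-decay estimates for each $K_k$ and averaging to extract an $L^{q'}$-based maximal function on the right-hand side.
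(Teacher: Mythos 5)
Your treatment of \eqref{MOTO-1} coincides with the paper's: H\"older's inequality on $\Sn$ gives $M_{\Omega}f \le \|\Omega\|_{L^q(\Sn)} M(|f|^{q'})^{1/q'}$, and Lemma~\ref{lem:sharp} at the exponent $p/q'$ yields the power $\frac{1}{p-q'}$. For \eqref{MOTO-3} you take a genuinely different and more direct route: the paper deduces it from \eqref{MOTO-1} by extrapolation (Theorem~\ref{thm:lim} and Remark~\ref{rem:pp}), whereas you apply Lemma~\ref{lem:M-vector} with exponents $(p/q',\,2/q')$. Your computation is correct, but it produces the exponent $\max\{\frac12,\frac{1}{p-q'}\}$, which matches the stated $\frac{1}{p-q'}$ only when $p-q'\le 2$; since the exponent in Lemma~\ref{lem:M-vector} is sharp, your route cannot do better for large $p$, so you are proving a correct variant of \eqref{MOTO-3} rather than the literal statement.

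The genuine gap is in \eqref{MOTO-2}. Your argument rests entirely on the pointwise bound $M^{\#}(T_{\Omega}f)\lesssim \|\Omega\|_{L^q(\Sn)}M(|f|^{q'})^{1/q'}$, and this is precisely what is unavailable for rough kernels. Controlling the nonlocal contribution $\fint_Q\big|\int_{(2Q)^c}[K(y-z)-K(x_Q-z)]f(z)\,dz\big|\,dy$ with an $L^{q'}$-average of $f$ on the right requires an $L^q$-H\"ormander condition on $K(y)=\Omega(y')|y|^{-n}$. The angular increment $\Omega((x-y)')-\Omega(x')$ can only be estimated through the $L^q(\Sn)$-modulus of continuity of $\Omega$, and a generic $\Omega\in L^q(\Sn)$ admits no quantitative (Dini-summable) modulus, so the dyadic sum in the H\"ormander condition diverges; your phrase ``the $L^{q'}$-averaged regularity of $K$'' names exactly the property the kernel lacks. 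This is why the paper does not argue through $M^{\#}$: it uses Watson's smoothed decomposition $T_{\Omega}=T_1+\sum_{j\ge1}(T_{j+1}-T_j)$, where each $K_j=\sum_k\nu_k*\varphi_{k-j}$ \emph{does} satisfy the $L^q$-H\"ormander condition uniformly in $j$, obtains from sparse domination the uniform bound $\|T_j\|_{L^p(v)\to L^p(v)}\lesssim[v]_{A_{p/q'}}^{\max\{1,\frac{1}{p-q'}\}}$ together with the unweighted decay $\|T_{j+1}-T_j\|_{L^p\to L^p}\lesssim 2^{-\delta j}$, and then sums the series by Stein--Weiss interpolation with change of measure (Theorem~\ref{thm:SW}) combined with the sharp reverse H\"older inequality (Lemma~\ref{lem:open}); that summation is the source of the first summand $\max\{1,\frac{1}{p/q'-1}\}$ in the exponent of \eqref{MOTO-2}. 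Indeed, if your pointwise sharp-function estimate were true, Lemma~\ref{lem:MM} would give $\|T_{\Omega}\|_{L^p(w)\to L^p(w)}\lesssim[w]_{A_{\infty}}[w]_{A_{p/q'}}^{\frac{1}{p-q'}}$, which is strictly stronger than \eqref{MOTO-2} (dramatically so as $p\to q'$) and well beyond the known quantitative theory for rough singular integrals; the very shape of the stated exponent signals that the sharp-function route is closed. To repair the proof you must carry out the $T_j$ decomposition (or an equivalent sparse-form argument with a summation over scales) rather than a single sharp-function estimate.
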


\begin{proof}
By definition and H\"{o}lder's inequality, one has 
\begin{align*}
M_{\Omega}f(x) \le \|\Omega\|_{L^q(\Sn)} \, M_{q'}f(x), \quad x \in \Rn, 
\end{align*}
which together with \eqref{eq:sharp} immediately gives \eqref{MOTO-1}. Then \eqref{MOTO-3} is a consequence of \eqref{MOTO-1}, Theorem \ref{thm:lim}, and Remark \ref{rem:pp}.

To treat \eqref{MOTO-2}, we choose a radial nonnegative function $\varphi \in \mathscr{C}_c^{\infty}(\Rn)$ such that $\supp \varphi \subset \{|x|< 1/4\}$ and $\int_{\Rn} \varphi \, dx =1$. Set $\varphi_j(x) := 2^{-nj} \varphi(2^{-j} x)$ and $\nu_j(x) := \frac{\Omega(x')}{|x|^n} \mathbf{1}_{\{2^j \le |x|<2^{j+1}\}}(x)$ for each $j \in \Z$. Define 
\[
T_j f := K_j * f \quad\text{ and }\quad 
K_j := \sum_{k \in \Z} \nu_k * \varphi_{k-j}, \quad j \in \Z. 
\]
Then, 
\begin{align}\label{TjTj}
T_{\Omega} = T_1 + \sum_{j=1}^{\infty} (T_{j+1} - T_j). 
\end{align}
It was proved in \cite[p. 396]{Wat} that for some $\delta_0>0$, 
\begin{align}
\label{TTL2-1} \|T_j f\|_{L^2(\Rn)} \lesssim \|f\|_{L^2(\Rn)}, \quad j \ge 1, 
\\
\label{TTL2-2} \|(T_{j+1} - T_j) f\|_{L^2(\Rn)} \lesssim 2^{-\delta_0 j} \|f\|_{L^2(\Rn)}, \quad j \ge 1, 
\end{align}
where the implicit constants are independent of $j$. 

On the other hand, it follows from \cite[Lemma 2]{Wat} that 
\begin{equation}\label{eq:LrHor}
\text{$K_j$ satisfies the $L^q$-H\"{o}rmander condition},  
\end{equation}
which together with \cite[Theorem 2]{Wat} gives 
\begin{align}
\label{Tj-weak} & T_j \text{ is bounded from $L^1(\Rn)$ to $L^{1, \infty}(\Rn)$}, 
\\
\label{Tj-Lp} & T_j \text{ is bounded on } L^p(\Rn), \quad 1<p<\infty. 
\end{align}
In particular, \eqref{Tj-Lp} implies 
\begin{align}\label{TjLr}
T_j \text{ is bounded from $L^{q'}(\Rn)$ to $L^{q', \infty}(\Rn)$}, \quad 1<q<\infty, 
\end{align}
and the interpolation theorem, \eqref{TTL2-2}, and \eqref{Tj-Lp} yield that for some $\delta>0$, 
\begin{align}\label{TTLp} 
\|(T_{j+1} - T_j) f\|_{L^p(\Rn)} \lesssim 2^{-\delta j} \|f\|_{L^p(\Rn)}, \quad j \ge 1, \, 1<p<\infty. 
\end{align}
Hence, by \eqref{eq:LrHor}, \eqref{TjLr}, and \cite[Theorem 1.2]{Li}, we obtain that for any $f \in L^{\infty}_c(\Rn)$ there exists a sparse family $\S_j$ such that 
\begin{align}\label{TjAS}
T_j f(x) 
\lesssim \sum_{Q \in \S_j} \langle |f|^{q'} \rangle_Q^{\frac{1}{q'}} \mathbf{1}_Q(x)
=\A_{\S_j}^{\frac{1}{q'}} (|f|^{q'})(x)^{\frac{1}{q'}}, \quad\text{a.e. } x \in \Rn, 
\end{align}
where the dyadic operator $\A_{\S}^{\gamma}$ is defined in \eqref{eq:AS} and the implicit constant is independent of $j$. Accordingly, we use \eqref{Mfk-2}, \eqref{TjAS}, and a density argument to arrive at 
\begin{align}\label{TjLpv}
\|T_j f\|_{L^p(v)} 
\lesssim [v]_{A_{p/q'}}^{\max\{1, \frac{1}{p-q'}\}} \|f\|_{L^p(v)}, 
\quad\forall p \in (q', \infty), \, v \in A_{p/q'}. 
\end{align}

Now fix $p \in (q', \infty)$ and $w \in A_{p/q'}$. By Lemma \ref{lem:open}, there exists $\gamma \in (0, 1)$ such that 
\[
(1+\gamma)' = c_n [w]_{A_{p/q'}}^{\max\{1, \frac{1}{p/q'-1}\}} =: c_n B_0, 
\quad\text{and}\quad
[w^{1+\gamma}]_{A_{p/q'}} \lesssim [w]_{A_{p/q'}}^{1+\gamma},
\] 
which along with \eqref{TjLpv} implies 
\begin{align}\label{TTjLpw}
\|(T_{j+1} -T_j)f\|_{L^p(w^{1+\gamma})} 
\lesssim [w]_{A_{p/q'}}^{(1+\gamma)\max\{1, \frac{1}{p-q'}\}} \|f\|_{L^p(w^{1+\gamma})}. 
\end{align}
In light of Theorem \ref{thm:SW} with $w_0 \equiv 1$, $w_1=w^{1+\gamma}$, and $\theta=\frac{1}{1+\gamma}$, interpolating between \eqref{TTLp} and \eqref{TTjLpw} gives 
\begin{align}\label{TjLpwee}
\|(T_{j+1} -T_j) f\|_{L^p(w)} 
\lesssim 2^{-(1-\theta)\delta j} [w]_{A_{p/q'}}^{\max\{1, \frac{1}{p-q'}\}} \|f\|_{L^p(w)}, \quad j \ge 1. 
\end{align}
Note that $1-\theta = \frac{1}{(1+\gamma)'}$ and $e^{-t} < 2t^{-2}$ for any $t >0$. As a consequence, \eqref{TjTj}, \eqref{TjLpv}, and \eqref{TjLpwee} imply 
\begin{align*}
\|T_{\Omega} f\|_{L^p(w)} 
&\lesssim \sum_{j=0}^{\infty} 2^{-\frac{c'_n j}{B_0}} [w]_{A_{p/q'}}^{\max\{1, \frac{1}{p-q'}\}} \|f\|_{L^p(w)}  
\\
&= \bigg(\sum_{j \le B_0} + \sum_{j > B_0} \bigg) 2^{-\frac{c'_n j}{B_0}} [w]_{A_{p/q'}}^{\max\{1, \frac{1}{p-q'}\}} \|f\|_{L^p(w)}  
\\
&\lesssim \bigg(B_0 + \sum_{j > B_0} j^{-2}B_0^2 \bigg)  
[w]_{A_{p/q'}}^{\max\{1, \frac{1}{p-q'}\}} \|f\|_{L^p(w)}  
\\
&\lesssim B_0 \, [w]_{A_{p/q'}}^{\max\{1, \frac{1}{p-q'}\}} \|f\|_{L^p(w)}  
\\
&= [w]_{A_{p/q'}}^{\max\{1, \frac{1}{p/q'-1}\} + \max\{1, \frac{1}{p-q'}\}} \|f\|_{L^p(w)}.   
\end{align*}
This completes the proof. 
\end{proof}

\begin{theorem}\label{thm:MTT}
Let $q \in (1, \infty)$ and $\Omega \in L^q(\Sn)$. Then for all $p \in (1, q)$ and for all $w^{1-p'} \in A_{p'/q'}$, 
\begin{align}\label{MTT} 
\|M_{\Omega}f\|_{L^p(w)}
\lesssim [w^{1-p'}]_{A_{p'/q'}}^{\max\{1, \frac{1}{p'/q'-1}\} + \max\{1, \frac{1}{p'-q'}\}} \|f\|_{L^p(w)}. 
\end{align}
\end{theorem}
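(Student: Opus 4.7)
The exponent on the right-hand side matches exactly the bound in Theorem~\ref{thm:Tom}\eqref{MOTO-2} for $T_\Omega$ on $L^{p'}(w^{1-p'})$; moreover, by Lemma~\ref{lem:weights}\eqref{list:ApRH-3}, the hypothesis $w^{1-p'}\in A_{p'/q'}$ is equivalent to $w\in A_p\cap RH_{(q/p)'}$, placing the theorem in the limited-range framework with $\p_-=1$, $\p_+=q$. Both observations strongly suggest that Theorem~\ref{thm:MTT} should follow by a duality argument, reducing a bound on the (positive) maximal operator $M_\Omega$ in the range $p<q$ to a bound on the (signed) singular integral $T_\Omega$ in the dual range $p'>q'$, which is the content of Theorem~\ref{thm:Tom}\eqref{MOTO-2}.

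Concretely, I would proceed as follows. Write $\sigma:=w^{1-p'}$ and use $(L^p(w))^*=L^{p'}(\sigma)$ to get $\|M_\Omega f\|_{L^p(w)}=\sup_{g\ge0,\,\|g\|_{L^{p'}(\sigma)}\le 1}\int M_\Omega f\cdot g\,dx$. Linearize by a standard measurable selection: pick $r\colon\R^n\to(0,\infty)$ with $M_\Omega f(x)\le 2A_{r(x)}f(x)$, where $A_Rf(x):=|B(0,R)|^{-1}\int_{|y|<R}|\Omega(y')|f(x-y)\,dy$. Fubini rewrites the bilinear form as $\int f(z)\,Lg(z)\,dz$ with $Lg(z):=\int K(x,z)\,g(x)\,dx$ and kernel $K(x,z):=|\Omega((x-z)/|x-z|)|\mathbf{1}_{|x-z|<r(x)}/|B(0,r(x))|$. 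H\"older reduces everything to proving, uniformly in the measurable function $r$, the estimate
\begin{equation*}
\|Lg\|_{L^{p'}(\sigma)}
\lesssim [\sigma]_{A_{p'/q'}}^{\max\{1,(p'/q'-1)^{-1}\}+\max\{1,(p'-q')^{-1}\}}\|g\|_{L^{p'}(\sigma)}.
\end{equation*}

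To prove this last bound, observe that on the support of $K$ one has $r(x)>|x-z|$, so $K(x,z)\le c_n|\Omega((x-z)/|x-z|)|/|x-z|^n$, which is precisely the kernel bound for a rough singular integral with symbol $\widetilde\Omega(y):=|\Omega(-y)|$ (of the same $L^q(\Sn)$-norm as $\Omega$). The plan is then to mimic the proof of Theorem~\ref{thm:Tom}\eqref{MOTO-2}: perform a Littlewood-Paley decomposition at each dyadic scale, exploit the $L^2$-decay and $L^q$-H\"ormander property of each frequency-localized piece to get a sparse domination via \cite{Li}, pass to weighted $L^{p'}(\sigma)$-bounds via \eqref{Mfk-2}, and sum geometrically using Lemma~\ref{lem:open} and Theorem~\ref{thm:SW}. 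This reproduces the two-factor exponent $\max\{1,(p'/q'-1)^{-1}\}+\max\{1,(p'-q')^{-1}\}$ exactly as in Theorem~\ref{thm:Tom}\eqref{MOTO-2}.

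The principal obstacle is the essentially arbitrary measurable scale function $r(x)$: the naive pointwise bound $Lg\lesssim M_{\widetilde\Omega}g$ fails (summing $\sum_k M_{\widetilde\Omega}(g\mathbf{1}_{E_k})$ over the dyadic level sets $E_k:=\{x:r(x)\in[2^k,2^{k+1})\}$ produces a divergent overlap). The remedy is precisely the disjointness of the $E_k$'s: because $\sum_k\|g\mathbf{1}_{E_k}\|_{L^{p'}(\sigma)}^{p'}=\|g\|_{L^{p'}(\sigma)}^{p'}$, the $r$-dependence is absorbed when one couples the scale-localized sparse bounds with the $L^{p'}(\sigma)$-disjointness. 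The resulting estimate is uniform in $r$ and, together with the duality and linearization steps above, yields Theorem~\ref{thm:MTT}.
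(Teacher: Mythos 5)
Your instinct---that the result should follow by duality from the $p'>q'$ bound of Theorem \ref{thm:Tom}\eqref{MOTO-2}, with the same exponent---is right, but the route you take through a linearization of $M_\Omega$ has a genuine gap at its decisive step. The operator $Lg(z)=\int K(x,z)g(x)\,dx$ with $K(x,z)=|\Omega((x-z)/|x-z|)|\,\mathbf{1}_{\{|x-z|<r(x)\}}/|B(0,r(x))|$ is a \emph{positive-kernel, non-convolution} operator whose truncation depends on $x$ through an arbitrary measurable $r$. The size bound $K(x,z)\lesssim|\Omega((x-z)')|\,|x-z|^{-n}$ carries no cancellation, and a positive kernel of that size is not a singular integral in any usable sense; all of the machinery behind \eqref{MOTO-2} (Watson's Plancherel-based decay estimates \eqref{TTL2-1}--\eqref{TTL2-2}, the $L^q$-H\"ormander condition, the sparse domination of \cite{Li}) is for mean-zero convolution kernels and does not apply to $L$. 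Your proposed remedy for the variable scale---splitting over the level sets $E_k=\{x: r(x)\in[2^k,2^{k+1})\}$ and invoking $\sum_k\|g\mathbf{1}_{E_k}\|_{L^{p'}(\sigma)}^{p'}=\|g\|_{L^{p'}(\sigma)}^{p'}$---does not close the argument: the outputs $L(g\mathbf{1}_{E_k})$ live on $2^{k}$-neighborhoods of $E_k$ and overlap massively, so $\ell^{p'}$-control of the inputs says nothing about $\|\sum_k L(g\mathbf{1}_{E_k})\|_{L^{p'}(\sigma)}$; and the one natural way to exploit the disjointness (pair against $h\in L^p(w)$, use the uniform pointwise bound $A_{2^k}^{*}h\lesssim M_{\widetilde\Omega}h$, and sum the disjoint pieces) lands you on $\|M_{\widetilde\Omega}h\|_{L^p(w)}$, which is exactly the estimate being proved. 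As written the argument is circular or incomplete at this point.

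The paper avoids the linearization altogether. Writing $\Omega_0:=|\Omega|-\fint_{\Sn}|\Omega|\,d\sigma$, it uses the pointwise domination $M_\Omega f\lesssim Mf+\mathbf{S}_{\Omega_0}(|f|)$ of \eqref{MMS}, where $\mathbf{S}_{\Omega_0}$ is the square function of the dyadic convolution pieces $\nu_{\Omega_0,j}*f$: the constant part of $|\Omega|$ produces $Mf$ (handled by \eqref{eq:sharp}, since $w^{1-p'}\in A_{p'}$ gives $w\in A_p$), while the mean-zero part is linear and convolution-type. The square function is then controlled via Khintchine's inequality by the randomized \emph{convolution} operators $\mathbf{T}_{\Omega_0}^{\varepsilon}=\sum_m\varepsilon_m\,\nu_{\Omega_0,m}*$, which satisfy Watson's hypotheses uniformly in $\varepsilon$ and hence the bound \eqref{supvar-1} for $s>q'$; duality is applied to these honest convolution operators---where it is immediate---to reach $p<q$ in \eqref{supvar-2}. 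If you want to salvage your outline, replace the linearization by this mean-zero-plus-randomization reduction: it is precisely the device that converts the positive maximal operator into signed convolution operators to which the $T_\Omega$ theory and duality legitimately apply.
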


\begin{proof}
Fix $p \in (1, q)$ and $w^{1-p'} \in A_{p'/q'}$. For $j \in \Z$,  set $\nu_{\Omega, j}(x) := \frac{\Omega(x')}{|x|^n} \mathbf{1}_{\{2^j \le |x|<2^{j+1}\}}(x)$. Define 
\begin{align*}
\mathbf{S}_{\Omega}f(x) := \bigg(\sum_{j \in \Z} |\mathbf{T}_{\Omega, j} f(x)|^2 \bigg)^{\frac12}, 
\quad\text{where}\quad 
\mathbf{T}_{\Omega, j} f := \nu_{\Omega, j}*f. 
\end{align*}
If we set $\Omega_0(x') := |\Omega(x')| - \fint_{\Sn} |\Omega| \, d\sigma$ for any $x' \in \Sn$, there there holds 
\begin{align}\label{MMS}
\Omega_0 \in L^q(\Sn), \quad \int_{\Sn} \Omega_0 \, d\sigma =0, \quad\text{and}\quad 
M_{\Omega} f
\lesssim Mf + \mathbf{S}_{\Omega_0}(|f|). 
\end{align}
Since $w^{1-p'} \in A_{p'/q'} \subset A_{p'}$, we see that $w \in A_p$ and by \eqref{eq:sharp}, 
\begin{align}\label{MMPP}
\|Mf\|_{L^p(w)} 
\lesssim [w]_{A_p}^{\frac{1}{p-1}} \|f\|_{L^p(w)}
= [w^{1-p'}]_{A_{p'}} \|f\|_{L^p(w)}
\le [w^{1-p'}]_{A_{p'/q'}} \|f\|_{L^p(w)}. 
\end{align}

In order to estimate $\mathbf{S}_{\Omega_0}$, we define a linear operator 
\[
\mathbf{T}_{\Omega_0}^{\varepsilon} 
:= \sum_{m \in \Z} \varepsilon_m \mathbf{T}_{\Omega_0, m}, 
\quad\text{ where } \, \varepsilon := \{\varepsilon_m = \pm 1\}.
\]
Writing 
\[
k := \sum_{m \in \Z} \varepsilon_m \nu_{\Omega_0, m} 
\quad\text{ and }\quad 
k^{(m)} := \nu_{\Omega_0, m}, \quad m\in \Z, 
\]
one can verify that \cite[Lemmas 1 and 2]{Wat} hold for $k$ and $k^{(m)}$, with bounds independent of $\varepsilon$. This means that $\mathbf{T}_{\Omega_0}^{\varepsilon}$ behaves as $T_{\Omega}$ in Theorem \ref{thm:Tom}. Then by \eqref{MOTO-2}, 
\begin{align}\label{supvar-1}
\sup_{\varepsilon} \|\mathbf{T}_{\Omega_0}^{\varepsilon} f\|_{L^s(v)}
&\lesssim [v]_{A_{s/q'}}^{\max\{1, \frac{1}{s/q'-1}\} + \max\{1, \frac{1}{s-q'}\}} \|f\|_{L^s(v)}. 
\end{align}
for any $s \in (q', \infty)$ and $v \in A_{s/q'}$. By duality, \eqref{supvar-1} implies 
\begin{align}\label{supvar-2}
\sup_{\varepsilon} \|\mathbf{T}_{\Omega_0}^{\varepsilon} f\|_{L^p(w)}
&\lesssim [w^{1-p'}]_{A_{p'/q'}}^{\max\{1, \frac{1}{p'/q'-1}\} + \max\{1, \frac{1}{p'-q'}\}} \|f\|_{L^p(w)}. 
\end{align}
We would like to use \eqref{supvar-2} to bound $\mathbf{S}_{\Omega_0}$. Let $\{r_m(\cdot)\}_{m \in \N}$ be the system of Rademacher functions in $[0, 1)$. By Khintchine's inequality (cf. \cite[p. 586]{Gra}) and \eqref{supvar-2} applied to $\varepsilon(t) := \{r_m(t)\}_{m \in \Z}$, we have 
\begin{align}\label{Som}
&\|\mathbf{S}_{\Omega_0} f\|_{L^p(w)}
\simeq \bigg\|\bigg(\int_0^1 \Big|\sum_{m \in \Z} r_m(t) \mathbf{T}_{\Omega_0, m} f\Big|^p dt \bigg)^{\frac1p}\bigg\|_{L^p(w)}
\\ \nonumber 
&\quad= \bigg(\int_0^1 \|\mathbf{T}_{\Omega_0}^{\varepsilon(t)} f\|_{L^p(w)}^p dt \bigg)^{\frac1p}
\lesssim [w^{1-p'}]_{A_{p'/q'}}^{\max\{1, \frac{1}{p'/q'-1}\} + \max\{1, \frac{1}{p'-q'}\}} \|f\|_{L^p(w)}.
\end{align}
Therefore, \eqref{MTT} follows from \eqref{MMS}, \eqref{MMPP}, and \eqref{Som}.
\end{proof}

\begin{lemma}\label{lem:GH}
Let $\psi \in \mathscr{C}_c^{\infty}(\Rn)$ be a radial function such that $0 \le \psi \le 1$, $\supp \psi \subset\{1/2 \le |\xi| \le 2\}$ and $\sum_{l \in \Z} \psi (2^{-l}\xi)^2 =1$ for $|\xi| \neq 0$. Define the multiplier $\Delta_l$ by $\widehat{\Delta_l f}(\xi) = \psi(2^{-l}\xi) \widehat{f}(\xi)$. For $j \in \Z$,  set $\nu_j(x) := \frac{\Omega(x')}{|x|^n} \mathbf{1}_{\{2^j\le |x|<2^{j+1}\}}(x)$, where $\Omega$ is the same as in Theorem \ref{thm:Tom}. Then for all $p \in (q', \infty)$ and $w \in A_{p/q'}$, 
\begin{equation}\label{vukk}
\sup_{s \in \Z} \bigg\|\Big(\sum_{k \in \Z} |\nu_{k+s} \ast \Delta_{l-k}^2 f|^2\Big)^{\frac12}\bigg\|_{L^p(w)} 
\lesssim [w]_{A_{p/q'}}^{\frac52 \max\{1, \frac{1}{p/q'-1}, \frac{2}{p-1}\}}  \|f\|_{L^p(w)}.
\end{equation}
\end{lemma}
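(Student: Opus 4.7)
The plan is to peel off one of the two Littlewood--Paley projections in $\Delta_{l-k}^{2}$ as an outer vector-valued estimate, then linearize the remaining square function via Khintchine's inequality and analyze it as a randomized rough singular integral.

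First, since convolutions commute, $\nu_{k+s}\ast\Delta_{l-k}^{2}f=\Delta_{l-k}(\nu_{k+s}\ast\Delta_{l-k}f)$. After the reindexing $j=l-k$, the family $\{\Delta_{l-k}\}_{k\in\Z}$ becomes a true dilation family $\{\Delta_{j}\}_{j\in\Z}$, and $\check\psi=\mathcal{F}^{-1}\psi$ is a Schwartz function with annular Fourier support and $\int\check\psi=\psi(0)=0$. Therefore Lemma~\ref{lem:phif}\,\eqref{eq:phik-3} (applied with $\varphi=\check\psi$ to the sequence $g_k:=\nu_{k+s}\ast\Delta_{l-k}f$) yields
\[
\Big\|\Big(\sum_{k\in\Z}|\nu_{k+s}\ast\Delta_{l-k}^{2}f|^{2}\Big)^{1/2}\Big\|_{L^p(w)}
\lesssim [w]_{A_p}^{\max\{\frac12,\frac{1}{p-1}\}}
\Big\|\Big(\sum_{k\in\Z}|g_k|^{2}\Big)^{1/2}\Big\|_{L^p(w)}.
\]

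Next, Khintchine's inequality reduces the remaining square function to a uniform-in-signs estimate for a single convolution operator: it suffices to prove, for all $\epsilon=\{\epsilon_k\}_{k\in\Z}$ with $\epsilon_k=\pm1$, the scalar bound $\|K^{\epsilon}\ast f\|_{L^p(w)}\lesssim [w]_{A_{p/q'}}^{\beta}\|f\|_{L^p(w)}$ with $\beta=\max\{1,\tfrac{1}{p/q'-1}\}+\max\{1,\tfrac{1}{p-q'}\}$, where
\[
K^{\epsilon}:=\sum_{k\in\Z}\epsilon_k\bigl(\nu_{k+s}\ast\check\psi_{l-k}\bigr).
\]
Up to the trivial relabeling $j=l+s$ and the harmless replacement of the nonnegative bump $\varphi$ by the mean-zero Schwartz $\check\psi$, the kernel $K^{\epsilon}$ is precisely the Watson-type kernel used in the proof of Theorem~\ref{thm:Tom}; the size and cancellation bounds in \cite[Lemmas~1 and~2]{Wat} are insensitive to the signs $\epsilon_k$, so $K^{\epsilon}$ satisfies the $L^q$-H\"ormander condition with constants independent of $\epsilon,l,s$. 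The sparse-domination machinery of \cite{Li} combined with the quantitative sparse-form bound \eqref{Mfk-2} and the open-property Lemma~\ref{lem:open}, exactly as in the derivation of \eqref{MOTO-2}, then produces the required scalar bound uniformly in $\epsilon,l,s$.

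Combining the two steps, using $[w]_{A_p}\le[w]_{A_{p/q'}}$ and $\frac{1}{p-q'}\le\frac{1}{p/q'-1}$, and checking the elementary inequality
\[
\max\{\tfrac12,\tfrac{1}{p-1}\}+\max\{1,\tfrac{1}{p/q'-1}\}+\max\{1,\tfrac{1}{p-q'}\}
\;\le\;\tfrac52\max\Big\{1,\tfrac{1}{p/q'-1},\tfrac{2}{p-1}\Big\}
\]
(which follows by bounding the first term by $\tfrac12 M$ and each of the other two by $M$, where $M$ is the right-hand maximum), one obtains the exponent in the statement. The main obstacle is verifying that the randomized kernel $K^{\epsilon}$ genuinely inherits Watson's $L^q$-H\"ormander condition uniformly in $\epsilon$ and tracking the precise weighted constants through the sparse-form argument; once that is in place, the rest is weighted Littlewood--Paley bookkeeping.
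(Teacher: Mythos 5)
Your first step is fine: peeling one copy of $\Delta_{l-k}$ via \eqref{eq:phik-3} costs $[w]_{A_p}^{\max\{\frac12,\frac{1}{p-1}\}}$ and is legitimate, and your final exponent bookkeeping ($\tfrac12 M+M+M\le\tfrac52 M$) is consistent with the statement. The gap is in the central step. After Khintchine you must prove, \emph{uniformly in $l,s\in\Z$ and in the signs $\epsilon$}, a weighted bound for $K^{\epsilon}=\sum_k\epsilon_k\,\nu_{k+s}\ast\check\psi_{l-k}$, and you dispose of this by asserting that $K^\epsilon$ is ``precisely the Watson-type kernel'' of Theorem \ref{thm:Tom} up to trivial modifications. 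That is overstated on three counts. First, Watson's $K_j=\sum_k\nu_k\ast\varphi_{k-j}$ is invoked in \eqref{eq:LrHor}--\eqref{TjLpv} only for $j\ge1$, i.e.\ with the mollifier living at a \emph{smaller} scale than the rough annulus; in your kernel the relative scale $2^{-(l+s)}$ ranges over all dyadic values, so you need the $L^q$-H\"ormander constant to be uniform over a two-sided family of regimes, only one of which is covered by the cited lemma. Second, replacing the nonnegative, compactly supported, unit-mass bump $\varphi$ by the mean-zero Schwartz function $\check\psi$ (which cannot be compactly supported, having compact Fourier support) changes the kernel estimates one must verify; ``harmless'' is plausible but is exactly the content that needs proving. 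Third, even granting the H\"ormander condition, the sparse route also needs a weak $(q',q')$ bound for $K^\epsilon$ uniform in $\epsilon,l,s$ before \cite[Theorem 1.2]{Li} applies; you do not address this. Since the uniform kernel bound \emph{is} the lemma in disguise, leaving it as ``the main obstacle'' means the proof is not complete.

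For comparison, the paper avoids any kernel analysis of the composite object $\nu_{k+s}\ast\check\psi_{l-k}$. It first proves a vector-valued inequality for the map $\{f_k\}\mapsto\{\nu_{k+s}\ast f_k\}$ for \emph{arbitrary} sequences: an $\ell^\infty$-valued bound \eqref{vksm-1} from the pointwise domination by $M_\Omega$ and \eqref{MOTO-1}, and an $\ell^1$-valued bound \eqref{vksm-2} by dualizing Theorem \ref{thm:MTT}; interpolating these gives the $\ell^2$-valued estimate with the half-power $\tfrac12\max\{1,\frac{1}{p/q'-1}\}$ on the first factor. Only then does it specialize $f_k=\Delta_{l-k}^2f$ and finish with \eqref{eq:phik-3} and \eqref{eq:phik-1}. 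That route uses only ingredients already established in the paper (where the randomized-kernel/Khintchine device is applied to $\sum_m\epsilon_m\nu_m$, the genuine Watson kernel, inside Theorem \ref{thm:MTT}), whereas yours would require a new, nontrivial verification of Watson-type estimates for a modified two-parameter family of kernels. If you want to salvage your approach, that verification is what must be supplied.
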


\begin{proof}
Let $p \in (q', \infty)$ and $w \in A_{p/q'}$. Observe that 
\begin{equation}\label{supnu}
\sup_{s \in \Z} \sup_{k \in \Z} |\nu_{k+s} \ast f_k| 
\le M_{\Omega}\Big(\sup_{k \in \Z} |f_k| \Big). 
\end{equation}
This and \eqref{MOTO-1} yield 
\begin{equation}\label{vksm-1}
\sup_{s \in \Z} \Big\|\sup_{k \in \Z} |\nu_{k+s} \ast f_k| \Big\|_{L^p(w)}
\lesssim [w]_{A_{p/q'}}^{\frac{1}{p-q'}} \Big\| \sup_{k \in \Z} |f_k| \Big\|_{L^p(w)}. 
\end{equation}
In light of Theorem \ref{thm:MTT}, \eqref{supnu} implies that for any $r \in (1, q)$ and $v^{1-r'} \in A_{r'/q'}$, 
\begin{equation*}
\sup_{s \in \Z} \Big\|\sup_{k \in \Z} |\nu_{k+s} \ast f_k| \Big\|_{L^r(v)}
\lesssim [v^{1-r'}]_{A_{r'/q'}}^{\max\{1, \frac{1}{r'/q'-1}\} + \max\{1, \frac{1}{r'-q'}\}} 
\Big\| \sup_{k \in \Z} |f_k| \Big\|_{L^r(v)}, 
\end{equation*}
which together with duality gives 
\begin{equation}\label{vksm-2}
\sup_{s \in \Z} \bigg\|\sum_{k \in \Z} |\nu_{k+s} \ast f_k| \bigg\|_{L^p(w)} 
\lesssim [w]_{A_{p/q'}}^{\max\{1, \frac{1}{p/q'-1}\} + \max\{1, \frac{1}{p-q'}\}} 
\bigg\| \sum_{k \in \Z} |f_k| \bigg\|_{L^p(w)}. 
\end{equation}
Then, interpolating between \eqref{vksm-1} and \eqref{vksm-2}, we obtain 
 \begin{equation*}
\sup_{s \in \Z} \bigg\|\Big(\sum_{k \in \Z} |\nu_{k+s} \ast f_k|^2 \Big)^{\frac12} \bigg\|_{L^p(w)} 
\lesssim [w]_{A_{p/q'}}^{\frac12 \max\{1, \frac{1}{p/q'-1}\} + \max\{1, \frac{1}{p-q'}\}}  
\bigg\| \Big(\sum_{k \in \Z} |f_k|^2 \Big)^{\frac12}\bigg\|_{L^p(w)}. 
\end{equation*}
Combining \eqref{eq:phik-3} with \eqref{eq:phik-1} and that $[w]_{A_p} \le [w]_{A_{p/q'}}$, this immediately implies \eqref{vukk}. 
\end{proof}

\section{Proof of main theorems}\label{sec:proof}
In this section, we will prove Theorems \ref{thm:lim} and \ref{thm:lim-Tb}. The first step is to show Theorem \ref{thm:lim}, which will follow from Theorem \ref{thm:lim-off}, a limited rang, off-diagonal extrapolation with quantitative weights norms. Before proving the latter, we present some other quantitative extrapolation.

\subsection{$A_p$ extrapolation}
We begin with the $A_p$ extrapolation with quantitative bounds.  

\begin{theorem}\label{thm:Ap}
Let $\F$ be a family of extrapolation pairs. Assume that there exist exponents $p_0 \in [1, \infty]$ such that for all weights $v^{p_0} \in A_{p_0}$, 
\begin{equation}\label{eq:Ap-1}
\|f v\|_{L^{p_0}} \leq \Phi ([v^{p_0}]_{A_{p_0}})  \|g v\|_{L^{p_0}}, \quad (f, g) \in \F,
\end{equation}
where $\Phi : [1, \infty) \to [1, \infty)$ is an increasing function. Then for all exponents $p \in (1, \infty)$ and all weights $w^p \in A_p$, 
\begin{equation}\label{eq:Ap-2}
\|f w\|_{L^p} 
\leq 2 \Phi \Big(C_p \, [w^p]_{A_p}^{\max\{1, \frac{p_0 - 1}{p -1}\}}\Big) 
\|g w\|_{L^p}, \quad (f, g) \in \F, 
\end{equation}
where $C_p=3^{n(p'+8)(p_0-p)}$ if $p<p_0$, and $C_p=3^{n(p+8)}$ if $p>p_0$. 
\end{theorem}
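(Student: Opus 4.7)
The plan is, for each $w^p\in A_p$, to construct a weight $V\in A_{p_0}$ with a sharp quantitative bound $[V]_{A_{p_0}}\le C_p\,[w^p]_{A_p}^{\max\{1,(p_0-1)/(p-1)\}}$, apply the hypothesis \eqref{eq:Ap-1} at $V$, and transfer back to $L^p(w^p)$ via H\"older. The weight $V$ is built out of $w^p$ together with an auxiliary $A_1$ weight $H$ produced by the Rubio de Francia iteration (Theorem \ref{thm:RdF}); the sharp Buckley estimate \eqref{eq:sharp} from Lemma \ref{lem:sharp} quantifies $[H]_{A_1}$, and Lemma \ref{lem:fac} assembles $V$ with the required $A_{p_0}$-norm control. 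The two cases $p<p_0$ and $p>p_0$ are treated separately, since the form of the factorization reverses.

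For $p<p_0$, by homogeneity I may assume $\|gw\|_{L^p}=1$. Setting $W:=w^p$ and applying Theorem \ref{thm:RdF} at $L^p(W)$ to $h:=g$ yields $H=\mathcal{R}g$ with $g\le H$, $\|H\|_{L^p(W)}\le 2$, and, by Lemma \ref{lem:sharp}, $[H]_{A_1}\lesssim 3^{n(p'+8)}[W]_{A_p}^{1/(p-1)}$. Setting $V:=WH^{p-p_0}$, Lemma \ref{lem:fac}(a) gives $V\in A_{p_0}$ with $[V]_{A_{p_0}}\le [W]_{A_p}[H]_{A_1}^{p_0-p}\le 3^{n(p'+8)(p_0-p)}[W]_{A_p}^{(p_0-1)/(p-1)}$, matching the stated $C_p$. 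The pointwise identity $f^p W=(f^{p_0}V)^{p/p_0}(WH^p)^{(p_0-p)/p_0}$ together with H\"older at exponents $p_0/p$ and $(p_0/p)'$ reduces the estimate to $(\int f^{p_0}V\,dx)^{1/p_0}$, which by hypothesis is at most $\Phi([V]_{A_{p_0}})(\int g^{p_0}V\,dx)^{1/p_0}$. Crucially, since $g\le H$ and $p<p_0$, $g^{p_0}WH^{p-p_0}=g^pW(g/H)^{p_0-p}\le g^p W$ pointwise, so $\int g^{p_0}V\,dx\le 1$; together with $\int WH^p\,dx\le 2^p$, taking $p$-th roots produces the leading factor $2^{(p_0-p)/p_0}\le 2$ appearing in \eqref{eq:Ap-2}.

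For $p>p_0$, I dualize at $L^p(W)$: writing $\sigma:=W^{1-p'}\in A_{p'}$ with $[\sigma]_{A_{p'}}=[W]_{A_p}^{1/(p-1)}$, $\|f\|_{L^p(W)}=\sup\int f\psi\,dx$ over nonnegative $\psi$ with $\|\psi\|_{L^{p'}(\sigma)}\le 1$. Running Theorem \ref{thm:RdF} at $L^{p'}(\sigma)$ produces $H\ge\psi$ with $\|H\|_{L^{p'}(\sigma)}\le 2$ and, via Lemma \ref{lem:sharp} and the numerical identity $\tfrac{1}{(p-1)(p'-1)}=1$, $[H]_{A_1}\le 3^{n(p+8)}[W]_{A_p}$. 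Lemma \ref{lem:fac}(b) with $u=W$ and $v=H$ then gives $V:=W^{(p_0-1)/(p-1)}H^{(p-p_0)/(p-1)}\in A_{p_0}$ with $[V]_{A_{p_0}}\le 3^{n(p+8)}[W]_{A_p}$. Bounding $\int f\psi\,dx\le\int fH\,dx$ and splitting by H\"older at the conjugate pair $(p_0,p_0')$ causes the $\psi$-side integral to collapse algebraically to $\int\sigma H^{p'}\,dx\le 2^{p'}$. Applying the hypothesis to $V$ and then a symmetric H\"older splitting at exponents $p/p_0$ and $(p/p_0)'$ gives $(\int g^{p_0}V\,dx)^{1/p_0}\le 2^{p'(p-p_0)/(pp_0)}(\int g^p W\,dx)^{1/p}$; the balancing identity $p'/p_0'+p'(p-p_0)/(pp_0)=1$ forces the residual powers of $2$ to collapse to exactly $2^1$.

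The main obstacle is the $p>p_0$ case: the naive choice of RdF exponent $(p/p_0)'$ would require $W\in A_{(p/p_0)'}$, which is not implied by $W\in A_p$ (since $(p/p_0)'=p/(p-p_0)$ may be smaller than $p$). Passing to the dual weight $\sigma\in A_{p'}$ bypasses this, and the reciprocal identity $\tfrac{1}{(p-1)(p'-1)}=1$ is what produces the linear-in-$[W]_{A_p}$ dependence of $[H]_{A_1}$, hence of $[V]_{A_{p_0}}$. The remaining arithmetic---verifying the pointwise factorizations of $f^p W$, checking the exponents in the H\"older splittings, and collecting the numerical constants in $C_p$---is a direct algebraic verification using the identities above.
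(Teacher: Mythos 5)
Your proof is correct and follows essentially the same route as the paper's: the paper packages the construction as a standalone product-type embedding (Theorem \ref{thm:Apvw}) and dualizes in both cases, but the weight you build ($w^p(\mathcal{R}g)^{p-p_0}$ for $p<p_0$, and $w^{p(p_0-1)/(p-1)}(\mathcal{R}\psi)^{(p-p_0)/(p-1)}$ for $p>p_0$ after dualizing), the Rubio de Francia source of the $A_1$ factor, the appeal to Lemma \ref{lem:fac}, and the resulting constants $C_p$ all coincide with the paper's. The only cosmetic difference is that for $p<p_0$ you transfer norms by a direct H\"older factorization of $f^p w^p$ rather than by introducing a dual test function.
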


Theorem \ref{thm:Ap} was shown in \cite{DGPP, Duo} without the explicit constant $C_p$.  We restudy it by presenting a stronger result as follows.  

\begin{theorem}\label{thm:Apvw}
Let $q \in [1, \infty]$ and $p \in (1, \infty)$. Then for any $w^p \in A_p$, $f \in L^p(w^p)$, and $g \in L^{p'}(w^{-p'})$, there exists $v^q \in A_q$ with $[v^q]_{A_q} \le C_p \, [w^p]_{A_p}^{\max\{1, \frac{q-1}{p-1}\}}$ such that 
\begin{align}\label{eq:fpq}
\|fv\|_{L^q} \|gv^{-1}\|_{L^{q'}} \le 2 \|fw\|_{L^p} \|gw^{-1}\|_{L^{p'}}, 
\end{align}
where $C_p=3^{n(p'+8)(q-p)}$ if $p<q$, and $C_p=3^{n(p+8)}$ if $p>q$. 
\end{theorem}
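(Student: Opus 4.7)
The case $p=q$ is trivial with $v:=w$. For $p\neq q$ the two remaining cases $p<q$ and $p>q$ are dual to one another, and both proceed by constructing $v$ via the Rubio de Francia iteration algorithm from Theorem~\ref{thm:RdF} together with the Jones-type factorization in Lemma~\ref{lem:fac}(a). I will sketch $p<q$ in detail and indicate the symmetric construction for $p>q$.

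For $p<q$, apply Theorem~\ref{thm:RdF} in $L^p(w^p)$ to the function $f\ge 0$ to obtain $h\ge f$ with
\[
\|h\|_{L^p(w^p)}\le 2\|f\|_{L^p(w^p)},\qquad [h]_{A_1}\le 2\|M\|_{L^p(w^p)\to L^p(w^p)}\le C_{n,p}\,[w^p]_{A_p}^{1/(p-1)},
\]
where the last bound uses Lemma~\ref{lem:sharp}. Set $v:=w^{p/q}\,h^{(p-q)/q}$, so that $v^q=w^p\,h^{p-q}$. Since $h\in A_1$ and $q>p$, Lemma~\ref{lem:fac}(a) with target exponent $q$ gives $v^q\in A_q$ with
\[
[v^q]_{A_q}\le [w^p]_{A_p}\,[h]_{A_1}^{q-p}\lesssim [w^p]_{A_p}^{1+(q-p)/(p-1)}=[w^p]_{A_p}^{(q-1)/(p-1)},
\]
which is the claimed power. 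For the norm inequality, the pointwise bound $h\ge f$ together with $p-q<0$ yields $f^q h^{p-q}\le f^p$, whence $\|fv\|_{L^q}\le \|fw\|_{L^p}^{p/q}$. For the dual side, expand
\[
\|gv^{-1}\|_{L^{q'}}^{q'}=\int (gw^{-1})^{q'}\,(wh)^{(q-p)/(q-1)}\,dx
\]
and apply H\"older's inequality with exponents $(p'/q',\,p(q-1)/(q-p))$ (admissible since $p<q\Leftrightarrow p'>q'$); the identity $\frac{q-p}{q-1}\cdot\frac{p(q-1)}{q-p}=p$ collapses the second factor to $\|wh\|_{L^p}^{(q-p)/(q-1)}\le (2\|fw\|_{L^p})^{(q-p)/(q-1)}$. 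Multiplying the two norm estimates, the $\|fw\|_{L^p}$ exponents telescope to $1$ via $q'(q-1)=q$, and the numerical factor is $2^{(q-p)/q}\le 2$.

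For $p>q$, I would use the standard duality $w^{-p'}\in A_{p'}$ with $[w^{-p'}]_{A_{p'}}^{1/(p'-1)}=[w^p]_{A_p}$, apply Rubio de Francia in $L^{p'}(w^{-p'})$ to $g$ to obtain $h\ge g$ with $[h]_{A_1}\lesssim [w^p]_{A_p}$, and define $v$ by $v^{-q'}:=w^{-p'}\,h^{p'-q'}$. Lemma~\ref{lem:fac}(a) on the dual side (now with $q'>p'$), combined with the reciprocity $[v^q]_{A_q}=[v^{-q'}]_{A_{q'}}^{q-1}$ and the identity $(q'-p')(q-1)=(p-q)/(p-1)$, produces $[v^q]_{A_q}\lesssim [w^p]_{A_p}^{(q-1)/(p-1)+(p-q)/(p-1)}=[w^p]_{A_p}$, matching $\max\{1,(q-1)/(p-1)\}=1$ in this regime. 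The norm inequality then follows by the symmetric argument: pointwise domination on the $g$-side together with H\"older's inequality with $p/q>1$ on the $f$-side, again producing the factor~$2$. The endpoint cases $q=1$ and $q=\infty$ require only cosmetic modifications (essential suprema replace integrals where needed).

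The main obstacle is the bookkeeping of exponents: one must verify that the product $\|fv\|_{L^q}\|gv^{-1}\|_{L^{q'}}$ is homogeneous of degree one in each of $\|fw\|_{L^p}$ and $\|gw^{-1}\|_{L^{p'}}$ (relying on the identities $q'(q-1)=q$ and $(p-1)(p'-1)=1$), and that the Rubio de Francia factor~$2$ distributes through the final product without producing a constant larger than~$2$. The explicit values $C_p=3^{n(p'+8)(q-p)}$ and $C_p=3^{n(p+8)}$ arise from propagating the quantitative bound in Lemma~\ref{lem:sharp} through the factorization inequality.
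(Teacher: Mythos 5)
Your proposal is correct and follows essentially the same route as the paper: for $p<q$ you build $v$ from the Rubio de Francia majorant via $v^q=w^p(\mathcal{R}f)^{p-q}$, control $[v^q]_{A_q}$ with Lemma \ref{lem:fac}(a) and Lemma \ref{lem:sharp}, and split the norm product by H\"older exactly as the paper does (your pointwise bound $f^q h^{p-q}\le f^p$ even saves a factor $2^{p/q}$, yielding $2^{(q-p)/q}\le 2$); the case $p>q$ is the same dualization the paper uses. The only point treated as lightly as in the paper itself is the endpoint bookkeeping for $q\in\{1,\infty\}$.
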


\begin{proof}
Let $p \in (1, \infty)$, $w^p \in A_p$, $f \in L^p(w^p)$, and $g \in L^{p'}(w^{-p'})$. We may assume that $f$ and $g$ are nonnegative and non-trivial. Let us first consider the case $p<q$. By $w^p \in A_p$ and Theorem \ref{thm:RdF}, there exists an operator $\mathcal{R}: L^p(w^p) \to L^p(w^p)$ such that
\begin{align}\label{eq:Ap-RdF}
f \le \mathcal{R} f, \quad
\|\mathcal{R} f\|_{L^p(w^p)} \le 2 \|f\|_{L^p(w^p)}, \quad\text{ and }\quad
[\mathcal{R} f]_{A_1} \le 2 \|M\|_{L^p(w^p)}. 
\end{align}
Define
\begin{align}\label{eq:Ap-v}
v := w^{\frac{p}{q}} (\mathcal{R} f)^{-1+\frac{p}{q}}.
\end{align}
Then by Lemma \ref{lem:fac}, the last estimate in \eqref{eq:Ap-RdF}, and \eqref{eq:sharp}, 
\begin{align}\label{eq:Ap-vq}
[v^q]_{A_q} = [w^p (\mathcal{R} f)^{p-q}]_{A_q}
\le [w^p]_{A_p} [\mathcal{R} f]_{A_1}^{q-p} 
\le 3^{n(p' + 8) (q-p)} [w^p]_{A_p}^{\frac{q-1}{p-1}}.
\end{align}
On the other hand, it follows from the first two estimate in \eqref{eq:Ap-RdF} that
\begin{align}\label{eq:Ap-fv}
\|fv\|_{L^q}
=\|f (\mathcal{R} f)^{-1+\frac{p}{q}} w^{\frac{p}{q}}\|_{L^q}
\le \|(\mathcal{R} f \cdot w)^{\frac{p}{q}}\|_{L^q}
=\|\mathcal{R} f \cdot w\|_{L^p}^{\frac{p}{q}}
\le (2 \|fw\|_{L^p})^{\frac{p}{q}}.
\end{align}
In view of $p<q$, we set $\frac1r := \frac1p - \frac1q$. Then, $\frac{1}{q'}=\frac{1}{p'} + \frac1r$, and by H\"{o}lder's inequality,
\begin{align}\label{eq:Ap-gv}
\|gv^{-1}\|_{L^{q'}}
=\|(g w^{-1}) (wv^{-1})\|_{L^{q'}}
\le \|gw^{-1}\|_{L^{p'}} \|wv^{-1}\|_{L^r}.
\end{align}
Observe that
\begin{align}\label{eq:Ap-wv}
\|wv^{-1}\|_{L^r}
=\|(\mathcal{R}f \cdot w)^{p(\frac1p-\frac1q)}\|_{L^r}
=\|\mathcal{R}f \cdot w\|_{L^p}^{1 - \frac{p}{q}}
\le (2 \|fw\|_{L^p})^{1 - \frac{p}{q}},
\end{align}
where the second estimate estimate in \eqref{eq:Ap-RdF} was used in the last step. Now collecting \eqref{eq:Ap-fv}--\eqref{eq:Ap-wv}, we obtain
\begin{align*}
\|fv\|_{L^q} \|gv^{-1}\|_{L^{q'}}
\le 2 \|fw\|_{L^p} \|gw^{-1}\|_{L^{p'}}.
\end{align*}
This and \eqref{eq:Ap-vq} show \eqref{eq:fpq} in the case $p<q$.

Let us deal with the case $q<p$, which is equivalent to $p'<q'$. Also, $w^p \in A_p$ is equivalent to $w^{-p'} \in A_{p'}$.
Note that $g \in L^{p'}(w^{-p'})$ and $f \in L^p(w^p)$. Invoking \eqref{eq:fpq} for $p'$, $q'$, $g$, $f$, $w^{-1}$ in place of $p$, $q$, $f$, $g$, and $w$, respectively, one can find a weight $u^{q'} \in A_{q'}$ with 
\begin{equation}\label{eq:uuq}
[u^{-q'}]_{A_{q'}} 
\le 3^{n(p+8)(q'-p')} [w^{-p'}]_{A_{p'}}^{\frac{q'-1}{p'-1}}
\end{equation}
such that
\begin{align}\label{eq:Ap-gf}
\|gu\|_{L^{q'}}  \|f u^{-1}\|_{L^q}
\le 2 \|gw^{-1}\|_{L^{p'}} \|fw\|_{L^p}.
\end{align}
Picking $v=u^{-1}$ and using \eqref{eq:uuq}, we see that 
\begin{align*}
[v^q]_{A_q} 
= [u^{-q}]_{A_q} 
= [u^{-q'}]_{A_{q'}}^{\frac{1}{q'-1}}
&\le 3^{n(p+8) \frac{q'-p'}{q'-1}} [w^{-p'}]_{A_{p'}}^{\frac{1}{p'-1}}
< 3^{n(p+8)} [w^p]_{A_p}, 
\end{align*} 
and \eqref{eq:Ap-gf} can be rewritten as
\begin{align*}
\|f v\|_{L^q} \|gv^{-1}\|_{L^{q'}}
\le 2 \|fw\|_{L^p} \|gw^{-1}\|_{L^{p'}}.
\end{align*}
This shows \eqref{eq:fpq} in the case $q<p$.
\end{proof}

\begin{proof}[\textbf{Proof of Theorem $\ref{thm:Ap}$.}]
Let $p \in (1, \infty)$ and $w^p \in A_p$. By duality, 
\begin{align}\label{Apvw}
\|fw\|_{L^p}
= \sup_{\substack{0 \le h \in L^{p'}(w^{-p'}) \\ \|hw^{-1}\|_{L^{p'}} = 1}} |\langle f, h \rangle|. 
\end{align}
Fix a nonnegative function $h \in L^{p'}(w^{-p'})$ with $\|hw^{-1}\|_{L^{p'}} = 1$. In view of Theorem \ref{thm:Apvw}, one can find a weight $v^{p_0} \in A_{p_0}$ such that 
\begin{align}
\label{Apvw-1}
[v^{p_0}]_{A_{p_0}}   
&\le C_p \, [w^p]_{A_p}^{\max\{1, \frac{p_0 - 1}{p -1}\}},   
\\
\label{Apvw-2} \|gv\|_{L^{p_0}} \|h v^{-1}\|_{L^{p'_0}} 
&\le 2 \|gw\|_{L^p} \|hw^{-1}\|_{L^{p'}}, 
\end{align}
where $C_p=3^{n(p'+8)(p_0-p)}$ if $p<p_0$, and $C_p=3^{n(p+8)}$ if $p>p_0$. Hence, by \eqref{eq:Ap-1}, \eqref{Apvw-1}, and \eqref{Apvw-2}, 
\begin{align*}
|\langle f, h \rangle|
\le \|fv\|_{L^{p_0}} \|hv^{-1}\|_{L^{p'_0}}
&\le \Phi([v^{p_0}]_{A_{p_0}}) \|gv\|_{L^{p_0}} \|hv^{-1}\|_{L^{p'_0}}
\\
&\le 2 \Phi(C_p [w^p]_{A_p}^{\max\{1, \frac{p_0 - 1}{p -1}\}}) 
\|gw\|_{L^p} \|hw^{-1}\|_{L^{p'}}, 
\end{align*}
which along with \eqref{Apvw} yields at once \eqref{eq:Ap-2} as desired. 
\end{proof}

Next, we would like to use Theorem \ref{thm:Ap} to get additional results. 

\begin{theorem}\label{thm:Ap-res}
Let $\F$ be a family of extrapolation pairs. Assume that there exist exponents $p_0 \in (0, \infty)$ and $q_0 \in [1, \infty)$ such that for all weights $v \in A_{q_0}$, 
\begin{equation}\label{eq:Apres-1}
\|f\|_{L^{p_0}(v)} \leq \Phi ([v]_{A_{q_0}})  \|g\|_{L^{p_0}(v)}, \quad (f, g) \in \F,
\end{equation}
where $\Phi : [1, \infty) \to [1, \infty)$ is an increasing function. Then for all exponents $p \in (1, \infty)$ and all weights $w \in A_p$, 
\begin{equation}\label{eq:Apres-2}
\|f\|_{L^{pp_0/q_0}(w)} 
\leq 2^{\frac{q_0}{p_0}} \Phi \Big(C_p \, [w]_{A_p}^{\max\{1, \frac{q_0 - 1}{p -1}\}}\Big)  
\|g\|_{L^{pp_0/q_0}(w)}, \quad (f, g) \in \F, 
\end{equation}
where $C_p=3^{n(p'+8)(q_0-p)}$ if $p<q_0$, and $C_p=3^{n(p+8)}$ if $p>q_0$. 
\end{theorem}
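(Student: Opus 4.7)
My plan is to reduce Theorem \ref{thm:Ap-res} directly to Theorem \ref{thm:Ap} via a power substitution that reconciles the Lebesgue exponent $p_0$ with the weight exponent $q_0$. Without loss of generality I assume $f, g \ge 0$. Given a family $\F$ as in the hypothesis, I introduce the associated family
\[
\widetilde{\F} := \{(f^{p_0/q_0}, g^{p_0/q_0}) : (f,g) \in \F\},
\]
and set $F := f^{p_0/q_0}$, $G := g^{p_0/q_0}$. Raising the hypothesis \eqref{eq:Apres-1} to the $1/q_0$ power and substituting $u := v^{1/q_0}$ (so that $v = u^{q_0}$ ranges over all weights with $u^{q_0} \in A_{q_0}$), I obtain
\[
\|F u\|_{L^{q_0}} \le \Phi([u^{q_0}]_{A_{q_0}})^{p_0/q_0} \|G u\|_{L^{q_0}}, \qquad (F,G) \in \widetilde{\F}, \, u^{q_0} \in A_{q_0}.
\]
This is exactly the hypothesis \eqref{eq:Ap-1} of Theorem \ref{thm:Ap} applied to the family $\widetilde{\F}$, with the role of $p_0$ played by $q_0$ and the increasing function $\Phi$ replaced by $\widetilde{\Phi} := \Phi^{p_0/q_0}$.

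Next, I invoke Theorem \ref{thm:Ap}: for every $p \in (1, \infty)$ and every $\widetilde{w}$ with $\widetilde{w}^p \in A_p$,
\[
\|F \widetilde{w}\|_{L^p} \le 2 \, \Phi\bigl(C_p \, [\widetilde{w}^p]_{A_p}^{\max\{1, (q_0-1)/(p-1)\}}\bigr)^{p_0/q_0} \|G \widetilde{w}\|_{L^p},
\]
where $C_p = 3^{n(p'+8)(q_0-p)}$ if $p < q_0$ and $C_p = 3^{n(p+8)}$ if $p > q_0$; these are precisely the constants in the conclusion of Theorem \ref{thm:Ap-res}.

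Finally I unwind the substitution. Writing $w := \widetilde{w}^p$, which ranges over all weights in $A_p$, and raising the previous inequality to the power $p$,
\[
\|F \widetilde{w}\|_{L^p}^p = \int F^p \, w \, dx = \|f\|_{L^{pp_0/q_0}(w)}^{pp_0/q_0},
\]
and similarly for $G$ and $g$. Taking the $(pp_0/q_0)$-th root then produces the multiplicative constant $2^{p/(pp_0/q_0)} = 2^{q_0/p_0}$ and the function $\Phi$ appears to the power $(pp_0/q_0)/(pp_0/q_0) = 1$, giving exactly \eqref{eq:Apres-2}. There is no real obstacle here: the only point requiring a little care is the bookkeeping of the exponents when passing from the $L^p$ norm of $F\widetilde{w}$ back to the $L^{pp_0/q_0}(w)$ norm of $f$, and in particular verifying that the composite constant remains $C_p$ with the stated dichotomy in terms of whether $p < q_0$ or $p > q_0$.
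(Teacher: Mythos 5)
Your proposal is correct and follows essentially the same route as the paper: rescale the pairs by the power $p_0/q_0$ so that the hypothesis becomes an $L^{q_0}$--$A_{q_0}$ diagonal estimate, apply Theorem \ref{thm:Ap} with $q_0$ in place of $p_0$ and $\Phi^{p_0/q_0}$ as the increasing function, and unwind the substitution to recover the constant $2^{q_0/p_0}$. The only blemish is the phrase ``raising the hypothesis to the $1/q_0$ power''---the correct exponent is $p_0/q_0$---but the displayed inequality you derive from it is the right one, so this is purely a wording slip.
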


\begin{proof}
Set 
\[
\widetilde{\F} := \big\{(F, G)= \big(f^{\frac{p_0}{q_0}}, g^{\frac{p_0}{q_0}}\big): (f, g) \in \F\big\}.
\]
Note that \eqref{eq:Apres-1} implies that for all weights $v \in A_{q_0}$, 
\begin{equation}\label{eq:Apres-3}
\|F\|_{L^{q_0}(v)} 
= \|f\|_{L^{p_0}(v)}^{\frac{p_0}{q_0}} 
\leq \Phi ([v]_{A_{q_0}})^{\frac{p_0}{q_0}}  \|g\|_{L^{p_0}(v)}^{\frac{p_0}{q_0}}
=\Phi ([v]_{A_{q_0}})^{\frac{p_0}{q_0}} \|G\|_{L^{q_0}(v)}, 
\end{equation}
for all $(F, G) \in \widetilde{\F}$. Then it follows from \eqref{eq:Apres-3} and Theorem \ref{thm:Ap} with $p_0$ replaced by $q_0$ that for all exponent $p \in (1, \infty)$ and for all weights $w \in A_p$, 
\begin{equation*}
\|F\|_{L^p(w)} 
\leq 2 \Phi \Big(C_p \, [w]_{A_p}^{\max\{1, \frac{q_0 - 1}{p -1}\}}\Big)^{\frac{p_0}{q_0}} 
\|G\|_{L^{q_0}(w)}, \quad (F, G) \in \widetilde{\F},
\end{equation*}
which can be rewritten as 
\begin{equation*}
\|f\|_{L^{pp_0/q_0}(w)} 
\leq 2^{\frac{q_0}{p_0}} \Phi \Big(C_p \, [w]_{A_p}^{\max\{1, \frac{q_0 - 1}{p -1}\}}\Big)  
\|g\|_{L^{pp_0/q_0}(w)}, \quad (f, g) \in \F, 
\end{equation*}
where $C_p=3^{n(p'+8)(q_0-p)}$ if $p<q_0$, and $C_p=3^{n(p+8)}$ if $p>q_0$. This shows \eqref{eq:Apres-2}.
\end{proof}

\begin{theorem}\label{thm:weakAp}
Let $T$ be a sublinear operator. Assume that there exists $p_0 \in [1, \infty)$ such that for all $v \in A_{p_0}$, 
\begin{align}\label{weakAp-1}
\|Tf\|_{L^{p_0, \infty}(v)} \le \Phi([v]_{A_{p_0}}) \|f\|_{L^{p_0}(v)}, 
\end{align}
where $\Phi : [1, \infty) \to [1, \infty)$ is an increasing function. Then for all $p \in (1, \infty)$ and for all $w \in A_p$, 
\begin{align}
\label{weakAp-2} \|Tf\|_{L^{p, \infty}(w)} 
&\le 2 \Phi \Big(C_p \, [w^p]_{A_p}^{\max\{1, \frac{p_0 - 1}{p -1}\}}\Big) \|f\|_{L^p(w)}, 
\\
\label{weakAp-3} \|Tf\|_{L^p(w)} 
& \le 2 \Phi \Big(C_p \, [w^p]_{A_p}^{\max\{1, \frac{3(p_0 - 1)}{p -1}\}}\Big) \|f\|_{L^p(w)}. 
\end{align}
\end{theorem}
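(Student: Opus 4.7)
The proof splits into two stages, one for each conclusion.

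\textbf{Plan for \eqref{weakAp-2}.} My plan is to linearize the weak $L^{p_0}$-norm and feed the resulting family of pairs into Theorem \ref{thm:Ap}. For each $\lambda > 0$ set $h_\lambda := \lambda \mathbf{1}_{\{|Tf|>\lambda\}}$; the hypothesis \eqref{weakAp-1} rewrites as
\begin{equation*}
\|h_\lambda\|_{L^{p_0}(v)} = \lambda\, v(\{|Tf|>\lambda\})^{1/p_0} \le \|Tf\|_{L^{p_0,\infty}(v)} \le \Phi([v]_{A_{p_0}}) \|f\|_{L^{p_0}(v)}
\end{equation*}
for every $v \in A_{p_0}$. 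Therefore the family $\F := \{(h_\lambda, f): \lambda > 0\}$ satisfies the hypothesis of Theorem \ref{thm:Ap} with the same $\Phi$ and $p_0$, and its conclusion, together with $\|Tf\|_{L^{p,\infty}(w)} = \sup_{\lambda>0}\|h_\lambda\|_{L^p(w)}$, yields \eqref{weakAp-2} upon taking the supremum in $\lambda$.

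\textbf{Plan for \eqref{weakAp-3}.} My plan is to interpolate \eqref{weakAp-2} between two exponents straddling $p$. Fix $p \in (1,\infty)$ and $w \in A_p$. Apply Lemma \ref{lem:open} to produce a lower exponent $q_0 \in (1,p)$ with $[w]_{A_{q_0}} \le 2^p [w]_{A_p}$ and with $p-q_0$ controlled from below by a reciprocal power of $[w]_{A_p}$. For the upper endpoint pick any fixed $q_1 > p$ (say $q_1 := p+1$); the monotonicity of $A_p$-norms in $p$ yields $[w]_{A_{q_1}} \le [w]_{A_p}$. Invoking \eqref{weakAp-2} at each of $q_0$ and $q_1$ provides weak $(q_i,q_i)$ bounds
\begin{equation*}
\|Tf\|_{L^{q_i,\infty}(w)} \le A_i \|f\|_{L^{q_i}(w)}, \quad A_i = 2\Phi\bigl(C_{q_i}[w]_{A_{q_i}}^{\max\{1,(p_0-1)/(q_i-1)\}}\bigr), \quad i=0,1.
\end{equation*}
Classical Marcinkiewicz interpolation in the measure space $(\Rn, w\,dx)$ then upgrades these to the strong $(p,p)$ bound
\begin{equation*}
\|Tf\|_{L^p(w)} \le C(p,q_0,q_1)\, A_0^\theta A_1^{1-\theta} \|f\|_{L^p(w)}, \quad \theta = \frac{1/q_0-1/p}{1/q_0-1/q_1}.
\end{equation*}

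\textbf{Main obstacle.} The hardest part will be the quantitative bookkeeping that produces the specific factor $3$ in $\max\{1,3(p_0-1)/(p-1)\}$. Because Lemma \ref{lem:open} only guarantees a gap $p-q_0$ that shrinks like a reciprocal power of $[w]_{A_p}$, both the Marcinkiewicz prefactor $C(p,q_0,q_1)$ and the ratio $(p_0-1)/(q_0-1)$ inherit polynomial dependence on $[w]_{A_p}$. The final step is to absorb these extra polynomial growths into the argument of the increasing, $[1,\infty)$-valued function $\Phi$, which inflates the exponent $(p_0-1)/(p-1)$ of \eqref{weakAp-2} into the $3(p_0-1)/(p-1)$ appearing in \eqref{weakAp-3}.
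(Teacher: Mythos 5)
Your proof of \eqref{weakAp-2} is exactly the paper's: linearize the weak norm via $F_\lambda=\lambda\mathbf{1}_{\{|Tf|>\lambda\}}$, feed the pairs $(F_\lambda,f)$ into Theorem \ref{thm:Ap}, and take the supremum in $\lambda$. For \eqref{weakAp-3} your skeleton (Lemma \ref{lem:open} producing $q_0\in(1,p)$ with $[w]_{A_{q_0}}\le 2^p[w]_{A_p}$, a second exponent $q_1>p$, the weak bounds from \eqref{weakAp-2} at both endpoints, Marcinkiewicz interpolation on $(\Rn,w\,dx)$) is also the paper's; the paper takes $q_1=p/(1-\varepsilon)$ so that $\theta=1/2$, but your fixed choice $q_1=p+1$ serves the same purpose. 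Two corrections to your ``main obstacle'' paragraph, however. First, the ratio $(p_0-1)/(q_0-1)$ does \emph{not} inherit any dependence on $[w]_{A_p}$: one may take $\varepsilon<\min\{1/2,(p-1)/2\}$, whence $q_0-1=\frac{p-1-\varepsilon}{1+\varepsilon}>\frac{p-1}{3}$ uniformly in the weight, and this elementary bound is the sole source of the factor $3$ in the exponent. Second, the Marcinkiewicz prefactor $C(p,q_0,q_1)\simeq\bigl(\tfrac{p}{p-q_0}+\tfrac{p}{q_1-p}\bigr)^{1/p}\simeq\varepsilon^{-1/p}$ genuinely does grow like a power of $[w]_{A_p}$ (since $\varepsilon\gtrsim_p[w]_{A_p}^{-\max\{1,1/(p-1)\}}$), and a multiplicative factor sitting \emph{outside} $\Phi$ cannot be ``absorbed into the argument of $\Phi$'' when $\Phi$ is merely increasing — take $\Phi\equiv1$ and the claimed bound $2\Phi(\cdots)=2$ is an absolute constant while the prefactor is not. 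So that step of your plan, taken literally, fails; it is only fair to add that the paper's own write-up of \eqref{weakAp-3} elides this prefactor in exactly the same way, so your proposal reproduces the published argument, including its one soft spot.
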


\begin{proof}
Given an arbitrary number $\lambda>0$, we denote
\begin{align*}
\F_{\lambda} :=\{(F_{\lambda}, G) :=  (\lambda \mathbf{1}_{\{x \in \Rn: |Tf(x)| > \lambda\}}, f): f \}. 
\end{align*}
The hypothesis \eqref{weakAp-1} implies that  for all weights $v \in A_{p_0}$,
\begin{align}\label{eq:Flam}
\|F_{\lambda}\|_{L^{p_0}(v)} 
&= \lambda \, v(\{x \in \Rn: |Tf(x)| > \lambda\})^{\frac{1}{p_0}}
\le \|Tf\|_{L^{p_0, \infty}(v)}
\\ \nonumber 
&\leq \Phi([v]_{A_{p_0}}) \|f\|_{L^{p_0}(v)}
= \Phi([v]_{A_{p_0}}) \|G\|_{L^{p_0}(v)},
\end{align}
for all $(F_{\lambda}, G) \in \F_{\lambda}$. Thus, \eqref{eq:Flam} means that \eqref{eq:Ap-1} is satisfies for the family $\F_{\lambda}$. Then Theorem \ref{thm:Ap} yields that for all exponents $p \in (1, \infty)$ and all weights $w \in A_p$, 
\begin{align*}
\lambda \, w(\{x \in \Rn: |Tf(x)| > \lambda\})^{\frac1p}
= \|F_{\lambda}\|_{L^p(w)}
&\le 2 \Phi \Big(C_p \, [w^p]_{A_p}^{\max\{1, \frac{p_0 - 1}{p -1}\}}\Big) \|G\|_{L^p(w)}
\\
&=  2 \Phi \Big(C_p \, [w^p]_{A_p}^{\max\{1, \frac{p_0 - 1}{p -1}\}}\Big) \|f\|_{L^p(w)},
\end{align*}
where $C_p=3^{n(p'+8)(p_0-p)}$ if $p<p_0$, and $C_p=3^{n(p+8)}$ if $p>p_0$, 
which along with the arbitrariness of $\lambda$ implies \eqref{weakAp-2}. 

To prove \eqref{weakAp-3}, we fix $q \in (1, \infty)$ and $w \in A_q$. By Lemma \ref{lem:open}, there exist $\gamma \in (0, 1)$ and $q_0 \in (1, q)$ so that 
\begin{align}\label{AqAq-1}
q_0 = \frac{q}{1+\varepsilon}, \quad
0<\varepsilon<\frac{q-1}{(1+\gamma)'}, \quad 
(1+\gamma)' \simeq [v]_{A_q}^{\max\{1, \frac{1}{q-1}\}}, \quad 
[w]_{A_{q_0}} \le 2^q [w]_{A_q}.
\end{align}
We may assume that $\varepsilon<\frac12$ since in this case \eqref{AqAq-1} still holds. Choose $q_1 := \frac{q}{1-\varepsilon} \in (q, 2q)$ such that $\frac1q=\frac{1-\theta}{q_0} + \frac{\theta}{q_1}$ with $\theta=\frac12$. Then, 
\begin{align}\label{AqAq-2}
w \in A_q \subset A_{q_1} \quad\text{ with }\quad [w]_{A_{q_1}} \le [w]_{A_q}.
\end{align}
Then it follows from \eqref{AqAq-1}, \eqref{AqAq-2}, and \eqref{weakAp-2} (with the exact constant $C_p$, see the proof above) that 
\begin{align}
\label{weakAp-4} \|Tf\|_{L^{q_0, \infty}(w)} 
&\le 2 \Phi \Big(C_{q_0} \, [w]_{A_{q_0}}^{\max\{1, \frac{p_0 - 1}{q_0 -1}\}}\Big) \|f\|_{L^{q_0}(w)}, 
\\
\label{weakAp-5} \|Tf\|_{L^{q_1, \infty}(w)} 
&\le 2 \Phi \Big(C_{q_1} \, [w]_{A_{q_1}}^{\max\{1, \frac{p_0 - 1}{q_1 -1}\}}\Big) \|f\|_{L^{q_1}(w)},  
\end{align}
where 
\begin{equation*}
C_{q_i}=
\begin{cases}
3^{n(q'_i+8)(p_0-q_i)}, & \text{ if } q_i<p_0, 
\\ 
3^{n(q_i+8)}, &\text{ if } q_i>p_0, 
\end{cases}
\qquad 
i=0, 1. 
\end{equation*}
Additionally, by the choice of $q_0$ and $q_1$, and that $\varepsilon<(q-1)/2$, we have 
\begin{align}
\label{qwqw-1} & q_0-1=\frac{q}{1+\varepsilon} -1
> \frac{q-1}{2(1+\varepsilon)}
>\frac{q-1}{3}, \quad
q'_0 = \frac{q}{q-1-\varepsilon} < 2q', 
\\ 
\label{qwqw-2} & C_{q_i} \le C'_q, \quad\text{and}\quad
[w]_{A_{q_i}}^{\max\{1, \frac{p_0 - 1}{q_i -1}\}}
\le [w]_{A_q}^{\max\{1, \frac{3(p_0 - 1)}{q -1}\}}, \quad i=0, 1, 
\end{align}
where $C'_q$ depends only on $n$, $p_0$, and $q$. Thus, invoking \eqref{qwqw-2}, we interpolate between \eqref{weakAp-4} and \eqref{weakAp-5} to conclude  
\begin{align*}
\|Tf\|_{L^q(w)} 
&\le 2 \Phi \Big(C'_q \, [w^p]_{A_p}^{\max\{1, \frac{3(p_0 - 1)}{q -1}\}}\Big) \|f\|_{L^q(w)}. 
\end{align*}
This completes the proof.
\end{proof}

\subsection{Off-diagonal extrapolation}
We next present a quantitative off-diagonal extrapolation below, which improves Theorem \ref{thm:Ap} to the limited range case. 

\begin{theorem}\label{thm:off}
Let $\F$ be a family of extrapolation pairs and $1 \le \p_- < \p_+ \le \infty$. Assume that there exist exponents $p_0 \in [\p_-, \p_+]$ and $q_0 \in (1, \infty)$ such that for all weights $v^{p_0} \in A_{p_0/\p_-} \cap RH_{(\p_+/p_0)'}$, 
\begin{equation}\label{eq:off-1}
\|f v\|_{L^{q_0}} 
\leq \Phi \big([v^{p_0(\p_+/p_0)'}]_{A_{\tau_{p_0}}}\big)  
\|g v\|_{L^{p_0}}, \quad (f, g) \in \F,
\end{equation}
where $\Phi : [1, \infty) \to [1, \infty)$ is an increasing function. Then for all exponents $p \in (\p_-, \p_+)$ and $q \in (1, \infty)$ satisfying $\frac1p - \frac1q = \frac{1}{p_0} - \frac{1}{q_0}$,  and all weights $w^p \in A_{p/\p_-} \cap RH_{(\p_+/p)'}$, 
\begin{equation}\label{eq:off-2}
\|f w\|_{L^q} 
\leq 2^{\max\{\frac{\tau_{p_0}}{p_0}, \frac{\tau_p}{p}\}} 
\Phi \Big(C_{p, q} \, [w^{p(\p_+/p)'}]_{A_{\tau_p}}^{\max\{1, \frac{\tau_{p_0} - 1}{\tau_p -1}\}}\Big) 
\|g w\|_{L^p}, \quad (f, g) \in \F. 
\end{equation}
\end{theorem}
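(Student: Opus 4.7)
The plan is to follow the duality-plus-Rubio de Francia blueprint used in Theorems \ref{thm:Ap} and \ref{thm:Apvw}, after translating the limited-range condition into a pure Muckenhoupt condition via Lemma \ref{lem:weights}\eqref{list:ApRH-2}. Setting $W:=w^{p(\p_+/p)'}$ and $V:=v^{p_0(\p_+/p_0)'}$, that lemma gives $W\in A_{\tau_p}$ (with comparable norms) and the analogous equivalence for $V\in A_{\tau_{p_0}}$; the proof then amounts to constructing a suitable $v$. Dualizing the left-hand side of \eqref{eq:off-2},
\[
\|fw\|_{L^q}=\sup\bigg\{\int fh\,dx:\ h\ge 0,\ \|hw^{-1}\|_{L^{q'}}\le 1\bigg\},
\]
and, for each admissible $h$, two applications of Hölder combined with the hypothesis \eqref{eq:off-1} give
\[
\int fh\,dx\le \Phi\big([V]_{A_{\tau_{p_0}}}\big)\,\|gw\|_{L^p}\,\|wv^{-1}\|_{L^r}\,\|hv^{-1}\|_{L^{q_0'}},
\]
with $1/r=|1/p_0-1/p|$ and the orientation of the second Hölder step dictated by the sign of $p-p_0$. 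The proof thus reduces to designing $v$ so that the residual $\|wv^{-1}\|_{L^r}\|hv^{-1}\|_{L^{q_0'}}$ telescopes to a universal constant while $V$ carries the desired quantitative $A_{\tau_{p_0}}$ bound.

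Following the indication just after the statement of Theorem \ref{thm:lim}, I would split into the cases $p>p_0$ and $p<p_0$, defining a separate Rubio de Francia iteration in each. In the case $p>p_0$ (equivalently $q>q_0$ and $\tau_p>\tau_{p_0}$, so the target exponent in \eqref{eq:off-2} is $1$), apply Theorem \ref{thm:RdF} to the Hardy--Littlewood maximal operator $M$ on a weighted $L^s$-space whose background weight is a suitable power of $W$; Lemma \ref{lem:sharp} together with Lemma \ref{lem:weights}\eqref{list:ApRH-2} then gives boundedness of $M$ with operator norm polynomially controlled by $[W]_{A_{\tau_p}}$. The output $\mathcal{R}h$ satisfies $\mathcal{R}h\ge h$, $\|\mathcal{R}h\|\le 2\|h\|$ in the same space, and $[\mathcal{R}h]_{A_1}\le 2\|M\|$. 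Then I would set $v=w^{a}(\mathcal{R}h)^{b}$ with $a$ and $b$ solved from three simultaneous requirements: (i) the residual Hölder factors telescope to a constant, (ii) $V$ decomposes as $W^{\alpha}(\mathcal{R}h)^{\beta}$ in a form amenable to Lemma \ref{lem:fac}\eqref{list:fac-1}, and (iii) the resulting exponents are consistent with the balance $1/p-1/q=1/p_0-1/q_0$. Lemma \ref{lem:fac}\eqref{list:fac-1} then yields $[V]_{A_{\tau_{p_0}}}\lesssim [W]_{A_{\tau_p}}$.

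The opposite case $p<p_0$ (forcing the target exponent $(\tau_{p_0}-1)/(\tau_p-1)>1$) is handled in parallel, either by running the Rubio de Francia algorithm on the dual weighted space, or by invoking Lemma \ref{lem:weights}\eqref{list:ApRH-3} to interchange $w\leftrightarrow w^{-1}$, $p\leftrightarrow p'$, $\p_\pm\leftrightarrow\p_\mp'$ and reducing to the first case; in either route Lemma \ref{lem:fac}\eqref{list:fac-1} produces $[V]_{A_{\tau_{p_0}}}\lesssim [W]_{A_{\tau_p}}^{(\tau_{p_0}-1)/(\tau_p-1)}$. Finally, taking the supremum over $h$ and tracking how many of the Hölder steps pick up a factor of $2$ from the Rubio de Francia output accounts for the outer constant $2^{\max\{\tau_{p_0}/p_0,\tau_p/p\}}$ in \eqref{eq:off-2}.

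The principal obstacle will be the exponent bookkeeping: one must pin down $a,b,\alpha,\beta$ and the exact $L^s(\mu)$-space on which the Rubio de Francia algorithm is run so that, simultaneously, $V$ lies in $A_{\tau_{p_0}}$ with the sharp quantitative dependence $\max\{1,(\tau_{p_0}-1)/(\tau_p-1)\}$, both Hölder residuals cancel, and the background weight for $M$ genuinely belongs to the relevant Muckenhoupt class with quantitative control by $[W]_{A_{\tau_p}}$. The definition $\tau_p=(\p_+/p)'(p/\p_--1)+1$ combined with the off-diagonal balance $1/p-1/q=1/p_0-1/q_0$ is precisely what makes the arithmetic close up.
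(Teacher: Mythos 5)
Your blueprint is correct and follows essentially the same route as the paper: dualize $\|fw\|_{L^q}$, split on the sign of $p-p_0$, and in each case run a Rubio de Francia iteration to build $v=w^{a}(\mathcal{R}(\cdot))^{b}$ so that $v^{p_0(\p_+/p_0)'}=W^{\alpha}(\mathcal{R}(\cdot))^{\beta}$ falls under Lemma \ref{lem:fac}\eqref{list:fac-1} while the H\"older residuals telescope; the paper packages exactly this as the ``product-type embedding'' Theorems \ref{thm:off-vw} and \ref{thm:lim-off-vw} (with $r=p(\p_+/p)'$, $\beta=1/\p_--1/\p_+$, $r\beta=\tau_p$) and handles the second case by the same dual swap you indicate. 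The exponent bookkeeping you defer is resolved precisely by the three constraints you list, so there is no gap in the approach.
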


To show Theorem \ref{thm:off}, we present a more general result below.
 
\begin{theorem}\label{thm:off-vw}
Let $\beta \in (0, \infty)$, $p_0, q_0 \in [1, \infty)$, $p, q \in (1, \infty)$, and let $r_0, r \in (\frac{1}{\beta}, \infty)$ be such that $\frac1q - \frac{1}{q_0} = \frac1r - \frac{1}{r_0} = \frac1p - \frac{1}{p_0}$. Then for all weights $w^r \in A_{r \beta}$ and for all functions $f \in L^p(w^p)$ and $g \in L^{q'}(w^{-q'})$, there exists a weight $v^{r_0} \in A_{r_0 \beta}$ such that 
\begin{align}
[v^{r_0}]_{A_{r_0 \beta}} 
&\lesssim [w^r]_{A_{r \beta}}^{\max\{1, \frac{r_0 \beta-1}{r \beta -1}\}}, 
\\
\|fv\|_{L^{p_0}} \|g v^{-1}\|_{L^{q'_0}} 
&\le 2^{\max\{\frac{r \beta}{p}, \frac{(r \beta)'}{q'}\}} \|fw\|_{L^p} \|gw^{-1}\|_{L^{q'}}. 
\end{align}
\end{theorem}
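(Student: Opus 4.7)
The strategy is to mimic the proof of Theorem \ref{thm:Apvw} while absorbing the off-diagonal parameter $r$ through a suitably modified Rubio de Francia iteration. The hypothesis $\frac{1}{p}-\frac{1}{p_0}=\frac{1}{r}-\frac{1}{r_0}=\frac{1}{q}-\frac{1}{q_0}$ is what matches exponents simultaneously in the $A_{r_0\beta}$-factorization and in the H\"older product estimate. Since the construction is asymmetric in $f$ and $g$, I split into cases $r\leq r_0$ and $r>r_0$ and reduce the latter to the former by duality $f\leftrightarrow g$, $w\mapsto w^{-1}$, $(p,q)\leftrightarrow(q',p')$, $(p_0,q_0)\leftrightarrow(q_0',p_0')$, using the equivalence $w^r\in A_{r\beta}\Leftrightarrow w^{-r/(r\beta-1)}\in A_{(r\beta)'}$ from Lemma \ref{lem:weights}, which sends the dual $r$-parameter to $r/(r\beta-1)$ while preserving $\beta$.

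In the principal case $r\leq r_0$, set
\[
F:=f^{p/(r\beta)}w^{(p-r)/(r\beta)},
\]
which satisfies the key identity $\|F\|_{L^{r\beta}(w^r)}^{r\beta}=\|fw\|_{L^p}^p$. Since $w^r\in A_{r\beta}$, Theorem \ref{thm:RdF} together with Lemma \ref{lem:sharp} yields $\mathcal{R}F\ge F$ with $\|\mathcal{R}F\|_{L^{r\beta}(w^r)}\le 2\|F\|_{L^{r\beta}(w^r)}$ and $[\mathcal{R}F]_{A_1}\lesssim[w^r]_{A_{r\beta}}^{1/(r\beta-1)}$. Define
\[
v:=w^{r/r_0}(\mathcal{R}F)^{-\beta(1-r/r_0)},
\]
so that $v^{r_0}=w^r(\mathcal{R}F)^{r\beta-r_0\beta}$. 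By Lemma \ref{lem:fac}(a) applied with the pair $r\beta\leq r_0\beta$,
\[
[v^{r_0}]_{A_{r_0\beta}}\le[w^r]_{A_{r\beta}}[\mathcal{R}F]_{A_1}^{r_0\beta-r\beta}\lesssim[w^r]_{A_{r\beta}}^{(r_0\beta-1)/(r\beta-1)},
\]
which is the desired $A$-class bound since $\max\{1,(r_0\beta-1)/(r\beta-1)\}=(r_0\beta-1)/(r\beta-1)$ in this case.

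For the product norm, rewrite $f=F^{r\beta/p}w^{(r-p)/p}\le(\mathcal{R}F)^{r\beta/p}w^{(r-p)/p}$. The exponent identity $\frac{1}{p}-\frac{1}{p_0}=\frac{1}{r}-\frac{1}{r_0}$ makes the powers of $w$ and $\mathcal{R}F$ in $(fv)^{p_0}$ collapse, giving the pointwise bound $(fv)^{p_0}\le(\mathcal{R}F)^{r\beta}w^r$; integration yields $\|fv\|_{L^{p_0}}\le 2^{r\beta/p_0}\|fw\|_{L^p}^{p/p_0}$. For the dual factor, split $gv^{-1}=(gw^{-1})\cdot w^{1-r/r_0}(\mathcal{R}F)^{\beta(1-r/r_0)}$ and apply H\"older with $\frac{1}{q_0'}=\frac{1}{q'}+\frac{1}{b}$, $\frac{1}{b}=\frac{1}{q}-\frac{1}{q_0}=\frac{1}{r}-\frac{1}{r_0}$; the same exponent identity forces $b(1-r/r_0)=r$ and $b\beta(1-r/r_0)=r\beta$, so the auxiliary integral again collapses to $\int(\mathcal{R}F)^{r\beta}w^r$. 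Multiplying and using $\frac{p}{p_0}+\frac{p}{b}=1$ and $r\beta\bigl(\frac{1}{p_0}+\frac{1}{b}\bigr)=\frac{r\beta}{p}$ yields
\[
\|fv\|_{L^{p_0}}\|gv^{-1}\|_{L^{q_0'}}\le 2^{r\beta/p}\|fw\|_{L^p}\|gw^{-1}\|_{L^{q'}},
\]
and the dual case $r>r_0$ replaces the exponent $r\beta/p$ by $\widetilde{r}\widetilde{\beta}/\widetilde{p}=(r\beta)'/q'$, explaining the $\max$.

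The main obstacle I anticipate is the algebraic bookkeeping: the function $F$ and the definition of $v$ have to be chosen so that a single Rubio de Francia iteration in $L^{r\beta}(w^r)$ simultaneously controls $\|fv\|_{L^{p_0}}$, $\|gv^{-1}\|_{L^{q_0'}}$, and $[v^{r_0}]_{A_{r_0\beta}}$. The shared identity $\frac{1}{p}-\frac{1}{p_0}=\frac{1}{r}-\frac{1}{r_0}=\frac{1}{q}-\frac{1}{q_0}$ is precisely what forces all three integrals to reduce to $\int(\mathcal{R}F)^{r\beta}w^r$; pinning down this collapse is the heart of the computation, and without the hypothesis one would be forced into multiple independent iterations losing control of the weight constant.
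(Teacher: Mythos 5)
Your proposal is correct and follows essentially the same route as the paper: the same auxiliary function $h=f^{p/(r\beta)}w^{(p-r)/(r\beta)}$ normalized in $L^{r\beta}(w^r)$, the same weight $v=w^{r/r_0}(\mathcal Rh)^{(r-r_0)\beta/r_0}$, the same factorization via Lemma \ref{lem:fac} and H\"older collapse driven by the identity $\frac1p-\frac1{p_0}=\frac1r-\frac1{r_0}=\frac1q-\frac1{q_0}$, and the same duality reduction (with $s=r/(r\beta-1)$, so $s\beta=(r\beta)'$) for the case $r>r_0$. The constants $2^{r\beta/p}$ and $2^{(r\beta)'/q'}$ you obtain match the paper's.
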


\begin{proof}
Fix $w^r \in A_{r \beta}$, $f \in L^p(w^p)$, and $g \in L^{q'}(w^{-q'})$. We first consider the case $q<q_0$ (equivalently, $p<p_0$ and $r<r_0$). Pick  
\begin{align}\label{defhh}
h := f^{\frac{p}{r \beta}} w^{\frac{p-r}{r \beta}} 
\quad\text{ so that }\quad 
\|h\|_{L^{r\beta}(w^r)} = \|fw\|_{L^p}^{\frac{p}{r \beta}}. 
\end{align}
By $w^r \in A_{r \beta}$ and Theorem \ref{thm:RdF}, there exists an operator $\mathcal{R}: L^{r \beta}(w^r) \to L^{r \beta}(w^r)$ such that
\begin{align}\label{eq:off-RdF}
h \le \mathcal{R} h, \quad
\|\mathcal{R} h\|_{L^{r \beta}(w^r)} \le 2 \|h\|_{L^{r \beta}(w^r)}, \quad\text{ and }\quad
[\mathcal{R} h]_{A_1} \le 2 \|M\|_{L^{r \beta}(w^r)}. 
\end{align}
Define
\begin{align}\label{eq:off-vv}
v := w^{\frac{r}{r_0}} (\mathcal{R} h)^{\frac{(r-r_0)\beta}{r_0}}.
\end{align}
Then by Lemma \ref{lem:fac} part \eqref{list:fac-1}, the last inequality in \eqref{eq:off-RdF}, and \eqref{eq:sharp}, 
\begin{align}
[v^{r_0}]_{A_{r_0 \beta}}
=[w^r (\mathcal{R} h)^{r\beta - r_0 \beta}]_{A_{r_0 \beta}}
\le [w^r]_{A_{r \beta}} [\mathcal{R}h]_{A_1}^{r_0 \beta - r \beta} 
\lesssim [w^r]_{A_{r \beta}}^{\frac{r_0 \beta -1}{r \beta -1}}. 
\end{align}
It follows from \eqref{defhh}, \eqref{eq:off-RdF}, and \eqref{eq:off-vv} that 
\begin{align}\label{off-fv}
\|fv\|_{L^{p_0}} 
&= \Big\|\big(h^{\frac{r \beta}{p}} w^{\frac{r}{p}-1+\frac{r}{p_0}} \big) 
(\mathcal{R} h)^{\frac{(r-r_0) \beta}{r_0}} \Big\|_{L^{p_0}}
\le \Big\|\big[(\mathcal{R} h)^{r \beta} w^r \big]^{\frac1p - \frac1r + \frac{1}{r_0}} \Big\|_{L^{p_0}}
\\ \nonumber 
&=\|\mathcal{R}h\|_{L^{r\beta}(w^r)}^{\frac{r \beta}{p_0}} 
\le (2 \|h\|_{L^{r\beta}(w^r)})^{\frac{r \beta}{p_0}} 
= \big(2 \|fw\|_{L^p}^{\frac{p}{r \beta}} \big)^{\frac{r \beta}{p_0}} 
= 2^{\frac{r \beta}{p_0}} \|fw\|_{L^p}^{\frac{p}{p_0}}. 
\end{align}
To proceed, we set $\frac1t := \frac{1}{q} - \frac{1}{q_0}$, equivalently $\frac{1}{q'_0} = \frac{1}{q'} + \frac1t$. By H\"{o}lder's inequality, 
\begin{align}\label{off-gv}
\|gv^{-1}\|_{L^{q'_0}} 
\le \|gw^{-1}\|_{L^{q'}} \|wv^{-1}\|_{L^t}, 
\end{align}
and by \eqref{defhh}--\eqref{eq:off-vv}, 
\begin{multline}\label{off-wv}
\|wv^{-1}\|_{L^t} 
=\Big\|(\mathcal{R} h)^{\beta(1-\frac{r}{r_0})} w^{1-\frac{r}{r_0}} \Big\|_{L^t}
=\|\mathcal{R}h\|_{L^{r\beta}(w^r)}^{r\beta(\frac1r - \frac{1}{r_0})}
\le \big(2 \|h\|_{L^{r\beta}(w^r)} \big)^{r\beta(\frac1r - \frac{1}{r_0})} 
\\ 
= \big(2 \|fw\|_{L^p}^{\frac{p}{r \beta}} \big)^{r\beta(\frac1r - \frac{1}{r_0})} 
= 2^{r\beta(\frac1r - \frac{1}{r_0})} \|fw\|_{L^p}^{p(\frac1r - \frac{1}{r_0})}.   
\end{multline}
Now collecting \eqref{off-fv}, \eqref{off-gv}, and \eqref{off-wv}, we deduce that 
\begin{align}
\|fv\|_{L^{p_0}} \|gv^{-1}\|_{L^{q'_0}} 
\le 2^{\frac{r \beta}{p}}\|fw\|_{L^p} \|gw^{-1}\|_{L^{q'}}, 
\end{align}
provided $\frac1q - \frac{1}{q_0} = \frac1r - \frac{1}{r_0} = \frac1p - \frac{1}{p_0}$. This shows the case $q<q_0$. 

Next let us deal with the case $q>q_0$ (equivalently, $p>p_0$ and $r>r_0$). Set 
\begin{align}\label{def:ssrr}
s := \frac{r}{r \beta -1} \quad\text{ and }\quad 
s_0 := \frac{r_0}{r_0 \beta -1}. 
\end{align}
Recall that $w^r \in A_{r \beta}$. Then we see that 
\begin{align}\label{wssrp}
w^{-s} \in A_{s \beta}, \quad 
q'<q'_0, \quad\text{ and }\quad 
\frac1s - \frac{1}{s_0} = \frac{1}{r_0} - \frac{1}{r} = \frac{1}{p'} - \frac{1}{p'_0} = \frac{1}{q'} - \frac{1}{q'_0}. 
\end{align}
Hence, the conclusion in the preceding case applied to the tuple $(q', p', s, q'_0, p'_0, s_0, g, f, w^{-1})$  in place of $(p, q, r, p_0, q_0, r_0, f, g, w)$ gives that there exists a weight $u^{s_0} \in A_{s_0 \beta}$ so that 
\begin{align}
\label{uuss} [u^{s_0}]_{A_{s_0 \beta}} 
&\lesssim [w^{-s}]_{A_{s \beta}}^{\frac{s_0 \beta -1}{s \beta -1}}, 
\\
\label{gguu} \|gu\|_{L^{q'_0}} fu^{-1}\|_{L^{p_0}} 
&\le 2^{\frac{s\beta}{q'}} \|gw^{-1}\|_{L^{q'}} \|fw\|_{L^p}. 
\end{align}
Note that by \eqref{def:ssrr}, 
\begin{align}\label{rgrg}
(r \beta -1) (s \beta-1) =1 
\quad\text{ and }\quad 
(r_0 \beta -1) (s_0 \beta-1) =1. 
\end{align}
Pick $v := u^{-1}$. Then by \eqref{wssrp}, \eqref{uuss}, and \eqref{rgrg}, 
\begin{align*}
[v^{r_0}]_{A_{r_0 \beta}} 
&=[u^{-r_0}]_{A_{r_0 \beta}} 
=[u^{\frac{r_0}{r_0 \beta -1}}]_{A_{(r_0 \beta)'}}^{r_0 \beta -1}
=[u^{s_0}]_{A_{s_0 \beta}}^{r_0 \beta -1}
\\ 
&\lesssim [w^{-s}]_{A_{s \beta}}^{\frac{1}{s \beta -1}}
=[w^{-\frac{r}{r \beta -1}}]_{A_{(r \beta)'}}^{r \beta -1}
=[w^r]_{A_{r \beta}}, 
\end{align*}
and \eqref{gguu} can be rewritten as 
\begin{align*}
\|fv\|_{L^{p_0}} \|gv^{-1}\|_{L^{q'_0}}
\le 2^{\frac{(r \beta)'}{q'}} \|fw\|_{L^p} \|gw^{-1}\|_{L^{q'}}. 
\end{align*}
In the case $q=q_0$, taking $v:=w$, the conclusion is trivial. This completes the proof. 
\end{proof}

The following conclusion is a particular case of Theorem \ref{thm:off-vw}. 

\begin{theorem}\label{thm:lim-off-vw}
Let $1 \le \p_- < \p_+ \le \infty$, $p_0 \in [\p_-, \p_+]$, $p \in (\p_-, \p_+)$, and let $q_0, q \in (1, \infty)$ be such that $\frac1q - \frac{1}{q_0} = \frac1p - \frac{1}{p_0}$. Then for all weights $w^p \in A_{p/\p_-} \cap RH_{(\p_+/p)'}$ and for all functions $f \in L^p(w^p)$ and $g \in L^{q'}(w^{-q'})$, there exists a weight $v^{p_0} \in A_{p_0/\p_-} \cap RH_{(\p_+/p_0)'}$ such that 
\begin{align}
\label{eq:lim-off-vw-2}
[v^{p_0(\p_+/p_0)'}]_{A_{\tau_{p_0}}}   
&\lesssim [w^{p(\p_+/p)'}]_{A_{\tau_p}}^{\max\{1, \frac{\tau_{p_0} - 1}{\tau_p -1}\}},  
\\
\label{eq:lim-off-vw-3} \|fv\|_{L^{p_0}} \|g v^{-1}\|_{L^{q'_0}} 
&\le 2^{\max\{\frac{\tau_p}{p}, \frac{\tau'_p}{q'}\}} \|fw\|_{L^p} \|gw^{-1}\|_{L^{q'}}. 
\end{align}
\end{theorem}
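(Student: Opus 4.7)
The plan is to derive Theorem~\ref{thm:lim-off-vw} as a direct specialization of the more general Theorem~\ref{thm:off-vw}, so the whole proof is essentially an exercise in choosing $\beta$, $r$, and $r_0$ to match the $A_p \cap RH_s$ structure with an $A_{r\beta}$ structure, together with an application of Lemma~\ref{lem:weights} part~\eqref{list:ApRH-2}.

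Concretely, I would set
\[
\beta := \frac{1}{\p_-} - \frac{1}{\p_+}, \qquad r := p\bigl(\p_+/p\bigr)', \qquad r_0 := p_0\bigl(\p_+/p_0\bigr)',
\]
and check three algebraic identities. First, a direct computation using $\tau_p = (\p_+/p)'(p/\p_- - 1)+1$ gives $r\beta = \tau_p$, and likewise $r_0\beta = \tau_{p_0}$; the easy way to see this is to rewrite $\tau_p/(p(\p_+/p)') = (1/\p_- - 1/p) + (1/p - 1/\p_+) = 1/\p_- - 1/\p_+ = \beta$, which is manifestly symmetric in the role of $p$ versus $p_0$. Second, since $1/r = 1/p - 1/\p_+$ and $1/r_0 = 1/p_0 - 1/\p_+$, one has $1/r - 1/r_0 = 1/p - 1/p_0$, which equals $1/q - 1/q_0$ by hypothesis; hence the off-diagonal relation required by Theorem~\ref{thm:off-vw} holds. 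Third, $r\beta = \tau_p > 1$ and $r_0\beta = \tau_{p_0} \ge 1$, so the integrability conditions on $r$ and $r_0$ are met (the extremal case $p_0 = \p_-$, where $\tau_{p_0}=1$, can be handled by the same argument after interpreting $A_1$ as the natural limiting class, or by a trivial limiting argument).

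By Lemma~\ref{lem:weights}\eqref{list:ApRH-2}, the hypothesis $w^p \in A_{p/\p_-} \cap RH_{(\p_+/p)'}$ is equivalent to $w^r \in A_{r\beta}$, so Theorem~\ref{thm:off-vw} produces a weight $v^{r_0} \in A_{r_0\beta}$, i.e.\ $v^{p_0(\p_+/p_0)'} \in A_{\tau_{p_0}}$, with
\[
[v^{p_0(\p_+/p_0)'}]_{A_{\tau_{p_0}}}
\lesssim [w^{p(\p_+/p)'}]_{A_{\tau_p}}^{\max\{1,(\tau_{p_0}-1)/(\tau_p-1)\}},
\]
which is \eqref{eq:lim-off-vw-2}. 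Applying Lemma~\ref{lem:weights}\eqref{list:ApRH-2} in the reverse direction shows that $v^{p_0}$ indeed lies in $A_{p_0/\p_-} \cap RH_{(\p_+/p_0)'}$. Finally, Theorem~\ref{thm:off-vw} yields
\[
\|fv\|_{L^{p_0}} \|gv^{-1}\|_{L^{q'_0}}
\le 2^{\max\{r\beta/p,\, (r\beta)'/q'\}} \|fw\|_{L^p} \|gw^{-1}\|_{L^{q'}},
\]
and since $r\beta = \tau_p$ and $(r\beta)' = \tau'_p$, this is exactly \eqref{eq:lim-off-vw-3}.

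The only real work is the algebraic verification $r\beta = \tau_p$ (and its $p_0$-analogue), which was the main motivation for introducing the parameter $\beta = 1/\p_- - 1/\p_+$ in the first place; once this identification is made, there is no genuine obstacle and the result drops out of Theorem~\ref{thm:off-vw} with no further construction. The only delicate bookkeeping is to confirm that the endpoint values of $p_0$ do not break the application (and, if they do, to argue by a straightforward limiting or duality argument).
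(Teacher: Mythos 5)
Your proposal is correct and is essentially identical to the paper's own proof: the paper also sets $\beta := \tfrac{1}{\p_-}-\tfrac{1}{\p_+}$, $r := p(\p_+/p)'$, $r_0 := p_0(\p_+/p_0)'$, verifies $r\beta=\tau_p$, $r_0\beta=\tau_{p_0}$, and $\tfrac1r-\tfrac{1}{r_0}=\tfrac1p-\tfrac{1}{p_0}=\tfrac1q-\tfrac{1}{q_0}$, and then invokes Lemma~\ref{lem:weights}\eqref{list:ApRH-2} together with Theorem~\ref{thm:off-vw}. Your remark about the endpoint values of $p_0$ (where $r_0=1/\beta$ or $r_0=\infty$) is a fair observation that the paper glosses over as well; no further changes are needed.
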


\begin{proof}
Denote 
\begin{align}\label{rppp}
r := p(\p_+/p)', \quad 
r_0 := p_0(\p_+/p_0)', \quad\text{ and }\quad 
\beta := \frac{1}{\p_-} - \frac{1}{\p_+}. 
\end{align}
Then one can check that 
\begin{align}\label{rgtp}
r \beta = \tau_p, \quad 
r_0 \beta = \tau_{p_0}, \quad\text{ and }\quad
\frac1r - \frac{1}{r_0}
=\frac1p - \frac{1}{p_0} 
=\frac1q - \frac{1}{q_0}.
\end{align}
Let $w^p \in A_{p/\p_-} \cap RH_{(\p_+/p)'}$, $f \in L^p(w^p)$, and $g \in L^{q'}(w^{-q'})$. Then it follows from Lemma \ref{lem:weights} part \eqref{list:ApRH-2} and \eqref{rgtp} that $w^r \in A_{r \beta}$, which together with Theorem \ref{thm:off-vw} implies that there exists a weight $v^{r_0} \in A_{r_0 \beta}$ such that 
\begin{align}
\label{vrar-1} [v^{r_0}]_{A_{r_0 \beta}} 
&\lesssim [w^r]_{A_{r \beta}}^{\max\{1, \frac{r_0 \beta-1}{r \beta -1}\}}, 
\\
\label{vrar-2} \|fv\|_{L^{p_0}} \|g v^{-1}\|_{L^{q'_0}} 
&\le 2^{\max\{\frac{r \beta}{p}, \frac{(r \beta)'}{q'}\}} \|fw\|_{L^p} \|gw^{-1}\|_{L^{q'}}. 
\end{align}
In view of \eqref{rppp}, \eqref{rgtp}, and Lemma \ref{lem:weights} part \eqref{list:ApRH-2},  we conclude from \eqref{vrar-1} and \eqref{vrar-2} that $v^{p_0} \in A_{p_0/\p_-} \cap RH_{(\p_+/p_0)'}$ so that \eqref{eq:lim-off-vw-2} and \eqref{eq:lim-off-vw-3} hold. 
\end{proof}

Let us see how we deduce Theorem \ref{thm:off} from Theorem \ref{thm:lim-off-vw}.

\begin{proof}[\textbf{Proof of Theorem $\ref{thm:off}$.}]
By duality, 
\begin{align}\label{fw-dual}
\|fw\|_{L^q}
= \sup_{\substack{0 \le h \in L^{q'}(w^{-q'}) \\ \|hw^{-1}\|_{L^{q'}} = 1}} |\langle f, h \rangle|. 
\end{align}
Fix a nonnegative function $h \in L^{q'}(w^{-q'})$ with $\|hw^{-1}\|_{L^{q'}} = 1$. By Theorem \ref{thm:lim-off-vw}, there exists a weight $v^{p_0} \in A_{p_0/\p_-} \cap RH_{(\p_+/p_0)'}$ such that 
\begin{align}
\label{vvw-1}
[v^{p_0(\p_+/p_0)'}]_{A_{\tau_{p_0}}}   
&\lesssim [w^{p(\p_+/p)'}]_{A_{\tau_p}}^{\max\{1, \frac{\tau_{p_0} - 1}{\tau_p -1}\}},  
\\
\label{vvw-2} \|gv\|_{L^{p_0}} \|h v^{-1}\|_{L^{q'_0}} 
&\le 2^{\max\{\frac{\tau_p}{p}, \frac{\tau'_p}{q'}\}} \|gw\|_{L^p} \|hw^{-1}\|_{L^{q'}}. 
\end{align}
Then, in view of \eqref{vvw-1}, we use \eqref{eq:off-1} and \eqref{vvw-2} to obtain 
\begin{align*}
|\langle f, h \rangle|
&\le \|fv\|_{L^{q_0}} \|hv^{-1}\|_{L^{q'_0}}
\le \Phi([v^{p_0(\p_+/p_0)'}]_{A_{\tau_{p_0}}}) \|gv\|_{L^{q_0}} \|hv^{-1}\|_{L^{q'_0}}
\\
&\le 2^{\max\{\frac{\tau_p}{p}, \frac{\tau'_p}{q'}\}} \Phi(C [w^{p(\p_+/p)'}]_{A_{\tau_p}}^{\max\{1, \frac{\tau_{p_0} - 1}{\tau_p -1}\}}) 
\|gw\|_{L^p} \|hw^{-1}\|_{L^{q'}}. 
\end{align*}
This along with \eqref{fw-dual} gives at once \eqref{eq:off-2} as desired. 
\end{proof}

\subsection{Multilinear extrapolation}
If we use Theorem \ref{thm:off} to show Theorem \ref{thm:lim}, it requires all the exponents are Banach. Thus, we have to improve Theorem \ref{thm:off} to the non-Banach ranges as follows. But in this case, we cannot establish a ``product-type embedding" as Theorem \ref{thm:lim-off-vw}.

\begin{theorem}\label{thm:lim-off}
Let $\F$ be a family of extrapolation pairs and $1 \le \p_- < \p_+ \le \infty$. Assume that there exist exponents $p_0, q_0 \in (0, \infty)$ such that $p_0 \in [\p_-, \p_+]$ and for all weights $v^{p_0} \in A_{p_0/\p_-} \cap RH_{(\p_+/p_0)'}$, 
\begin{equation}\label{eq:lim-off-1}
\|f v\|_{L^{q_0}} 
\leq \Phi \big([v^{p_0}]_{A_{p_0/\p_-} \cap RH_{(\p_+/p_0)'}}\big) 
\|g v\|_{L^{p_0}}, \quad (f, g) \in \F,
\end{equation}
where $\Phi : [1, \infty) \to [1, \infty)$ is an increasing function. Then for all exponents $p \in (\p_-, \p_+)$ and $q \in (0, \infty)$ satisfying $\frac1p - \frac1q = \frac{1}{p_0} - \frac{1}{q_0}$,  and all weights $w^p \in A_{p/\p_-} \cap RH_{(\p_+/p)'}$, 
\begin{equation}\label{eq:lim-off-2}
\|f w\|_{L^q} 
\leq 2^{\max\{\frac{\tau_p}{p}, \frac{\tau'_p}{p_0}\}} 
\Phi \big(C_0 \, [w^p]_{A_{p/\p_-} \cap RH_{(\p_+/p)'}}^{\gamma(p, \, p_0)} \big)
\|g w\|_{L^p}, \quad (f, g) \in \F,  
\end{equation} 
where the constant $C_0$ depends only on $n$, $p$, $p_0$, $\p_-$, and $\p_+$, and 
\begin{equation*}
\gamma(p, p_0) := 
\begin{cases}
\max\big\{1, \frac{\tau_{p_0} - 1}{\tau_p -1} \big\}, & p_0<\p_+, 
\\[4pt]
\frac{p_0}{\tau_p -1} \big(\frac{1}{\p_-} - \frac{1}{\p_+}\big), & p_0 = \p_+.
\end{cases}
\end{equation*} 
\end{theorem}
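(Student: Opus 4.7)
The plan is to follow the Rubio de Francia iteration algorithm adapted to the limited range setting, treating the three cases $q<q_0$, $q=q_0$, and $q>q_0$ separately. Since $q$ may lie below $1$, the duality reduction used to prove Theorem \ref{thm:off} is unavailable here, so the extrapolation weight $v$ has to be constructed directly, iterating once on each side. The case $q=q_0$ is immediate by taking $v:=w$.

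Set $r:=p(\p_+/p)'$, $r_0:=p_0(\p_+/p_0)'$, and $\beta:=\frac{1}{\p_-}-\frac{1}{\p_+}$, so that $r\beta=\tau_p$, $r_0\beta=\tau_{p_0}$, and $\frac{1}{r}-\frac{1}{r_0}=\frac{1}{p}-\frac{1}{p_0}=\frac{1}{q}-\frac{1}{q_0}=:\frac{1}{s}$. By Lemma \ref{lem:weights}\eqref{list:ApRH-2}, the hypothesis $w^p\in A_{p/\p_-}\cap RH_{(\p_+/p)'}$ is equivalent to $w^r\in A_{\tau_p}$, and the target condition $v^{p_0}\in A_{p_0/\p_-}\cap RH_{(\p_+/p_0)'}$ to $v^{r_0}\in A_{\tau_{p_0}}$, so everything can be rephrased inside the $A_\tau$-classes.

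In Case 1 ($q<q_0$, hence $p<p_0$ and $\tau_p<\tau_{p_0}$), I would take $h:=g^{p/\tau_p}w^{(p-r)/\tau_p}$, so that $\|h\|_{L^{\tau_p}(w^r)}=\|gw\|_{L^p}^{p/\tau_p}$, and apply Theorem \ref{thm:RdF} in $L^{\tau_p}(w^r)$. Combined with Lemma \ref{lem:sharp}, this yields an operator $\mathcal{R}$ with $h\le\mathcal{R}h$, $\|\mathcal{R}h\|_{L^{\tau_p}(w^r)}\le 2\|h\|_{L^{\tau_p}(w^r)}$, and $[\mathcal{R}h]_{A_1}\lesssim[w^r]_{A_{\tau_p}}^{1/(\tau_p-1)}$. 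Define $v:=w^{r/r_0}(\mathcal{R}h)^{(\tau_p-\tau_{p_0})/r_0}$; Lemma \ref{lem:fac}\eqref{list:fac-1} with base exponent $\tau_p$, target $\tau_{p_0}$, weight $w^r\in A_{\tau_p}$, and $A_1$-weight $\mathcal{R}h$ then gives $v^{r_0}\in A_{\tau_{p_0}}$ with $[v^{r_0}]_{A_{\tau_{p_0}}}\le[w^r]_{A_{\tau_p}}[\mathcal{R}h]_{A_1}^{\tau_{p_0}-\tau_p}\lesssim[w^r]_{A_{\tau_p}}^{(\tau_{p_0}-1)/(\tau_p-1)}$, which is precisely the first branch of $\gamma(p,p_0)$. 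A pointwise use of $\mathcal{R}h\ge h$, combined with the exponent identities, shows $(gv)^{p_0}\le(gw)^p$, so $\|gv\|_{L^{p_0}}\le\|gw\|_{L^p}^{p/p_0}$; and H\"older's inequality applied to $fw=(fv)\cdot(w/v)$ with $\frac{1}{q}=\frac{1}{q_0}+\frac{1}{s}$, together with the identity $\|w/v\|_{L^s}^s=\|\mathcal{R}h\|_{L^{\tau_p}(w^r)}^{\tau_p}\le 2^{\tau_p}\|gw\|_{L^p}^p$, yields $\|w/v\|_{L^s}\le 2^{\tau_p/s}\|gw\|_{L^p}^{p/s}$. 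Inserting the hypothesis \eqref{eq:lim-off-1} applied to $v$ and using $p/p_0+p/s=1$ then produces the stated bound with constant $2^{\tau_p/s}\le 2^{\tau_p/p}$.

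Case 2 ($q>q_0$, hence $p>p_0$ and $\tau_p>\tau_{p_0}$) is handled by the dual variant of the same algorithm: iterate Rubio de Francia in the conjugate space $L^{\tau'_p}(w^{-r/(\tau_p-1)})$ applied to a suitable $k:=f^{q_0/\tau'_p}(\text{appropriate power of }w)$, noting $w^{-r/(\tau_p-1)}\in A_{\tau'_p}$ with norm $[w^r]_{A_{\tau_p}}^{1/(\tau_p-1)}$, so $[\mathcal{R}k]_{A_1}\lesssim[w^r]_{A_{\tau_p}}$; then build $v$ via Lemma \ref{lem:fac}\eqref{list:fac-2}. The algebra mirrors Case 1 with the roles of $f$ and $g$ reversed, yielding $[v^{r_0}]_{A_{\tau_{p_0}}}\lesssim[w^r]_{A_{\tau_p}}$, which matches $\max\{1,(\tau_{p_0}-1)/(\tau_p-1)\}=1$ in this case, and a H\"older constant of $2^{\tau'_p/q_0}$. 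The main obstacle is the borderline subcase $p_0=\p_+$, where $r_0=\infty$ and the $RH$-factor of the mixed class degenerates: there one works from the endpoint definition \eqref{def:wpApRH} and the corresponding limiting form of Lemma \ref{lem:weights}, which accounts for the second branch $\gamma(p,p_0)=\frac{p_0}{\tau_p-1}\beta$ in \eqref{eq:lim-off-2}. Combining both cases gives the stated bound with constant $2^{\max\{\tau_p/p,\tau'_p/p_0\}}$, and the dependence $C_0$ on $n,p,p_0,\p_-,\p_+$ arises from careful tracking of the Buckley bound \eqref{eq:sharp} through the iteration.
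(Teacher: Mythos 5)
Your Case 1 (for $q<q_0$ with $p_0<\p_+$) is essentially the paper's argument: the same $h:=g^{p/\tau_p}w^{(p-r)/\tau_p}$, the same Rubio de Francia operator in $L^{\tau_p}(w^{p(\p_+/p)'})$, the same weight $v=w^{r/r_0}(\mathcal{R}h)^{(\tau_p-\tau_{p_0})/r_0}$, and the same use of Lemma \ref{lem:fac}\eqref{list:fac-1}; your pointwise bound $(gv)^{p_0}\le(gw)^p$ (exploiting that the exponent $(\tau_p-\tau_{p_0})/r_0$ is negative) is a valid, slightly sharper variant of the paper's step \eqref{eq:Apfvv}, and the arithmetic of the H\"older step closes correctly with constant $2^{\tau_p/s}\le 2^{\tau_p/p}$.

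There are, however, two genuine gaps. First, the endpoint subcase $p_0=\p_+$ cannot occur in your Case 2: since $p<\p_+$ strictly, $p_0=\p_+$ forces $p<p_0$, hence $q<q_0$, so it belongs to Case 1 — and there your formula for $v$ breaks down because $r_0=p_0(\p_+/p_0)'=\infty$. The paper handles it inside Case I with a different weight, $v:=(\mathcal{R}h)^{\frac{1}{\p_+}-\frac{1}{\p_-}}$ (a pure power of $\mathcal{R}h$, no factor of $w$), verified via Lemma \ref{lem:weights}\eqref{list:ApRH-1} rather than Lemma \ref{lem:fac}; this is where the second branch of $\gamma(p,p_0)$ comes from, and your proposal supplies no construction for it. Second, your Case 2 is only a gesture, and the route you gesture at differs from the paper's in a way that matters. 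The paper first dualizes $\|fw\|_{L^q}^{q_0}=\|f^{q_0}\|_{L^{q/q_0}(w^q)}$ in the Banach space $L^{q/q_0}(w^q)$ (legitimate because $q/q_0>1$ even when $q_0<1$) and applies $\mathcal{R}$ in $L^{\tau'_p}(w^{-s})$ to a power of the resulting dual function $h$, which keeps everything normalized by $\|h\|\le 1$. You instead apply $\mathcal{R}$ to $k=f^{q_0/\tau'_p}\cdot w^{\cdots}$: with that exponent $\|k\|_{L^{\tau'_p}(w^{-s})}^{\tau'_p}=\int f^{q_0}w^{\cdots}$ is not a quantity you control (the exponent would need to be $q/\tau'_p$ to give $\|fw\|_{L^q}^{q/\tau'_p}$), and even after fixing this the argument produces $\|fw\|_{L^q}^{q}\le C\,\Phi^{q_0}\|gw\|_{L^p}^{q_0}\|fw\|_{L^q}^{q-q_0}$, so one must know $\|fw\|_{L^q}<\infty$ a priori and divide — an approximation/truncation step you do not address and which the paper's dualization avoids entirely. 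As written, "the algebra mirrors Case 1 with the roles of $f$ and $g$ reversed" does not close the more delicate half of the theorem.
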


\begin{proof}
Fix $p \in (\p_-, \p_+)$ and $q \in (0, \infty)$ satisfying $\frac1p - \frac1q = \frac{1}{p_0} - \frac{1}{q_0}$,  and let $w^p \in A_{p/\p_-} \cap RH_{(\p_+/p)'}$. Fix $(f, g) \in \F$. Without loss of generality we may assume that $0 < \|gw\|_{L^p} < \infty$. Indeed, if $\|gw\|_{L^p} = \infty$ there is nothing to prove, and if $\|gw\|_{L^p} = 0$, then $g=0$ a.e. and by \eqref{eq:lim-off-1} we see that $f=0$ a.e., which trivially implies \eqref{eq:lim-off-2}. We split the proof into two cases. 

\medskip {\bf Case I: $q<q_0$.} Recall that $\tau_t = \big(\frac{\p_{+}}{t} \big)' \big(\frac{t}{\p_-}-1)+1$ for any $t \in [\p_{-}, \p_+]$. Obviously,  $\tau_t$ is an increasing function in $t$. Lemma \ref{lem:weights} part \eqref{list:ApRH-2} gives 
\begin{align}\label{eq:wppAA}
w^{p(\p_+/p)'} \in A _{\tau_p}.
\end{align}  
Set
\begin{align}\label{def:hhff}
h := g^{\frac{p}{\tau_{p}}}w^{\frac{p}{\tau_{p}}[1-(\p_+/p)']}
\quad\text{ so that }\quad 
\|h\|_{L^{\tau_p}(w^{p(\p_+/p)'})} 
=\|gw\|_{L^p}^{\frac{p}{\tau_p}} 
< \infty, 
\end{align}
which along with \eqref{eq:wppAA} and Theorem \ref{thm:RdF} implies that there exists an operator $\mathcal{R}: L^{\tau_p}(w^{p(\p_+/p)'}) \to L^{\tau_p}(w^{p(\p_+/p)'})$ such that
\begin{align}\label{eq:AR-RdF}
h \le \mathcal{R} h, \quad 
\|\mathcal{R}h\|_{L^{\tau_p}(w^{p(\p_+/p)'})} \leq 2 \|h\|_{L^{\tau_p}(w^{p(\p_+/p)'})}, 
\, \text{ and }\, 
[\mathcal{R}h]_{A_1} \le 2\|M\|_{L^{\tau_p}(w^{p(\p_+/p)'})}.  
\end{align}
Then \eqref{def:hhff} and the second estimate in \eqref{eq:AR-RdF} yield 
\begin{align}\label{eq:RRhfw} 
\|\mathcal{R}h\|_{L^{\tau_p}(w^{p(\p_+/p)'})} 
\leq 2 \|gw\|_{L^p}^{\frac{p}{\tau_p}}.  
\end{align}

Assume first that $p_0<\p_+$. Pick 
\begin{align}\label{eq:vvww}
v := w^{\frac{p(\p_+/p)'}{p_0(\p_+/p_0)'}}(\mathcal{R}h)^{\frac{\tau_p - \tau_{p_0}}{p_0(\p_+/p_0)'}}. 
\end{align}
Considering $p<p_0$, \eqref{eq:wppAA}, and the last estimate in \eqref{eq:AR-RdF}, we use Lemma \ref{lem:fac} and \eqref{eq:sharp} to get $v^{p_0(\p_+/p_0)'} \in A _{\tau_{p_0}}$ with 
\begin{align}\label{eq:vvwtau}
[v^{p_0(\p_+/p_0)'}]_{A_{\tau_{p_0}}}
\le [w^{p(\p_+/p)'}]_{A_{\tau_p}} [\mathcal{R}h]_{A_1}^{\tau_{p_0} - \tau_p}
\le C_1 [w^{p(\p_+/p)'}]_{A_{\tau_p}}^{\frac{\tau_{p_0} - 1}{\tau_p -1}}, 
\end{align}
where the constant $C_1$ depends only on $n$, $p$, $p_0$, $\p_-$, and $\p_+$, which together with Lemma \ref{lem:weights} part \eqref{list:ApRH-2} implies 
\begin{equation}\label{eq:vvvRH}
v^{p_0} \in A_{p_0/\p_-} \cap RH_{(\p_+/p_0)'}.
\end{equation}
On the other hand, note that 
\begin{align}
\label{ppp-1}
&\frac{1}{p(\p_+/p)'} - \frac{1}{p_0(\p_+/p_0)'}
= \frac1p - \frac{1}{p_0}
= \frac1q - \frac{1}{q_0} ,  
\\ 
\label{ppp-2} 
&\frac{\tau_p}{p(\p_+/p)'} 
= \frac{1}{\p_-} - \frac{1}{\p_+} 
= \frac{\tau_{p_0}}{p_0(\p_+/p_0)'}, 
\end{align}
provided 
\begin{align}\label{taup}
\tau_p = \bigg(\frac{\p_{+}}{p}\bigg)' \bigg(\frac{p}{\p_{-}}-1 \bigg) + 1
=\frac{\frac{1}{\p_{-}}-\frac{1}{p}}{\frac{1}{p}-\frac{1}{\p_{+}}}+1
=\frac{\frac{1}{\p_{-}}-\frac{1}{\p_{+}}}{\frac{1}{p}-\frac{1}{\p_{+}}}, 
\end{align}
which also implies 
\begin{align}\label{ppp-3}
\frac{\tau_p}{p} + \frac{\tau_p - \tau_{p_0}}{p_0(\p_+/p_0)'}
&=\tau_p \bigg[\frac1p + \frac{1-\tau_{p_0}/\tau_p}{p_0(\p_+/p_0)'} \bigg]
= \tau_p \bigg[\frac1p + \bigg(\frac{1}{p_0}-\frac{1}{\p_+}\bigg)\bigg(1-\frac{\frac1p-\frac{1}{\p_+}}{\frac{1}{p_0}-\frac{1}{\p_+}}\bigg)\bigg]
= \frac{\tau_p}{p_0}. 
\end{align}
By \eqref{def:hhff}, the first estimate in \eqref{eq:AR-RdF}, \eqref{ppp-1}, and \eqref{ppp-3}, 
\begin{align}\label{eq:Apfvv}
\|gv\|_{L^{p_0}}
&= \bigg\|h^{\frac{\tau_p}{p}} w^{(\p_+/p)'-1+\frac{p(\p_+/p)'}{p_0(\p_+/p_0)'}} 
(\mathcal{R}h)^{\frac{\tau_p - \tau_{p_0}}{p_0(\p_+/p_0)'}} \bigg\|_{L^{p_0}}
\\ \nonumber
&\le \bigg\|(\mathcal{R}h)^{\frac{\tau_p}{p} + \frac{\tau_p - \tau_{p_0}}{p_0(\p_+/p_0)'}} 
w^{p(\p_+/p)' [\frac1p - (\frac{1}{p(\p_+/p)'} - \frac{1}{p_0(\p_+/p_0)'})]} \bigg\|_{L^{p_0}}
=\|\mathcal{R}h\|_{L^{\tau_p}(w^{p(\p_+/p)'})}^{\frac{\tau_p}{p_0}}. 
\end{align}
To proceed, we denote $\frac1r := \frac1q - \frac{1}{q_0}>0$. Then in light of \eqref{eq:vvww}, \eqref{ppp-1}, and \eqref{ppp-2}, it follows from H\"{o}lder's inequality that 
\begin{align*}
\|fw\|_{L^q}
&= \bigg\|\Big[f \, w^{\frac{p(\p_+/p)'}{p_0(\p_+/p_0)'}} 
(\mathcal{R}h)^{\frac{\tau_p - \tau_{p_0}}{p_0(\p_+/p_0)'}} \Big] 
\Big[(\mathcal{R} h)^{\frac{\tau_p}{p(\p_+/p)'}} w \Big]^{(1-\frac{p(\p_+/p)'}{p_0(\p_+/p_0)'})} \bigg\|_{L^q}
\\ 
&\le \|fv\|_{L^{q_0}} \Big\|\big[(\mathcal{R} h)^{\frac{\tau_p}{p(\p_+/p)'}} w \big]^{1-\frac{p(\p_+/p)'}{p_0(\p_+/p_0)'}} \Big\|_{L^r}
\\ 
&= \|fv\|_{L^{q_0}} \Big\|\big[(\mathcal{R} h)^{\tau_p} w^{p(\p_+/p)'} \big]^{\frac1q - \frac{1}{q_0}} \Big\|_{L^r}
\\ 
&= \|fv\|_{L^{q_0}} \|\mathcal{R} h\|_{L^{\tau_p}(w^{p(\p_+/p)'})}^{\tau_p(\frac1q - \frac{1}{q_0})}.  
\end{align*}
Furthermore, invoking \eqref{eq:vvwtau}, \eqref{eq:vvvRH}, and \eqref{eq:lim-off-1}, we arrive at  
\begin{align}\label{eq:lim-off-3}
\|fw\|_{L^q}
&\le \Phi \big([v^{p_0(\p_+/p_0)'}]_{A_{\tau_{p_0}}}\big)  \|g v\|_{L^{p_0}} 
\|\mathcal{R} h\|_{L^{\tau_p}(w^{p(\p_+/p)'})}^{\tau_p(\frac1q - \frac{1}{q_0})}
\\ \nonumber
&\le \Phi \big([v^{p_0(\p_+/p_0)'}]_{A_{\tau_{p_0}}}\big)  
\|\mathcal{R} h\|_{L^{\tau_p}(w^{p(\p_+/p)'})}^{\tau_p(\frac{1}{p_0} + \frac1q - \frac{1}{q_0})}
\\ \nonumber
&\le 2^{\frac{\tau_p}{p}} \Phi \Big(C_1 [w^{p(\p_+/p)'}]_{A_{\tau_p}}^{\frac{\tau_{p_0} - 1}{\tau_p -1}}\Big)  
\|gw\|_{L^p}, 
\end{align}
where we have used \eqref{eq:Apfvv}, \eqref{eq:RRhfw}, and that $\frac1p - \frac1q = \frac{1}{p_0} - \frac{1}{q_0}$.

Let us next treat the case $p_0=\p_+$. Choose $v := (\mathcal{R}h)^{\frac{1}{\p_+} - \frac{1}{\p_-}}$. Then it follows from Lemma \ref{lem:weights} part \eqref{list:ApRH-1} that 
\begin{align}\label{vrhp}
v^{p_0} = (\mathcal{R}h)^{1-\frac{p_0}{\p_-}} \in A_{p_0/\p_-} \cap RH_{\infty} = A_{p_0/\p_-} \cap RH_{(\p_+/p_0)'}
\end{align}
with 
\begin{align}
\max\big\{[v^{p_0}]_{A_{p_0/\p_-}}, [v^{p_0}]_{RH_{\infty}} \big\} 
\le [\mathcal{R}h]_{A_1}^{\frac{p_0}{\p_-} - 1} 
\lesssim [w^{p(\p_+/p)'}]_{A_{\tau_p}}^{\frac{p_0}{\tau_p-1}(\frac{1}{\p_-} - \frac{1}{\p_+})}, 
\end{align}
where we have used the last estimate in \eqref{eq:AR-RdF} and \eqref{eq:sharp}. In the current scenario, 
\begin{align}
\label{expp-1} & \frac{\tau_p}{p} + \frac{1}{\p_+} - \frac{1}{\p_-}
=\tau_p \bigg[\frac1p - \bigg(\frac1p - \frac{1}{\p_+}\bigg) \bigg]
=\frac{\tau_p}{\p_+}
=\frac{\tau_p}{p_0}, 
\\
\label{expp-2} &p_0[(\p_+/p)' -1]
=\frac{p_0p}{\p_+-p}
=\frac{\p_+ p}{\p_+ - p}
=p(\p_+/p)', 
\\ 
\label{expp-3} &\frac1r := \frac1q - \frac{1}{q_0} 
= \frac1p - \frac{1}{p_0} 
= \frac1p - \frac{1}{\p_+}
=\frac{1}{p(\p_+/p)'}, 
\\
\label{expp-4} & \text{and}\quad 
r \bigg(\frac{1}{\p_-} -\frac{1}{\p_+} \bigg)
=\frac{\frac{1}{\p_-} -\frac{1}{\p_+}}{\frac1p -\frac{1}{\p_+}}
=\tau_p. 
\end{align} 
In view of \eqref{def:hhff}, \eqref{expp-1}, and \eqref{expp-2}, there holds 
\begin{align}\label{gvph}
\|gv\|_{L^{p_0}} 
= \Big\|h^{\frac{\tau_p}{p}} w^{(\p_+/p)'-1} (\mathcal{R} h)^{\frac{1}{\p_+} - \frac{1}{\p_-}}\Big\|_{L^{p_0}}
\le \|\mathcal{R} h\|_{L^{\tau_p}(w^{p(\p_+/p)'})}^{\frac{\tau_p}{p_0}}. 
\end{align}
Hence, invoking \eqref{vrhp}--\eqref{gvph},  H\"{o}lder's inequality, and \eqref{eq:lim-off-1}, we deduce 
\begin{align*}
\|fw\|_{L^q}
&= \Big\|\Big[f \, (\mathcal{R}h)^{\frac{1}{\p_+} - \frac{1}{\p_-}} \Big] 
\Big[(\mathcal{R}h)^{\frac{1}{\p_-} - \frac{1}{\p_+}} w \Big] \Big\|_{L^q}
\\ 
&\le \|fv\|_{L^{q_0}} \big\|(\mathcal{R}h)^{\frac{1}{\p_-} - \frac{1}{\p_+}} w\big\|_{L^r}
\\ 
&= \|fv\|_{L^{q_0}} \|\mathcal{R} h\|_{L^{\tau_p}(w^{p(\p_+/p)'})}^{\tau_p(\frac1p - \frac{1}{p_0})}
\\ 
&\le \Phi \big(\max\{[v^{p_0}]_{A_{p_0/{\p_-}}},\, [v^{p_0}]_{RH_{(\p_+/p_0)'}}\}\big) \|g v\|_{L^{p_0}} 
\|\mathcal{R} h\|_{L^{\tau_p}(w^{p(\p_+/p)'})}^{\tau_p(\frac1p - \frac{1}{p_0})}
\\ \nonumber
&\le \Phi \big(\max\{[v^{p_0}]_{A_{p_0/{\p_-}}},\, [v^{p_0}]_{RH_{(\p_+/p_0)'}}\}\big)    
\|\mathcal{R} h\|_{L^{\tau_p}(w^{p(\p_+/p)'})}^{\frac{\tau_p}{p}}
\\ \nonumber
&\le 2^{\frac{\tau_p}{p}} 
\Phi \Big(C_1 [w^{p(\p_+/p)'}]_{A_{\tau_p}}^{\frac{p_0}{\tau_p-1}(\frac{1}{\p_-} - \frac{1}{\p_+})}\Big)  
\|gw\|_{L^p}, 
\end{align*}
where \eqref{eq:RRhfw} was used in the last step. \black 

\medskip {\bf Case II: $q_0<q$.} By Lemma \ref{lem:weights} parts \eqref{list:ApRH-2} and \eqref{list:ApRH-3}, 
\begin{align}\label{wsap}
w^{-s} \in A_{\tau'_p} \quad\text{ with }\quad 
[w^{-s}]_{A_{\tau'_p}} = [w^{p(\p_+/p)'}]_{A_{\tau_p}}^{\frac{1}{\tau_p-1}}, 
\end{align}
where $s=p'(\p'_-/p')'=\frac{1}{\frac{1}{\p_-} - \frac1p}$. This and Theorem \ref{thm:RdF} yield that there exists an operator $\mathcal{R}: L^{\tau'_p}(w^{-s}) \to L^{\tau'_p}(w^{-s})$ such that for any nonnegative function $\widetilde{h} \in L^{\tau'_p}(w^{-s})$, 
\begin{align}\label{eq:RRdF}
\widetilde{h} \le \mathcal{R} \widetilde{h}, \quad 
\|\mathcal{R} \widetilde{h}\|_{L^{\tau'_p}(w^{-s})} \leq 2 \|\widetilde{h}\|_{L^{\tau'_p}(w^{-s})}, 
\, \text{ and }\, 
[\mathcal{R} \widetilde{h}]_{A_1} \le 2 \|M\|_{L^{\tau'_p}(w^{-s})}.  
\end{align}

Write $\frac1r := \frac{1}{q_0} - \frac1q = \frac{1}{p_0} - \frac1p>0$, equivalently, $\frac{q}{q-q_0}=\frac{r}{q_0}$. By duality there exists a nonnegative function $h \in L^{\frac{q}{q-q_0}}(w^q)$ with $\|h\|_{L^{\frac{q}{q-q_0}}(w^q)} \le 1$ such that 
\begin{align}\label{HH-1}
\|fw\|_{L^q}^{q_0} 
=\|f^{q_0}\|_{L^{\frac{q}{q_0}}(w^q)}
= \int_{\Rn} f^{q_0} h \, w^q \, dx. 
\end{align}
Setting $H := \mathcal{R} \Big(h^{\frac{r}{\tau'_p q_0}} w^{\frac{s+q}{\tau'_p}} \Big)^{\frac{\tau'_p q_0}{r}} 
w^{-\frac{(s+q)q_0}{r}}$, 
we utilize \eqref{eq:RRdF} to obtain that $h \le H$, and by \eqref{eq:sharp} and \eqref{wsap}, 
\begin{align}\label{HHA1}
\big[H^{\frac{r}{\tau'_p q_0}} w^{\frac{s+q}{\tau'_p}} \big]_{A_1} 
= \Big[\mathcal{R} \Big(h^{\frac{r}{\tau'_p q_0}} w^{\frac{s+q}{\tau'_p}} \Big)\Big]_{A_1} 
\lesssim [w^{-s}]_{A_{\tau'_p}}^{\frac{1}{\tau'_p -1}} 
=[w^{p(\p_+/p)'}]_{A_{\tau_p}}, 
\end{align}
provided that 
\[
\|h^{\frac{r}{\tau'_p q_0}} w^{\frac{s+q}{\tau'_p}}\|_{L^{\tau'_p}(w^{-s})} 
= \|h\|_{L^{\frac{r}{q_0}}(w^q)}^{\frac{r}{\tau'_p q_0}}
= \|h\|_{L^{\frac{q}{q-q_0}}(w^q)}^{\frac{r}{\tau'_p q_0}}
\le 1, 
\]
which also gives 
\begin{align}\label{HH-2}
\|H\|_{L^{\frac{r}{q_0}}(w^q)}
= \Big\| \mathcal{R} \Big(h^{\frac{r}{\tau'_p q_0}} w^{\frac{s+q}{\tau'_p}} \Big) \Big\|_{L^{\tau'_p}(w^{-s})}^{\frac{\tau'_p q_0}{r}}
\le 2^{\frac{\tau'_p q_0}{r}} \Big\|h^{\frac{r}{\tau'_p q_0}} w^{\frac{s+q}{\tau'_p}} \Big\|_{L^{\tau'_p}(w^{-s})}^{\frac{\tau'_p q_0}{r}}
\le 2^{\frac{\tau'_p q_0}{r}}. 
\end{align}
Now picking $v := w^{\frac{q}{q_0}} \, H^{\frac{1}{q_0}}$, we see that by \eqref{HH-2}
\begin{align}\label{HH-3}
\|vw^{-1}\|_{L^r}
= \|H\|_{L^{\frac{r}{q_0}}(w^q)}^{\frac{1}{q_0}}
\le 2^{\frac{\tau'_p}{r}}
\le 2^{\frac{\tau'_p}{p_0}}. 
\end{align}

To proceed, we observe that $p_0 < p <\p_+$ and use \eqref{taup} to deduce that 
\begin{align}
\label{HH-4}
& \tau'_p \, p_0(\p_+/p_0)'/r
=\frac{\tau_p}{\tau_p -1} \frac{\frac{1}{p_0} - \frac1p}{\frac{1}{p_0} - \frac{1}{\p_+}}
=\frac{\tau_p}{\tau_p -1} \bigg(1- \frac{\tau_{p_0}}{\tau_p} \bigg)
=\frac{\tau_p - \tau_{p_0}}{\tau_p -1}, 
\\ 
\label{HH-5}
&\frac{\tau_p -1}{p(\p_+/p)'}
=\frac{1}{\p_-} - \frac1p
=\frac1s, 
\quad\text{ and }\quad 
\frac{\tau_{p_0} -1}{p_0(\p_+/p_0)'}
=\frac{1}{\p_-} - \frac{1}{p_0}
=: \frac{1}{s_0}, 
\end{align}
which in turn implies 
\begin{align}\label{HH-6}
\frac{q}{q_0} - \frac{q}{r} - \frac{s}{r} 
=1-\frac{s}{r}
=s \bigg(\frac1s - \frac1r\bigg)
=s \bigg(\frac{1}{\p_-} - \frac{1}{p_0}\bigg)
=\frac{s}{s_0} 
=\frac{p(\p_+/p)'}{p_0(\p_+/p_0)'} \frac{\tau_{p_0} -1}{\tau_p -1}. 
\end{align}
Hence, it follows from \eqref{HH-4} and \eqref{HH-6} that 
\begin{align*}
v^{p_0(\p_+/p_0)'} 
&=w^{(\frac{q}{q_0} - \frac{s}{r} - \frac{q}{r})p_0(\p_+/p_0)'} \, 
\big(H^{\frac{r}{\tau'_p q_0}} w^{\frac{s+q}{\tau'_p}} \big)^{\tau'_p p_0(\p_+/p_0)'/r} 
\\
&=\big(w^{p(\p_+/p)'}\big)^{\frac{\tau_{p_0} -1}{\tau_p -1}} 
\big(H^{\frac{r}{\tau'_p q_0}} w^{\frac{s+q}{\tau'_p}} \big)^{\frac{\tau_p - \tau_{p_0}}{\tau_p -1}}, 
\end{align*}
which along with \eqref{eq:wppAA}, \eqref{HHA1}, and Lemma \ref{lem:fac} part \eqref{list:fac-2}, yields 
\begin{align}\label{HHvv}
[v^{p_0(\p_+/p_0)'}]_{A_{\tau_{p_0}}} 
\le \big[w^{p(\p_+/p)'}\big]_{A_{\tau_p}}^{\frac{\tau_{p_0} -1}{\tau_p -1}}
\big[H^{\frac{r}{\tau'_p q_0}} w^{\frac{s+q}{\tau'_p}} \big]_{A_1}^{\frac{\tau_p - \tau_{p_0}}{\tau_p -1}}
\le C_2 \big[w^{p(\p_+/p)'}\big]_{A_{\tau_p}}, 
\end{align}
where the constant $C_2$ depends only on $n$, $p$, $p_0$, $\p_-$, and $\p_+$. By Lemma \ref{lem:weights} part \eqref{list:ApRH-2}, this means that 
\begin{align}\label{hvap}
v^{p_0} \in A_{p_0/\p_-} \cap RH_{(\p_+/p_0)'}.
\end{align}

With \eqref{HH-1} and \eqref{hvap} in hand, the hypothesis \eqref{eq:lim-off-1} implies 
\begin{align}\label{eq:lim-off-4}
\|fw\|_{L^q} 
&\le \bigg(\int_{\Rn} f^{q_0} H \, w^q \, dx \bigg)^{\frac{1}{q_0}} 
= \|f v\|_{L^{q_0}}
\le \Phi \big([v^{p_0(\p_+/p_0)'}]_{A_{\tau_{p_0}}}\big) \|g v\|_{L^{p_0}}
\\ \nonumber
&\le \Phi \big([v^{p_0(\p_+/p_0)'}]_{A_{\tau_{p_0}}}\big) \|g w\|_{L^p} \|vw^{-1}\|_{L^r}
\le 2^{\frac{\tau'_p}{p_0}} \Phi \big(C_2 [w^{p(\p_+/p)'}]_{A_{\tau_p}}\big) \|g w\|_{L^p}, 
\end{align}
where \eqref{HH-3} and \eqref{HHvv} were used in the last inequality. As a consequence, \eqref{eq:lim-off-2} follows at once from \eqref{eq:lim-off-3} and \eqref{eq:lim-off-4}. 
\end{proof}

\medskip 
\begin{proof}[\textbf{Proof of Theorem $\ref{thm:lim}$.}]
Fix $v_i^{q_i} \in A_{q_i/\p_i^-} \cap RH_{(\p_i^+/q_i)'}$, $i=2, \ldots, m$. Set 
\begin{align*}
\F_1 := \bigg\{(F, G) := \bigg(\frac{f \prod_{i=2}^m v_i}{\prod_{i=2}^m \|f_i v_i\|_{L^{q_i}}}, f_1 \bigg)
: (f, f_1, \ldots, f_m) \in \F \bigg\}.
\end{align*} 
By hypothesis \eqref{eq:lim-1}, we see that for every $v_1^{q_1} \in A_{q_1/\p_1^-} \cap RH_{(\p_1^+/q_1)'}$ 
\begin{align*}
\|F v_1\|_{L^q} = \frac{\|f v\|_{L^q}}{\prod_{i=2}^m \|f_i v_i\|_{L^{q_i}}} 
&\le \prod_{i=1}^m \Phi_i \big([v_i^{q_i}]_{A_{q_i/\p_i^-} \cap RH_{(\p_i^+/q_i)'}}\big)  \|f_1 v_1\|_{L^{q_1}}
\\ 
&= \prod_{i=1}^m \Phi_i \big([v_i^{q_i}]_{A_{q_i/\p_i^-} \cap RH_{(\p_i^+/q_i)'}}\big) 
\|G v_1\|_{L^{q_1}}, \quad (F, G) \in \F_1, 
\end{align*}
where $\frac1q = \sum_{i=1}^m \frac{1}{q_i}$ and $v=\prod_{i=1}^m v_i$. This verifies the hypothesis \eqref{eq:lim-off-1} for the family $\F_1$. Then Theorem \ref{thm:lim-off} implies that for every $p_1 \in (\p_1^-, \p_1^+)$ and every $w_1^{p_1} \in A_{p_1/\p_1^-} \cap RH_{(\p_1^+/p_1)'}$, 
\begin{multline}\label{eq:FG-1}
\frac{\|fw_1 \prod_{i=2}^m v_i\|_{L^{s_1}}}{\prod_{i=2}^m \|f_i v_i\|_{L^{q_i}}} 
= \|Fw_1\|_{L^{s_1}}  
\le \mathfrak{N}_1 
\prod_{i=2}^m \Phi_i \big([v_i^{q_i}]_{A_{q_i/\p_i^-} \cap RH_{(\p_i^+/q_i)'}}\big) \|G w_1\|_{L^{p_1}} 
\\
= \mathfrak{N}_1 
\prod_{i=2}^m \Phi_i \big([v_i^{q_i}]_{A_{q_i/\p_i^-} \cap RH_{(\p_i^+/q_i)'}}\big) 
\|f_1 w_1\|_{L^{p_1}}, \quad (F, G) \in \F_1, 
\end{multline}
where $\frac{1}{s_1} - \frac{1}{p_1} = \frac1q - \frac{1}{q_1}$, 
\begin{align}
& \mathfrak{N}_1 := 
2^{\max\{\frac{\tau_{p_1}}{p_1}, \frac{\tau'_{p_1}}{q_1}\}} 
\Phi_1 \Big(C_1 \, [w_1^{p_1}]_{A_{p_1/\p_1^-} \cap RH_{(\p_1^+/p_1)'}}^{\gamma_1(p_1, q_1)}\Big), 
\\ 
& \gamma_1(p_1, q_1) := 
\begin{cases}
\max\big\{1, \frac{\tau_{q_1} - 1}{\tau_{p_1} -1} \big\}, & q_1<\p_1^+, 
\\[4pt]
\frac{q_1}{\tau_{p_1} -1} \big(\frac{1}{\p_1^-} - \frac{1}{\p_1^+} \big), & q_1 = \p_1^+.
\end{cases}
\end{align} 
Considering \eqref{eq:FG-1}, we have 
\begin{align}\label{eq:FG-2}
\bigg\|fw_1\prod_{i=2}^m v_i \bigg\|_{L^{s_1}} 
\le \mathfrak{N}_1 \|f_1 w_1\|_{L^{p_1}} 
\prod_{i=2}^m \Phi_i  \big([v_i^{q_i}]_{A_{q_i/\p_i^-} \cap RH_{(\p_i^+/q_i)'}}\big) \|f_i v_i\|_{L^{q_i}}, 
\end{align}
for all $(f, f_1, \ldots, f_m) \in \F$, for all $p_1 \in (\p_1^-, \p_1^+)$, for all $w_1^{p_1} \in A_{p_1/\p_1^-} \cap RH_{(\p_1^+/p_1)'}$, and for all $v_i^{q_i} \in A_{q_i/\p_i^-} \cap RH_{(\p_i^+/q_i)'}$, $i=2, \ldots, m$. 

Now fix $p_1 \in (\p_1^-, \p_1^+)$, $w_1^{p_1} \in A_{p_1/\p_1^-} \cap RH_{(\p_1^+/p_1)'}$, and $v_i^{q_i} \in A_{q_i/\p_i^-} \cap RH_{(\p_i^+/q_i)'}$, $i=3, \ldots, m$. Set 
\begin{align*}
\F_2 := \bigg\{(F, G) := \bigg(\frac{f w_1 \prod_{i=3}^m v_i}{\|f_1 w_1\|_{L^{p_1}} \prod_{i=3}^m \|f_i v_i\|_{L^{q_i}}}, f_2 \bigg): (f, f_1, \ldots, f_m) \in \F \bigg\}.
\end{align*}
It follows from \eqref{eq:FG-2} that for every $v_2^{q_2} \in A_{q_2/\p_2^-} \cap RH_{(\p_2^+/q_2)'}$,  
\begin{align*}
\|Fv_2\|_{L^{s_1}} 
&=\frac{\|fw_1 \prod_{i=2}^m v_i\|_{L^{s_1}}}{\|f_1w_1\|_{L^{p_1}} \prod_{i=3} \|f_i v_i\|_{L^{q_i}}} 
\\
&\le \mathfrak{N}_1 
\prod_{i=2}^m \Phi_i  \big([v_i^{q_i}]_{A_{q_i/\p_i^-} \cap RH_{(\p_i^+/q_i)'}}\big) \|f_2 v_2\|_{L^{q_2}}
\\
&= \mathfrak{N}_1 
\prod_{i=2}^m \Phi_i  \big([v_i^{q_i}]_{A_{q_i/\p_i^-} \cap RH_{(\p_i^+/q_i)'}}\big) 
\|G v_2\|_{L^{q_2}}, \quad (F, G) \in \F_2. 
\end{align*}
Invoking Theorem \ref{thm:lim-off} applied to $\F_2$, we have that for every $p_2 \in (\p_2^-, \p_2^+)$ and every $w_2^{p_2} \in A_{p_2/\p_2^-} \cap RH_{(\p_2^+/p_2)'}$, 
\begin{align}\label{fws-2}
&\frac{\|fw_1w_2 \prod_{i=3}^m v_i\|_{L^{s_2}}}{\|f_1w_1\|_{L^{p_1}} \prod_{i=3} \|f_i v_i\|_{L^{q_i}}} 
=\|Fw_2\|_{L^{s_2}} 
\\ \nonumber
&\quad\le \mathfrak{N}_1 \mathfrak{N}_2  
\prod_{i=3}^m \Phi_i  \big([v_i^{q_i}]_{A_{q_i/\p_i^-} \cap RH_{(\p_i^+/q_i)'}}\big) \|G w_2\|_{L^{p_2}} 
\\ \nonumber
&\quad= \mathfrak{N}_1 \mathfrak{N}_2  
\prod_{i=3}^m \Phi_i  \big([v_i^{q_i}]_{A_{q_i/\p_i^-} \cap RH_{(\p_i^+/q_i)'}}\big) 
\|f_2 w_2\|_{L^{p_2}}, \quad (F, G) \in \F_2, 
\end{align}
where $\frac{1}{s_2} - \frac{1}{p_2} = \frac{1}{s_1} - \frac{1}{q_2}$, 
\begin{align}
& \mathfrak{N}_2 := 
2^{\max\{\frac{\tau_{p_2}}{p_2}, \frac{\tau'_{p_2}}{q_2}\}} 
\Phi_2 \Big(C_2 \, [w_2^{p_2}]_{A_{p_2/\p_2^-} \cap RH_{(\p_2^+/p_2)'}}^{\gamma_2(p_2, q_2)}\Big), 
\\ 
& \gamma_2(p_2, q_2) := 
\begin{cases}
\max\big\{1, \frac{\tau_{q_2} - 1}{\tau_{p_2} -1} \big\}, & q_2<\p_2^+, 
\\[4pt]
\frac{q_2}{\tau_{p_2} -1} \big(\frac{1}{\p_2^-} - \frac{1}{\p_2^+} \big), & q_2 = \p_2^+.
\end{cases}
\end{align} 
It follows from \eqref{fws-2} that for every $p_i \in (\p_i^-, \p_i^+)$,  for every $w_i^{p_i} \in A_{p_i/\p_i^-} \cap RH_{(\p_i^+/p_i)'}$, $i=1, 2$, and for every $v_j^{q_j} \in A_{q_j/\p_j^-} \cap RH_{(\p_j^+/q_j)'}$, $j=3, \ldots, m$, 
\begin{align*}
\bigg\|fw_1w_2 \prod_{j=3}^m v_j \bigg\|_{L^{s_2}}  
\le \prod_{i=1}^2 \mathfrak{N}_i \|f_i w_i\|_{L^{p_i}} 
\prod_{j=3}^m \Phi_j \big([v_j^{q_j}]_{A_{q_j/\p_j^-} \cap RH_{(\p_j^+/q_j)'}}\big) \|f_j w_j\|_{L^{q_j}}, 
\end{align*}
for all $(f, f_1, \ldots, f_m) \in \F$.

Inductively, one can show that for each $k \in \{1, \ldots, m\}$, for every $p_i \in (\p_i^-, \p_i^+)$,  for every $w_i^{p_i} \in A_{p_i/\p_i^-} \cap RH_{(\p_i^+/p_i)'}$, $i \in \{1, \ldots, k\}$, and for every $v_j^{q_j} \in A_{q_j/\p_j^-} \cap RH_{(\p_j^+/q_j)'}$, $j=\{k+1, \ldots, m\}$, 
\begin{align}\label{fws-22}
\bigg\|f \prod_{i=1}^k w_i \prod_{j=k+1}^m v_j \bigg\|_{L^{s_k}}  
\le \prod_{i=1}^k \mathfrak{N}_i \|f_i w_i\|_{L^{p_i}} 
\prod_{j=k+1}^m \Phi_j \big([v_j^{q_j}]_{A_{q_j/\p_j^-} \cap RH_{(\p_j^+/q_j)'}}\big) \|f_j v_j\|_{L^{q_j}}, 
\end{align}
for all $(f, f_1, \ldots, f_m) \in \F$, where $s_0 := q$, 
\begin{align}
\label{ss-k} & \frac{1}{s_k} - \frac{1}{p_k} = \frac{1}{s_{k-1}} - \frac{1}{q_k}, 
\\
& \mathfrak{N}_k := 
2^{\max\{\frac{\tau_{p_k}}{p_k}, \frac{\tau'_{p_k}}{q_k}\}} 
\Phi_k \Big(C_k \, [w_k^{p_k}]_{A_{p_k/\p_k^-} \cap RH_{(\p_k^+/p_k)'}}^{\gamma_k(p_k, q_k)}\Big), 
\\ 
& \gamma_k(p_k, q_k) := 
\begin{cases}
\max\big\{1, \frac{\tau_{q_k} - 1}{\tau_{p_k} -1} \big\}, & q_k<\p_k^+, 
\\[4pt]
\frac{q_k}{\tau_{p_k} -1} \big(\frac{1}{\p_k^-} - \frac{1}{\p_k^+} \big), & q_k = \p_k^+.
\end{cases}
\end{align} 
To conclude the proof, we take $\frac{1}{s_m} = \frac1p := \sum_{i=1}^m \frac{1}{p_i}$, and then \eqref{ss-k} is satisfied.  
The inequality \eqref{fws-22} immediately gives \eqref{eq:lim-2} as desired.

It remains to show the vector-valued inequality \eqref{eq:lim-3}. Fix $r_i \in (\p_i^-, \p_i^+)$, $i=1, \ldots, m$, and set $\frac{1}{r} = \sum_{i=1}^m \frac{1}{r_i}$. Given $N \in \N$, we define
\begin{align*}
\F_{\vec{r}}^N := \Big\{(F, F_1, \ldots, F_m)
:= &\bigg(\Big(\sum_{|k| < N} |f^k|^r \Big)^{\frac1r}, \Big(\sum_{|k| < N} |f^k_1|^{r_1} \Big)^{\frac{1}{r_1}},  \ldots,
\\
&\qquad \Big(\sum_{|k| < N} |f^k_m|^{r_m} \Big)^{\frac{1}{r_m}} \bigg): \{(f^k, f^k_1, \cdots, f^k_m)\}_k \subset \F \bigg\}.
\end{align*}
By \eqref{eq:lim-2}, for all $(F, F_1, \ldots, F_m) \in \F_{\vec{r}}^N$, and for all weights $v_i^{r_i} \in A_{r_i/\p_i^-} \cap RH_{(\p_i^+/r_i)'}$, $i=1, \ldots, m$,
\begin{align}\label{eq:FL-1}
\|F\|_{L^r(v^r)}
&= \bigg\| \Big(\sum_{|k| < N}  |f^k|^r \Big)^{\frac1r} \bigg\|_{L^r(v^r)}
= \bigg(\sum_{|k|<N} \|f^k\|_{L^r(v^r)}^r \bigg)^{\frac1r}
\\ \nonumber
&\le \bigg(\sum_{|k|<N} \prod_{i=1}^m \mathfrak{C}_{i, 1} 
\Phi_i \big(C_i [v_i^{r_i}]_{A_{r_i/\p_i^-} \cap RH_{(\p_i^+/r_i)'}}^{\gamma_i(r_i, q_i)} \big)^r  
\|f^k_i\|_{L^{r_i}(v_i^{r_i})}^r \bigg)^{\frac1r}
\\ \nonumber
&\le \prod_{i=1}^m \mathfrak{C}_{i, 1} 
\Phi_i \big(C_i [v_i^{r_i}]_{A_{r_i/\p_i^-} \cap RH_{(\p_i^+/r_i)'}}^{\gamma_i(r_i, q_i)}\big) 
\bigg(\sum_{|k|<N}  \|f^k_i\|_{L^{r_i}(v_i^{r_i})}^{r_i} \bigg)^{\frac{1}{r_i}}
\\ \nonumber
&= \prod_{i=1}^m \mathfrak{C}_{i, 1} 
\Phi_i \big(C_i [v_i^{r_i}]_{A_{r_i/\p_i^-} \cap RH_{(\p_i^+/r_i)'}}^{\gamma_i(r_i, q_i)}\big) \|F_i\|_{L^{r_i}(v_i^{r_i})}, 
\end{align}
where $ \mathfrak{C}_{i, 1} := 2^{\max\{\frac{\tau_{r_i}}{r_i}, \frac{\tau'_{r_i}}{q_i}\}}$.
This corresponds to \eqref{eq:lim-1} for the family $\F_{\vec{r}}^N$ and the exponent $\vec{r}=(r_1, \ldots, r_m)$. Then the estimate \eqref{eq:lim-2} applied to $\F_{\vec{r}}^N$ gives that for all exponents $p_i \in (\p_i^{-}, \p_i^{+})$ and all weights $w_i^{p_i} \in A_{p_i/\p_i^{-}} \cap RH_{(\p_i^{+}/p_i)'}$, $i=1, \ldots, m$,
\begin{align}\label{eq:FL-2}
\|F\|_{L^p(w^p)} 
\le \prod_{i=1}^m \mathfrak{C}_{i, 1} \mathfrak{C}_{i, 2} \, \Phi_i 
\big(C'_i \, [w_i^{p_i}]_{A_{p_i/\p_i^-} \cap RH_{(\p_i^+/p_i)'}}^{\gamma_i(p_i, r_i) \gamma_i(r_i, q_i)}\big) 
\|F_i\|_{L^{p_i}(w_i^{p_i})},  
\end{align}
for all $(F, F_1, \ldots, F_m) \in \F_{\vec{r}}^N$, where $\mathfrak{C}_{i, 2} := 2^{\max\{\frac{\tau_{p_i}}{p_i}, \frac{\tau'_{p_i}}{r_i}\}}$. The estimate \eqref{eq:FL-2} in turn implies
\begin{equation}\label{eq:FL-3}
\bigg\| \Big(\sum_{|k|<N} |f^k|^r \Big)^{\frac1r}\bigg\|_{L^p(w^p)}
\leq  \prod_{i=1}^m \mathfrak{C}'_i \, \Phi_i 
\big(C'_i \, [w_i^{p_i}]_{A_{p_i/\p_i^-} \cap RH_{(\p_i^+/p_i)'}}^{\gamma_i(p_i, r_i) \gamma_i(r_i, q_i)}\big) 
\bigg\| \Big(\sum_k |f^k_i|^{r_i} \Big)^{\frac{1}{r_i}}\bigg\|_{L^{p_i}(w_i^{p_i})},
\end{equation}
for all $\{(f^k, f^k_1, \cdots, f^k_m)\}_k \subset \F$, where $\mathfrak{C}'_i := 2^{\max\{\frac{\tau_{p_i}}{p_i}, \frac{\tau'_{p_i}}{r_i}\} + \max\{\frac{\tau_{r_i}}{r_i}, \frac{\tau'_{r_i}}{q_i}\}}$, and the constant $C'_i$ depends only on $n$, $p_i$, $q_i$, $r_i$, $\p_i^-$, and $\p_i^+$. Letting $N \to \infty$, we conclude \eqref{eq:lim-3} as desired.
\end{proof}

\begin{proof}[\textbf{Proof of Theorem $\ref{thm:lim-Tb}$.}]
Let $s_i \in (\p_i^-, \p_i^+)$, $i=1, \ldots, m$, be such that $\frac1s := \sum_{i=1}^m \frac{1}{s_i} \le 1$. It follows from \eqref{eq:lim-Tb-1} and Theorem \ref{thm:lim} that for all $v_i^{s_i} \in A_{s_i/\p_i^-} \cap RH_{(\p_i^+/s_i)'}$, $i=1, \ldots, m$, 
\begin{equation}\label{eq:CC-1}
\|T(\vec{f})\|_{L^s(v^s)} 
\leq C_0 \prod_{i=1}^m \Phi_i 
\big(C_i \, [v_i^{s_i}]_{A_{s_i/\p_i^-} \cap RH_{(\p_i^+/s_i)'}}^{\gamma_i(s_i, q_i)}\big) 
\|f_i\|_{L^{s_i}(v_i^{s_i})}, 
\end{equation}
where both $C_0$ and $C_i$ depend only on $n$, $s_i$, $q_i$, $\p_i^-$, and $\p_i^+$. 

Fix $\b = (b_1, \ldots, b_m) \in \BMO^m$ and multi-index $\alpha \in \N^m$. Given $v_i^{s_i} \in A_{s_i/\p_i^-} \cap RH_{(\p_i^+/s_i)'}$, $i=1, \ldots, m$, in light of Lemma \ref{lem:weights} part \eqref{list:ApRH-2}, we see that 
\begin{equation}\label{visi-1}
v_i^{s_i(\p_i^+/s_i)'} \in A_{\tau_{s_i}}, 
\end{equation} 
which together with Lemma \ref{lem:open} yields that there exists $\eta_i \in (1, 2)$ such that 
\begin{align}\label{visi-2}
\eta_i' \simeq [v_i^{s_i(\p_i^+/s_i)'}]_{A_{\tau_{s_i}}}^{\max\{1, \frac{1}{\tau_{s_i}-1}\}}, 
\quad \text{and}\quad 
[v_i^{\eta_i s_i(\p_i^+/s_i)'}]_{A_{\tau_{s_i}}}^{\frac{1}{\eta_i}} 
\le 2^{\tau_{s_i}} [v_i^{s_i(\p_i^+/s_i)'}]_{A_{\tau_{s_i}}}. 
\end{align}
Then in view of \eqref{eq:CC-1}--\eqref{visi-2}, Theorem \ref{thm:TTb} applied to $p:=s \ge 1$, $p_i:=s_i$, $r_i := \tau_{s_i}$, and $\theta_i:=s_i(\p_i^+/s_i)$, gives that for all $v_i^{s_i} \in A_{s_i/\p_i^-} \cap RH_{(\p_i^+/s_i)'}$, $i=1, \ldots, m$, 
\begin{multline}\label{eq:CC-2}
\|[T, \b]_{\alpha}(\vec{f})\|_{L^s(v^s)} 
\leq C_0 \prod_{i=1}^m (\eta'_i)^{\alpha_i} \Phi_i 
\big(C_i \, [v_i^{\eta_i s_i(\p_i^+/s_i)'}]_{A_{\tau_{s_i}}}^{\frac{1}{\eta_i} \gamma_i(s_i, q_i)}\big) 
\|b_i\|_{\BMO}^{\alpha_i} \|f_i\|_{L^{s_i}(v_i^{s_i})} 
\\ 
\leq C_0 \prod_{i=1}^m [v_i^{s_i(\p_i^+/s_i)'}]_{A_{\tau_{s_i}}}^{\alpha_i \max\{1, \frac{1}{\tau_{s_i}-1}\}} 
\Phi_i \big(C_i \, [v_i^{s_i(\p_i^+/s_i)'}]_{A_{\tau_{s_i}}}^{\gamma_i(s_i, q_i)}\big) 
\|b_i\|_{\BMO}^{\alpha_i} \|f_i\|_{L^{s_i}(v_i^{s_i})}, 
\end{multline}
where $C_i$ depends only on $n$, $s_i$, $q_i$, $\p_i^-$, and $\p_i^+$, and $C_0$ depends only on the same parameters and additionally on $\alpha$. 

Observe that for each $i=1, \ldots, m$, 
\begin{align}\label{eq:CC-3}
\widetilde{\Phi}_i(t)
:=  t^{\alpha_i \max\{1, \frac{1}{\tau_{s_i}-1}\}} \Phi_i(C_i \, t^{\gamma_i(s_i, q_i)}) 
\text{ is an increasing function}. 
\end{align}
Now with \eqref{eq:CC-2} and \eqref{eq:CC-3} in hand, we use Theorem \ref{thm:lim} applied to $s_i$ and $C_0^{\frac1m} \widetilde{\Phi}_i$ in place of $q_i$ and $\Phi_i$ to deduce that for all exponents $p_i, r_i \in (\p_i^{-}, \p_i^{+})$ and for all weights $w_i^{p_i} \in A_{p_i/\p_i^{-}} \cap RH_{(\p_i^{+}/p_i)'}$, $i=1, \ldots, m$, 
\begin{align*}
\|[T, \b]_{\alpha}(\vec{f})\|_{L^p(w^p)} 
\leq C_0 \prod_{i=1}^m \widetilde{\Phi}_i \big(C'_i \, [w_i^{p_i(\p_i^+/p_i)'}]_{A_{\tau_{p_i}}}^{\gamma_i(p_i, s_i)}\big) 
\|b_i\|_{\BMO}^{\alpha_i} \|f_i\|_{L^{p_i}(w_i^{p_i})}, 
\end{align*}
where $C_0$ depends only on $\alpha$, $n$, $p_i$, $q_i$, $s_i$, $\p_i^-$, and $\p_i^+$, $C'_i$ depends only on $n$, $p_i$, $s_i$, $\p_i^-$, and $\p_i^+$, and 
\begin{align*}
\bigg\| \Big(\sum_k | [T, \b]_{\alpha}(\vec{f}^k)|^r \Big)^{\frac1r} \bigg\|_{L^p(w^p)}
& \leq C  \prod_{i=1}^m \widetilde{\Phi}_i \big(C''_i \, [w_i^{p_i}]_{A_{p_i/\p_i^-} \cap RH_{(\p_i^+/p_i)'}}^{\gamma_i(p_i, r_i) \gamma_i(r_i, s_i)}\big)  
\\
&\qquad\qquad\times \|b_i\|_{\BMO}^{\alpha_i}
\bigg\| \Big(\sum_k |f^k_i|^{r_i} \Big)^{\frac{1}{r_i}}\bigg\|_{L^{p_i}(w_i^{p_i})},
\end{align*}
where $\frac{1}{r} = \sum_{i=1}^m \frac{1}{r_i}$, $C$ depends only on $\alpha$, $n$, $p_i$, $q_i$, $r_i$, $s_i$, $\p_i^-$, and $\p_i^+$, and $C''_i$ depends only on $n$, $p_i$, $r_i$, $s_i$, $\p_i^-$, and $\p_i^+$. This completes the proof of Theorem \ref{thm:lim-Tb}.
\end{proof}

\section{Applications}\label{sec:app}
This section is dedicated to using extrapolation to prove quantitative weighted inequalities for a variety of operators. This also shows that extrapolation theorems are useful and powerful.

\subsection{Bilinear Bochner-Riesz means}

Given $\delta \in \R$, the bilinear Bochner-Riesz means of order $\delta$ is defined by 
\begin{align*}
\B^{\delta}(f_1, f_2)(x) := \int_{\R^{2n}} (1-|\xi_1|^2-|\xi_2|^2)^{\delta}_{+} \, 
\widehat{f_1}(\xi_1) \, \widehat{f_2}(\xi_2) e^{2\pi i x \cdot(\xi_1 + \xi_2)} d\xi_1 d\xi_2. 
\end{align*}

\begin{theorem}\label{thm:BR-1}
Let $n \ge 2$ and $\delta \ge n-1/2$. Then for all $p_i \in (1, \infty)$, for all $w_i^{p_i} \in A_{p_i}$, for all $\b=(b_1, b_2) \in \BMO^2$, and for each multi-index $\alpha \in \N^2$, 
\begin{align}
\label{eq:Bn-1} \|\B^{\delta}(f_1, f_2)\|_{L^p(w^p)} 
& \lesssim \prod_{i=1}^2 [w_i^{p_i}]_{A_{p_i}}^{\beta_i(\delta)} \|f_i\|_{L^{p_i}(w_i^{p_i})}, 
\\
\label{eq:Bn-2} \|[\B^{\delta}, \b]_{\alpha}(f_1, f_2)\|_{L^p(w^p)} 
& \lesssim \prod_{i=1}^2 [w_i^{p_i}]_{A_{p_i}}^{\eta_i(\delta)}
\|b_i\|_{\BMO}^{\alpha_i} \|f_i\|_{L^{p_i}(w_i^{p_i})}, 
\end{align}
whenever $\frac1s := \frac{1}{s_1} + \frac{1}{s_2} \le 1$ with $s_1, s_2 \in (1, \infty)$, where $w=w_1 w_2$, $\frac1p=\frac{1}{p_1}+\frac{1}{p_2}$, 
\begin{equation*}
\beta_i(\delta) =
\begin{cases}
\frac{1}{p_i-1}, & \delta>n-1/2, 
\\ 
\max\{1, \frac{1}{p_i-1}\}, & \delta=n-1/2, 
\end{cases}
\end{equation*}  
and 
\begin{equation*}
\eta_i(\delta) =
\begin{cases}
(\alpha_i+\frac{1}{p_i-1}) \max\{1, \frac{1}{s_i-1}, \frac{1}{p_i-1}, \frac{s_i-1}{p_i-1}\}, & \delta>n-1/2, 
\\ 
(\alpha_i+1)\max\{1, \frac{1}{s_i-1}, \frac{1}{p_i-1}, \frac{s_i-1}{p_i-1}\}, & \delta=n-1/2. 
\end{cases}
\end{equation*}  
\end{theorem}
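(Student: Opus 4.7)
The proof specializes the quantitative extrapolation of Theorems \ref{thm:lim} and \ref{thm:lim-Tb} to the parameters $\p_i^- = 1$, $\p_i^+ = \infty$ for $i = 1, 2$. In this regime $A_{p_i/\p_i^-} \cap RH_{(\p_i^+/p_i)'} = A_{p_i}$, $\tau_{p_i} = p_i$, and $\gamma_i(p_i, q_i) = \max\{1, (q_i - 1)/(p_i - 1)\}$ whenever $q_i < \infty$. The argument naturally bifurcates according to whether $\delta = n - 1/2$ or $\delta > n - 1/2$, since the underlying boundedness mechanism of $\B^{\delta}$ is different in the two regimes.

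At the critical index $\delta = n - 1/2$ the operator $\B^{\delta}$ is a bilinear Calder\'{o}n--Zygmund operator on $\R^n \times \R^n$, its convolution kernel satisfying the standard size and smoothness estimates. Theorem \ref{thm:CZO} then yields, for any $1 < q_1, q_2 < \infty$ and $\vec v \in A_{\vec q}$,
\[
\|\B^{\delta}(f_1, f_2)\|_{L^q(v^q)} \lesssim [\vec v]_{A_{\vec q}}^{\max\{q, q_1', q_2'\}} \prod_{i=1}^{2} \|f_i\|_{L^{q_i}(v_i^{q_i})}.
\]
Specializing to $q_1 = q_2 = 2$ (so that the maximum equals $2$) and invoking Lemma \ref{lem:ApRHApr} to pass from the joint $A_{(2, 2)}$ constant to $\prod_i [v_i^2]_{A_2}^{1/2}$, I obtain the seed hypothesis \eqref{eq:lim-1} of Theorem \ref{thm:lim} with $\Phi_i(t) = t$. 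Theorem \ref{thm:lim} then delivers \eqref{eq:Bn-1} with $\beta_i(n - 1/2) = \gamma_i(p_i, 2) = \max\{1, 1/(p_i - 1)\}$, while Theorem \ref{thm:lim-Tb} applied to the same seed yields \eqref{eq:Bn-2}: a direct computation reduces $\widetilde \Phi_i(t)$ to $t^{(\alpha_i + 1)\max\{1, 1/(s_i - 1)\}}$, and multiplication by $\gamma_i(p_i, s_i) = \max\{1, (s_i - 1)/(p_i - 1)\}$ collapses the combined exponent to $(\alpha_i + 1)\max\{1, 1/(s_i - 1), 1/(p_i - 1), (s_i - 1)/(p_i - 1)\}$, matching $\eta_i(n - 1/2)$.

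For $\delta > n - 1/2$, I will first establish the pointwise dominance
\[
|\B^{\delta}(f_1, f_2)(x)| \lesssim M f_1(x) \, M f_2(x).
\]
The convolution kernel of $\B^{\delta}$ on $\R^{2n}$ is bounded by $(1 + |y_1|^2 + |y_2|^2)^{-(n + \delta + 1/2)/2}$ via the asymptotic expansion of Bessel functions, and the elementary inequality $(1 + a + b)^{-\beta} \le (1 + a)^{-\beta/2}(1 + b)^{-\beta/2}$ furnishes a product majorant $h_1(y_1) h_2(y_2)$. Each $h_i$ is radially decreasing and integrable on $\R^n$ precisely when $\delta > n - 1/2$, so convolution against $h_i$ is dominated by $M$. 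H\"{o}lder and Buckley's sharp estimate \eqref{eq:Buckley} then produce \eqref{eq:Bn-1} with $\beta_i(\delta) = 1/(p_i - 1)$ without invoking extrapolation. For \eqref{eq:Bn-2}, the same pointwise bound applied to the shifted entries yields $|[\B^{\delta}, \b]_\alpha(\vec f)(x)| \lesssim M_{b_1, \alpha_1} f_1(x) \cdot M_{b_2, \alpha_2} f_2(x)$, where $M_{b, k} f(x) := M(|b(x) - b|^k f)(x)$; combining the sharp weighted bounds for iterated maximal commutators (classical in the linear theory) with H\"{o}lder's inequality yields the stated $\eta_i(\delta) = (\alpha_i + 1/(p_i - 1))\max\{1, 1/(s_i - 1), 1/(p_i - 1), (s_i - 1)/(p_i - 1)\}$. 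The Banach-range condition $1/s_1 + 1/s_2 \le 1$ is consistent with $\p_+ = \infty$ by Remark \ref{rem:ss}.

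The main technical obstacle is the kernel-level estimate in the supercritical regime: verifying the product $L^1$ majorant hinges on the quantitative improvement of the Bochner--Riesz decay past the critical line $\delta = n - 1/2$, which is precisely the boundary separating the two regimes. A secondary bookkeeping challenge is tracking the interplay between the extrapolation factor $\gamma_i(p_i, s_i)$ and the commutator pickup, so that in both cases the four competing quantities $1, 1/(s_i - 1), 1/(p_i - 1), (s_i - 1)/(p_i - 1)$ collapse exactly to the single maximum $M$ appearing in $\eta_i(\delta)$.
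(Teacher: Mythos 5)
Your treatment of the supercritical case $\delta>n-1/2$ is essentially the paper's: the pointwise bound $|\B^{\delta}(f_1,f_2)|\lesssim Mf_1\cdot Mf_2$ (which the paper simply cites as \cite[Lemma 3.1]{JSS}) plus Buckley's estimate and H\"older give \eqref{eq:Bn-1}, and the commutator bound then follows from Theorem \ref{thm:lim-Tb} and Remark \ref{rem:pp}; your alternative pointwise route through iterated maximal commutators would also be plausible, though you do not actually verify that it reproduces the $s_i$-dependent exponent $\eta_i(\delta)$.

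The critical case $\delta=n-1/2$, however, contains a genuine gap. Your argument rests entirely on the claim that $\B^{n-1/2}$ is a bilinear Calder\'on--Zygmund operator, so that Theorem \ref{thm:CZO} supplies the seed estimate. This is false, and it is precisely why $n-1/2$ is called the critical index. The kernel of $\B^{\delta}$ on $\R^{2n}$ is $c_\delta\,J_{n+\delta}(2\pi|(y_1,y_2)|)/|(y_1,y_2)|^{n+\delta}$, which by the Bessel asymptotics decays like $|(y_1,y_2)|^{-(n+\delta+1/2)}$; at $\delta=n-1/2$ this is exactly $|(y_1,y_2)|^{-2n}$, so the size condition holds only at the borderline, and the gradient (or any H\"older modulus) decays no faster than $|(y_1,y_2)|^{-2n}$ rather than the required $|(y_1,y_2)|^{-2n-1}$ (or $|(y_1,y_2)|^{-2n-\epsilon}$ in H\"older form). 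The smoothness hypotheses of the bilinear Calder\'on--Zygmund class therefore fail, and Theorem \ref{thm:CZO} cannot be invoked. The paper instead obtains the $L^2\times L^2\to L^1$ seed estimate with linear dependence on $[v_i^2]_{A_2}$ by Stein's analytic interpolation with change of measures along the family $\delta(\theta)=(1+\varepsilon_1)(1-\theta)+\theta(n-1/2+\varepsilon_2)$: the endpoint $\theta=0$ is an unweighted bound for $\B^{1+\varepsilon_1}$, the endpoint $\theta=1$ is the weighted bound in the supercritical regime, and the resulting estimate at $\delta(\theta)=n-1/2$ carries only the partial weight $u_i^{\theta}$, which is upgraded to the full weight $v_i^2$ by choosing $u_i=v_i^{2(1+\gamma)}$ and $\theta=(1+\gamma)^{-1}$ via the sharp reverse H\"older inequality (Lemma \ref{lem:open}). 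Some argument of this kind is indispensable; without it your proof of both \eqref{eq:Bn-1} and \eqref{eq:Bn-2} at $\delta=n-1/2$ does not go through.
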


\begin{proof}
Let us first consider the case $\delta>n-1/2$. In this case, it was shown in \cite[Lemma 3.1]{JSS} that 
\begin{align}\label{eq:BMM}
|\B^{\delta}(f_1, f_2)(x)| 
\lesssim Mf_1(x) Mf_2(x), \qquad x \in \Rn, 
\end{align}
where the implicit constant is independent of $x$, $f_1$, and $f_2$. Combining \eqref{eq:BMM} with \eqref{eq:sharp} and H\"{o}lder's inequality, we obtain that for all $p_i \in (1, \infty)$ and for all $w_i^{p_i} \in A_{p_i}$, 
\begin{align}\label{eq:Bdel-1} 
\|\B^{\delta}(f_1, f_2)\|_{L^p(w^p)} 
& \lesssim \prod_{i=1}^2 [w_i^{p_i}]_{A_{p_i}}^{\frac{1}{p_i-1}} \|f_i\|_{L^{p_i}(w_i^{p_i})}, 
\end{align}
where $w=w_1 w_2$ and $\frac1p=\frac{1}{p_1}+\frac{1}{p_2}$. Then it follows from \eqref{eq:Bdel-1}, Theorem \ref{thm:lim-Tb}, and Remark \ref{rem:pp} that for all $\b=(b_1, b_2) \in \BMO^2$ and for each multi-index $\alpha \in \N^2$, 
\begin{align*}
\|[\B^{\delta}, \b]_{\alpha}(f_1, f_2)\|_{L^p(w^p)} 
\lesssim \prod_{i=1}^2 [w_i^{p_i}]_{A_{p_i}}^{(\alpha_i+\frac{1}{p_i-1}) \max\{1, \frac{1}{s_i-1}, \frac{1}{p_i-1}, \frac{s_i-1}{p_i-1}\}}
\|b_i\|_{\BMO}^{\alpha_i} \|f_i\|_{L^{p_i}(w_i^{p_i})}, 
\end{align*}
whenever $s_1, s_2 \in (1, \infty)$ satisfy $\frac1s := \frac{1}{s_1} + \frac{1}{s_2} \le 1$. 

Next, we turn to the case $\delta=n-1/2$. Given $\varepsilon_1 \in (0, \frac12)$, and $\varepsilon_2>0$, we write 
\begin{align}\label{eq:dtt}
\delta(\theta) := (1+\varepsilon_1)(1-\theta) + \theta(n-1/2 + \varepsilon_2),\quad \theta \in (0, 1).
\end{align} 
We first claim that for any $u_1, u_2 \in A_2$, 
\begin{align}\label{eq:Ba-claim}
\|\B^{\delta(\theta)}(f_1, f_2)\|_{L^1(u^{\theta})} 
\le \phi_1(\varepsilon_1)^{1-\theta} \phi_2(\varepsilon_2)^{\theta} 
\prod_{i=1}^2 [u_i]_{A_2}^{\theta} \|f_i\|_{L^2(u_i^{\theta})}, 
\quad\forall \theta \in (0, 1), 
\end{align}
where $u=u_1 u_2$ and the constant $ \phi_1, \phi_2$ are non-negative function and $\phi_2$ is increasing. Indeed, \eqref{eq:Ba-claim} can be obtained by following the proof of \cite[Theorem 1.8]{JSS}. We here only mention the difference: 
\begin{align*}
\sup_{t \in \R} |\psi(it)| 
&\le \phi_1(\varepsilon_1) \|h\|_{L^{\infty}(\Rn)} \prod_{i=1}^2 \|f_i\|_{L^2(\Rn)}, 
\\ 
\sup_{t \in \R} |\psi(1+it)| 
&\le \phi_2(\varepsilon_2) \|h\|_{L^{\infty}(\Rn)} \prod_{i=1}^2 [u_i]_{A_2} \|f_i\|_{L^2(\Rn)}, 
\end{align*}
provided the sharp estimate for the Hardy-Littlewood maximal operator in \eqref{eq:sharp}.  

Now let $v_1^2, v_2^2 \in A_2$, $v:=v_1 v_2$, and by Lemma \ref{lem:open}, there exists $\gamma \in (0, 2^{-n-3})$ such that 
\begin{align}\label{eq:aa22}
[v_i^{2(1+\gamma)}]_{A_2} 
\le 2^{2(1+\gamma)} [v_i^2]_{A_2}^{1+\gamma}. 
\end{align}
Then, \eqref{eq:Ba-claim} applied to $u_i=v_i^{2(1+\gamma)}$, $i=1, 2$, gives the for any $\theta \in (0, 1)$, 
\begin{multline}\label{eq:Baa}
\|\B^{\delta(\theta)}(f_1, f_2)\|_{L^1(v^{2(1+\gamma)\theta})} 
\le \phi_1(\varepsilon_1)^{1-\theta} \phi_2(\varepsilon_2)^{\theta}  
\prod_{i=1}^2 [v_i^{2(1+\gamma)}]_{A_2}^{\theta} \|f_i\|_{L^2(v_i^{2(1+\gamma)\theta})}
\\ 
\le \phi_1(\varepsilon_1)^{1-\theta} \phi_2(\varepsilon_2)^{\theta} 2^{4(1+\gamma) \theta}
\prod_{i=1}^2 [v_i^2]_{A_2}^{(1+\gamma)\theta} \|f_i\|_{L^2(v_i^{2(1+\gamma)\theta})}, 
\end{multline}
where \eqref{eq:aa22} was used in the last step. Picking $\theta =(1+\gamma)^{-1}$, $\varepsilon_1=1/4$, and $\varepsilon_2=(n-7/4) \gamma$, we utilize \eqref{eq:dtt} and \eqref{eq:Baa} to deduce that $\delta(\theta)=n-1/2$ and 
\begin{align}\label{eq:BBaa}
\|\B^{n-1/2}(f_1, f_2)\|_{L^1(v^2)} 
\lesssim \prod_{i=1}^2 [v_i^2]_{A_2} \|f_i\|_{L^2(v_i^2)}, 
\end{align}
where we had used that $\phi_1(\varepsilon_1)^{1-\theta} \phi_2(\varepsilon_2)^{\theta} \le \max\{1, \phi_1(1/4), \phi_2(n)\}$, and the implicit constant depends only on $n$. 

Having proved \eqref{eq:BBaa} and invoking Theorems \ref{thm:lim} and \ref{thm:lim-Tb} applied to $\p_i^-=1$, $\p_i^+=\infty$, $q_i=2$, and $\Phi_i(t)=t$, we conclude \eqref{eq:Bn-1} and \eqref{eq:Bn-2}. 
\end{proof}

The next result considers the case $\delta<n-\frac12$, which can be viewed as a complement of Theorem \ref{thm:BR-1}. 

\begin{theorem}\label{thm:BR-2}
Let $n \ge 2$, $0<\delta<n-\frac12$, and $0<\delta_1,\delta_2\le \frac n2$ be such that $\delta_1+\delta_2<\delta$.  Set $\p_1^- := \frac{2n}{n+2\delta_1}$, $\p_2^- := \frac{2n}{n+2\delta_2}$, and $\p_1^+ = \p_2^+ :=2$. Then for all $w_i^2 \in A_{2/\p_i^-}\cap RH_{(\p_i^+/2)'}$, $i=1, 2$, 
\begin{align}\label{eq:Bn-21} 
\|\B^{\delta}(f_1, f_2)\|_{L^1(w)} 
& \lesssim \prod_{i=1}^2 [w_i^2]_{A_{2/\p_i^-} \cap RH_{(\p_i^+/2)'}} 
\|f_i\|_{L^2(w_i^2)}. 
\end{align}
Moreover, for all $p_i \in (\p_i^-, \p_i^+)$ and for all $w_i^{p_i} 
\in A_{p_i/\p_i^-}\cap RH_{(\p_i^+/p_i)'}$, $i=1, 2$, 
\begin{align}\label{eq:Bn-22} 
\|\B^{\delta}(f_1, f_2)\|_{L^p(w^p)} 
& \lesssim \prod_{i=1}^2 [w_i^{p_i}]_{A_{p_i/\p_i^-} \cap RH_{(\p_i^+/p_i)'}}^{\gamma_i(p_i, 2)} 
\|f_i\|_{L^{p_i}(w_i^{p_i})}, 
\end{align}
where $w=w_1 w_2$ and $\frac1p=\frac{1}{p_1}+\frac{1}{p_2}$.  
\end{theorem}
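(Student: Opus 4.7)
The plan is to establish the endpoint estimate \eqref{eq:Bn-21} first and then deduce \eqref{eq:Bn-22} from it via the multilinear extrapolation theorem, Theorem \ref{thm:lim}. The extrapolation step is essentially automatic: taking the family $\F := \{(|\B^\delta(f_1, f_2)|, |f_1|, |f_2|) : f_1, f_2 \in \S(\Rn)\}$ and observing that $(\p_i^+/2)'=\infty$ since $\p_i^+=2$, the weight class in \eqref{eq:Bn-21} reduces to $A_{2/\p_i^-}\cap RH_\infty$. Consequently \eqref{eq:Bn-21} is exactly the hypothesis \eqref{eq:lim-1} of Theorem \ref{thm:lim} with $m=2$, $q_1=q_2=2$ (so $q=1$), and $\Phi_i(t)=t$, and the conclusion \eqref{eq:lim-2} yields \eqref{eq:Bn-22} with the exponent $\gamma_i(p_i, 2)$ exactly as claimed.

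For the proof of \eqref{eq:Bn-21}, I would follow the complex interpolation strategy already deployed in Theorem \ref{thm:BR-1}, suitably extended to the subcritical setting. Fix a small $\eta>0$ and consider the analytic family
\[
T_z(f_1, f_2) := \B^{\delta(z)}(f_1, f_2), \qquad \delta(z) := (\delta_1+\delta_2-\eta)(1-z) + \bigl(n-\tfrac{1}{2}+\eta\bigr)\, z,
\]
so that $\delta(\theta)=\delta$ for a unique $\theta\in(0,1)$, which is possible because $\delta_1+\delta_2 < \delta < n-\tfrac{1}{2}$. At the endpoint $\Re z=1$, the index satisfies $\delta(z)\ge n-\tfrac{1}{2}$, so Theorem \ref{thm:BR-1} at $p_1=p_2=2$ supplies a weighted $L^2(u_1)\times L^2(u_2)\to L^1(\sqrt{u_1 u_2})$ bound of the form $\|T_{1+it}(f_1,f_2)\|_{L^1(\sqrt{u_1 u_2})}\le \phi_1(|t|)\,[u_1]_{A_2}[u_2]_{A_2}\|f_1\|_{L^2(u_1)}\|f_2\|_{L^2(u_2)}$ for every $u_1, u_2\in A_2$. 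At the endpoint $\Re z=0$ the real part of the index is slightly below $\delta_1+\delta_2$, and an unweighted bilinear Bochner--Riesz estimate of the form $T_z: L^2\times L^2 \to L^{r_0}$ (with $r_0$ chosen so that interpolation produces $L^1$ at $z=\theta$) is available via a kernel decomposition along the sphere $\{|\xi_1|^2+|\xi_2|^2=1\}\subset\R^{2n}$ and Stein--Tomas-type restriction, the hypothesis $\delta_i\le n/2$ keeping the parameters within the admissible range.

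To align the two endpoints with the target weight class on the left-hand side of \eqref{eq:Bn-21}, given $w_i^2\in A_{2/\p_i^-}\cap RH_\infty$ I would invoke Lemma \ref{lem:open} together with the factorization encoded in Lemma \ref{lem:weights}\eqref{list:ApRH-1}--\eqref{list:ApRH-2} to produce auxiliary weights $u_i\in A_2$ with $u_i^\theta \sim w_i^2$ and $[u_i]_{A_2}\lesssim [w_i^2]_{A_{2/\p_i^-}\cap RH_\infty}^{1/\theta}$. Stein's complex interpolation with change of measures (the bilinear version of Theorem \ref{thm:SW}) applied to $T_z$ at $z=\theta$ will then deliver \eqref{eq:Bn-21} with the claimed linear quantitative dependence on the weight constants, after which the first paragraph completes the proof.

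The hard part will be supplying the unweighted subcritical endpoint at $\Re z=0$ in a form with admissible growth in $|\Im z|$ that is compatible with Stein's analytic interpolation, and carefully calibrating the weight-opening step so that the factor $[w_i^2]_{A_{2/\p_i^-}\cap RH_\infty}^{1/\theta}$ appearing at the $\Re z=1$ endpoint collapses back to the first power after the $\theta$-fold interpolation. The remainder of the argument is then a direct adaptation of the analytic-family scheme already used in Theorem \ref{thm:BR-1}.
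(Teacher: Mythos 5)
Your reduction of \eqref{eq:Bn-22} to the endpoint estimate \eqref{eq:Bn-21} via Theorem \ref{thm:lim} is correct and is exactly what the paper does. The gap is in the endpoint itself: the analytic-interpolation scheme cannot produce the weight class claimed in \eqref{eq:Bn-21}. Stein--Weiss interpolation with change of measures between an unweighted bound at $\Re z=0$ and a bound for $A_2$ weights $u_i$ at $\Re z=1$ yields the inequality for the weights $u_i^{\theta}$, i.e.\ for the class $\{u^{\theta}: u\in A_2\}$, which by Lemma \ref{lem:weights} part \eqref{list:ApRH-2} equals $A_{1+\theta}\cap RH_{1/\theta}$. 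Here $\theta$ is forced by $\delta(\theta)=\delta$, so $\theta\simeq (\delta-\delta_1-\delta_2)/(n-\tfrac12-\delta_1-\delta_2)$, which tends to $0$ as $\delta\downarrow\delta_1+\delta_2$. Since the target class is $A_{1+2\delta_i/n}\cap RH_\infty$, your weight-opening step requires $u_i=w_i^{2/\theta}\in A_2$ with controlled constant; but $w_i^2\in A_{1+2\delta_i/n}\cap RH_\infty$ only gives $w_i^{2s}\in A_{1+2\delta_i s/n}$, which lies in $A_2$ only for $s\le n/(2\delta_i)$. Whenever $\theta<2\delta_i/n$ (e.g.\ $\delta_1=\delta_2=n/4$ and $\delta=n/2+\epsilon$ with $\epsilon$ small) the required power $s=1/\theta$ exceeds this threshold and $w_i^{2/\theta}$ need not belong to $A_2$ at all, so the $\Re z=1$ endpoint cannot be invoked. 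This is precisely why the interpolation trick is used in Theorem \ref{thm:BR-1} only at the critical index, where $\theta=(1+\gamma)^{-1}$ can be taken close to $1$ by reverse H\"older self-improvement, and not below it. A secondary issue: matching $L^1$ at $z=\theta$ with both inputs fixed at $L^2$ forces $r_0=1$, so you would need the unweighted bound $\B^{\delta_1+\delta_2-\eta}:L^2\times L^2\to L^1$ with admissible growth in $\Im z$, which you assert but do not supply and which is not a routine restriction-theorem consequence in this generality.

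The paper proves \eqref{eq:Bn-21} directly, following \cite{LW}: decompose $\B^\delta=\sum_{j\ge0}T_j$ with $T_j$ localized where $1-|\xi_1|^2-|\xi_2|^2\simeq 2^{-j}$, split each kernel $K_j$ into four pieces according to balls of radius $2^{j(1+\gamma)}$, control the three far pieces by the Hardy--Littlewood maximal function with an exponential gain $2^{-\varepsilon j}$ coming from the rapid decay of $K_j$, and handle the local piece through the $L^{2/r}\to L^2$ estimate for $T_{\mathfrak m}$ from \cite{LW}, where the hypotheses $w_i^2\in A_{1+2\delta_i/n}\cap RH_\infty$ are used to absorb the loss $2^{j(1+\gamma)\delta_i}$; the sum over $j$ converges because $\delta_1+\delta_2<\delta$. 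You would need an argument of this type (or some other scheme avoiding fixed-$\theta$ interpolation) to reach the stated weight class.
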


\begin{proof}
We modify the proof of \cite[Theorem 2]{LW} into the current setting. First, choose a nonnegative function $\varphi \in \mathscr{C}_c^{\infty}(0,\,\infty)$ satisfying $\supp \varphi \subset (\frac12,\,2)$ and $\sum_{j \in \Z} \varphi(2^j s)=1$ for any $s>0$. For each $j \ge 0$, we define the bilinear operator
\begin{equation*}
T_j(f_1, f_2) 
:= \int_{0}^{\infty}\int_{0}^{\infty}\varphi_j^\delta(\lambda_1,\lambda_2)
R_{\lambda_1}f_1 \, R_{\lambda_2}f_2 \,\lambda_1^{n-1}\lambda_2^{n-1} \, d\lambda_1 \, d\lambda_2,
\end{equation*}
where
\begin{align*}
\varphi_j^\delta(s_1, s_2) & := (1-s_1^2-s_2^2)_+^\delta \, \varphi(2^j(1-s_1^2-s_2^2)),
\\
R_{\lambda}f(x) & := \int_{\mathbb{S}^{n-1}}\widehat f(\lambda \omega) 
e^{2\pi ix\cdot \lambda \omega} \, d\sigma(\omega),\ \ \lambda >0.
\end{align*}
Here $d\sigma$ is the surface measure on $\mathbb{S}^{n-1}$. Then one has 
\begin{equation}\label{eq:BR-0}
\B^\delta=\sum_{j=0}^{\infty}T_j.
\end{equation}
Given $j\ge0$, let $B_j=\{x\in\Rn: |x| < 2^{j(1+\gamma)}\}$ with $\gamma>0$ chosen later, and split the kernel function $K_j$ of $T_j$ into four parts:
\begin{align*}
&K_j^1(y_1, y_2) := K_j(y_1, y_2) \mathbf{1}_{B_j}(y_1) \mathbf{1}_{B_j}(y_2),\ \ \ 
K_j^2(y_1, y_2) := K_j(y_1, y_2) \mathbf{1}_{B_j}(y_1) \mathbf{1}_{B_j^c}(y_2),
\\
&K_j^3(y_1, y_2) := K_j(y_1, y_2) \mathbf{1}_{B_j^c}(y_1) \mathbf{1}_{B_j}(y_2),\ \ \ 
K_j^4(y_1, y_2) := K_j(y_1, y_2) \mathbf{1}_{B_j^c}(y_1) \mathbf{1}_{B_j^c}(y_2). 
\end{align*}
Letting $T_j^\ell$ denote the bilinear operator with kernel $K_j^\ell$, $\ell=1,2,3,4$, we see that 
\begin{align}\label{eq:BRTj}
T_j = T_j^1 + T_j^2 + T_j^3 + T_j^4.
\end{align}
Note that a straightforward calculation gives 
\begin{equation}\label{eq:BR-11}
|K_j(x_1,x_2)|
\lesssim 2^{-j\delta}2^{-j}(1+2^{-j}|x_1|)^{-N}(1+2^{-j}|x_2|)^{-N}, 
\quad\forall N>0, 
\end{equation}
and 
\begin{align}\label{eq:BR-13}
\int_{B_j^c} \frac{|f(x-y)|}{(1+2^{-j}|y|)^N} dy 
&=\sum_{k=0}^{\infty} \int_{2^{k+j(1+\gamma)} \le |y| < 2^{k+1+j(1+\gamma)}}
\frac{|f(x-y)|}{(1+2^{-j}|y|)^N} dy 
\\ \nonumber
&\le \sum_{k=0}^{\infty} \frac{(2^{k+2+j(1+\gamma)})^n}{(1+2^{k+j\gamma})^N} 
\fint_{Q(x, 2^{k+2+j(1+\gamma)})} |f(y)| \, dy
\\ \nonumber
&\lesssim \sum_{k=0}^{\infty} 2^{-k(N-n)} 2^{-j(N\gamma-(1+\gamma)n)} Mf(x) 
\\ \nonumber
&\lesssim 2^{-j(N\gamma-(1+\gamma)n)}Mf(x), \quad\text{ provided } N>n.  
\end{align}

In view of Theorem \ref{thm:lim}, it suffices to prove \eqref{eq:Bn-21}. Now let $q_1=q_2=2$, $v_1^2 \in A_{q_1/\p_1^-} \cap RH_{(\p_1^+/q_1)'} = A_{1+\frac{2\delta_1}{n}} \cap RH_{\infty}$, and $v_2^2 \in A_{q_2/\p_2^-} \cap RH_{(\p_2^+/q_2)'} 
= A_{1+\frac{2\delta_2}{n}} \cap RH_{\infty}$. Considering \eqref{eq:BR-0}--\eqref{eq:BRTj}, we are reduced to showing that there exists $\varepsilon>0$ such that 
\begin{equation}\label{eq:BR-12}
\|T_j^{\ell}(f_1, f_2)\|_{L^1(v)}
\lesssim 2^{-\varepsilon j} \prod_{i=1}^2 [v_i^2]_{A_{2/\p_i^-} \cap RH_{(\p_i^+/2)'}} 
\|f_i\|_{L^2(v_1^2)}, \quad j \ge 0, \,  \ell=1, 2, 3, 4. 
\end{equation}
To control $T_j^4$, note that $v_1^2 \in A_{1 + \frac{2\delta_1}{n}} \subset A_2$ and $v_2^2 \in A_{1+\frac{2\delta_2}{n}} \subset A_2$ since $\max\{n+2\delta_1, n+2\delta_2\} \le 2n$. Using \eqref{eq:BR-11}, Cauchy-Schwarz inequality, \eqref{eq:BR-13}, \eqref{eq:sharp}, and \eqref{def:wpApRH}, we have
\begin{align}\label{eq:BR-14}
\|T_j^4(f_1, f_2)\|_{L^1(v)}
&\lesssim \int_{\Rn} \int_{B_j^c} \int_{B_j^c} \frac{|f_1(x-y_1)|}{(1+2^{-j}|y_1|)^N} 
\frac{|f_2(x-y_2)|}{(1+2^{-j}|y_2|)^N} dy_1 dy_2\,v(x)dx
\\ \nonumber
&\le 
\biggl(\int_{\Rn}\biggl(\int_{B_j^c} \frac{|f_1(x-y_1)|}{(1+2^{-j}|y_1|)^N} dy_1\biggr)^2v_1^2(x)dx \biggr)^{\frac12}
\\ \nonumber
&\qquad\quad\times
\biggl(\int_{\Rn}\biggl(\int_{B_j^c} \frac{|f_2(x-y_2)|}{(1+2^{-j}|y_2|)^N} dy_2\biggr)^2v_2^2(x)dx \biggr)^{\frac12} 
\\ \nonumber
&\lesssim 2^{-j[N\gamma - (1+\gamma)n]} \|Mf_1\|_{L^2(v_1^2)} \|Mf_2\|_{L^2(v_2^2)}
\\ \nonumber
&\lesssim 2^{-\varepsilon j} [v_1^2]_{A_2} [v_2^2]_{A_2} \|f_1\|_{L^2(v_1^2)} \|f_2\|_{L^2(v_2^2)}
\\ \nonumber
&\le 2^{-\varepsilon j} \prod_{i=1}^2 [v_i^2]_{A_{2/\p_i^-} \cap RH_{(\p_i^+/2)'}} \|f_i\|_{L^2(v_i^2)}, 
\end{align}
where in the second-to-last inequality we have picked $N>0$ large enough so that $N\gamma>(1+\gamma)n$, and then taken   $0<\varepsilon<N\gamma-(1+\gamma)n$. Similarly, 
\begin{align}\label{eq:BR-15} 
\|T_j^3(f_1, f_2)\|_{L^1(v)}
&\lesssim \biggl(\int_{\Rn}\biggl(\int_{|y_1| \ge 2^{j(1+\gamma)}}
\frac{|f_1(x-y_1)|}{(1+2^{-j}|y_1|)^N} dy_1\biggr)^2 v_1^2(x) dx\biggr)^{\frac12}
\\ \nonumber 
&\qquad\quad\times \biggl(\int_{\Rn}\biggl(\int_{|y_2| < 2^{j(1+\gamma)}}
\frac{|f_2(x-y_2)|}{(1+2^{-j} |y_2|)^N}dy_2 \biggr)^2v_2^2(x)dx\biggr)^{\frac12}
\\ \nonumber
&\lesssim 2^{-j[N\gamma-(1+\gamma)n]} 2^{j(1+\gamma)n} \|Mf_1\|_{L^2(v_1^2)} \|Mf_2\|_{L^2(v_2^2)} 
\\ \nonumber
&\lesssim 2^{-j[N\gamma-2(1+\gamma)n]} [v_1^2]_{A_2} [v_2^2]_{A_2} \|f_1\|_{L^2(v_1^2)} \|f_2\|_{L^2(v_2^2)}
\\ \nonumber 
&\le 2^{-\varepsilon j} \prod_{i=1}^2 [v_i^2]_{A_{2/\p_i^-} \cap RH_{(\p_i^+/2)'}} \|f_i\|_{L^2(v_i^2)}, 
\end{align}
where we have chosen $N>0$ sufficiently large so that $N\gamma - 2(1+\gamma)n > \varepsilon$. Symmetrically to $T_j^3(f_1, f_2)$, there holds 
\begin{equation*}\label{eq:BR-16}
\|T_j^2(f_1, f_2)\|_{L^1(v)}
\lesssim 2^{-\varepsilon j} \prod_{i=1}^2 [v_i^2]_{A_{2/\p_i^-} \cap RH_{(\p_i^+/2)'}} \|f_i\|_{L^2(v_i^2)}. 
\end{equation*}

Finally, to prove \eqref{eq:BR-12} for $T_j^1$, we proceed as follows. For fixed $y\in \Rn$, set $B_j(y, r)=\{x\in\Rn: |x-y|\le 2^{j(1+\gamma)} r\}$ with $r>0$, and split $f_1$ and $f_2$ into three parts, respectively: 
\[
f_1 = f_{1, 1} + f_{1, 2} + f_{1, 3}, \quad\text{and}\quad 
f_2 = f_{2, 1} + f_{2, 2} + f_{2, 3},
\] 
where
\begin{align*}
&f_{1, 1} := f_1 \mathbf{1}_{B_j(y,\frac34)},\quad 
f_{1, 2} := f_1 \mathbf{1}_{B_j(y,\frac54)\setminus B_j(y,\frac34)}, \quad
f_{1, 3} := f_1 \mathbf{1}_{B_j(y,\frac54)^c},
\\
&f_{2, 1} := f_2 \mathbf{1}_{B_j(y,\frac34)}, \quad 
f_{2, 2} := f_2 \mathbf{1}_{B_j(y,\frac54)\setminus B_j(y,\frac34)}, \quad  
f_{2, 3} := f_2 \mathbf{1}_{B_j(y,\frac54)^c}.
\end{align*}
We should mention that each $f_{1, i}$ and $f_{2, i}$, $i=1, 2, 3$, depend on the variable $y$. Let $x\in B_j(y, \frac14)$. Since $f_{1,3}$ is supported on $\Rn \setminus B_j(y,\frac54)$, it follows from $f_{1, 3}(x-y_1) \ne 0$ that $|x-y_1-y| \ge \frac54 2^{j(1+\gamma)}$, and so $|y_1| \ge 2^{j(1+\gamma)}$. Noting that the kernel $K_j^1$ is supported on $B_j\times B_j$, we get 
$T_j^1(f_{1, 3}, f_2)=0$. Similarly, $T_j^1(f_1, f_{2, 3})=0$. Hence, for any $x\in B_j(y, \frac14)$, 
\begin{align}\label{eq:TjTj1}
T_j^1(f_1, f_2)(x) 
& = T_j^1(f_{1, 1}, f_{2, 1})(x) + T_j^1(f_{1, 1}, f_{2, 2})(x) 
\\ \nonumber
&\qquad+ T_j^1(f_{1, 2}, f_{2, 1})(x) + T_j^1(f_{1, 2}, f_{2, 2})(x).
\end{align}
Since $f_{1, 2}$ and $f_{2, 2}$ are supported on $B_j(y,\frac54)\setminus B_j(y,\frac34)$, it follows from $f_{1, 2}(x-y_1) f_{2, 2}(x-y_2) \ne 0$ that $|y_1| \ge 2^{j(1+\gamma)-1}$ and $|y_2| \ge 2^{j(1+\gamma) - 1}$. Then, repeating the proof of \eqref{eq:BR-14} yields  
\begin{equation}\label{eq:BR-17}
\|T_j^1(f_{1, 2}, f_{2, 2})v\|_{L^1(B_j(y,\frac14))}
\le 2^{-\varepsilon j} \prod_{i=1}^2 [v_i^2]_{A_{2/\p_i^-} \cap RH_{(\p_i^+/2)'}} \|f_{i, 2}\|_{L^2(v_i^2)}. 
\end{equation}
Since $f_{1, 1}$ is supported on $B_j(y,\frac34)$, it follows from $f_{1, 1}(x-y_1) f_{2, 2}(x-y_2)\ne 0$ that $|y_1| \le 2^{j(1+\gamma)}$ and $|y_2| \ge 2^{j(1+\gamma)-1}$. Thus, we calculate much as in \eqref{eq:BR-15} to get 
\begin{equation}\label{eq:BR-18} 
\|T_j^1(f_{1,1}, f_{2,2})v\|_{L^1(B_j(y,\frac14))}
\le 2^{-\varepsilon j} \prod_{i=1}^2 [v_i^2]_{A_{2/\p_i^-} \cap RH_{(\p_i^+/2)'}} \|f_{i, i}\|_{L^2(v_i^2)}.  
\end{equation}
Symmetrically, 
\begin{equation}\label{eq:BR-19}
\|T_j^1(f_{1, 2}, f_{2,1})v\|_{L^1(B_j(y,\frac14))}
\le 2^{-\varepsilon j} \prod_{i=1}^2 [v_i^2]_{A_{2/\p_i^-} \cap RH_{(\p_i^+/2)'}} \|f_{i, 3-i}\|_{L^2(v_i^2)}. 
\end{equation}
It remains to consider $T_j^1(f_{1,1}, f_{2,1})$. Given $\m\in L^\infty(\R)$, set $T_{\m} h := \int_0^1 \m(\lambda) \, R_\lambda h \, \lambda^{n-1} \, d\lambda$. 
Then \cite[Lemma 3.1]{LW} states that 
\begin{equation}\label{eq:BR-20}
\|T_{\m} h\|_{L^2(\Rn)} \lesssim \|\m\|_{L^\infty(\Rn)} \|h\|_{L^p(\Rn)}, \quad 1\le p\le 2.
\end{equation}
Let $r=1+2\delta_2/n$. Then $v_2^2 \in A_r \cap RH_{\infty}$. Using \eqref{eq:BR-20} and H\"older's inequality, we have that for $h \in L^2(v_2^2)$ with $\supp h \subset B_j(y,\frac34)$, 
\begin{align}\label{eq:BR-21} 
&\|(T_{\m} h) v_2\|_{L^2(B_j(y,\frac14))}  
\le \|T_{\m} h\|_{L^2(B_j(y,\frac14))} \Bigl(\esssup_{B_j(y,\frac14)} v_2^2 \Bigr)^{\frac12} 
\\
& \lesssim \|\m\|_{L^\infty(\Rn)} [v_2^2]_{RH_{\infty}}^{\frac12} \|h\|_{L^{\frac2r}(B_j(y,\frac34))}
\biggl(\fint_{B_j(y,\frac14)} v_2^2 \, dz\biggr)^{\frac12}
\nonumber
\\
&\lesssim \|\m\|_{L^\infty(\Rn)} [w_2]_{RH_{\infty}}^{\frac12} \|h v_2\|_{L^2(B_j(y,\frac34)} |B_j(y,3/4)|^{\frac{r-1}{2}}
\nonumber
\\
&\qquad \times \biggl(\fint_{B_j(y,\frac34)}v_2^{2(1-r')}dz\biggr)^{\frac{r-1}{2}} 
\biggl(\fint_{B_j(y,\frac14)} v_2^2 \, dz\biggr)^{\frac12}
\nonumber
\\
&\lesssim 2^{j(1+\gamma)\delta_2} \|\m\|_{L^\infty(\Rn)} [v_2^2]_{RH_{\infty}}^{\frac12} [v_2^2]_{A_{1+2\delta_2/n}}^{\frac12} 
\|h v_2\|_{L^2(B_j(y,\frac34)} \nonumber
\\
&\le 2^{j(1+\gamma)\delta_2} \|\m\|_{L^\infty(\Rn)} 
[v_2^2]_{A_{q_2/\p_2^-} \cap RH_{(\p_+/q_2)'}}
\|h v_2\|_{L^2(B_j(y,\frac34)}, 
\nonumber
\end{align}
where the definition \eqref{def:wpApRH} was used in the last inequality. 
Similarly, 
\begin{align}\label{eq:BR-22}
\|(T_{\m} h) v_1\|_{L^2(B_j(y,\frac14))} 
\lesssim 2^{j(1+\gamma)\delta_1} \|\m\|_{L^\infty(\Rn)} 
[v_1^2]_{A_{q_1/\p_2^-} \cap RH_{(\p_+/q_1)'}} \|h v_1\|_{L^2(B_j(y,\frac34))}. 
\end{align}
Observe that 
\begin{align}\label{eq:BR-24} 
T_j^1 (f_{1,1}, f_{2,1})(x) = T_j (f_{1,1}, f_{2,1})(x), \quad x \in B_j(y, 1/4). 
\end{align}
As argued in \cite[(3.7)]{LW}, we utilize \eqref{eq:BR-21}--\eqref{eq:BR-24} to get that for any fixed $0<\kappa<\delta$, 
\begin{align}\label{eq:BR-25}
\|T_j^1(f_{1,1}, f_{2,1})v\|_{L^1(B_j(y,\frac14))}
&\lesssim 2^{-j(\delta-\kappa) + j(1+\gamma)(\delta_1+\delta_2)}
\prod_{i=1}^2 [v_i^2]_{A_{2/\p_i^-} \cap RH_{(\p_i^+/2)'}} \|f_{i, 1}\|_{L^2(v_i^2)}
\\ \nonumber
&\lesssim 2^{-\varepsilon j} \prod_{i=1}^2 [v_i^2]_{A_{2/\p_i^-} \cap RH_{(\p_i^+/2)'}} \|f_{i, 1}\|_{L^2(v_i^2)}, 
\end{align}
provided choosing $\kappa, \gamma, \varepsilon$ small enough so that 
$\delta-\kappa -(1+\gamma)(\delta_1+\delta_2)>\varepsilon $. 
Summing \eqref{eq:TjTj1}--\eqref{eq:BR-19} and \eqref{eq:BR-25} yields
\begin{align}\label{eq:BR-27}
\|T_j^1(f_1, f_2)v\|_{L^1(B_j(y,\frac14))}
\lesssim 2^{-\varepsilon j} \prod_{i=1}^2 [v_i^2]_{A_{2/\p_i^-} \cap RH_{(\p_i^+/2)'}}  
\|f_i \mathbf{1}_{B_j(y,\frac54)}\|_{L^2(v_i^2)}.
\end{align}
Now, integrating the both sides of \eqref{eq:BR-27} with respect to $y$, using Cauchy-Schwarz inequality, and interchanging the order of integration, we conclude 
\begin{equation*}
\|T_j^1(f_1, f_2)\|_{L^1(v)}
\lesssim 2^{-\varepsilon j} \prod_{i=1}^2 [v_i^2]_{A_{2/\p_i^-} \cap RH_{(\p_i^+/2)'}} \|f_i\|_{L^2(v_i^2)}. 
\end{equation*}
This shows \eqref{eq:BR-12} for $T_j^1$ and completes the whole proof. 
\end{proof}

\subsection{Bilinear rough singular integrals}
Given $\Omega \in L^q(\mathbb{S}^{2n-1})$ with $1 \le q \le \infty$ and $\int_{\mathbb{S}^{2n-1}} \Omega\, d\sigma=0$,  we define the rough bilinear singular integral operator $T_{\Omega}$ by
\begin{equation*}
T_{\Omega}(f, g)(x)=\mathrm{p.v.} \int_{\R^{2n}} K_{\Omega}(x-y, x-z) f(y) g(z) dy dz,
\end{equation*}
where the rough kernel $K_{\Omega}$ is given by $K_{\Omega}(y, z) = \frac{\Omega ((y, z)/|(y, z)|)}{|(y, z)|^{2n}}$.

A typical example of the rough bilinear operators is the Calder\'{o}n commutator defined in \cite{P.C} as
\begin{align*}
\mathcal{C}_a (f)(x) := \text{p.v.} \int_{\R} \frac{A(x)-A(y)}{|x-y|^2} f(y) dy,
\end{align*}
where $a$ is the derivative of $A$. C. Calder\'{o}n \cite{C.C} established the boundedness of $\mathcal{C}_a$ in the full range of exponents $1<p_1, p_2< \infty$. It was shown in \cite{P.C} that the Calder\'{o}n commutator can be written as
\begin{align*}
\mathcal{C}_a(f)(x):= \text{p.v.} \int_{\R \times \R} K(x-y, x-z) f(y) a(z) \, dydz,
\end{align*}
where the kernel is given by 
\begin{align*}
K(y, z)=\frac{e(z)-e(z-y)}{y^2}=\frac{\Omega((y, z)/|(y,z)|)}{|(y, z)|^2},
\end{align*}
where $e(t)=1$ if $t>0$ and $e(t)=0$ if $t<0$. Observe that $K(y, z)$ is odd and homogeneous of degree $-2$ whose restriction on $\mathbb{S}^1$ is $\Omega(y,z)$. It is also easy to check that $\Omega$ is odd and bounded, and hence Theorems \ref{thm:rough}--\ref{thm:rough-2} below can be applied to $\mathcal{C}_a$.

\begin{theorem}\label{thm:rough}
Let $\Omega \in L^{\infty}(\mathbb{S}^{2n-1})$ and $\int_{\mathbb{S}^{2n-1}} \Omega\, d\sigma=0$. Then for all $p_i \in (1, \infty)$, for all $w_i^{p_i} \in A_{p_i}$, for all $\b=(b_1, b_2) \in \BMO^2$, and for each multi-index $\alpha \in \N^2$, 
\begin{align}
\label{eq:TOLp-1} \|T_{\Omega}(f_1, f_2)\|_{L^p(w^p)}
&\lesssim \prod_{i=1}^2 [w_i^{p_i}]_{A_{p_i}}^{\frac32 \max\{1, \frac{1}{p_i-1}\}} \|f_i\|_{L^{p_i}(w_i^{p_i})},
\\
\label{eq:TOLp-2} \|[T_{\Omega}, \b]_{\alpha}(f_1, f_2)\|_{L^p(w^p)}
&\lesssim  \prod_{i=1}^2 [w_i^{p_i}]_{A_{p_i}}^{(\alpha_i+\frac32)\max\{1, \frac{1}{s_i-1}, \frac{1}{p_i-1}, \frac{s_i-1}{p_i-1}\}}
\|b_i\|_{\BMO}^{\alpha_i} \|f_i\|_{L^{p_i}(w_i^{p_i})},
\end{align}
whenever $\frac1s := \frac{1}{s_1} + \frac{1}{s_2} \le 1$ with $s_1, s_2 \in (1, \infty)$, where $w=w_1 w_2$ and $\frac1p=\frac{1}{p_1}+\frac{1}{p_2}$.  
\end{theorem}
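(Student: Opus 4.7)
\medskip

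The plan is to reduce the full range weighted inequalities to a single quantitative estimate at the diagonal endpoint $q_1 = q_2 = 2$ and then invoke the extrapolation Theorems~\ref{thm:lim} and \ref{thm:lim-Tb} with $\mathfrak{p}_i^- = 1$ and $\mathfrak{p}_i^+ = \infty$ for $i=1,2$. With this choice one has $\tau_{p_i} = p_i$, so
\[
[w_i^{p_i}]_{A_{p_i/\mathfrak{p}_i^-} \cap RH_{(\mathfrak{p}_i^+/p_i)'}} = [w_i^{p_i}]_{A_{p_i}},
\qquad
\gamma_i(p_i, 2) = \max\{1, \tfrac{1}{p_i-1}\},
\]
which is exactly the form of the exponents in \eqref{eq:TOLp-1} and \eqref{eq:TOLp-2}. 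Thus my goal reduces to proving the base estimate
\[
\|T_\Omega(f_1, f_2)\|_{L^1(v_1 v_2)}
\lesssim [v_1^2]_{A_2}^{3/2} [v_2^2]_{A_2}^{3/2} \, \|f_1\|_{L^2(v_1^2)} \|f_2\|_{L^2(v_2^2)},
\qquad v_i^2 \in A_2.
\]

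To prove the base estimate, I would mimic the strategy of Theorem~\ref{thm:Tom} in the bilinear setting. Choose a radial nonnegative $\varphi \in \mathscr{C}_c^\infty(\R^{2n})$ with $\int \varphi = 1$ and $\supp \varphi \subset \{|(y,z)|<1/4\}$, set $\varphi_j(y,z) := 2^{-2nj}\varphi(2^{-j}\cdot)$ and $\nu_j(y,z) := K_\Omega(y,z)\mathbf{1}_{\{2^j \le |(y,z)|<2^{j+1}\}}$, and define the smoothed bilinear multipliers $T_j$ with kernel $K_j := \sum_{k\in\Z} \nu_k * \varphi_{k-j}$, so that $T_\Omega = T_1 + \sum_{j \ge 1}(T_{j+1}-T_j)$. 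For each $j \ge 1$ one needs two ingredients: (i) the \emph{unweighted geometric decay} $\|(T_{j+1}-T_j)(f_1,f_2)\|_{L^1} \lesssim 2^{-\delta_0 j}\|f_1\|_{L^2}\|f_2\|_{L^2}$, which follows from Fourier-side cancellation together with the fact that $\widehat{\nu_k}$ satisfies $|\widehat{\nu_k}(\xi)| \lesssim \min\{|2^k\xi|, |2^k\xi|^{-\delta_0}\}$ (adapting \cite{Wat} to the bilinear framework of \cite{CCDO}); and (ii) a \emph{uniform bilinear sparse domination} for each $T_j$ with $(r_1, r_2, r_3) = (1,1,1)$-type averages for bounded $\Omega$, producing $\|T_j(f_1,f_2)\|_{L^1(v_1 v_2)} \lesssim [v_1^2]_{A_2}^{3/2}[v_2^2]_{A_2}^{3/2}\|f_1\|_{L^2(v_1^2)}\|f_2\|_{L^2(v_2^2)}$ uniformly in $j$. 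Applying Lemma~\ref{lem:open} to find $\gamma>0$ with $(1+\gamma)' \simeq \max_i [v_i^2]_{A_2}$, then interpolating (i) and (ii) via Theorem~\ref{thm:SW} yields a decay $2^{-c j/(1+\gamma)'}$ for each piece; summing as in the proof of \eqref{TjLpwee}--\eqref{MOTO-2} delivers the base estimate.

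With the base estimate established, Theorem~\ref{thm:lim} applied to $\Phi_i(t)=t^{3/2}$, $q_i=2$ immediately gives \eqref{eq:TOLp-1}. For \eqref{eq:TOLp-2}, Theorem~\ref{thm:lim-Tb} produces the exponent
\[
\big(\alpha_i \max\{1,\tfrac{1}{\tau_{s_i}-1}\} + \tfrac{3}{2}\gamma_i(s_i,2)\big)\cdot \gamma_i(p_i,s_i),
\]
and since $\tau_{s_i}=s_i$ one has $\gamma_i(s_i,2) = \max\{1,\tfrac{1}{s_i-1}\}$ and $\gamma_i(p_i,s_i) = \max\{1, \tfrac{s_i-1}{p_i-1}\}$, whose product equals $\max\{1,\tfrac{1}{s_i-1},\tfrac{1}{p_i-1},\tfrac{s_i-1}{p_i-1}\}$, matching the stated bound. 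The main obstacle in this program is ingredient (ii): tracking the sharp constant $3/2$ through the bilinear sparse form for rough kernels requires a careful quantitative extension of the sparse domination machinery of \cite{CCDO} (or Barron) to the bilinear setting, together with sharp bounds for the bilinear $(1,1,1)$-sparse operator acting on the weighted class $A_2 \times A_2$.
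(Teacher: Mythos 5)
Your reduction to the extrapolation machinery is exactly what the paper does: the choice $\p_i^-=1$, $\p_i^+=\infty$, $q_i=2$, $\Phi_i(t)=t^{3/2}$ in Theorems \ref{thm:lim} and \ref{thm:lim-Tb}, and your bookkeeping of the exponents (including the identity $\max\{1,\tfrac{1}{s_i-1}\}\max\{1,\tfrac{s_i-1}{p_i-1}\}=\max\{1,\tfrac{1}{s_i-1},\tfrac{1}{p_i-1},\tfrac{s_i-1}{p_i-1}\}$) is correct and matches \eqref{eq:TOLp-1}--\eqref{eq:TOLp-2}.

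The divergence, and the gap, is in the base estimate at $L^2(v_1^2)\times L^2(v_2^2)\to L^1(v_1v_2)$ with constant $[v_1^2]_{A_2}^{3/2}[v_2^2]_{A_2}^{3/2}$. The paper does not prove this: it observes via Lemma \ref{lem:ApRHApr} that $[\vec{v}]_{A_{(2,2)}}\le [v_1^2]_{A_2}^{1/2}[v_2^2]_{A_2}^{1/2}$ and then quotes the bound $\|T_\Omega\|_{L^2(v_1^2)\times L^2(v_2^2)\to L^1(v)}\lesssim \|\Omega\|_{L^\infty}[\vec{v}]_{A_{(2,2)}}^{3}$ from the reference \cite{CHS}. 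You instead sketch a proof by decomposing $T_\Omega=T_1+\sum_{j}(T_{j+1}-T_j)$ and interpolating an unweighted geometric decay against a uniform weighted bound, but both ingredients are left unestablished. Ingredient (i), the decay $\|(T_{j+1}-T_j)\|_{L^2\times L^2\to L^1}\lesssim 2^{-\delta_0 j}$, is not a routine consequence of the pointwise Fourier decay of $\widehat{\nu_k}$: in the bilinear setting the target space $L^1$ is below the Plancherel range, and this estimate is itself the main theorem of the unweighted rough bilinear theory (proved via a wavelet decomposition, not by the argument of \cite{Wat}). Ingredient (ii), a weighted bound for each smooth piece $T_j$, uniform in $j$ and with the precise power $3/2$ on each $A_2$ constant, is exactly the quantitative content you would need and which you yourself flag as ``the main obstacle.'' One would also need a bilinear Stein--Weiss interpolation with simultaneous change of all three measures, which is not covered by Theorem \ref{thm:SW} as stated. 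As written, therefore, the proposal does not close; the shortest repair is to invoke the known result of Chen--He--Song together with the multilinear weight comparison of Lemma \ref{lem:ApRHApr}, as the paper does.
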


\begin{proof}
Picking $r_1=r_2=r_3=1$ and $p_1=p_2=q_1=q_2=2$, we see that \eqref{eq:rqpp} holds and $p_i \in (1, \infty)$, $i=1, 2$. Then Lemma \ref{lem:ApRHApr} gives that 
\begin{align}\label{eq:TO-A22}
[\vec{w}]_{A_{(2, 2)}} \le [w_1^2]_{A_2}^{\frac12} [w_2^2]_{A_2}^{\frac12}.
\end{align}
On the other hand, it was proved in \cite{CHS} that for every $\vec{w}=(w_1, w_2) \in A_{(2, 2)}$,
\begin{align}\label{eq:TO-1}
\|T_{\Omega}\|_{L^2(w_1^2) \times L^2(w_2^2) \to L^1(w)} 
\lesssim \|\Omega\|_{L^{\infty}} [\vec{w}]_{A_{(2, 2)}}^3
\lesssim \|\Omega\|_{L^{\infty}} [w_1^2]_{A_2}^{\frac32} [w_2^2]_{A_2}^{\frac32}, 
\end{align}
where \eqref{eq:TO-A22} was used in the last step. Thus, \eqref{eq:TOLp-1} and \eqref{eq:TOLp-2} follow at once from \eqref{eq:TO-1} and Theorems \ref{thm:lim} and \ref{thm:lim-Tb} applied to $\p_i^-=1$, $\p_i^+=\infty$, $q_i=2$, $\Phi_i(t)=t^{\frac32}$. 
\end{proof}

\begin{theorem}\label{thm:rough-2}
Let $\Omega \in L^q(\mathbb{S}^{2n-1})$ with $q>\frac43$ and $\int_{\mathbb{S}^{2n-1}} \Omega\, d\sigma=0$. Let $\pi_q < \p_i^- < \p_i^+ \le \infty$, $i=1, 2$, be such that $\frac{1}{\pi'_q} < \frac{1}{\p_+} := \frac{1}{\p_i^+} + \frac{1}{\p_2^+} <1$, where $\pi_q := \max\big\{\frac{24n+3q-4}{8n+3q-4}, \frac{24n+q}{8n+q}\big\}$. Then for all $p_i \in (\p_i^-, \p_i^+)$, for all $w_i^{p_i} \in A_{p_i/\p_i^-} \cap RH_{(\p_i^+/p_i)'}$, for all $\b=(b_1, b_2) \in \BMO^2$, and for each multi-index $\alpha \in \N^2$, 
\begin{align}
\label{eq:TOLq-1} & \|T_{\Omega}(f_1, f_2)\|_{L^p(w^p)}
\lesssim \prod_{i=1}^2 [w_i^{p_i(\p_i^+/p_i)'}]_{A_{\tau_{p_i}}}^{\theta (\frac{1}{p_i} - \frac{1}{\p_i^+})}
\|f_i\|_{L^{p_i}(w_i^{p_i})},
\\ 
\label{eq:TOLq-2}
& \|[T_{\Omega}, \b]_{\alpha} (f_1, f_2)\|_{L^p(w^p)}
\lesssim \prod_{i=1}^2 \Psi_i \big([w_i^{p_i(\p_i^+/p_i)'}]_{A_{\tau_{p_i}}}^{\gamma_i(p_i, s_i)}\big) 
\|b_i\|_{\BMO}^{\alpha_i} \|f_i\|_{L^{p_i}(w_i^{p_i})}, 
\end{align}
whenever $\frac1s := \frac{1}{s_1} + \frac{1}{s_2} \le 1$ with $s_i \in (\p_i^-, \p_i^+)$,  where $w=w_1 w_2$, $\frac1p = \frac{1}{p_1} + \frac{1}{p_2}$, $\frac{1}{r} = \frac{1}{r_1} + \frac{1}{r_2}$, 
\begin{align*}
\theta = \max_{i=1, 2} \bigg\{\frac{\frac{1}{\p_i^-}}{\frac{1}{\p_i^-} - \frac{1}{p_i}}, \frac{1-\frac{1}{\p_+}}{\frac1p - \frac{1}{\p_+}}\bigg\}, 
\quad \text{ and }\quad  
\Psi_i(t) := t^{\alpha_i \max\{1, \frac{1}{\tau_{s_i}-1}\} + \theta (\frac{1}{p_i} - \frac{1}{\p_i^+})}.
\end{align*}
\end{theorem}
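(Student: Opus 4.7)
The plan is to deduce \eqref{eq:TOLq-1} and \eqref{eq:TOLq-2} from the extrapolation machinery (Theorems \ref{thm:lim} and \ref{thm:lim-Tb}) once a single quantitative ``base'' weighted estimate for $T_\Omega$ at one admissible pair of exponents is in place. That is, the only genuinely operator-theoretic step is to obtain, for some fixed exponents $q_i^0 \in (\p_i^-, \p_i^+)$ with $\tfrac{1}{q_0^0} = \tfrac{1}{q_1^0} + \tfrac{1}{q_2^0}$ and for all $v_i^{q_i^0} \in A_{q_i^0/\p_i^-}\cap RH_{(\p_i^+/q_i^0)'}$, an inequality of the form
\begin{equation}\label{eq:plan-base}
\|T_\Omega(f_1,f_2)\|_{L^{q_0^0}(v^{q_0^0})}
\le C \prod_{i=1}^2 [v_i^{q_i^0}]_{A_{q_i^0/\p_i^-}\cap RH_{(\p_i^+/q_i^0)'}}^{\theta_i^0}\|f_i\|_{L^{q_i^0}(v_i^{q_i^0})},
\end{equation}
with $v=v_1v_2$; here $\theta_i^0$ is some explicit exponent whose value is ultimately what dictates the final $\theta$ in the statement.

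To produce \eqref{eq:plan-base} I would set up a bilinear dyadic frequency decomposition in the spirit of $T_\Omega = T_1 + \sum_{j\ge 1}(T_{j+1}-T_j)$ used in the proof of Theorem \ref{thm:Tom}, with annular pieces $\nu_k(y,z) := K_\Omega(y,z)\mathbf{1}_{\{2^k \le |(y,z)| < 2^{k+1}\}}$. Each frequency block admits two complementary controls: Plancherel combined with Fourier decay of $\widehat{\nu}_k$ yields $L^2\times L^2\to L^1$ bounds with geometric decay in the dyadic parameter $j$; meanwhile the vector-valued square-function estimate of Lemma \ref{lem:GH}, bilinearised by Khintchine's inequality in the manner of \eqref{Som}, provides the quantitative $L^{q'}$-type control needed to propagate weighted norms. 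Interpolating between these two bounds via Theorem \ref{thm:SW}, summing the geometric series in $j$ in the same way as was done for \eqref{MOTO-2}, and tracking the weight constants in terms of $[v_i^{q_i^0(\p_i^+/q_i^0)'}]_{A_{\tau_{q_i^0}}}$ via \eqref{def:wpApRH}, yields \eqref{eq:plan-base}. The hypothesis $q > 4/3$ together with the particular form of $\pi_q$ is precisely what keeps the unweighted endpoint bounds on each annulus below the divergence threshold of the geometric series, and this is what enforces $\p_i^- > \pi_q$.

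With \eqref{eq:plan-base} available, \eqref{eq:TOLq-1} follows by applying Theorem \ref{thm:lim} with $m=2$, $q_i = q_i^0$ and $\Phi_i(t) = t^{\theta_i^0}$: the conclusion assigns a power $\theta_i^0\,\gamma_i(p_i,q_i^0)$ to $[w_i^{p_i}]_{A_{p_i/\p_i^-}\cap RH_{(\p_i^+/p_i)'}}$, which is then rewritten as $\theta\bigl(\tfrac{1}{p_i}-\tfrac{1}{\p_i^+}\bigr)$ on $[w_i^{p_i(\p_i^+/p_i)'}]_{A_{\tau_{p_i}}}$ by invoking \eqref{def:wpApRH} and the explicit formula for $\gamma_i$. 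The commutator bound \eqref{eq:TOLq-2} is then a direct application of Theorem \ref{thm:lim-Tb} to the same base \eqref{eq:plan-base}: the condition $\tfrac{1}{\p_+} < 1$ combined with Remark \ref{rem:ss} guarantees the existence of admissible $s_1,s_2 \in (\p_i^-,\p_i^+)$ with $\tfrac{1}{s_1}+\tfrac{1}{s_2}\le 1$, and the growth function $\widetilde\Phi_i$ produced by that theorem collapses to the stated $\Psi_i$ after identifying its exponent as $\alpha_i \max\{1,\tfrac{1}{\tau_{s_i}-1}\} + \theta\bigl(\tfrac{1}{p_i}-\tfrac{1}{\p_i^+}\bigr)$.

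The main obstacle is the base estimate \eqref{eq:plan-base} itself. In the linear setting of Theorem \ref{thm:Tom} the $L^q$-H\"ormander condition on the convolution kernel plus sparse domination produces a clean quantitative bound, but the bilinear setting requires a genuinely two-parameter frequency decomposition and a careful interpolation between the Plancherel/Fourier-decay estimate and the vector-valued square-function bound of Lemma \ref{lem:GH}. The specific numerology producing $\pi_q = \max\bigl\{\tfrac{24n+3q-4}{8n+3q-4},\tfrac{24n+q}{8n+q}\bigr\}$ emerges by balancing the Stein--Weiss interpolation exponents on the two branches of the decomposition (low- versus high-frequency regime) against the geometric-decay rate in $j$; the bookkeeping of these exponents, rather than the application of extrapolation, is where the technical difficulty concentrates.
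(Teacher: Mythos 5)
There is a genuine gap, and it sits exactly where you locate the ``main obstacle'': the base estimate \eqref{eq:plan-base} is never actually established, and the sketch you give for it would not produce it. The paper does not prove any single-exponent weighted bound for $T_\Omega$ by a bilinear frequency decomposition at all. Instead it invokes the sparse bilinear-form domination of Grafakos--Wang--Xue \cite[Theorem 1.1]{GWX}: since $p_0:=\min\{\p_1^-,\p_2^-,\p_+'\}>\pi_q$, one has $|\langle T_\Omega(f_1,f_2),f_3\rangle|\lesssim\sup_{\S}\Lambda_{\S,(\p_1^-,\p_2^-,\p_+')}(f_1,f_2,f_3)$, and then Theorem \ref{thm:m-linear} (which rests on Lemma \ref{lem:ApRHApr} and Nieraeth's quantitative bound for sparse forms, \cite[Corollary 4.2]{Nie}) gives \eqref{eq:TOLq-1} \emph{directly at every admissible $(p_1,p_2)$}, with no extrapolation; extrapolation enters only for the commutator and vector-valued statements. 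The threshold $\pi_q$ and the restriction $q>4/3$ are outputs of the sparse domination argument in \cite{GWX}, which is a substantial two-parameter analysis of the bilinear annular pieces; asserting that the same numerology ``emerges by balancing Stein--Weiss interpolation exponents'' against Lemma \ref{lem:GH} bilinearised by Khintchine is not a proof, and the linear toolkit of Theorem \ref{thm:Tom} (scalar $L^q$-H\"ormander condition, linear sparse domination) does not transfer to the bilinear kernel $K_\Omega$ by these means.

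There is also a quantitative error in the second step. Even granting a base estimate at a fixed tuple $(q_1^0,q_2^0)$ with some exponents $\theta_i^0$, Theorem \ref{thm:lim} would yield the power $\theta_i^0\,\gamma_i(p_i,q_i^0)$ on $[w_i^{p_i}]_{A_{p_i/\p_i^-}\cap RH_{(\p_i^+/p_i)'}}$, i.e.\ $\theta_i^0\max\bigl\{1,\tfrac{\tau_{q_i^0}-1}{\tau_{p_i}-1}\bigr\}$ in terms of $[w_i^{p_i(\p_i^+/p_i)'}]_{A_{\tau_{p_i}}}$. This does not rewrite as $\theta\bigl(\tfrac{1}{p_i}-\tfrac{1}{\p_i^+}\bigr)$ with the stated $\theta=\max_{i}\bigl\{\tfrac{1/\p_i^-}{1/\p_i^--1/p_i},\tfrac{1-1/\p_+}{1/p-1/\p_+}\bigr\}$: the latter is a joint maximum over both entries and a global term depending on the target $p$, produced by evaluating Nieraeth's sparse-form bound at the target exponents themselves, whereas extrapolation from a single point produces a product of per-entry factors anchored at $(q_1^0,q_2^0)$. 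So your route, even if the base estimate were available, would prove a formally different (and generally weaker) quantitative statement than \eqref{eq:TOLq-1}. The application of Theorem \ref{thm:lim-Tb} for the commutator bound \eqref{eq:TOLq-2} is the one part of your plan that matches the paper, since that is indeed how the paper derives \eqref{eq:Tbf-Lp} from \eqref{eq:Tf-Lp} inside Theorem \ref{thm:m-linear}.
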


\begin{proof}
By assumption, $p_0 := \min\{\p_1^-, \p_2^-, \p'_+\} > \pi_q$, which together with \cite[Theorem 1.1]{GWX} gives 
\begin{equation}\label{eq:TO-sparse}
|\langle T_{\Omega}(f_1, f_2), f_3 \rangle| 
\lesssim \sup _{\S: \text{ sparse}} \Lambda_{\S, (p_0, p_0, p_0)}(f_1, f_2, f_3)
\le \sup _{\S: \text{ sparse}} \Lambda_{\S, (\p_1^-, \p_2^-, \p'_+)}(f_1, f_2, f_3),
\end{equation}
for all $f_1, f_2, f_3 \in \mathscr{C}_c^{\infty}(\Rn)$. This and Theorem \ref{thm:m-linear} below imply \eqref{eq:TOLq-1} and \eqref{eq:TOLq-2} as desired. 
\end{proof}

\begin{remark}
In Theorem \ref{thm:rough}, the exponent $\p_i^- > \pi_q$ can be relaxed to $\p_i^- \ge \pi_q$, at the cost of a larger exponent appearing in \eqref{eq:TOLq-1} and \eqref{eq:TOLq-2}. Indeed, to get the first inequality in \eqref{eq:TO-sparse}, it requires that $p_0$ is strictly greater than $\pi_q$. When $\p_i^- = \pi_q$ and $w_i^{p_i} \in A_{p_i/\p_i^-} \cap RH_{(\p_i^+/p_i)'}$, Lemma \ref{lem:open} implies that there exists $\p_i^- < \widetilde{\p}_i^- < p_i$ such that $w_i^{p_i} \in A_{p_i/\widetilde{\p}_i^-} \cap RH_{(\p_i^+/p_i)'}$, $i=1, 2$. Then $p_0 := \min\{\widetilde{\p}_1^-, \widetilde{\p}_2^-, \p'_+\} > \pi_q$. Combining this with Lemma \ref{lem:open} and the result in the case $\p_i^- > \pi_q$, we can formulate similar estimates as in Theorem \ref{thm:rough}. Details are left to the reader. 
\end{remark}

Recall that a family $\S$ of cubes is called sparse if for every cube $Q \in \S$, there exists $E_Q \subset Q$ such that $|E_Q | \geq \eta |Q|$ for some $0<\eta<1$ and the collection $\{E_Q\}_{Q \in \S}$ is pairwise disjoint.

Given a sparse family $\S$ and $\vec{\s}=(\s_1, \ldots, \s_{m+1})$ with $\s_i \geq 1$, $i=1, \dots, m+1$, we define the $(m+1)$-sparse form
\begin{equation*}
\Lambda_{\S, \vec{\s}}(f_1, \ldots, f_{m+1})
:=\sum_{Q \in \S} |Q| \prod_{i=1}^{m+1} \bigg(\fint_{Q} |f_i|^{\s_i} dx \bigg)^{\frac{1}{\s_i}}.
\end{equation*}
We are interested in those operators $T$ that dominated by certain sparse form
\begin{equation}\label{eq:sparse-dom}
|\langle T(f_1, \ldots, f_m), f_{m+1}\rangle|
\leq C(\vec{\s}) \sup _{\S: \text{ sparse}} \Lambda_{\S, \vec{\s}}(f_1, \ldots, f_{m+1}),
\end{equation}
for all $f_1, \ldots, f_{m+1} \in \mathscr{C}_c^{\infty}(\Rn)$.

\begin{theorem}\label{thm:m-linear} 
Let $1 \leq \p_i^- < \p_i^+ \leq \infty$, $i=1, \dots, m$. Assume that the operator $T$ satisfies \eqref{eq:sparse-dom} for the exponents $\vec{\s} = (\p_1^-, \ldots, \p_m^-, \p'_+)$, where $\frac{1}{\p_+} := \sum_{i=1}^m \frac{1}{\p_i^+} < 1$. Then for all exponents $p_i, r_i \in (\p_i^{-}, \p_i^{+})$ and for all weights $w_i^{p_i} \in A_{p_i/\p_i^{-}} \cap RH_{(\p_i^{+}/p_i)'}$,
\begin{align}\label{eq:Tf-Lp}
\|T(\vec{f})\|_{L^p(w^p)}
&\lesssim \prod_{i=1}^m [w_i^{p_i(\p_i^+/p_i)'}]_{A_{\tau_{p_i}}}^{\theta(\frac{1}{p_i} - \frac{1}{\p_i^+})} 
\|f_i\|_{L^{p_i}(w_i^{p_i})},
\\
\label{eq:Tf-vector}
\bigg\|\Big(\sum_j |T(\vec{f}^j)|^r \Big)^{\frac1r}\bigg\|_{L^p(w^p)}
&\lesssim \prod_{i=1}^m [w_i^{p_i(\p_i^+/p_i)'}]_{A_{\tau_{p_i}}}^{\theta(\frac{1}{p_i} - \frac{1}{\p_i^+}) \gamma_i(p_i, r_i)} 
\bigg\|\Big(\sum_j |f_i^j|^{r_i}\Big)^{\frac{1}{r_i}}\bigg\|_{L^{p_i}(w_i^{p_i})},
\end{align}
where 
\begin{align*}
w=\prod_{i=1}^m w_i, \quad 
\frac1p = \sum_{i=1}^m \frac{1}{p_i}, \quad
\frac{1}{r} = \sum_{i=1}^m \frac{1}{r_i}, 
\quad \text{ and }\quad  
\theta = \max_{1\le i \le m} \bigg\{\frac{\frac{1}{\p_i^-}}{\frac{1}{\p_i^-} - \frac{1}{p_i}}, \frac{1-\frac{1}{\p_+}}{\frac1p - \frac{1}{\p_+}}\bigg\}.
\end{align*}

If in addition $T$ is an $m$-linear linear operator, then for the same exponents and weights as above, for all $\mathbf{b}=(b_1, \ldots, b_m) \in \BMO^m$, and for each multi-index $\alpha$,
\begin{align}
\label{eq:Tbf-Lp} & \|[T, \b]_{\alpha} (\vec{f})\|_{L^p(w^p)}
\lesssim \prod_{i=1}^m \Psi_i \big([w_i^{p_i(\p_i^+/p_i)'}]_{A_{\tau_{p_i}}}^{\gamma_i(p_i, s_i)}\big) 
\|b_i\|_{\BMO}^{\alpha_i} \|f_i\|_{L^{p_i}(w_i^{p_i})},
\\
\label{eq:Tbf-vector} & \bigg\|\Big(\sum_{j}\big|[T, \b]_{\alpha}(\vec{f}^j)\big|^r \Big)^{\frac1r}\bigg\|_{L^p(w^p)}
\lesssim \prod_{i=1}^m \Psi_i \big([w_i^{p_i(\p_i^+/p_i)'}]_{A_{\tau_{p_i}}}^{\gamma_i(p_i, r_i) \gamma_i(r_i, s_i)}\big) 
\\ \nonumber
&\qquad\qquad\qquad\qquad\qquad\qquad\qquad\qquad\quad\times \|b_i\|_{\BMO}^{\alpha_i}
\bigg\|\Big(\sum_{j} \big|f_i^j\big|^{r_i}\Big)^{\frac{1}{r_i}}\bigg\|_{L^{p_i}(w_i^{p_i})}, 
\end{align}
whenever $\frac1s := \sum_{i=1}^m \frac{1}{s_i} \le 1$ with $s_i \in (\p_i^-, \p_i^+)$, where 
$\Psi_i(t) := t^{\alpha_i \max\{1, \frac{1}{\tau_{s_i}-1}\} + \theta (\frac{1}{p_i} - \frac{1}{\p_i^+})}$.  
\end{theorem}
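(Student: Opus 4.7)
My approach is in two phases. First, I would deduce the strong-type scalar estimate \eqref{eq:Tf-Lp} directly from the sparse domination hypothesis \eqref{eq:sparse-dom}, valid at every admissible tuple $\vec{p}$. Then, with \eqref{eq:Tf-Lp} in hand, I would feed it into Theorem~\ref{thm:lim} to pass to the vector-valued bound \eqref{eq:Tf-vector}, and into Theorem~\ref{thm:lim-Tb} to pass to the commutator estimates \eqref{eq:Tbf-Lp}--\eqref{eq:Tbf-vector}.

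\textbf{Step 1 (sparse form $\Rightarrow$ \eqref{eq:Tf-Lp}).} Fix admissible exponents $\vec{p}$ and weights $\vec{w}$. By duality with an extremal $g \in L^{p'}(w^{-p'})$ and by \eqref{eq:sparse-dom}, matters reduce to the pairing estimate
$$\Lambda_{\S, \vec{\s}}(f_1,\ldots,f_m,g) \lesssim [\vec{w}]_{A_{\vec{p}, \vec{\s}}}^{\theta} \prod_{i=1}^m \|f_i\|_{L^{p_i}(w_i^{p_i})} \cdot \|g\|_{L^{p'}(w^{-p'})},$$
uniformly in the sparse family $\S$, where $\vec{\s} = (\p_1^-,\ldots,\p_m^-,\p'_+)$. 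I would run the standard principal-cube decomposition, absorbing each sparse cube average either as a weighted maximal operator acting on one of the inputs $f_i$ against the dual weight $w_i^{-p_i'\p_i^-/(p_i-\p_i^-)}$, or as a maximal operator acting on the dual entry $g$ against $w^{p'}$. Buckley's sharp bound \eqref{eq:sharp} applied to each of these maximal functions produces precisely the two competing exponents $\frac{1/\p_i^-}{1/\p_i^- - 1/p_i}$ and $\frac{1 - 1/\p_+}{1/p - 1/\p_+}$ whose uniform worst case is~$\theta$. Applying Lemma~\ref{lem:ApRHApr} with $\vec{q}=\vec{p}$ and $\vec{r}=\vec{\s}$ (whose conditions $\vec{r}\preceq\vec{p}$ and \eqref{eq:rqpp} are readily checked under the current hypotheses on $\p_i^\pm$) then converts $[\vec{w}]_{A_{\vec{p}, \vec{\s}}}^{\theta}$ into the product of weight norms $\prod_i [w_i^{p_i(\p_i^+/p_i)'}]_{A_{\tau_{p_i}}}^{\theta(1/p_i - 1/\p_i^+)}$ appearing in \eqref{eq:Tf-Lp}.

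\textbf{Step 2 (extrapolation).} With \eqref{eq:Tf-Lp} available, I would read it at the base tuple $\vec{r}$ as the hypothesis \eqref{eq:lim-1} of Theorem~\ref{thm:lim}, taking $\Phi_i$ to be the monomial whose exponent is dictated by the scalar bound; the conclusion then produces \eqref{eq:Tf-vector}, with the factor $\gamma_i(p_i, r_i)$ arising from the extrapolation transfer. For the commutator estimate \eqref{eq:Tbf-Lp}, I would analogously feed \eqref{eq:Tf-Lp} at the tuple $\vec{s}$ (with $1/s\le 1$, so that $L^s$ is Banach) into Theorem~\ref{thm:lim-Tb}; the Cauchy-integral trick encoded in Theorem~\ref{thm:TTb} contributes the summand $\alpha_i\max\{1, 1/(\tau_{s_i}-1)\}$ inside $\Psi_i$, while the $A_{\tau_{p_i}}$-exponent supplied by the scalar input accounts for the remaining summand. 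Finally, \eqref{eq:Tbf-vector} follows by combining the commutator transfer with the vector-valued extrapolation, both incorporated into the final part of Theorem~\ref{thm:lim-Tb}.

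\textbf{Main obstacle.} The technical heart of the argument is Step~1: the sharp exponent~$\theta$ depends on $\vec{p}$ through an asymmetric maximum mixing the $m$ input entries with the dual entry, and such $\vec{p}$-dependence cannot be reproduced by extrapolating from any single fixed tuple via Theorem~\ref{thm:lim}, which only transfers a \emph{fixed} quantitative profile with a multiplicative $\gamma_i(p_i,q_i)$-correction. One is therefore forced to run the principal-cube argument uniformly in $\vec{p}$, choosing in each summand which weighted maximal operator to invoke so that its Buckley constant matches exactly the corresponding entry of the maximum defining~$\theta$. Once this delicate balancing of Buckley constants is in place, the extrapolation machinery of the paper handles the rest.
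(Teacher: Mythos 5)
Your overall architecture matches the paper's proof exactly: first a scalar weighted bound at every admissible tuple $\vec{p}$ coming from the sparse form, converted to the product of $A_{\tau_{p_i}}$-norms via Lemma \ref{lem:ApRHApr}, and then Theorem \ref{thm:lim} (in the form of Remark \ref{rem:pp}, since the scalar input is available at all tuples) for \eqref{eq:Tf-vector} and Theorem \ref{thm:lim-Tb} for \eqref{eq:Tbf-Lp}--\eqref{eq:Tbf-vector}. Your diagnosis in the ``main obstacle'' paragraph --- that the $\vec{p}$-dependent exponent $\theta$ cannot be produced by extrapolating from one fixed tuple and must come from a direct estimate of the sparse form --- is also exactly why the paper proceeds this way.

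The one substantive difference is in Step 1: the paper does not run the principal-cube argument at all, but simply quotes the quantitative bound $\Lambda_{\S,\vec{\s}}(f_1,\ldots,f_m,g)\lesssim [\vec{w}]_{A_{\vec{p},\vec{\s}}}^{\theta}\prod_i\|f_iw_i\|_{L^{p_i}}\|gw^{-1}\|_{L^{p'}}$ from Nieraeth \cite[Corollary 4.2]{Nie}, and then applies Lemma \ref{lem:ApRHApr}. What you propose to do by hand is therefore a re-proof of that cited result; your sketch of it (choosing, cube by cube, which weighted maximal operator to invoke so that Buckley's constant matches the relevant entry of the maximum defining $\theta$) is the right mechanism but is by far the least detailed part of your write-up, and carrying it out with the sharp exponent is a nontrivial piece of work in its own right. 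If you are content to cite Nieraeth, your proof collapses to the paper's; if not, Step 1 needs to be fleshed out into a full argument.
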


\begin{proof}
Let $p_i \in (\p_i^{-}, \p_i^{+})$ and $w_i^{p_i} \in A_{p_i/\p_i^{-}} \cap RH_{(\p_i^{+}/p_i)'}$, $i=1, \ldots, m$. 
By density, we may assume that $f_1, \ldots, f_m \in \mathscr{C}_c^{\infty}(\Rn)$ in this sequel. By Lemma \ref{lem:ApRHApr}, one has $\vec{w} \in A_{\vec{p}, \vec{r}}$ with 
\begin{align}\label{eq:Aprbdd-1}
[\vec{w}]_{A_{\vec{p}, \vec{\s}}}
\le \prod_{i=1}^m [w_i^{p_i(\p_i^+/p_i)'}]_{A_{\tau_{p_i}}}^{\frac{1}{p_i} - \frac{1}{\p_i^+}}.
\end{align}
Then it follows from \eqref{eq:Aprbdd-1} and \cite[Corollary 4.2]{Nie} that 
\begin{align}\label{eq:Aprbdd-2}
\|T(f_1, \ldots, f_m)\|_{L^p(w^p)}
\lesssim [\vec{w}]_{A_{\vec{p}, \vec{\s}}}^{\theta} \prod_{i=1}^m  \|f_i\|_{L^{p_i}(w_i^{p_i})}, 
\end{align}
Thus, \eqref{eq:Tf-Lp} is a consequence of \eqref{eq:Aprbdd-1} and \eqref{eq:Aprbdd-2}. With Theorem \ref{thm:lim} and Remark \ref{rem:pp} in hand, the estimate \eqref{eq:Tf-Lp} in turn gives \eqref{eq:Tf-vector}. Additionally, \eqref{eq:Tbf-Lp} and \eqref{eq:Tbf-vector} follow from \eqref{eq:Tf-Lp}, Theorem \ref{thm:lim-Tb}, and Remark \ref{rem:pp}. The proof is complete.
\end{proof}

We close the subsection with the following remark, which shows Theorems \ref{thm:lim}--\ref{thm:lim-Tb} and Theorem \ref{thm:m-linear} contain a lot of applications. Details are left to the interested reader. 

\begin{remark}
Now let us present some examples in terms of the hypothesis in Theorem \ref{thm:m-linear}.
\begin{enumerate}
\item[$\bullet$] In \cite{BFP}, Bernicot et al. established a bilinear sparse domination $\Lambda_{\S, p_0, q'_0}$ for singular non-integral operators under certain assumptions. This verifies the hypothesis \eqref{eq:sparse-dom} for $r_1=p_0$ and $r_2=q'_0$. Note also that our extrapolation theorems above can be extended to spaces of homogeneous type since the corresponding sharp estimate for the Hardy-Littlewood operator \eqref{eq:sharp} was established in \cite[Proposition 7.13]{HK}. 

\item[$\bullet$] For Bochner-Riesz means $\B^{\alpha}$ in $\R^2$,  the authors \cite{BBL} proved a similar spare bilinear form  to \eqref{eq:sparse-dom} with $r_1=6/5$ and $r_2=2$ whenever $\alpha>1/6$. Much as before, one can not only recover \cite[Theorem 1.2]{BBL}, but also obtain quantitative weighted estimates and vector-valued inequalities. 

\item[$\bullet$] Bui et. al \cite{BCDH} studied the Schr\"{o}dinger operator $L=\Delta+V$ on $\Rn$ with $n \geq 3$, where $V \in RH_q$ and $q \in (n/2, n)$. Letting $p_0=\big(\frac{1}{q}-\frac{1}{n}\big)^{-1}$ and $K(x, y)$ be the kernel of the Riesz transform $L^{-1/2}\nabla$, we see that $K$ satisfies the Bui-Duong's condition (cf. \cite[Theorem 5.6]{BCDH}). The latter implies $L^r$-H\"{o}rmander condition (cf. \cite[Proposition 3.2]{Li}).  Then, combining the $L^p$ bounds for $\nabla L^{-1/2}$ with $p \in (1, p_0]$ (cf. \cite{Shen}) and the pointwise sparse domination in \cite{Li}, we use a duality argument to conclude that there exists a sparse family $\S$ such that $|\langle \nabla L^{-1/2}f, g \rangle| \lesssim \Lambda_{\S, 1, p_0}(f, g)$. That is, the hypothesis \eqref{eq:sparse-dom} is satisfied for the Riesz transform $\nabla L^{-1/2}$. 

\item[$\bullet$] For the $m$-linear Calder\'{o}n-Zygmund operators and the corresponding maximal truncation, pointwise sparse dominations were obtained in \cite{CR, DHL}, which immediately implies \eqref{eq:sparse-dom} with $\vec{r}=(1, \ldots, 1)$. Then one can improve Corollaries 8.2 and 8.3 in \cite{GM} to the quantitative weighted estimates.  

\item[$\bullet$] Let $1 \leq r < \infty$ and $\mathfrak{g}$ be the square function with the kernel $K_t$ satisfies the $m$-linear $L^r$-H\"{o}rmander condition defined in \cite{CY}.  Under the assumption that $\mathfrak{g}$ is bounded from $L^r(\Rn) \times \cdots \times L^r(\Rn)$ to $L^{r/m, \infty}(\Rn)$,   Cao and Yabuta \cite{CY} obtained a pointwise control of $\mathfrak{g}$ by $\Lambda_{\S, \vec{r}}$, where $\vec{r}=(r, \ldots, r, 1)$. Then, the square function $\mathfrak{g}$ verifies \eqref{eq:sparse-dom}. 

\item[$\bullet$] The operators satisfying \eqref{eq:sparse-dom} also include the discrete cubic Hilbert transform \cite{CKL}
and oscillatory integrals \cite{LS}. 
\end{enumerate}
\end{remark}

\subsection{Multilinear Fourier multipliers}
Given $s, m \in \N$, a function $\sigma \in \mathscr{C}^s(\R^{nm} \setminus \{0\})$ is said to belong to $\mathcal{M}^s(\R^{nm})$ if 
\begin{align*}
\big|\partial_{\xi_1}^{\alpha_1} \cdots \partial_{\xi_m}^{\alpha_m}  \sigma(\vec{\xi}) \big|
&\leq C_{\alpha} (|\xi_1|+\cdots+|\xi_m|)^{-\sum_{i=1}^m |\alpha_i|}, \quad \forall \vec{\xi} \in \R^{nm} \setminus\{0\}, 
\end{align*}
for each multi-index $\alpha=(\alpha_1, \ldots, \alpha_m)$ with $\sum_{i=1}^m |\alpha_i| \le s$. 

Given $s \in \R$, the (usual) Sobolev space $W^s(\R^{nm})$ is defined by the norm
\begin{align*} 
\|f\|_{W^s(\R^{nm})} :=\bigg(\int_{\R^{nm}} (1+|\vec{\xi}|^2)^{s}|\widehat{f}(\vec{\xi})|^2 d\vec{\xi}\bigg)^{\frac12}, 
\end{align*}
where $\widehat{f}$ is the Fourier transform in all the variables. For $\vec{s}=(s_1,\ldots,s_m) \in \R^m$, the Sobolev space of product type $W^{\vec{s}}(\R^{nm})$ consists of all $f \in \S'(\R^{nm})$ such that 
\begin{align*} 
\|f\|_{W^{\vec{s}}(\R^{nm})} 
:=\bigg(\int_{\R^{nm}} (1+|\xi_1|^2)^{s_1} \cdots 
(1+|\xi_m|^2)^{s_m} |\widehat{f}(\vec{\xi})|^2 d\vec{\xi}\bigg)^{\frac12} < \infty. 
\end{align*} 
Given a function $\sigma$ on $\R^{nN}$, we set 
\begin{align}\label{def:mj}
\sigma_j(\vec{\xi}) :=\Psi(\vec{\xi}) \sigma(2^j \vec{\xi}), \quad j \in \Z, 
\end{align} 
where $\Psi \in \S(\R^{nm})$ satisfy $\supp \Psi \subset \{1/2 \le |\vec{\xi}| \le 2 \}$, and $\sum_{k \in \Z} \Psi(2^{-k}\vec{\xi}) =1$ for all $\vec{\xi} \in \R^{nm} \setminus\{0\}$. Denote  
\begin{align*}
\mathcal{W}^s(\R^{nm})
 &:= \big\{\sigma \in L^{\infty}(\R^{nm}): \sup_{j \in \Z} \|\sigma_j\|_{W^s(\R^{nm})}<\infty\big\}, 
\\
\mathcal{W}^{\vec{s}}(\R^{nm}) 
&:= \big\{\sigma \in L^{\infty}(\R^{nm}): \sup_{j \in \Z} \|\sigma_j\|_{W^{\vec{s}}(\R^{nm})}<\infty\big\}. 
\end{align*}
Then one has 
\begin{align*}
\mathcal{M}^{s}(\R^{nm}) \subsetneq \mathcal{W}^{s}(\R^{nm}) 
\subsetneq \mathcal{W}^{(\frac{s}{m},\ldots,\frac{s}{m})}(\R^{nm}). 
\end{align*}

For a bounded function $\sigma$ on $\R^{nm}$, the $m$-linear Fourier multiplier $T_{\sigma}$ is defined by 
\begin{align*}
T_{\sigma} & (\vec{f})(x)
:= \int_{\R^{nm}} e^{2\pi ix \cdot (\xi_1+\dots+\xi_m)}
\sigma(\vec{\xi}) \widehat{f}_1(\xi_1) \cdots \widehat{f}_m(\xi_m) \, d\vec{\xi}, 
\end{align*}
for all $f_1,\ldots f_m \in \S(\Rn)$. 

Be means of extrapolation theorems, we improve Theorems 1.2 (i) and 6.2 in \cite{FT} to the weighted estimates with quantitative bounds. We can also establish the corresponding weighted estimates for the higher order commutators and vector-valued inequalities as follows. 

\begin{theorem}\label{thm:FT-theorem}
Let $m \ge 2$, $n/2 < s_i \le n$, $i=1, \ldots, m$. Assume that $\sigma \in \mathcal{W}^{\vec{s}}(\R^{nm})$. Then for every $p_i>n/s_i$, for every $w_i^{p_i} \in A_{p_i s_i/n}$, $i=1, \ldots, m$, for all $\b=(b_1,\ldots, b_m) \in \BMO^m$, and for each multi-index $\alpha \in \N^m$, 
\begin{align}
\label{eq:FT-1}
\|T_{\sigma}(\vec f)\|_{L^p(w^p)} 
&\lesssim \prod_{i=1}^m [w_i^{p_i}]_{A_{p_i s_i/n}}^{\frac32 \gamma_i(p_i, 2m)} 
\|f_i\|_{L^{p_i}(w_i^{p_i})}, 
\\
\label{eq:FT-2} 
\|[T_{\sigma}, \b]_{\alpha}(\vec{f})\|_{L^p(w^p)} 
&\lesssim \prod_{i=1}^m [w_i^{p_i}]_{A_{p_is_i/n}}^{(\alpha_i + \frac32) \gamma_i(p_i, 2m)} 
\|b_i\|_{\BMO}^{\alpha_i} \|f_i\|_{L^{p_i}(w_i^{p_i})}, 
\end{align}
where $\frac1p = \sum_{i=1}^m \frac{1}{p_i}$ and $w=\prod_{i=1}^m w_i$.  

Moreover, for any $r \in (n/s_i, 2]$, $i=1, \ldots,m$, 
\begin{align*}
\bigg\| \bigg(\sum_{k_1, \ldots, k_m} |T_{\sigma}(f^1_{k_1}, \ldots, f^m_{k_m})|^r \bigg)^{\frac1r}\bigg\|_{L^p(w^p)}
&\lesssim \prod_{i=1}^m [w_i^{p_i}]_{A_{p_i s_i/n}}^{\beta_i(r)}  
\bigg\| \bigg(\sum_{k_i} |f^i_{k_i}|^r \bigg)^{\frac1r}\bigg\|_{L^{p_i}(w_i^{p_i})}, 
\\ 
\bigg\| \bigg(\sum_{k_1, \ldots, k_m} |[T_{\sigma}, \b]_{\alpha}(f^1_{k_1}, \ldots, f^m_{k_m})|^r \bigg)^{\frac1r}\bigg\|_{L^p(w^p)}
& \lesssim \prod_{i=1}^m [w_i^{p_i}]_{A_{p_i s_i/n}}^{\eta_i(r)} \|b_i\|_{\BMO}^{\alpha_i} 
\bigg\| \bigg(\sum_{k_i} |f^i_{k_i}|^r \bigg)^{\frac1r}\bigg\|_{L^{p_i}(w_i^{p_i})}, 
\end{align*}
where 
\begin{equation*}
\beta_i(r) := 
\begin{cases} 
\frac32 \gamma_i(p_i, 2m), & r=2, 
\\
\frac32 \gamma_i(p_i, q_i) \gamma_i(q_i, 2m), & r \neq 2, 
\end{cases} 
\qquad 
\eta_i(r) := 
\begin{cases}
(\alpha_i + \frac32) \gamma_i(p_i, 2m), & r=2, 
\\
(\alpha_i + \frac32) \gamma_i(p_i, q_i) \gamma_i(q_i, 2m), & r \neq 2, 
\end{cases} 
\end{equation*}
provided $q_i \in (n/s_i, r)$, $i=1, \ldots, m$. 
\end{theorem}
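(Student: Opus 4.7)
The plan is to reduce everything to a single sharp weighted base-case estimate and then let Theorems \ref{thm:lim} and \ref{thm:lim-Tb} (together with Remark \ref{rem:pp}) do the rest. Set $\p_i^- := n/s_i$ and $\p_i^+ := \infty$. Since $(\p_i^+/p_i)' = 1$ and $RH_1 = A_\infty$ is automatic, the condition $w_i^{p_i} \in A_{p_i/\p_i^-} \cap RH_{(\p_i^+/p_i)'}$ reduces exactly to $w_i^{p_i} \in A_{p_i s_i/n}$. Moreover $\tau_{p_i} = (\p_i^+/p_i)'(p_i/\p_i^- - 1) + 1 = p_i s_i/n$, so by the convention in \eqref{def:wpApRH} we have $[w_i^{p_i}]_{A_{p_i/\p_i^-} \cap RH_{(\p_i^+/p_i)'}} = [w_i^{p_i}]_{A_{p_i s_i/n}}$. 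Because $s_i > n/2$, the exponent $q_i := 2m$ lies in $[\p_i^-, \p_i^+]$, and $\frac{1}{q} = \sum_{i=1}^m \frac{1}{2m} = \frac12$.

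The core step is the sharp base-case estimate: for every $v_i^{2m} \in A_{2m s_i/n}$, $i=1,\dots,m$,
\begin{equation}\label{eq:FT-base}
\|T_\sigma(\vec{f})\|_{L^2(v^2)} \lesssim \prod_{i=1}^m [v_i^{2m}]_{A_{2m s_i/n}}^{3/2} \|f_i\|_{L^{2m}(v_i^{2m})}, \quad v := \prod_{i=1}^m v_i.
\end{equation}
To prove \eqref{eq:FT-base}, decompose dyadically $T_\sigma = \sum_{j \in \Z} T_{\sigma_j(2^{-j}\cdot)}$ with $\sigma_j$ as in \eqref{def:mj}, and expand each $\sigma_j$ in a Fourier series on a cube containing its support using the product Sobolev regularity $\sup_j \|\sigma_j\|_{W^{\vec{s}}(\R^{nm})} < \infty$. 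Each factor then becomes a convolution with a Schwartz bump localized at scale $2^{-j}$ in the $i$-th variable, producing a representation of $T_\sigma$ in terms of $m$-linear operators of the form $\sum_j \prod_{i=1}^m \varphi^i_{j,\nu_i} \ast f_i$ with Fourier-side supports in $\{|\xi_i| \sim 2^j\}$ and coefficients summable in $\vec{\nu}$ (using $s_i > n/2$). Pairing with $g \in L^2(v^{-2})$ and applying Cauchy-Schwarz in $j$ reduces matters to scalar weighted square-function bounds of Littlewood-Paley type (Lemma \ref{lem:phif}) at exponent $2m$ and weight $v_i^{2m} \in A_{2m s_i/n} \subset A_{2m}$, combined with the sharp weighted Hardy-Littlewood estimate \eqref{eq:sharp} at exponent $2m/(2m/(n/s_i)) = n/s_i$. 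Tracking constants through Cauchy-Schwarz and the two-sided square-function estimate yields the exponent $\frac32$ on each $[v_i^{2m}]_{A_{2m s_i/n}}$, exactly as in the rough-singular-integral model of Theorem \ref{thm:rough}.

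With \eqref{eq:FT-base} established, apply Theorem \ref{thm:lim} to the extrapolation family $\F := \{(|T_\sigma(\vec{f})|, f_1,\ldots,f_m) : f_i \in \S(\Rn)\}$, with $q_i = 2m$, $\p_i^- = n/s_i$, $\p_i^+ = \infty$, and $\Phi_i(t) = t^{3/2}$. This immediately produces \eqref{eq:FT-1} with exponent $\frac32 \gamma_i(p_i, 2m)$. The commutator bound \eqref{eq:FT-2} then follows from Theorem \ref{thm:lim-Tb}, observing that $\widetilde{\Phi}_i(t) = t^{\alpha_i \max\{1, 1/(\tau_{s_i}-1)\}} \Phi_i(C_i t^{\gamma_i(s_i,2m)})$ collapses into $t^{(\alpha_i + \frac32)\gamma_i(p_i,2m)}$ after the two successive applications of $\gamma_i$ (and noting that since $\p_+ = \infty$, the existence of admissible $s_i$ with $\frac1s \le 1$ is immediate by taking $s_i$ large enough, per Remark \ref{rem:ss}).

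For the vector-valued inequalities, distinguish two cases. When $r = 2$, apply the vector-valued conclusion \eqref{eq:lim-3} of Theorem \ref{thm:lim} with $r_i = 2m$ directly (resp.\ \eqref{eq:lim-Tb-3} of Theorem \ref{thm:lim-Tb} for the commutator case), using \eqref{eq:FT-base} as the scalar input; this yields the exponents $\beta_i(2) = \frac32 \gamma_i(p_i, 2m)$ and $\eta_i(2) = (\alpha_i+\frac32)\gamma_i(p_i, 2m)$. When $r \neq 2$, first use Marcinkiewicz-Zygmund (Lemma \ref{lem:MZ}) at the intermediate scalar level $q_i \in (n/s_i, r)$ together with \eqref{eq:FT-1} to obtain a vector-valued estimate at exponent $q_i$, and then invoke Theorem \ref{thm:lim} to extrapolate to the general $(\vec{p}, \vec{w})$; the composition of the two $\gamma$-factors $\gamma_i(p_i,q_i)\gamma_i(q_i,2m)$ accounts exactly for $\beta_i(r)$, and likewise for $\eta_i(r)$ via Theorem \ref{thm:lim-Tb}.

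The main obstacle is the base-case estimate \eqref{eq:FT-base} with the precise exponent $\frac32$: one must coordinate the Littlewood-Paley decomposition adapted to each variable with the weighted square-function bounds, keeping the $A_p$-dependence linear (not superlinear) at exponent $2m$. This is where the sharp weighted estimate \eqref{eq:sharp} for $M$ and the sharp square-function estimate \eqref{eq:phik-1}--\eqref{eq:phik-2} must be deployed jointly; any sub-sharp input would propagate to a strictly worse final exponent than $\frac32\gamma_i(p_i,2m)$.
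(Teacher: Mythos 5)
Your plan follows the paper's proof almost exactly: same limited-range setup $\p_i^-=n/s_i$, $\p_i^+=\infty$, same base case $L^{2m}\times\cdots\times L^{2m}\to L^2$ with weight exponent $\tfrac32$ (the paper likewise obtains this by quantifying the Fujita--Tomita argument rather than reproving it), same applications of Theorems \ref{thm:lim} and \ref{thm:lim-Tb} with $\Phi_i(t)=t^{3/2}$ and $s_i=q_i=2m$, and the same Marcinkiewicz--Zygmund-plus-extrapolation strategy for the vector-valued bounds when $r\neq2$.

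The one genuine flaw is your treatment of the case $r=2$. The conclusion \eqref{eq:lim-3} of Theorem \ref{thm:lim} with $r_i=2m$ produces a single-index inequality of the form
\begin{equation*}
\Big\| \Big(\sum_k |T_\sigma(\vec f^{\,k})|^2\Big)^{1/2}\Big\|_{L^p(w^p)}
\lesssim \prod_{i=1}^m \cdots \Big\| \Big(\sum_k |f_i^k|^{2m}\Big)^{1/2m}\Big\|_{L^{p_i}(w_i^{p_i})},
\end{equation*}
i.e.\ $\ell^2$ on the left against $\ell^{2m}$ on the right with $\tfrac1r=\sum_i\tfrac1{r_i}$, whereas the theorem asserts the Marcinkiewicz--Zygmund-type inequality over the multi-index $(k_1,\dots,k_m)$ with the \emph{same} exponent $r=2$ on both sides. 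For $m\ge2$ these are different statements, so \eqref{eq:lim-3} does not yield the claim. The fix is exactly the tool you already use for $r\neq2$: Lemma \ref{lem:MZ}(i) holds for $r=2$ with arbitrary exponents, so apply it to \eqref{eq:FT-1} at intermediate exponents $q_i=2m$ (where $\gamma_i(2m,2m)=1$) and then extrapolate via Theorem \ref{thm:lim} with $\Phi_i(t)=t^{3/2}$, which gives $\beta_i(2)=\tfrac32\gamma_i(p_i,2m)$ as stated; this is what the paper does. With that substitution your argument is complete and coincides with the paper's.
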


\begin{proof}
We borrow some ideas from \cite{FT}, but now we can give a proof without using the weighted Hardy space argument. Let $\p_i^- := n/s_i$ and $\p_i^+ := \infty$ for each $i=1, \ldots, m$. Let $q=2$ and $q_i=2m$ for $1\le i \le m$. Then, $q_i \in (\p_i^-, \p_i^+)$. Checking the proof of \cite[Theorem 6.2]{FT}, we can obtain that for any weight $v_i^{q_i} \in A_{q_is_i/n}=A_{q_i/\p_i^-} \cap RH_{(\p_i^+/q_i)'}$, $i=1, \ldots, m$, 
\begin{equation}\label{FT-Tm}
\|T_{\sigma}(\vec f)\|_{L^q(v^q)} 
\lesssim \prod_{i=1}^m [v_i^{q_i}]_{A_{q_is_i/n}}^{\frac32} \|f_i\|_{L^{q_i}(v_i^{q_i})}.
\end{equation}
Thus, \eqref{eq:FT-1} follows from \eqref{FT-Tm} and Theorem \ref{thm:lim} applied to $\Phi_i(t) = t^{3/2}$. 

Note that in the current scenario, $\gamma_i(q_i, q_i)=1$, $\tau_{q_i}=2ms_i/n$, and hence, 
\[
\widetilde{\Phi}_i(t) 
:= t^{\alpha_i \max\{1, \frac{1}{\tau_{q_i}-1}\}} \Phi_i(C_i \, t^{\gamma_i(q_i, q_i)})
= C_i^{\frac32}  \, t^{\frac32 + \alpha_i \max\{1, \frac{1}{2Ns_i/n-1}\}} 
= C_i^{\frac32}  \, t^{\frac32 + \alpha_i}. 
\]
Then in view of \eqref{FT-Tm}, Theorem \ref{thm:lim-Tb} applied to $s_i=q_i=2m$ implies \eqref{eq:FT-2}.

On the other hand, Lemma \ref{lem:MZ} and \eqref{eq:FT-1} give that for every $q_i>n/s_i$, for every $w_i^{q_i} \in A_{q_i s_i/n}$, $i=1, \ldots, m$,
\begin{align*}
\bigg\| \bigg(\sum_{k_1, \ldots, k_m} |T_{\sigma}(f^1_{k_1}, \ldots, f^m_{k_m})|^r \bigg)^{\frac1r}\bigg\|_{L^q(v^q)}
\lesssim \prod_{i=1}^m [v_i^{q_i}]_{A_{q_i s_i/n}}^{\frac32 \gamma_i(q_i, 2m)} 
\bigg\| \bigg(\sum_{k_i} |f^i_{k_i}|^r \bigg)^{\frac1r}\bigg\|_{L^{q_i}(v_i^{q_i})}, 
\end{align*}
provided $r=2$ or $r \in (n/s_i, 2)$ and $q_i \in (n/s_i, r)$, where $\frac1q=\sum_{i=1}^m \frac{1}{q_i}$ and $v=\prod_{i=1}^m v_i$. Therefore, the vector-valued inequalities above follow from Theorem \ref{thm:lim} applied to $\Phi_i(t) = t^{\frac32 \gamma_i(q_i, 2m)}$. 
\end{proof}

\begin{theorem}\label{thm:Ta} 
Let $1 \leq r \leq 2$ and $s_1, s_2>1/r$. Let $\sigma$ be a bounded function on $\R^2$ satisfying 
\begin{equation*}
\sup _{j \in \Z} \big\|(I-\Delta_{\xi_1})^{\frac{s_1}{2}} 
(I-\Delta_{\xi_2})^{\frac{s_2}{2}} \sigma_j\big\|_{L^r(\R^2)} < \infty, 
\end{equation*}
where $\sigma_j$ is given in \eqref{def:mj} with $n=1$. Assume that $1 \leq \p_1^{-}, \p_2^{-}<\infty$ and 
$\max\limits _{1 \leq i \leq 2}\frac{1}{s_i} < \min\limits _{1 \leq i \leq 2} \p_i^{-}$.  Then for all exponents $p_i \in (\p_i^{-}, \infty)$ and all weights $w_i^{p_i} \in A_{p_i/\p_i^-}$, $i=1, 2$, 
\begin{equation*}
\|T_{\sigma}(\vec{f})\|_{L^p(w^p)} 
\lesssim \prod_{i=1}^2 [w_i^{p_i}]_{A_{p_i/\p_i^-}}^{\frac{15}{p_i} + 2 \max\{\frac12, \frac{1}{p_i-\p_i^-}\}} 
\|f_i\|_{L^{p_i}(w_i^{p_i})}, 
\end{equation*}
where $\frac{1}{p} = \frac{1}{p_1} + \frac{1}{p_2} \ge 1$ and $w=w_1 w_2$.   
\end{theorem}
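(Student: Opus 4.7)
The natural route is to apply the limited range extrapolation Theorem~\ref{thm:lim} with $\p_1^+ = \p_2^+ = \infty$. Under this choice, $(\p_i^+/p_i)' = 1$, so $RH_{(\p_i^+/p_i)'} = A_\infty$ and the weight hypothesis $w_i^{p_i} \in A_{p_i/\p_i^-} \cap RH_{(\p_i^+/p_i)'}$ collapses to the assumption in the statement, namely $w_i^{p_i} \in A_{p_i/\p_i^-}$; moreover $\tau_{p_i} = p_i/\p_i^-$. It therefore suffices to establish a single base estimate
\begin{equation*}
\|T_\sigma(f_1, f_2)\|_{L^q(v^q)} \lesssim \prod_{i=1}^2 [v_i^{q_i}]_{A_{q_i/\p_i^-}}^{\beta_i}\, \|f_i\|_{L^{q_i}(v_i^{q_i})}
\end{equation*}
for some pair of exponents $(q_1, q_2)$ with $q_i \in (\p_i^-, \infty)$ and $1/q = 1/q_1 + 1/q_2$, and then invoke Theorem~\ref{thm:lim} to transport the bound to every $p_i \in (\p_i^-, \infty)$, at the cost of replacing $\beta_i$ by $\beta_i\, \gamma_i(p_i, q_i) = \beta_i \max\{1, (q_i - \p_i^-)/(p_i - \p_i^-)\}$.

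To produce the base estimate, I would decompose $\sigma$ into Littlewood-Paley pieces $\sigma_j$ as in \eqref{def:mj}, so that $T_\sigma = \sum_{j \in \Z} T_{\sigma_j}$ with the symbol of $T_{\sigma_j}$ supported in the annulus $\{2^{j-1} \le |\vec\xi| \le 2^{j+1}\}$. Hausdorff-Young together with H\"older in the Fourier variables, fueled by the product $L^r$-Sobolev hypothesis and the borderline regularity $s_i > 1/r$, should yield a pointwise bilinear control of the form
\begin{equation*}
|T_{\sigma_j}(g_1, g_2)(x)| \lesssim \Lambda_j \, M_{\rho_1} g_1(x)\, M_{\rho_2} g_2(x),
\end{equation*}
where $M_\rho f := M(|f|^\rho)^{1/\rho}$, $\rho_i$ is any exponent in $(1/s_i, \p_i^-)$ (such a $\rho_i$ exists precisely because the hypothesis $\max_i 1/s_i < \min_i \p_i^-$ opens the required window), and $\Lambda_j$ is bounded in $j$ by the product Sobolev norm of $\sigma$. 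Taking $g_i = \widetilde\Delta_j^{i} f_i$ to be a Littlewood-Paley projection in the $i$-th variable onto a slightly enlarged annulus and summing in $j$ by Cauchy-Schwarz reduces the base estimate to a product of two weighted vector-valued maximal/square function bounds evaluated on $f_i$.

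At this stage the sharp weighted maximal inequality (Lemma~\ref{lem:sharp}), the sharp vector-valued Fefferman-Stein bound (Lemma~\ref{lem:M-vector}), and the sharp square function estimates of Lemma~\ref{lem:phif}, especially \eqref{eq:phik-33} with its $(7/2)\max\{1, 1/(p-1)\}$-power, deliver the base estimate with $\beta_i$ assembled from these universal sharp constants. Feeding this into Theorem~\ref{thm:lim} and choosing $q_i$ appropriately (for instance letting $q_i$ be large and tracking $\gamma_i(p_i, q_i)$), the final exponent $\frac{15}{p_i} + 2\max\{\tfrac12, \tfrac{1}{p_i - \p_i^-}\}$ emerges from combining Buckley's $1/(p-1)$ factor, the $7/2$ factor in \eqref{eq:phik-33}, and the extrapolation multiplier $\gamma_i(p_i, q_i)$.

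The principal obstacle will be the pointwise bilinear kernel bound. The regularity threshold $s_i > 1/r$ sits right on the Hausdorff-Young boundary, so the control of $T_{\sigma_j}$ by products of $M_{\rho_i}$ with $\rho_i$ strictly less than $\p_i^-$ is delicate and uses the strict separation $\max_i 1/s_i < \min_i \p_i^-$ in an essential way. A subsidiary but non-trivial difficulty is the quantitative book-keeping through the vector-valued Littlewood-Paley reassembly: the somewhat unusual shape $\frac{15}{p_i} + 2\max\{\tfrac12, \tfrac{1}{p_i - \p_i^-}\}$ must be reverse-engineered by pairing the $7/2$-exponent of \eqref{eq:phik-33} and the $1/(p-1)$-exponent of \eqref{eq:sharp} with the extrapolation factor $\gamma_i(p_i, q_i)$, which requires careful tuning of the base exponents $q_i$ before invoking Theorem~\ref{thm:lim}.
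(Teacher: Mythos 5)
Your plan has two genuine gaps. First, the pointwise bound you posit, $|T_{\sigma_j}(g_1,g_2)|\lesssim \Lambda_j M_{\rho_1}g_1\,M_{\rho_2}g_2$ with $g_i=\widetilde\Delta_j^i f_i$ Littlewood--Paley projections in \emph{both} variables, is not available: the rescaled symbol $\sigma_j$ is supported where $\max(|\xi_1|,|\xi_2|)\sim 1$, so one of the two input frequencies may be arbitrarily small and cannot be localized. This is exactly why the argument (following \cite{GHNY}) must split $\sigma$ into two structurally different pieces: a piece $\sigma_1$ where only one input and the \emph{output} are frequency-localized (cf.\ \eqref{eq:Ta-1}), forcing a reverse square-function estimate $\|F\|_{L^p(w^p)}\lesssim\|(\sum_j|\Delta_j F|^2)^{1/2}\|_{L^p(w^p)}$ on the output, and a diagonal piece $\sigma_2$ where both inputs carry projections at the same scale and one sums by Cauchy--Schwarz (cf.\ \eqref{eq:Ta-2}--\eqref{eq:Ta-2-j}). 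Since the theorem's range is $\frac1p=\frac1{p_1}+\frac1{p_2}\ge 1$, the output square-function reassembly for $\sigma_1$ takes place entirely in the quasi-Banach range $p\le 1$, and this is the technical heart of the paper's proof: it is handled by the weighted Hardy-space lemma (Lemma \ref{lem:LPAinfty}, built on Lemmas \ref{lem:NNN} and \ref{lem:LPA}), which supplies the factor $[w^p]_{A_2}^{15/p}$ that is then factored into $\prod_i[w_i^{p_i}]_{A_{p_i/\p_i^-}}^{15/p_i}$ via Lemma \ref{lem:ApRH-m}. Your sketch does not engage with this step at all.

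Second, the extrapolation route cannot deliver the stated quantitative exponent. Extrapolating a base bound $[v_i^{q_i}]^{\beta_i}$ via Theorem \ref{thm:lim} (with $\p_i^+=\infty$) produces the exponent $\beta_i\,\gamma_i(p_i,q_i)=\beta_i\max\{1,\frac{q_i-\p_i^-}{p_i-\p_i^-}\}$, which equals the constant $\beta_i$ for all $p_i\ge q_i$; but the target exponent $\frac{15}{p_i}+2\max\{\frac12,\frac{1}{p_i-\p_i^-}\}$ tends to $1$ as $p_i\to\infty$, and any base estimate assembled from the forward square-function bound \eqref{eq:phik-1} together with the maximal and reverse square-function contributions necessarily has $\beta_i>1$. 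So the extrapolated exponent strictly exceeds the claimed one for large $p_i$, and the decaying term $\frac{15}{p_i}$ cannot be ``reverse-engineered'' from a single-base-point extrapolation. The paper avoids this by proving the estimate directly at the target exponents rather than extrapolating, which is why Theorem \ref{thm:lim} does not appear in its proof of this statement.
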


\begin{proof}
We will use the same notation as \cite{GHNY}. By the same argument as \cite[p. 970]{GHNY}, we are deduced to showing the boundedness of $T_{\sigma_1}$ and $T_{\sigma_2}$, which satisfy 
\begin{align}
\label{eq:Ta-1}
|\Delta_j^{\theta}(T_{\sigma_1}(f_1, f_2))| 
&\lesssim M(|f_1|^{\rho})^{\frac{1}{\rho}}
M\big(|\Delta_j^{\eta} f_2|^{\rho}\big)^{\frac{1}{\rho}}, 
\\
\label{eq:Ta-2} T_{\sigma_2}(f_1, f_2)
&=\sum_{j \in \Z} T_{\sigma_2}(f_1, \Delta_j^{\theta} f_2), 
\\ 
\label{eq:Ta-2-j} T_{\sigma_2}(f_1, \Delta_j^{\theta} f_2)
&\lesssim M\big(|\Delta_j^{\zeta} f_1|^{\rho}\big)^{\frac{1}{\rho}} 
M\big(|\Delta_j^{\theta} f_2|^{\rho}\big)^{\frac{1}{\rho}}. 
\end{align}
Here, $\rho \in (1, 2)$ satisfies $\max\limits _{i=1, 2}\frac{1}{s_i} < \rho < \min\{\p_1^{-}, \p_2^{-}, r\}$ if $r>1$, and $\rho=1$ if $r=1$. The multiplier $\Delta_j^{\theta}$ is defined by $\widehat{\Delta_j^{\theta} f}=\widehat{\theta}(2^{-j}\cdot) \widehat{f}$, for each $j \in \Z$, where $\theta \in \mathcal{S}(\R)$ satisfies $\supp(\widehat{\theta}) \subset \{ \xi \in \R: 1/c_0 \leq |\xi| \leq c_0 \}$, for some $c_0>1$, and $\sum_{j \in \Z} \widehat{\theta}(2^{-j}\xi)=C_{\theta}$ for all $\xi \in \R \setminus\{0\}$. Considering the same property of $\Delta_j^{\theta}$ and $\Delta_j^{\zeta}$, we will suppress $\theta$ and $\zeta$ in this sequel.

Let $w_i^{p_i} \in A_{p_i/\p_i^-}$, $i=1, 2$. By the choice of $\rho$, we have 
\begin{align}\label{eq:wp-Apr}
w_i^{p_i} \in A_{p_i/\rho} \subset A_{p_i} \quad\text{ with }\quad 
[w_i^{p_i}]_{A_{p_i}} \le [w_i^{p_i}]_{A_{p_i/\rho}} 
\le [w_i^{p_i}]_{A_{p_i/\p_i^-}}, \quad i=1, 2. 
\end{align} 
Let us control $T_{\sigma_1}$ and $T_{\sigma_2}$. Invoking \eqref{eq:Ta-2}--\eqref{eq:wp-Apr}, \eqref{eq:sharp}, and Lemma \ref{lem:M-vector}, we use  H\"{o}lder's inequality to conclude that 
\begin{align*}
\|T_{\sigma_2} (f_1, f_2)\|_{L^p(w^p)} 
&\lesssim \bigg\| \sum_{j \in \Z} M\big(|\Delta_j f_1|^{\rho}\big)^{\frac{1}{\rho}} 
M\big(|\Delta_j f_2|^{\rho}\big)^{\frac{1}{\rho}} \bigg\|_{L^p(w^p)}
\\
&\leq \prod_{i=1}^2 \bigg\|\Big(\sum_{j \in \Z} M\big(|\Delta_j f_i|^{\rho}\big)^{\frac{2}{\rho}} \Big)^{\frac12}
\bigg\|_{L^{p_i}(w_i^{p_i})} 
\\
&\lesssim \prod_{i=1}^2 [w_i^{p_i}]_{A_{p_i/\rho}}^{\max\{\frac12, \frac{1}{p_i-\rho}\}}
\bigg\|\Big(\sum_{j \in \Z} |\Delta_j f_i|^2 \Big)^{\frac12} \bigg\|_{L^{p_i}(w_i^{p_i})}  
\\
&\lesssim \prod_{i=1}^2 [w_i^{p_i}]_{A_{p_i/\rho}}^{\max\{\frac12, \frac{1}{p_i-\rho}\}}
[w_i^{p_i}]_{A_{p_i}}^{\max\{\frac12, \frac{1}{p_i-1}\}} \|f_i\|_{L^{p_i}(w_i^{p_i})}
\\
&\lesssim \prod_{i=1}^2 [w_i^{p_i}]_{A_{p_i/\p_i^-}}^{2 \max\{\frac12, \frac{1}{p_i-\p_i^-}\}}
\|f_i\|_{L^{p_i}(w_i^{p_i})}, 
\end{align*}
where the inequality \eqref{eq:phik-1} was used in the second-to-last step. To estimate $T_{\sigma_1}$, we note that by Lemma \ref{lem:ApRH-m}, 
\begin{align}\label{eq:Tswp}
[w^p]_{A_2} 
\le [w^p]_{A_{p/\p_-}}
\le \prod_{i=1}^2 [w_i^{p_i}]_{A_{p_i/\p_i^-}}^{\frac{p}{p_i}}, 
\end{align}
since $\frac{1}{\p_-} := \frac{1}{\p_1^-} + \frac{1}{\p_2^-} \le 2$ and $p \le 1 \le 2\p_-$. Therefore, in view of Lemma \ref{lem:LPAinfty} applied to $r=2$ and $v=w^p$, \eqref{eq:Tswp}, and \eqref{eq:Ta-1}, we proceed as above to obtain 
\begin{align*}
& \|T_{\sigma_1}(f_1, f_2)\|_{L^p(w^p)} 
\lesssim [w^p]_{A_2}^{\frac{15}{p}} \bigg\|\bigg(\sum_{j \in \Z}|\Delta_j 
(T_{\sigma_1}(f_1, f_2))|^2 \bigg)^{\frac12}\bigg\|_{L^p(w^p)} 
\\
&\quad \lesssim \prod_{i=1}^2 [w_i^{p_i}]_{A_{p_i/\p_i^-}}^{\frac{15}{p_i}}
\bigg\|M(|f_1|^{\rho})^{\frac{1}{\rho}} 
\bigg(\sum_{j \in \Z}M\big(|\Delta_j f_2|^{\rho} \big)^{\frac{2}{\rho}}\bigg)^{\frac12} \bigg\|_{L^p(w^p)}
\\ 
&\quad \lesssim \prod_{i=1}^2 [w_i^{p_i}]_{A_{p_i/\p_i^-}}^{\frac{15}{p_i}}
\big\|M(|f_1|^{\rho})^{\frac{1}{\rho}} \big\|_{L^{p_1}(w_1^{p_1})}
\bigg\|\bigg(\sum_{j \in \Z}M\big(|\Delta_j f_2|^{\rho} 
\big)^{\frac{2}{\rho}}\bigg)^{\frac12} \bigg\|_{L^{p_2}(w_2^{p_2})} 
\\
&\quad \lesssim [w_1^{p_1}]_{A_{p_1/\rho}}^{\frac{15}{p_1}+\frac{1}{p_1-\rho}} 
[w_2^{p_2}]_{A_{p_2/\rho}}^{\frac{15}{p_2}+\max\{\frac12, \frac{1}{p_2-\rho}\}} 
\|f_1\|_{L^{p_1}(w_1^{p_1})} 
\bigg\|\bigg(\sum_{j \in \Z} |\Delta_j f_2|^2 \bigg)^{\frac12} \bigg\|_{L^{p_2}(w_2^{p_2})} 
\nonumber \\ 
&\quad \lesssim \prod_{i=1}^{m} [w_i^{p_i}]_{A_{p_i/\p_i^-}}^{\frac{15}{p_i} + 2 \max\{\frac12, \frac{1}{p_i-\p_i^-}\}}
\|f_i\|_{L^{p_i}(w_i^{p_i})}. 
\end{align*}
This completes the proof. 
\end{proof}

In this subsection, we always choose $\phi \in \S(\Rn)$ with $\int_{\Rn} \phi \, dx=1$, and set $\phi_t(x) := t^{-n} \phi(x/t)$ for any $x \in \Rn$ and $t>0$. And let $\psi,\,\Phi\in \S(\Rn)$ satisfy $0 \le \widehat{\psi}(\xi)\le \mathbf{1}_{\{1/2 \le |\xi| \le 2\}}$, $\widehat{\psi}(\xi) \ge 0$ for $1/2 \le |\xi| \le 2$, $\sum_{j \in \Z} \widehat{\psi}(2^j\xi)=1$ for $|\xi|\ne0$, and $\mathbf{1}_{\{1/2 \le |\xi| \le 2\}} \le \widehat{\Phi}(\xi) \le \mathbf{1}_{\{1/3 \le |\xi| \le 3\}}$. Denote $\psi_j(x)=2^{-jn}\psi(x/2^j)$ and $\Phi_j(x)=2^{-jn}\Phi(x/2^j)$ for each $j \in \Z$.

\begin{lemma}\label{lem:LPAinfty}
For all $0<p \le 1 \le r \le 2$ and for all $v \in A_r$, 
\begin{align*}
\|f\|_{L^p(v)} 
\le \bigg\|\sup_{t>0} |\phi_t*f| \bigg\|_{L^p(v)} 
\lesssim [v]_{A_2}^{\frac{11+2r'}{p}}
\bigg\|\bigg(\sum_{j \in \Z} |\Delta_j f|^2 \bigg)^{\frac12} \bigg\|_{L^p(v)}. 
\end{align*}
\end{lemma}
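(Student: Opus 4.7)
The first inequality is essentially immediate: since $\phi\in\S(\Rn)$ with $\int\phi\,dx=1$, Lebesgue differentiation yields $\phi_t*f(x)\to f(x)$ as $t\to 0^+$ for almost every $x$, so $|f(x)|\le\sup_{t>0}|\phi_t*f(x)|$ almost everywhere, and integrating against $v$ gives $\|f\|_{L^p(v)}\le\|\sup_{t>0}|\phi_t*f|\|_{L^p(v)}$.

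For the main inequality I would follow a Peetre--Plancherel--Polya maximal function argument, chaining together three ingredients. First, a pointwise, weight-free control of the vertical maximal function by an $\ell^2$-sum of Peetre-type maximal functions:
\[
\sup_{t>0}|\phi_t*f(x)|\lesssim\Big(\sum_{j\in\Z}(\psi_j^{*,N}f(x))^2\Big)^{1/2},\qquad \psi_j^{*,N}f(x):=\sup_{y\in\Rn}\frac{|\Delta_jf(x-y)|}{(1+2^{-j}|y|)^N};
\]
this is obtained by writing $\phi_t*f=\sum_j\phi_t*\Phi_j*\Delta_jf$ (using $\widehat{\Phi_j}\widehat{\psi_j}=\widehat{\psi_j}$), exploiting the Schwartz decay of $\widehat\phi$ against the annular support of $\widehat{\Phi_j}$ to gain a factor $2^{-|j-k|\varepsilon}$ when $t\sim 2^k$, and applying Cauchy--Schwarz in $j$. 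Second, the classical Plancherel--Polya inequality (valid since $\widehat{\Delta_j f}$ is compactly supported):
\[
\psi_j^{*,N}f(x)\lesssim M_s(\Delta_jf)(x):=M(|\Delta_jf|^s)(x)^{1/s},\qquad N>n/s.
\]
Third, the weighted vector-valued Fefferman--Stein inequality (Lemma \ref{lem:M-vector}) applied at level $L^{p/s}(v)$ with sequence space $\ell^{2/s}$ (which requires $p/s>1$, $2/s>1$ and $v\in A_{p/s}$), and then raised to the $1/s$-power, yielding
\[
\Big\|\Big(\sum_j M_s(\Delta_jf)^2\Big)^{1/2}\Big\|_{L^p(v)}\lesssim[v]_{A_{p/s}}^{\max\{1/2,\,1/(p-s)\}}\|S(f)\|_{L^p(v)}.
\]
To obtain the dependence on $[v]_{A_2}$ with the exponent $(11+2r')/p$, I would choose $s$ slightly below $p/r$ so that $v\in A_r\subset A_{p/s}$, then open up the class via Lemma \ref{lem:open} applied with $q=r$ (absorbing a factor $(1+\gamma)'\simeq [v]_{A_r}^{\max\{1,1/(r-1)\}}=[v]_{A_r}^{r'-1}$), and finally convert $[v]_{A_r}$ to a power of $[v]_{A_2}$ using the equivalence $v\in A_r\Leftrightarrow v^{1-r'}\in A_{r'}$ together with the sharp reverse H\"older bound of Lemma \ref{lem:RH}.

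The main obstacle will be the precise bookkeeping of the exponent $(11+2r')/p$: the factor $2r'$ comes cleanly from opening up $A_r$ as above, but the numerical constant $11$ is less transparent and should combine the $\max\{1/2,\,1/(p-s)\}$ loss from the vector-valued Fefferman--Stein step with the $\tfrac{7}{2}$-type losses already present in the weighted Littlewood--Paley and vector-valued singular-integral bounds proved in Lemma \ref{lem:phif} and Lemma \ref{lem:CZO}, which are most likely invoked in rigorously justifying ingredient (i). Balancing the Peetre exponent $N>n/s$ against the choice of $s$, and keeping the constants in Steps (i)--(iii) linear in $[v]_{A_2}^{?/p}$, is where the delicate numerology lies.
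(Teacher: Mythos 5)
The decisive first step of your chain is false. You claim the pointwise, weight-free bound
\begin{equation*}
\sup_{t>0}|\phi_t*f(x)|\lesssim\Big(\sum_{j\in\Z}\big(\psi_j^{*,N}f(x)\big)^2\Big)^{1/2},
\end{equation*}
to be proved by gaining a factor $2^{-|j-k|\varepsilon}$ from the decay of $\widehat{\phi}$ when $t\sim 2^k$. But $\int\phi\,dx=1$ means $\widehat{\phi}(0)=1$, so $\widehat{\phi}(t\xi)$ decays only when $t|\xi|$ is \emph{large}; on the blocks whose frequency support lies well below $1/t$ you get no gain at all, and Cauchy--Schwarz over the unboundedly many such $j$ cannot be closed. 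Indeed the inequality fails even qualitatively: letting $t\to0^+$ gives $\sup_{t>0}|\phi_t*f(x)|\ge|f(x)|=|\sum_j\Delta_jf(x)|$, an $\ell^1$-type sum, and a lacunary example with $\Delta_jf(x_0)\approx 1/j$ for $1\le j\le J$ makes the left side grow like $\log J$ while the right side stays bounded. This is exactly why the embedding ``maximal function norm $\lesssim$ square function norm'' for $p\le1$ is a statement about $L^p(v)$ norms (the hard direction of $H^p(v)\simeq F^0_{p,2}(v)$), not a pointwise one.

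The paper bridges this gap with genuinely non-pointwise machinery: it writes $f=\sum_j\Phi_j*\psi_j*f$ and invokes Lemma \ref{lem:LPA}, whose proof runs a Whitney/Calder\'on--Zygmund decomposition of $\{N^*\mathbf{f}>\lambda\}$ together with the vector-valued singular integral bound of Lemma \ref{lem:CZO} and the marcinkiewicz-type function of Lemma \ref{lem:Mar}, plus Lemma \ref{lem:NNN} comparing the nontangential, tangential and grand maximal functions with quantitative $A_r$ constants. That is where the factor $[v]_{A_2}^{(9+2r')/p}$ originates. Your subsequent steps (the Peetre/Plancherel--Polya bound $\psi_j^{*,N}f\lesssim M(|\Delta_jf|^s)^{1/s}$ for $N>n/s$ and the weighted vector-valued Fefferman--Stein inequality at level $L^{p/s}(v)$, $\ell^{2/s}$) do match the tail of the paper's argument, but they only enter after the Hardy-space reduction; as written, your proposal has no valid substitute for that reduction.
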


\begin{proof}
It suffices to show the second inequality since $|f(x)| \leq \sup_{t>0} |\phi_t*f(x)|$ for all $x \in \Rn$. By Lemma \ref{lem:NNN}--\ref{lem:LPA} below and estimates in \cite[p. 588]{Bui}, we  have 
\begin{align*}
\|f\|_{H^p(v)} 
&= \Big\|\sum_j \Phi_j \ast \psi_j \ast f\Big\|_{H^p(v)} 
\lesssim [v]_{A_2}^{\frac{9}{p}+\frac{2r'}{p}} \Big\|
\sup_{t>0} \Big(\sum_j |\phi_t*\psi_j \ast f|^2\Big)^{\frac12} \Big\|_{L^p(v)} 
\\
&\le [v]_{A_2}^{\frac{9}{p}+\frac{2r'}{p}} \Big\| 
\Big(\sum_j \sup_{t>0} |\phi_t*\psi_j \ast f|^2\Big)^{\frac12} \Big\|_{L^p(v)}
\lesssim [v]_{A_2}^{\frac{9}{p}+\frac{2r'}{p}} 
\Big\| \Big(\sum_j |\psi_{j \lambda}^{*}f|^2 \Big)^{\frac12}\Big\|_{L^{p}(v)}
\\
&\lesssim [v]_{A_2}^{\frac{9}{p}+\frac{2r'}{p}}
\bigg\|\Big[\sum_j M(|\psi_j \ast f|^s)(x)^{\frac{2}{s}}\Big]^{\frac{s}{2}} 
\bigg\|_{L^{p/s}(v)}^{\frac1s}
\\
&\lesssim [v]_{A_2}^{\frac{9}{p}+\frac{2r'}{p}}  
[v]_{A_{p/s}}^{\max\{\frac{1}{2}, \frac{1}{p-s}\}} 
\bigg\|\Big[\sum_j |\psi_j \ast f(x)|^2 \Big]^{\frac{1}{2}} \bigg\|_{L^p(v)}  
\end{align*}
where we used Lemma \ref{lem:M-vector} and that 
$\lambda>\max\{\frac{nr}{p}, \frac{n}{2}\} = \frac{nr}{p}$, so 
$s := \frac{n}{\lambda}<\frac{p}{r}$ and $[v]_{A_{p/s}} \leq [w]_r$. 
If we take $\frac{n}{\lambda}=\frac{p(1-\varepsilon)}{r}$ for some 
$\varepsilon \in (0, 1)$, then $p-s=p-\frac{n}{\lambda} 
= p(1-\frac{1-\varepsilon}{r}) \ge p(1-(1 - \varepsilon) ) = p \varepsilon$. 
This means $\max\{\frac{1}{2}, \frac{1}{p-s}\} < \frac{1}{p\varepsilon}$. 
Consequently, taking $\varepsilon=1/2$, we get the desired estimate.  
\end{proof}

We use the maximal operators $N, N^+, N^*$ defined in \cite{Bui}. Moreover, 
given a sequence $\mathbf{f}=\{f_j\}$, a function $u$ on $\R^{n+1}_+$, and 
$\alpha, \kappa>0$, we define
\begin{align*}
N_{\kappa}^{**} \mathbf{f}(x) 
& :=  \sup_{y \in \Rn, t>0} \bigg(\sum_j |\phi_t*f_j(x)|^q \bigg)^{\frac1q} 
\bigg(\frac{t}{t+|x-y|}\bigg)^{\kappa}, 
\\
\widetilde{N}_{\alpha} u(x) 
&:=  \sup_{|x-y|<\alpha t} |u(y, t)|, \quad 
\widetilde{N}_{\kappa}^{**} u(x) :=  
\sup_{y \in \Rn, t>0} |u(y, t)| \bigg(\frac{t}{t+|x-y|}\bigg)^{\kappa}. 
\end{align*}

\begin{lemma}\label{lem:NNN}
For any $p \in (0, \infty)$, $r \in (1, \infty)$, and $w \in A_r$, 
\begin{align}
\label{NN-1} \|\widetilde{N}^{**}_{\kappa} u\|_{L^p(w)} 
& \lesssim [w]_{A_r}^{\frac{r'}{p}} \|\widetilde{N}_1 u\|_{L^p(w)}, 
\\ 
\label{NN-2} \|N^* \mathbf{f}\|_{L^p(w)} 
& \lesssim [w]_{A_r}^{\frac{r'}{p}} \|N \mathbf{f}\|_{L^p(w)}, 
\\ 
\label{NN-3} \|N \mathbf{f}\|_{L^p(w)} 
& \lesssim [w]_{A_r}^{\frac{r'}{p}} \|N^+ \mathbf{f}\|_{L^p(w)}. 
\end{align}
\end{lemma}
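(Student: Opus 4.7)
All three inequalities will be handled by a common two-step mechanism: first a pointwise Fefferman--Stein-type majorization that trades the ``bad'' maximal operator for a power of the Hardy--Littlewood maximal operator applied to a power of the ``good'' one, and then Buckley's sharp $A_p$ estimate \eqref{eq:sharp} to produce the correct power of $[w]_{A_r}$. In each case the key pointwise bound takes the form
\begin{equation}\label{eq:NNN-key}
L(x) \le C_{n,\kappa,s}\,[M(R^s)(x)]^{1/s},
\end{equation}
where $L$ is the left-hand maximal operator, $R$ is the right-hand one, and $s$ is a parameter in a suitable range.

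For \eqref{NN-1} I would invoke the classical Fefferman--Stein tangential majorization: for every $s>0$ with $s\kappa>n$,
\[
\widetilde{N}_{\kappa}^{**}u(x)\lesssim \bigl[M\bigl((\widetilde{N}_{1}u)^{s}\bigr)(x)\bigr]^{1/s},
\]
obtained by comparing supremum over $(y,t)$ with $|x-y|<t$ against the same supremum at tangential aperture $1$ and absorbing the aperture change through the tail factor $(t/(t+|x-y|))^{\kappa}$. Given $w\in A_{r}$, Lemma \ref{lem:open} lets me pick $s$ slightly below $p/r$ so that $w\in A_{p/s}$ with $[w]_{A_{p/s}}\le C\,[w]_{A_{r}}$; \eqref{eq:sharp} then yields
\[
\|M((\widetilde{N}_{1}u)^{s})\|_{L^{p/s}(w)}^{1/s}
\lesssim [w]_{A_{p/s}}^{1/(p-s)}\|\widetilde{N}_{1}u\|_{L^{p}(w)},
\]
and taking $s\uparrow p/r$ makes the exponent $1/(p-s)$ converge to $r'/p$. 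For \eqref{NN-2} the argument is identical, but applied vectorially: the pointwise inequality
\[
N^{*}\mathbf{f}(x)\lesssim [M((N\mathbf{f})^{s})(x)]^{1/s}
\]
is obtained by handling the tail factor and then using Minkowski in $\ell^{q}$ to interchange the maximal supremum with the $\ell^{q}$-norm of the sequence $\{\phi_{t}*f_{j}\}$, after which Buckley applies verbatim.

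The third inequality \eqref{NN-3} is the passage from the tangential (cone-aperture $1$) maximal $N\mathbf{f}$ to the vertical maximal $N^{+}\mathbf{f}$. The plan is to use the standard Fefferman--Stein reproducing trick: for any admissible approach point $(y,t)$ with $|x-y|<t$, we rewrite $\phi_{t}*f_{j}(y)=\int\phi_{t}*\widetilde\phi_{t}(y-z)f_{j}(z)\,dz$ for a suitable bump $\widetilde\phi$ and estimate the integrand by the vertical maximal value $N^{+}\mathbf{f}(z)$ times a decaying kernel, which after an $\ell^{q}$-Minkowski step produces \eqref{eq:NNN-key} with $L=N$, $R=N^{+}$, and any $s$ in the open window where the kernel is integrable. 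One more call to Lemma \ref{lem:open} and \eqref{eq:sharp} delivers the bound $[w]_{A_{r}}^{r'/p}$.

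The main obstacle is honestly tracking the weight constant through the openness step: we need $s$ strictly below $p/r$ (so that Buckley applies with $p/s>1$) while still having $[w]_{A_{p/s}}\le C\,[w]_{A_{r}}$ and the exponent $1/(p-s)$ arbitrarily close to $r'/p$. Lemma \ref{lem:open} provides exactly this trade-off with a multiplicative constant of the form $2^{p/r}$, so the limiting exponent $r'/p$ is achieved (modulo an implicit constant depending on $n$, $p$, $r$, $\kappa$), which matches the claimed form of the bound. The three Fefferman--Stein majorizations are themselves standard in the (unweighted) theory of \cite{Bui}; what is new here is merely inserting \eqref{eq:sharp} at the Hardy--Littlewood step instead of the qualitative $A_{p/s}$ boundedness used there.
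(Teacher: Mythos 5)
Your treatment of \eqref{NN-1} and \eqref{NN-2} is correct, and it is a genuinely different route from the paper's: you use the classical pointwise Fefferman--Stein majorization $\widetilde{N}_{\kappa}^{**}u\lesssim [M((\widetilde{N}_{1}u)^{s})]^{1/s}$ (valid for $s>n/\kappa$) followed by Buckley's bound \eqref{eq:sharp}, whereas the paper runs the aperture change at the level of distribution functions via the weighted inequality $w(\{\widetilde{N}_{\beta}u>\eta\})\lesssim(1+\beta/\alpha)^{nr}[w]_{A_r}^{r'}w(\{\widetilde{N}_{\alpha}u>\eta\})$ from Gundy--Wheeden and then sums a geometric series. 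Both give the exponent $r'/p$ under the same implicit restriction $\kappa>nr/p$. One remark: the limiting argument ``$s\uparrow p/r$'' and the appeal to Lemma \ref{lem:open} are unnecessary and slightly off --- for $s<p/r$ one has $p/s>r$, so $w\in A_{p/s}$ with $[w]_{A_{p/s}}\le[w]_{A_r}$ automatically (openness is only needed to go \emph{below} $r$), and since $[w]_{A_r}\ge1$ and $1/(p-s)\le r'/p$ the claimed bound already holds for each fixed admissible $s$; simplest is to take $s=p/r$ outright, which is what the paper does in its proof of \eqref{NN-3}.

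The proof of \eqref{NN-3}, however, has a genuine gap. The ``reproducing trick'' $\phi_t*f_j=(\phi_t*\widetilde{\phi}_t)*f_j$ requires $\widehat{\widetilde{\phi}}\equiv1$ on $\operatorname{supp}\widehat{\phi}$, hence compact Fourier support of $\phi$; but here $\phi$ is only a Schwartz approximate identity with $\int\phi\,dx=1$ (see the setup before Lemma \ref{lem:LPAinfty}), so no such identity is available. More fundamentally, the pointwise bound $N\mathbf{f}\lesssim[M((N^{+}\mathbf{f})^{s})]^{1/s}$ you are after does not hold for small $s$ in this generality: passing from the vertical to the nontangential maximal function requires controlling the oscillation of $z\mapsto\phi_t*f_j(z)$ at scale $t$, and this is precisely what a pointwise majorization cannot see (and even in the band-limited case, the $\ell^q$--Minkowski step you invoke goes the wrong way and would leave you with a vector-valued maximal function, not $M((N^{+}\mathbf{f})^{s})$). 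The standard and, as far as I know, unavoidable argument --- which the paper follows from Bui --- is the gradient-plus-absorption scheme: one proves $N\mathbf{f}(x)^{s}\le(1+1/\delta)^{n}M((N^{+}\mathbf{f})^{s})(x)+\delta^{s}\sum_{i}\widetilde{N}_{\mu}u_{i}(x)^{s}$, where $u_i$ is built from the derivative bumps $\partial_i\phi$; the error term is then bounded by $\|N\mathbf{f}\|_{L^p(w)}$ using \eqref{NN-1}--\eqref{NN-2} (this is where the extra factor $[w]_{A_r}^{r'/p}$ must be tracked) and absorbed into the left-hand side after choosing $\delta$ small, together with an a priori finiteness argument. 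Your proposal omits both the gradient term and the absorption, so \eqref{NN-3} is not proved.
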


\begin{proof}
The inequality \eqref{NN-1} follows from the following
\begin{align*}
\widetilde{N}^{**}_{\kappa} u \lesssim \sup_{m \in \N} 2^{-mn} \widetilde{N}_{2^m} u 
\quad\text{ and }\quad 
w(\{\widetilde{N}_{\beta}u>\eta\}) 
\lesssim (1+\beta/\alpha)^{nr} [w]_{A_r}^{r'} w(\{\widetilde{N}_{\alpha}u>\eta\}), 
\end{align*}
for all $\eta>0$, where the first estimate is trivial and the second one is contained in \cite{GW}.  The inequality \eqref{NN-2} is a consequence of  \eqref{NN-1} and the pointwise estimate $N^*\mathbf{f} \lesssim \widetilde{N}^{**}_{\kappa} \mathbf{f}$. 

To show \eqref{NN-3}, we trace the proof of $\|N \mathbf{f}\|_{L^p(w)} \lesssim \|N^+ \mathbf{f}\|_{L^p(w)}$ in \cite{Bui}. 
Firstly, by \eqref{NN-1} we have 
\begin{equation*}
\|N^{**}_\lambda f\|_{L^p(w)}
\lesssim [w]_{A_r}^{\frac{r'}{p}} \|N \mathbf{f}\|_{L^p(w)}.
\end{equation*}
Setting $\widetilde{N}_\mu u_i(x) := \sup_{t>0, |x-y|<\mu t} \big(\sum_{j\in\Z}|\phi_t^{(i)}*f_j(y)|^q \big)^{\frac1q}$, where $\phi^{(i)}=\frac{\partial \phi}{\partial x_i}$ and $\mu>1$, we use \eqref{NN-2} to get 
\begin{equation}\label{eq:Nnn-2}
\|\widetilde{N}_\mu u_i\|_{L^p(w)}
\lesssim [w]_{A_r}^{\frac{r'}{p}} \|N \mathbf{f}\|_{L^p(w)}.
\end{equation}
Since $r>1$ and $w\in A_r$, Lemma \ref{lem:open} gives that $r>\inf\{\rho>0: w\in A_\rho\}$. So, for $s\in(0,1]$ with $p/s = r > \inf\{\rho>0: w\in A_\rho\}$, and $\delta>0$ satisfying $\Gamma_\delta(y)\subset \Gamma_\mu(x)$ for all $(y,t) \in \Gamma_1(x)$, we get 
\begin{equation*}
N{\mathbf f}(x)^s\le (1+1/\delta)^n M((N^+ {\bold f})^s)(x) +
\delta^s\sum_{i=1}^{n} \widetilde{N}_\mu u_i(x)^s.
\end{equation*}
Hence, taking $L^{p/s}(w)$-norm of both sides of the above, and using
\eqref{eq:Nnn-2}, we see that 
\begin{equation*}
\|N \mathbf{f}\|^s_{L^p(w)} 
\le C_1 (1+1/\delta)^n[w]_{A_{p/s}}^{\frac{1}{p/s-1}} \|N^+ \mathbf{f}\|^s_{L^p(w)}
+ C_2 \delta^s\|N \mathbf{f}\|_{L^p(w)}.
\end{equation*}
Choosing $\delta$ so small that $C_2 \delta^s<1/2$, we obtain 
\begin{equation*}
\|N \mathbf{f}\|_{L^p(w)} 
\lesssim [w]_{A_{p/s}}^{\frac{1}{p-s}} \|N^+ \mathbf{f}\|_{L^p(w)}
=[w]_{A_{r}}^{\frac{r'}{p}} \|N^+ \mathbf{f}\|_{L^p(w)}.
\end{equation*}
This completes the proof of \eqref{NN-3}. 
\end{proof}

\begin{lemma}\label{lem:LPA}
Then for any $p \in (0, 1]$ and $w \in A_2$,  
\begin{align*}
\bigg\|\sup_{0<t<\infty} \Big|\phi_t \ast \Big(\sum_j \Phi_j \ast f_j \Big) \Big|\bigg\|_{L^p(w)}
\lesssim [w]_{A_2}^{\frac9p} \|N^*\mathbf{f}\|_{L^p(w)}. 
\end{align*}
\end{lemma}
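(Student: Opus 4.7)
The plan is a standard Peetre-type maximal function argument combined with the weighted maximal function reductions in Lemma \ref{lem:NNN}.

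\textbf{Step 1 (Pointwise domination).} First I would establish that, for a parameter $\kappa > n/p$ chosen large enough,
\[
\sup_{0<t<\infty}\Big|\phi_t \ast \Big(\sum_{j} \Phi_j \ast f_j\Big)(x)\Big|
\lesssim N_\kappa^{**}\mathbf{f}(x), \qquad x \in \Rn.
\]
For this, I would swap the supremum and the sum and analyze $(\phi_t\ast \Phi_j)\ast f_j$ according to the relative size of $t$ and $2^j$. Because $\widehat{\Phi}_j$ is supported in an annulus of scale $2^{-j}$ while $\widehat{\phi}_t$ is a Schwartz function at scale $1/t$, integration by parts in the Fourier variable and the Schwartz decay of $\phi$ and $\Phi$ yield the envelope estimate
\[
\bigl|(\phi_t \ast \Phi_j)\ast f_j(x)\bigr|
\lesssim \min\!\Big(\frac{t}{2^j},\,\frac{2^j}{t}\Big)^{\!N}
\sup_{y \in \Rn}\frac{|\phi_{\tau} \ast f_j(y)|}{(1+|x-y|/\tau)^{\kappa}},
\]
for any large $N$ and an appropriate reproducing scale $\tau=\tau(j,t)$. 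The supremum on the right is majorized by $N_\kappa^{**}\mathbf{f}(x)$ uniformly in $t$ and $j$, so summing the geometric factor in $j$ yields the claimed pointwise bound.

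\textbf{Step 2 (Weighted reduction).} Taking $L^p(w)$-norms and iterating the inequalities in Lemma \ref{lem:NNN} in their sequence-valued form (for the vector $\mathbf{f}$ and the associated $\R^{n+1}_+$-function $u(y,t)=(\sum_j|\phi_t\ast f_j(y)|^q)^{1/q}$), I would chain the comparisons $N^{**}_\kappa \mathbf{f} \leadsto N \mathbf{f} \leadsto N^+\mathbf{f} \leadsto N^* \mathbf{f}$ using \eqref{NN-1}--\eqref{NN-3}. Each application with $r=2$ contributes a factor of $[w]_{A_2}^{r'/p} = [w]_{A_2}^{2/p}$, and accounting for the interchange between the tangential, non-tangential, and radial versions of the maximal functions produces the claimed total exponent $9/p$.

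\textbf{Main obstacle.} The delicate ingredient is Step 1: achieving Schwartz-type decay in both regimes $t \ll 2^j$ and $t \gg 2^j$ simultaneously, with an envelope that cleanly embeds into $N^{**}_\kappa\mathbf{f}$ independently of $t$. Once this pointwise domination is in place, Step 2 is a routine bookkeeping of the constants provided by Lemma \ref{lem:NNN}.
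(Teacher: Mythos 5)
Your Step 1 contains a genuine error that the rest of the argument cannot survive. The envelope estimate
\[
\bigl|(\phi_t \ast \Phi_j)\ast f_j(x)\bigr|
\lesssim \min\!\Big(\tfrac{t}{2^j},\,\tfrac{2^j}{t}\Big)^{\!N}\,(\cdots)
\]
is false in the regime $t\ll 2^j$. The factor $(2^j/t)^N$ for $t\gg 2^j$ is fine, since $\widehat{\phi}(t\xi)$ decays rapidly when $t|\xi|\gg 1$ and $|\xi|\simeq 2^{-j}$ on $\supp\widehat{\Phi}_j$. But the factor $(t/2^j)^N$ would require $\widehat{\phi}$ to vanish to high order at the origin, whereas here $\int\phi\,dx=1$ (indeed $\widehat{\phi}\equiv 1$ on $|\xi|\le 2$), so for $t2^{-j}\ll 1$ one has $\widehat{\phi}(t\xi)\approx 1$ on the frequency support of $\Phi_j\ast f_j$ and there is no gain at all. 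Without decay in $j$ on the low-frequency side, the best one gets for each such term is a bound by a single Peetre maximal quantity of size $N^{**}_\kappa\mathbf{f}(x)$, and the sum over $\{j:2^j\ge t\}$ of these diverges. This is the classical obstruction: a radial maximal function built from a kernel with nonvanishing integral cannot be dominated pointwise by the Peetre maximal functions of the individual blocks; the passage from the pieces to the reconstructed sum is genuinely an $L^p$-norm statement for $p\le 1$, not a pointwise one. (The hand-waving in Step 2 is a symptom of the same problem: chaining \eqref{NN-1}--\eqref{NN-3} only ever produces powers that are multiples of $r'/p$, and there is no way to arrive at the exponent $9/p$ by that bookkeeping.)

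The paper's proof takes the Fefferman--Stein route instead: it fixes $\lambda>0$, sets $\Omega_\lambda=\{N^*\mathbf{f}>\lambda\}$, and proves the distributional inequality
\[
w\Big(\Big\{\sup_{t>0}\Big|\phi_t\ast\Big(\sum_j\Phi_j\ast f_j\Big)\Big|>\lambda\Big\}\Big)
\lesssim [w]_{A_2}^9\Big\{\lambda^{-2}\int_{\Rn\setminus\Omega_\lambda}\sum_j|f_j|^2\,w\,dx+w(\Omega_\lambda)\Big\},
\]
from which the $L^p(w)$ bound for $0<p\le 1$ follows by integration in $\lambda$. The inequality itself is obtained by a Whitney decomposition of $\Omega_\lambda$ and the associated good/bad splitting $f_j=\widetilde f_j+(f_j-\widetilde f_j)$: the bad part is handled by the Marcinkiewicz integral bound of Lemma \ref{lem:Mar} (giving $[w]_{A_2}^2\,w(\Omega_\lambda)$), and the good part by Chebyshev, the sharp bound \eqref{eq:sharp} for $M$ on $L^2(w)$, and the vector-valued Calder\'on--Zygmund estimate of Lemma \ref{lem:CZO} applied to $T(\mathbf{g})=\sum_j\Phi_j\ast g_j$ (giving the remaining $[w]_{A_2}^7$, hence $9$ in total). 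If you want to repair your argument, you should abandon the pointwise domination and run this level-set/Calder\'on--Zygmund decomposition argument, tracking the weight constants through Lemmas \ref{lem:Mar} and \ref{lem:CZO}.
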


\begin{proof}
Fix $w \in A_2$ and $\lambda>0$. It suffices to show 
\begin{align}\label{Jlam}
\mathcal{J}_{\lambda} 
&:= w \Big(\Big\{x \in \Rn: \sup\limits_{t>0} \Big|\phi_t * \Big(\sum_j \Phi_j \ast f_j \Big)(x) \Big| > \lambda \Big\} \Big)
\\ \nonumber
&\lesssim [w]_{A_2}^9 \bigg\{\lambda^{-2} \int _{\Rn \backslash \Omega_{\lambda}}\sum_j |f_{j}(x)|^2 w(x) \, dx 
+ w(\Omega_{\lambda}) \bigg\}, 
\end{align}
where the implicit constant is independent of $\lambda$, and 
$\Omega_{\lambda} := \{N^* \mathbf{f}>\lambda\}$ 
(cf. \cite[p. 190]{RRT}). 

It follows from Whitney decomposition that one can find a pairwise disjoint family of cubes $\{Q_j\}$ such that 
$\Omega_{\lambda} = \bigcup_k Q_k$ and $\dist(\Rn \backslash \Omega_{\lambda}, Q_k) \simeq \ell(Q_k)$. Then we choose a sequence of nonnegative functions $\{\varphi_k\}_k$ such that $\mathbf{1}_{\Omega_{\lambda}} = \sum_k \varphi_k$, with the following properties 
\[
\supp(\varphi_k) \subset \frac{6}{5} Q_k, \quad 
a_k := \int_{\Rn} \varphi_k\, dx \simeq |Q_k|, \quad
\|\partial^{\alpha}\varphi_k \|_{L^{\infty}(\Rn)} \lesssim \ell(Q_k)^{-|\alpha|}. 
\]  
Setting
\[
\widetilde{f}_j(x) := f_j(x) \mathbf{1}_{\Rn \backslash \Omega_{\lambda}} + \sum_k b_{k, j} \, \varphi_k 
\quad\text{ and }\quad 
b_{k, j} := \frac{1}{a_k} \int_{\Rn} f_j(x) \varphi_k(x) \, dx, 
\]
we see that for all $x \in \Rn$, 
\begin{align*}
\sum_j |\widetilde{f}_j(x)|^2 
\lesssim \sum_j |f_j(x)|^2 \mathbf{1}_{\Rn \setminus \Omega_{\lambda}} 
+ \sum_j |b_{k, j}(x)|^2 
\lesssim \lambda^2 + N^* \mathbf{f}(x_j) 
\lesssim \lambda^2,  
\end{align*}
where $x_j \in C_0 Q_j \cap (\Rn \setminus \Omega_{\lambda}) \neq \emptyset$ for all $j$ and for some $C_0>0$, which follows from the construction of Whitney decomposition of $\Omega$. 

Writing 
\begin{align*}
\mathcal{J}'_{\lambda} 
& := w \Big(\Big\{x \in \Rn \setminus \Omega_{\lambda}: \sup\limits_{t>0} \Big|\phi_t * 
\Big(\sum_j \Phi_j \ast (f_j - \widetilde{f}_j) \Big)(x) \Big| > \lambda \Big\} \Big), 
\end{align*} 
and observing that 
\[
\sup\limits_{t>0} \Big|\phi_t * \Big(\sum_j \Phi_j \ast (f_j - \widetilde{f}_j) \Big)(x) \Big|
\lesssim \lambda \, \mathfrak{M}_1(x), \quad x \in \Rn \backslash \Omega_{\lambda}, 
\] 
where $\mathfrak{M}_1(x)$ is defined in \eqref{Me}, we invoke Lemma \ref{lem:Mar} to deduce 
\begin{align}\label{Jlam-1}
\mathcal{J}'_{\lambda} 
\lesssim \|\mathfrak{M}_1\|_{L^2(w)}^2 
\lesssim [w]_{A_2}^2 w(\Omega_{\lambda}). 
\end{align}
By Chebyshev's inequality and \eqref{eq:sharp}, 
\begin{align}
\mathcal{J}''_{\lambda} \label{eq:Nnn-4}
& := w \Big(\Big\{x \in \Rn: \sup\limits_{t>0} \Big|\phi_t * 
\Big(\sum_j \Phi_j \ast \widetilde{f}_j \Big)(x) \Big| > \lambda \Big\} \Big)
\\
&\leq \lambda^{-2} \bigg\|\sup\limits_{t>0} \Big|\phi_t * \Big(\sum_j \Phi_j \ast \widetilde{f}_j \Big) \Big| \bigg\|_{L^2(w)}^2 
\lesssim \lambda^{-2} \bigg\|M\Big(\sum_j \Phi_j \ast \widetilde{f}_j \Big) \bigg\|_{L^2(w)}^2 \nonumber
\\
&\lesssim \lambda^{-2} [w]_{A_2}^2 \bigg\|\sum_j \Phi_j \ast \widetilde{f}_j \bigg\|_{L^2(w)}^2 
= \lambda^{-2} [w]_{A_2}^2 \int_{\Rn} \Big|\sum_j \Phi_j \ast \widetilde{f}_j(x) \Big|^2 w(x) \, dx.\nonumber
\end{align}
To control the last term, we let $T$ be the singular integral with $\mathscr{L}(\ell^2(\Z),\mathbb C)$-valued kernel $\Phi=\{\Phi_j\}_{j \in \Z}$ defined by $T(\mathbf{g}) := \sum_{j\in\Z} \Phi_j*g_j$ for good $\ell^2$-valued functions $\mathbf{g}=\{g_j\}_{j \in \Z}$. One can check that $T$ is bounded from $L^2(\Rn,\ell^2)$ to $L^2(\Rn, \ell^2)$, $\|\Phi\|_{\mathscr{L}(\ell^2(\Z),\mathbb C)}\lesssim |x|^{-n}$, and $\|\nabla\Phi\|_{\mathscr L(\ell^2(\Z),\mathbb C)}\lesssim |x|^{-n-1}$ (cf. \cite[p. 165]{Tri}). Hence, this, Lemma \ref{lem:CZO}, and \eqref{eq:Nnn-4} yield 
\begin{align}\label{Jlam-2} 
\mathcal{J}''_{\lambda}
&\lesssim \lambda^{-2} [w]_{A_2}^9  \int_{\Rn} \sum_j |\widetilde{f_j}(x)|^2 w(x)dx
\\ \nonumber 
&\leq \lambda^{-2} [w]_{A_2}^9 
\int_{\Rn \backslash \Omega_{\lambda}} \sum_j |f_j(x)| ^2 w(x) dx + [w]_{A_2}^9 w(\Omega_{\lambda}).
\end{align}
As a consequence, \eqref{Jlam} immediately follows from \eqref{Jlam-1} and \eqref{Jlam-2}. 
\end{proof}

\subsection{Weighted jump inequalities for rough operators}

Let $\mathcal{F} := \{F_t(x)\}_{t>0}$ be a family of Lebesgue measurable functions defined on $\Rn$. Given $\lambda>0$, we introduce the $\lambda$-jump function $N_\lambda(\mathcal F)$ of $\mathcal F$, its value at $x$ is the supremum over all $N$ such that there exist $s_1<t_1\leq s_2<t_2\leq\dotsc\leq s_N<t_N$ with
\[
|F_{t_k}(x)-F_{s_k}(x)|>\lambda, \quad\forall k=1, \ldots, N.
\] 
Given $\rho>0$, the value of the strong $\rho$-variation function $\mathcal{V}_\rho(\mathcal F)$ at $x$ is defined by
\begin{equation*}
\mathcal{V}_\rho(\mathcal F)(x) 
:= \sup_{\{t_k\}_{k \ge 0}} 
\bigg( |F_{t_0}(x)|^{\rho} + \sum_{k \geq 1} |F_{t_k}(x)-F_{t_{k-1}}(x)|^{\rho} \bigg)^{\frac{1}{\rho}},
\end{equation*}
where the supremum runs over all increasing sequences $\{t_k\}_{k \geq 0}$.

Given $\Omega \in L^1(\Sn)$ and $\varepsilon>0$, the truncated singular integral operator $T_{\varepsilon}$ is defined by
\begin{equation*}
T_{\Omega, \varepsilon} f(x) := \int_{|y| \ge \varepsilon}\frac{\Omega(y')}{|y|^n}f(x-y)dy. 
\end{equation*}
The principal value singular integral and its maximal version are defined by 
\begin{equation*}
T_{\Omega} f(x) := \lim_{\varepsilon \to 0^+} T_{\Omega, \varepsilon} f(x)
\quad\text{ and }\quad 
T_{\Omega, \#} f(x) := \sup_{\varepsilon>0} |T_{\Omega, \varepsilon} f(x)|, \quad x \in \Rn. 
\end{equation*}
In this sequel, we write $\mathcal{T} := \{T_{\Omega, \varepsilon}\}_{\varepsilon>0}$.

\begin{theorem}\label{thm:jump}
Let $\rho>2$ and $\Omega \in L^q(\Sn)$ with $q \in (1, \infty)$ be such that $\int_{\Sn} \Omega \, d\sigma=0$. Then for all $p \in (q', \infty)$ and for all $w \in A_{p/q'}$, 
\begin{align}\label{JTf}
\|\mathbb{T}f\|_{L^p(w)}
\lesssim [w]_{A_{p/q'}}^{7\max\{1, \frac{2}{p/q'-1}\}} \|f\|_{L^p(w)}, 
\end{align}
where $\mathbb{T} \in \big\{\sup\limits_{\lambda>0} \lambda\sqrt{N_{\lambda} \circ \mathcal{T}}, \mathcal{V}_{\rho} \circ \mathcal{T}, T_{\Omega, \#}\big\}$.
\end{theorem}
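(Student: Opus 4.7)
The plan is to reduce the three operators in $\mathbb{T}$ to the single variational object $\mathcal{V}_{\rho}(\mathcal{T})$ and then establish its weighted estimate through a Littlewood--Paley decomposition combined with the square function bound in Lemma~\ref{lem:GH}. Indeed, for any $\rho > 2$, one has the elementary pointwise inequalities $\sup_{\lambda>0}\lambda\sqrt{N_{\lambda}(\mathcal{F})}(x) \le C\,\mathcal{V}_2(\mathcal{F})(x) \le C_{\rho}\,\mathcal{V}_{\rho}(\mathcal{F})(x)$ and $T_{\Omega,\#}f(x) \le \mathcal{V}_{\rho}(\mathcal{T})f(x) + |T_{\Omega}f(x)|$, where the $T_{\Omega}$ contribution is controlled by Theorem~\ref{thm:Tom} with an exponent dominated by $7\max\{1,2/(p/q'-1)\}$. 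Thus it suffices to prove the inequality for $\mathbb{T}=\mathcal{V}_{\rho}(\mathcal{T})$.

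I would then split $\mathcal{V}_{\rho}(\mathcal{T})f \le \mathbb{V}_{\rho}^{L}f + \mathbb{V}_{\rho}^{S}f$ with
\[
\mathbb{V}_{\rho}^{L}f := \mathcal{V}_{\rho}\big(\{T_{\Omega,2^j}f\}_{j\in\Z}\big), \qquad \mathbb{V}_{\rho}^{S}f := \bigg(\sum_{j\in\Z} \mathcal{V}_{\rho}\big(\{T_{\Omega,\varepsilon}f\}_{2^j\le\varepsilon<2^{j+1}}\big)^{\rho}\bigg)^{1/\rho}.
\]
The short variation is bounded pointwise by $CM_{\Omega}(|f|)$, since for $2^j\le\varepsilon<2^{j+1}$ the difference $T_{\Omega,\varepsilon}f - T_{\Omega,2^j}f$ is an annular integral of the rough kernel over a set of scale $\lesssim 2^j$, and Theorem~\ref{thm:Tom} (respectively Theorem~\ref{thm:MTT} in the dual range) absorbs this contribution into the target exponent. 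The main task is therefore the long variation $\mathbb{V}_{\rho}^{L}f$.

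For $\mathbb{V}_{\rho}^{L}$, I would apply the Littlewood--Paley decomposition $\mathrm{Id} = \sum_l \Delta_l^2$ from Lemma~\ref{lem:GH} and write, setting $U^s_jf := \sum_{k\ge j} \nu_k * \Delta^2_{k-s}f$,
\[
T_{\Omega,2^j}f = \sum_{s\in\Z} U^s_jf, \qquad\text{hence}\qquad \mathbb{V}_{\rho}^{L}f \le \sum_{s\in\Z} \mathcal{V}_{\rho}\big(\{U^s_jf\}_{j\in\Z}\big).
\]
A L\'epingle--Bourgain variation bound valid for $\rho>2$ controls each summand by
\[
\mathcal{V}_{\rho}\big(\{U^s_jf\}_j\big)(x) \lesssim \sup_{j\in\Z}|U^s_jf(x)| + \bigg(\sum_{k\in\Z}|\nu_k * \Delta^2_{k-s}f(x)|^2\bigg)^{1/2},
\]
and the square function on the right is exactly the object bounded in Lemma~\ref{lem:GH}, while the shifted maximal operator $\sup_j|U^s_jf|$ can be controlled by a similar square function through Rademacher randomisation, as in the proof of Theorem~\ref{thm:MTT}.

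The main obstacle is to extract sufficient geometric decay in $|s|$ so that $\sum_{s\in\Z}$ converges with the claimed quantitative constant. Unweighted Fourier analysis yields $\|\nu_k*\Delta^2_{k-s}\|_{L^2\to L^2}\lesssim 2^{-\alpha|s|}$ for some $\alpha=\alpha(q)>0$, coming from the cancellation $\int_{\Sn}\Omega\,d\sigma=0$ (which handles $s\to -\infty$) and the $L^q$-H\"ormander estimate for the kernel (which handles $s\to +\infty$). I would interpolate this $L^2$ decay against the weighted bound of Lemma~\ref{lem:GH} via the Rubio de Francia bootstrap (Lemma~\ref{lem:open}, tuned so that $(1+\gamma)' \simeq [w]_{A_{p/q'}}^{\max\{1,1/(p/q'-1)\}}$), exactly as in the proof of Theorem~\ref{thm:Tom}, trading the factor $(1+\gamma)'$ for a summable geometric tail in $s$. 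Combined with the exponent $\tfrac52\max\{1,1/(p/q'-1),2/(p-1)\}$ from Lemma~\ref{lem:GH} and the weighted Littlewood--Paley inverse from Lemma~\ref{lem:phif}, this delivers the target bound $[w]_{A_{p/q'}}^{7\max\{1,2/(p/q'-1)\}}$.
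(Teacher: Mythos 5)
Your reduction goes in the wrong direction, and this is fatal. Among the three operators, the jump quantity $\sup_{\lambda>0}\lambda\sqrt{N_{\lambda}\circ\mathcal{T}}$ is the \emph{strongest}: the correct pointwise chain is
\[
T_{\Omega,\#}f(x)\;\le\;\mathcal{V}_{\rho}(\mathcal{T}f)(x)\;\lesssim_{\rho}\;\sup_{\lambda>0}\lambda\sqrt{N_{\lambda}(\mathcal{T}f)(x)},\qquad \rho>2,
\]
coming from the embedding $\ell^{2,\infty}\hookrightarrow\ell^{\rho}$. Your chain $\sup_{\lambda}\lambda\sqrt{N_{\lambda}}\le C\,\mathcal{V}_{2}\le C_{\rho}\,\mathcal{V}_{\rho}$ is broken at the second step: variation seminorms \emph{decrease} in $\rho$ (since $\ell^{2}\subset\ell^{\rho}$ for $\rho>2$), so $\mathcal{V}_{\rho}\le\mathcal{V}_{2}$ and not the reverse; moreover $\mathcal{V}_{2}$ of such families is generically unbounded, which is the whole reason the jump functional is introduced. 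Consequently, even a complete proof of the weighted bound for $\mathcal{V}_{\rho}\circ\mathcal{T}$ would not yield the theorem for the jump operator, which is the case one must actually prove (the paper proves the jump bound first and derives the other two from it).

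Two further steps would also fail as stated. First, the claimed pointwise ``L\'epingle--Bourgain'' inequality $\mathcal{V}_{\rho}(\{U^{s}_{j}f\}_{j})\lesssim\sup_{j}|U^{s}_{j}f|+\big(\sum_{k}|\nu_{k}*\Delta_{k-s}^{2}f|^{2}\big)^{1/2}$ is false for general partial-sum families: taking $N$ equal increments shows the $\rho$-variation can exceed the square function of consecutive differences by a factor $N^{1/2}$, and adding the supremum does not repair this. L\'epingle's inequality is an $L^{p}$ (not pointwise) statement for martingales, and the standard route is to compare with the conditional expectations $\mathbb{E}_{k}$ and use the martingale jump inequality, which is exactly what the paper does through Lemmas \ref{lem:}, \ref{lem:SS}, and \ref{lem:Uf}. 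Second, the short variation is not pointwise bounded by $M_{\Omega}(|f|)$: each dyadic block's variation is indeed $\lesssim M_{\Omega}f$, but the short variation takes an $\ell^{\rho}$ (in the paper, $\ell^{2}$) sum over all blocks $j\in\Z$, which destroys any such uniform pointwise bound; the paper's treatment of $S_{2}(\mathcal{T}f)$ requires the unweighted decay $\|S_{2,k}\|_{L^p\to L^p}\lesssim 2^{-\delta|k|}$ combined with Stein--Weiss interpolation with change of measure, plus the $M_{\Omega^{2-q}}$ duality trick when $q<2$. Your instinct to use the Littlewood--Paley decomposition in the shift parameter $s$, Lemma \ref{lem:GH}, and the Rubio de Francia/open-property bootstrap to sum a geometric tail is the right one for the rough pieces $T^{2}_{k},T^{3}_{k}$, but the architecture around it --- the initial reduction and the two pointwise claims above --- must be replaced by the Jones--Seeger--Wright decomposition $\lambda\sqrt{N_{\lambda}(\mathcal{T}f)}\lesssim S_{2}(\mathcal{T}f)+\lambda\sqrt{N_{\lambda/3}(\{T_{\Omega,2^{k}}f\})}$ and the martingale comparison.
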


It suffices to show \eqref{JTf} for $\mathbb{T}=\sup\limits_{\lambda>0} \lambda\sqrt{N_{\lambda} \circ \mathcal{T}}$, which immediately implies \eqref{JTf} for $\mathbb{T} \in \{\mathcal{V}_{\rho} \circ \mathcal{T}, T_{\Omega, \#}\}$ since the following pointwise domination holds
\begin{equation*}
T_{\Omega, \#} f(x) 
\le \mathcal{V}_{\rho}(\mathcal{T} f)(x) 
\le \sup_{\lambda>0} \lambda \sqrt{N_\lambda(\mathcal{T} f)(x)}, \qquad x \in \Rn, 
\end{equation*} 
provided that $\ell^{2,\infty}(\mathbb{N})$ embeds into $\ell^\rho(\mathbb{N})$ for all $\rho>2$. 

Let us turn to the proof of \eqref{JTf} for $\mathbb{T}=\sup\limits_{\lambda>0} \lambda\sqrt{N_{\lambda} \circ \mathcal{T}}$. It was proved in \cite[Lemma 1.3]{JSW08} that 
\begin{equation*}
\lambda \sqrt{N_\lambda(\mathcal Tf)(x)} 
\lesssim S_2(\mathcal Tf)(x)+\lambda\sqrt{N_{\lambda/3}(\{T_{\Omega, 2^k}f\})(x)}, \quad x \in \Rn, 
\end{equation*}
where
\begin{align*}
S_2(\mathcal Tf)(x) & := \bigg(\sum_{j\in\mathbb Z} V_{2, j}(\mathcal Tf)(x)^2\bigg)^{\frac12},
\\
V_{2,j}(\mathcal Tf)(x) & := \bigg(\sup_{\substack{t_1<\cdots<t_N \\ [t_l,t_{l+1}]\subset[2^j,2^{j+1}]}} 
\sum_{l=1}^{N-1}|T_{\Omega, t_{l+1}}f(x)-T_{\Omega, t_l}f(x)|^2\bigg)^{\frac12}.
\end{align*}
Thus, we are reduced to proving 
\begin{align}
\label{d-jump}
\Big\|\sup_{\lambda>0}\lambda \sqrt{N_\lambda(\{T_{\Omega, 2^k}f\})} \Big\|_{L^p(w)}
& \lesssim [w]_{A_{p/q'}}^{7\max\{1, \frac{2}{p/q'-1}\}} \|f\|_{L^p(w)}, 
\\ 
\label{short-var}
\|S_2(\mathcal Tf)\|_{L^p(w)} 
& \lesssim [w]_{A_{p/q'}}^{4\max\{1, \frac{2}{p/q'-1}\}} \|f\|_{L^p(w)}. 
\end{align}

\subsubsection{\bf Dyadic jump estimates}
We are going to show \eqref{d-jump} in this subsection. Let $\phi \in \S(\Rn)$ be a radial function such that $\widehat{\phi}(\xi)=1$ for $|\xi| \le2$ and $\widehat{\phi}(\xi)=0$ for $|\xi|>4$. Define $\widehat{\phi}_k(\xi) = \widehat{\phi}(2^k\xi)$ for each $k \in \Z$. For each $j \in \Z$,  set $\nu_j(x) := \frac{\Omega(x)}{|x|^n} \mathbf{1}_{\{2^j\le |x|<2^{j+1}\}}(x)$. Then for any $k \in \Z$, 
\begin{align*}
T_{\Omega, 2^k} f(x)  
& = \int_{|x-y| \ge 2^k} \frac{\Omega(x-y)}{|x-y|^n}f(y) \, dy 
=\sum_{j \ge k} \nu_j \ast f(x) 
\\
&=\phi_k \ast T_{\Omega} f + \sum_{s \ge 0}(\delta_0 - \phi_k) \ast \nu_{k+s} \ast f - \phi_k \ast \sum_{s<0} \nu_{k+s}\ast f 
\\
&=: T^1_k f + T^2_k f - T_k^3 f,
\end{align*}
where $\delta_0$ is the Dirac measure at 0. Let $\mathscr T^if$ denote the family $\{T^i_{k}f\}_{k \in \Z}$, 
$i=1, 2, 3$. Hence, to get \eqref{d-jump} it suffices to prove the following:
\begin{equation}\label{TTT}
\Big\|\sup_{\lambda>0}\lambda \sqrt{N_{\lambda}(\mathscr T^if)} \Big\|_{L^p(w)} 
\lesssim [w]_{A_{p/q'}}^{7\max\{1, \frac{2}{p/q'-1}\}} \|f\|_{L^p(w)}, \quad i=1,2,3. 
\end{equation}

We begin with showing \eqref{TTT} for $i=1$. Define 
\[
\mathbb{D}_j f := \mathbb{E}_j f - \mathbb{E}_{j-1} f, \quad 
\mathscr{E}f :=\{\mathbb{E}_j f\}_{j \in \Z}, \quad\text{ where }\quad  
\mathbb{E}_j f := \sum_{Q \in \D_j} \bigg(\fint_Q f \, dx \bigg) \mathbf{1}_Q, 
\]
where $\D_j$ is the family of dyadic cubes with sidelength $2^j$.

\begin{lemma}\label{lem:}
For any $p \in (1, \infty)$ and $w \in A_p$, 
\begin{align}\label{eq:DN-1}
\|\mathbb{T}f\|_{L^p(w)}
\lesssim [w]_{A_p}^{\max\{1, \frac{1}{p-1}\}} \|f\|_{L^p(w)}, 
\end{align}
where $\mathbb{T}f \in\big\{\big(\sum_{j \in \Z} |\mathbb{D}_j f|^2 \big)^{\frac12},\, \sup_{\lambda>0} \lambda\sqrt{N_{\lambda}(\mathscr{E}f)}\big\}$.
\end{lemma}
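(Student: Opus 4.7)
The plan is to deduce both estimates from a common pointwise sparse domination. Specifically, for every admissible $f$ I would produce a sparse family $\S = \S(f)$ such that
\begin{equation*}
\mathbb{T}f(x) \lesssim \A_{\S}^{2}(|f|)(x), \quad \text{a.e. } x \in \R^n,
\end{equation*}
where $\A_{\S}^{2}$ is the dyadic operator from \eqref{eq:AS}. Once this is in hand, \eqref{Mfk-2} with $\gamma = 2$ gives
\begin{equation*}
\|\mathbb{T}f\|_{L^p(w)} \lesssim \|\A_{\S}^{2}(|f|)\|_{L^p(w)} \lesssim [w]_{A_p}^{\max\{\frac{1}{2},\,\frac{1}{p-1}\}}\|f\|_{L^p(w)} \le [w]_{A_p}^{\max\{1,\,\frac{1}{p-1}\}}\|f\|_{L^p(w)},
\end{equation*}
which is in fact slightly sharper than \eqref{eq:DN-1} claims.

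For the dyadic square function $\mathbb{T}f = (\sum_j |\mathbb{D}_j f|^2)^{1/2}$, the sparse bound is a dyadic-martingale version of Lerner's principal-cube argument. The two ingredients are the orthogonality identity $\|\mathbb{T}(f\mathbf{1}_Q)\|_{L^2(Q)}^2 \le \|f\mathbf{1}_Q\|_{L^2}^2$ for each dyadic cube $Q$ (which is immediate since the $\mathbb{D}_j$ are orthogonal projections on $L^2$), and the weak-type $(1,1)$ bound for $\mathbb{T}$, obtained via a standard dyadic Calder\'on-Zygmund decomposition using that $\mathbb{D}_j b = 0$ on scales below the CZ cubes of the bad part $b$. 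These two facts allow one to iterate principal-cube selection: at each stage, pick maximal descendants $Q' \subset Q$ on which either $\langle |f|\rangle_{Q'}$ essentially doubles or the level set contribution is large; the local contribution on each selected cube is of size $\langle|f|\rangle_Q^2 \mathbf 1_Q$ in the $\ell^2$-sense, reproducing $\A_{\S}^2(|f|)$.

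The same recursion handles $\mathbb{T}f = Jf := \sup_{\lambda>0}\lambda\sqrt{N_\lambda(\mathscr{E}f)}$ once we have (i) L\'epingle's $L^2$ bound $\|Jf\|_{L^2} \lesssim \|f\|_{L^2}$ for dyadic martingales in its localized form on each $Q$, and (ii) a weak-type $(1,1)$ bound for $J$ coming from a dyadic Calder\'on-Zygmund decomposition, again exploiting that on scales below the CZ cubes the conditional expectation of the bad part vanishes so no jumps are produced. Given (i) and (ii), the Lerner-Ombrosi-P\'erez construction yields the same $\A_{\S}^2$ form and the conclusion follows as above. The main obstacle will be assembling this sparse domination for $J$: verifying the localized weak-type $(1,1)$ estimate and, more importantly, arranging the principal-cube iteration so that the local contribution is the squared average $\langle|f|\rangle_Q^2$, rather than a higher moment that would give a weaker weighted exponent.
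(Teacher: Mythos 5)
Your treatment of the dyadic square function is a sound alternative to the paper's. The paper simply quotes \cite{LPR} for the linear $A_2$ bound at $p=2$ (the dyadic square function being a Haar-shift type object) and then applies the quantitative extrapolation of Theorem \ref{thm:Ap} to reach the exponent $\max\{1,\frac{1}{p-1}\}$; your route via the pointwise domination $\big(\sum_j|\mathbb{D}_jf|^2\big)^{1/2}\lesssim \A_{\S}^{2}(|f|)$ combined with \eqref{Mfk-2} is the standard sparse argument for square functions (local $L^2$ orthogonality, the dyadic weak $(1,1)$, and the quadratic subadditivity $S(f+g)^2\le 2S(f)^2+2S(g)^2$, which is what makes the local contribution the squared average), and it even yields the sharper exponent $\max\{\frac12,\frac{1}{p-1}\}$. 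For that half the two approaches differ only in machinery.

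The jump half is where the proposal has a genuine gap, which you have located but not closed. First, the input you call ``L\'epingle's $L^2$ bound $\|Jf\|_{L^2}\lesssim\|f\|_{L^2}$'' is not L\'epingle's inequality: L\'epingle concerns $\rho$-variation for $\rho>2$ and fails at $\rho=2$, whereas the object here is $\sup_{\lambda>0}\lambda\sqrt{N_\lambda(\mathscr{E}f)}$ with the supremum over $\lambda$ inside; the classical martingale jump inequality is uniform in $\lambda$, and handling the supremum is itself a point your ingredient (i) does not address. Second, and more importantly, the sparse domination $Jf\lesssim\A_{\S}^{2}(|f|)$ is precisely the nontrivial content of this half of the lemma, and the proposal does not supply it: the Lerner-type recursion fed by an $L^2$ bound and a weak $(1,1)$ bound produces the linear form $\A_{\S}^{1}$ unless one verifies a quadratic subadditivity for $J^2$ (it does hold, via $N_\lambda(\mathcal{F}+\mathcal{G})\le N_{\lambda/2}(\mathcal{F})+N_{\lambda/2}(\mathcal{G})$, but this is exactly the step you flag as an obstacle and leave open). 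The paper sidesteps all of this: following the proof of \cite[Proposition 4.1]{KZ}, one samples the martingale $\{\mathbb{E}_jf\}$ at the successive times where it moves by more than $\lambda$; the sampled process is again a martingale for the stopped filtration, $\lambda^2N_\lambda(\mathscr{E}f)$ is pointwise dominated by the square function of that sampled martingale, and the weighted square-function bound --- valid for an arbitrary such filtration with the same dependence on $[w]_{A_p}$ --- transfers directly. If you wish to keep the sparse route for $J$ you must actually prove the $\A_{\S}^{2}$ domination for the martingale jump function; otherwise the stopping-time reduction is the efficient way to finish.
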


\begin{proof}
For $p=2$, the estimate \eqref{eq:DN-1} for dyadic operators is contained in \cite{LPR}, which established a sharp weighted inequality for the Haar shift operators. The general case is a consequence of the case $p=2$ and Theorem \ref{thm:Ap}. Then \eqref{eq:DN-1} for jump operators follows at once from \eqref{eq:DN-1} for $\mathbb{T}f =\big(\sum_{j \in \Z} |\mathbb{D}_j f|^2 \big)^{\frac12}$ and the proof of \cite[Proposition 4.1]{KZ}.
\end{proof}

Define the square function as follows: 
\begin{equation}\label{def:Sf}
\mathfrak{S} f := \bigg(\sum_{k \in \Z} |\phi_k*f - \mathbb{E}_k f|^2 \bigg)^{\frac12}. 
\end{equation}
\begin{lemma}\label{lem:SS}
For any $w \in A_1$, 
\begin{align}\label{SS-weak} 
\|\mathfrak{S}\|_{L^2(w) \to L^2(w)} \lesssim [w]_{A_1}^2 
\quad\text{ and }\quad
\|\mathfrak{S}\|_{L^1(w) \to L^{1, \infty}(w)} \lesssim [w]_{A_1}^5, 
\end{align}
where the implicit constant is independent of $[w]_{A_1}$.
\end{lemma}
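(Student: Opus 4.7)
The plan is to realise $\mathfrak{S}$ as a vector-valued singular operator with $\ell^2$-valued non-convolution kernel
$
\mathbf{K}(x, y) := \big\{\phi_k(x - y) - |Q_k(x)|^{-1} \mathbf{1}_{Q_k(x)}(y)\big\}_{k \in \Z},
$
where $Q_k(x)$ denotes the unique dyadic cube of sidelength $2^k$ containing $x$, and then run a Calder\'on-Zygmund argument in which the dependence on $[w]_{A_1}$ is tracked carefully.

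First I would prove the unweighted $L^2$ bound for $\mathfrak{S}$ via Cotlar's almost-orthogonality lemma applied to $T_k := \phi_k \ast (\cdot) - \mathbb{E}_k$. The decisive observation is that for $j<k$ the leading contributions to $T_j T_k^\ast$ cancel: the Fourier support of $\phi$ gives $\phi_j \ast \phi_k = \phi_k$, while the martingale identity yields $\mathbb{E}_j \mathbb{E}_k = \mathbb{E}_k$, so the only surviving pieces are the defects $\mathbb{E}_j(\phi_k \ast f) - \phi_k \ast f$ and $\phi_j \ast \mathbb{E}_k f - \mathbb{E}_k f$. The first is bounded by $2^{j-k}\|f\|_{L^2}$ using $\|\nabla(\phi_k \ast f)\|_{L^2}\lesssim 2^{-k}\|f\|_{L^2}$, and the second by $2^{(j-k)/2}\|f\|_{L^2}$ using that only a boundary strip of relative measure $2^{j-k}$ of each dyadic cube of side $2^k$ is affected by convolution at scale $2^j$. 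This yields the geometric decay $\|T_j T_k^\ast\|_{L^2 \to L^2} \lesssim 2^{-c|j-k|}$ needed for Cotlar.

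Next I would verify the size bound $\|\mathbf{K}(x,y)\|_{\ell^2} \lesssim |x-y|^{-n}$ (splitting the $k$-sum at the threshold $2^k \sim |x-y|$: for large scales only the Schwartz decay of $\phi_k$ contributes, for small scales only finitely many terms survive with $|T_k| \lesssim 2^{-kn}$) together with an $\ell^2$-valued H\"ormander condition in each variable. The non-translation-invariant piece is handled by noting that when $x_1, x_2$ lie in a common cube of side $2^{k_0}$, the dyadic cube $Q_k(x)$ is unchanged for $k < k_0$, while for $k \geq k_0$ a direct dyadic summation of the jumps controls the total oscillation by $O(1)$. With this structure in place, $\mathfrak{S}$ fits the template of a non-convolution vector-valued Calder\'on-Zygmund operator.

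With these kernel estimates in hand, I would conclude by running the quantitative Calder\'on-Zygmund argument. For the weak-type $L^1(w)$ estimate, a Calder\'on-Zygmund decomposition of $f$ at level $\lambda$ splits $f = g + \sum_Q b_Q$; the good part is controlled by the $L^2(w)$ bound (contributing $[w]_{A_1}^2$), the enlarged exceptional set $\bigcup 3Q$ is controlled by the sharp reverse H\"older inequality (Lemma \ref{lem:RH}, contributing $[w]_{A_1}$), and the bad-part tails are integrated against $w$ using the $\ell^2$-H\"ormander condition, with the weight absorbed via a further reverse H\"older step (contributing two more powers); summing gives the exponent $5$. The $L^2(w)$ bound itself is obtained via a non-convolution analogue of Lemma \ref{lem:CZO}, where the pointwise gradient estimate is replaced by the $\ell^2$-H\"ormander condition above, combined with the Fefferman-Stein inequality (Lemma \ref{lem:MM}) and the trivial bound $[w]_{A_2} \le [w]_{A_1}$, to produce the exponent $2$. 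The principal obstacle is the verification of the $\ell^2$-H\"ormander condition: the discontinuity of $Q_k(x)$ as $x$ crosses a dyadic boundary must be shown to integrate to $O(1)$ across all scales, which forces a case analysis on whether $k$ is below, comparable to, or above $\log_2 \ell(Q)$ for the source cube $Q$.
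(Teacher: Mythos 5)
Your proposal founders on its central structural premise: the operator $\mathfrak{S}$ is \emph{not} a vector-valued Calder\'on--Zygmund operator, because the $\ell^2$-valued kernel $\mathbf{K}(x,y)=\{\phi_k(x-y)-|Q_k(x)|^{-1}\mathbf{1}_{Q_k(x)}(y)\}_k$ fails the H\"ormander condition in both variables. Take $y,y'$ separated by a dyadic hyperplane of infinite order (e.g.\ $y=-\varepsilon$, $y'=+\varepsilon$ in one dimension), so that $Q_k(y)\cap Q_k(y')=\emptyset$ for \emph{every} $k$. For each scale with $2^k\gg|y-y'|$ and each $x\in Q_k(y)\setminus Q_k(y')$ one has $K_k(x,y)-K_k(x,y')=\phi_k(x-y)-\phi_k(x-y')-2^{-kn}\approx-2^{-kn}$ (the smooth part contributes only $O(2^{-kn}\,2^{-k}|y-y'|)$ and cannot cancel the jump), so each such scale contributes $\gtrsim 1$ to $\int_{|x-y|>2|y-y'|}\|\mathbf{K}(x,y)-\mathbf{K}(x,y')\|_{\ell^2}\,dx$, and the sum over the infinitely many scales $2^k\gtrsim|y-y'|$ diverges. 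The same happens in the $x$-variable, so your claim that ``a direct dyadic summation of the jumps controls the total oscillation by $O(1)$'' is false. This kills both the pointwise bound $M^{\#}(\mathfrak{S}f)\lesssim M_r f$ (hence the proposed $L^2(w)$ upgrade via Lemma \ref{lem:CZO}/Lemma \ref{lem:MM}) and the H\"ormander-based treatment of the bad-part tails in the weak $(1,1)$ argument. A secondary issue: even where that machinery applies, Lemma \ref{lem:CZO} yields the exponent $\tfrac72$ at $p=2$, not $2$; reaching $2$ would require the $A_1$-specific Fefferman--Stein bound $\|Mg\|_{L^p(w)}\lesssim\|g\|_{L^p(Mw)}$ with $Mw\le[w]_{A_1}w$, which you do not invoke.

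The fix --- and the paper's actual route --- is to replace kernel regularity by cancellation of the \emph{input}. Decompose $f=\sum_j\mathbb{D}_jf$ into martingale differences and prove the weighted single-scale estimate $\|\phi_{k+j}*\mathbb{D}_jf-\mathbb{E}_{k+j}\mathbb{D}_jf\|_{L^2(w)}\lesssim 2^{-\theta|k|}[w]_{A_1}\|\mathbb{D}_jf\|_{L^2(w)}$: for $k\ge0$ one uses $\mathbb{E}_{k+j}\mathbb{D}_jf=0$ and $|\phi_{k+j}*\mathbb{D}_jf|\lesssim 2^{-k}M(\mathbb{D}_jf)$, and for $k<0$ the doubling inequality $w(\lambda Q)\le\lambda^n[w]_{A_1}w(Q)$; this is your Cotlar step, but carried out directly in $L^2(w)$, where the mean-zero property of $\mathbb{D}_jf$ on dyadic cubes substitutes for the missing kernel smoothness. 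Summing with Minkowski and Cauchy--Schwarz and invoking the sharp weighted bound for the dyadic square function $(\sum_j|\mathbb{D}_jf|^2)^{1/2}$ produces exactly the two powers of $[w]_{A_1}$. For the weak type, the Calder\'on--Zygmund decomposition does go through, but the bad part is handled by the dyadic structure rather than a H\"ormander condition: for a dyadic cube $Q$ of the decomposition one has $\mathbb{E}_kb_Q\equiv0$ when $2^k\ge\ell(Q)$ and $\operatorname{supp}(\mathbb{E}_kb_Q)\subset Q$ otherwise, so only the genuinely smooth convolution part requires tail estimates.
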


\begin{proof}
We claim that for all $k, j \in \Z$,  
\begin{align}\label{eq:kjDj}
\|\mathcal{I}_{k, j}\|_{L^2(w)}
:= \|\phi_{k+j}*\mathbb{D}_j f - \mathbb{E}_{k+j} \mathbb{D}_j f\|_{L^2(w)}
\lesssim 2^{-\theta |k|} [w]_{A_1} \|\mathbb{D}_j f\|_{L^2(w)}, 
\end{align}
for some $\theta>0$, where the implicit constant and $\theta$ are independent of $k$ and $j$. To show \eqref{eq:kjDj}, we first note that by \cite[p. 6722]{JSW08}, for any $k \ge 0$, 
\[
\mathbb{E}_{k+j} \mathbb{D}_j f=0 
\quad\text{ and }\quad 
|\phi_{k+j} *\mathbb{D}_j f| 
\lesssim 2^{-k} M(\mathbb{D}_j f), 
\]
which along with \eqref{eq:sharp} gives 
\begin{align*}
\|\mathcal{I}_{k, j}\|_{L^2(w)}
\lesssim 2^{-k} \|M(\mathbb{D}_j f)\|_{L^2(w)}
\lesssim 2^{-k} [w]_{A_2} \|\mathbb{D}_j f\|_{L^2(w)}
\le 2^{-k} [w]_{A_1} \|\mathbb{D}_j f\|_{L^2(w)}. 
\end{align*}
To control the case $k<0$, we use the argument in \cite[p. 2461--2463]{CDHL} and that 
\[
w(\lambda Q) \le \lambda^n [w]_{A_1} w(Q), \quad\text{ for any cube } Q, 
\]
to see $\mathcal{I}_{k, j}(x) = \sum_{d \ge 0} I_d(x)$, where for some $\delta>0$, 
\begin{align*}
\|I_d\|_{L^2(w)} &\lesssim 2^{-\delta |k|n/4} [w]_{A_1}^{\frac12} \|\mathbb{D}_j f\|_{L^2(w)}, \quad d \le |k|/2, 
\\
\|I_d\|_{L^2(w)} &\lesssim 2^{-d} [w]_{A_1}^{\frac12} \|\mathbb{D}_j f\|_{L^2(w)}, \quad d \ge |k|/2, 
\end{align*}
Then summing these estimates up, we obtain \eqref{eq:kjDj} as desired. 

Having shown \eqref{eq:kjDj}, we use $f(x) = \sum_{j \in \Z} \mathbb{D}_j f(x)$, a.e. $x \in \Rn$, to deduce that 
\begin{align*}
\|\mathfrak{S}f\|_{L^2(w)} 
&= \bigg\|\bigg(\sum_{k \in \Z} \Big| \sum_{j \in \Z} \big(\phi_k* \mathbb{D}_j f - \mathbb{E}_k \mathbb{D}_j f \big) \Big|^2 \bigg)^{\frac12}\bigg\|_{L^2(w)}
\\
&\le \bigg(\sum_{k \in \Z} \Big(\sum_{j \in \Z} \|\phi_k* \mathbb{D}_j f - \mathbb{E}_k \mathbb{D}_j f\|_{L^2(w)} \Big)^2 \bigg)^{\frac12}
\\
&\lesssim [w]_{A_1} \bigg(\sum_{k \in \Z} \Big(\sum_{j \in \Z} 2^{-\theta |k-j|} \|\mathbb{D}_j f\|_{L^2(w)} \Big)^2 \bigg)^{\frac12}
\\
&\lesssim [w]_{A_1} \bigg[\sum_{k \in \Z} \Big(\sum_{j \in \Z} 2^{- \theta |k-j|} \Big) 
\Big(\sum_{j \in \Z} 2^{- \theta |k-j|} \|\mathbb{D}_j f\|_{L^2(w)}^2 \Big)  \bigg]^{\frac12}
\\
&\lesssim [w]_{A_1} \bigg[\sum_{j \in \Z} \Big(\sum_{k \in \Z} 2^{- \theta |k-j|} \Big) 
\|\mathbb{D}_j f\|_{L^2(w)}^2 \bigg]^{\frac12}
\\
&\simeq [w]_{A_1} \bigg\| \Big(\sum_{j \in \Z} |\mathbb{D}_j f|^2 \Big)^{\frac12} \bigg\|_{L^2(w)} 
\lesssim [w]_{A_1}^2 \|f\|_{L^2(w)}, 
\end{align*}
where we have used Minkowski's inequality, \eqref{eq:kjDj}, Cauchy-Schwarz inequality, and \eqref{eq:DN-1}. This shows the first estimate in \eqref{SS-weak}. Then, using the first inequality in \eqref{SS-weak} and Calder\'{o}n-Zygmund decomposition as in \cite[p. 2458--2460]{CDHL}, we obtain the second estimate in \eqref{SS-weak}. The proof is complete. 
\end{proof}

\begin{lemma}\label{lem:Uf}
Let $\mathscr{U}$ be a family of operators given by $\mathscr{U}f :=\{\phi_k * f\}_{k \in \Z}$. Then for all $p \in (1, \infty)$ and for all $w \in A_p$, 
\begin{align*}
\Big\|\sup_{\lambda>0} \lambda\sqrt{N_{\lambda}(\mathscr{U}f)} \Big\|_{L^p(w)}
&\lesssim [w]_{A_p}^{\max\{5, \frac{1}{p-1}\}} \black \|f\|_{L^p(w)}. 
\end{align*}
\end{lemma}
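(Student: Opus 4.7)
The plan is to decompose $\phi_k \ast f = (\phi_k \ast f - \mathbb{E}_k f) + \mathbb{E}_k f$, so that the family $\mathscr{U}f$ is split into the ``smooth-minus-dyadic'' part $\{\phi_k \ast f - \mathbb{E}_k f\}_k$ and the dyadic martingale $\mathscr{E}f$. Two elementary properties of jump functions are needed: first, the subadditivity $N_\lambda(\{a_k + b_k\}) \le N_{\lambda/2}(\{a_k\}) + N_{\lambda/2}(\{b_k\})$; second, whenever a sequence $\{a_k\}$ exhibits a $\lambda$-jump $|a_{t_i} - a_{s_i}| > \lambda$, at least one of $|a_{t_i}|, |a_{s_i}|$ exceeds $\lambda/2$, and since the pairs $\{s_i,t_i\}$ form a chain of non-overlapping intervals one deduces
\[
\sup_{\lambda>0} \lambda \sqrt{N_\lambda(\{a_k\})} \lesssim \Big(\sum_k |a_k|^2 \Big)^{1/2}.
\]
Combining these two facts yields the pointwise estimate
\[
\sup_{\lambda>0} \lambda \sqrt{N_\lambda(\mathscr{U}f)}(x)
\lesssim \mathfrak{S}f(x) + \sup_{\lambda>0} \lambda \sqrt{N_\lambda(\mathscr{E}f)}(x),
\]
with $\mathfrak{S}$ the square function defined in \eqref{def:Sf}.

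The second term is handled directly by the preceding dyadic jump lemma, yielding the bound $[w]_{A_p}^{\max\{1, 1/(p-1)\}} \|f\|_{L^p(w)}$ for every $p \in (1,\infty)$ and $w \in A_p$. What remains is to control $\|\mathfrak{S}f\|_{L^p(w)}$. The plan here is to invoke Lemma \ref{lem:SS}, which provides the endpoint weak-type bound $\|\mathfrak{S}\|_{L^1(v) \to L^{1,\infty}(v)} \lesssim [v]_{A_1}^5$ for every $v \in A_1$, and then apply Theorem \ref{thm:weakAp} with $p_0 = 1$ and $\Phi(t) = t^5$. The crucial observation is that with $p_0 = 1$, the factor $\max\{1, 3(p_0-1)/(p-1)\}$ appearing in \eqref{weakAp-3} collapses to $1$, so the quantitative weak-to-strong extrapolation delivers
\[
\|\mathfrak{S}f\|_{L^p(w)} \lesssim [w]_{A_p}^5 \|f\|_{L^p(w)}, \qquad p \in (1,\infty), \ w \in A_p,
\]
with no extra power of $[w]_{A_p}$ incurred. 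Summing the two contributions yields the exponent $\max\{5, 1/(p-1)\}$ as claimed.

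The main obstacle in this argument is neither the splitting nor the control of the dyadic piece, both of which are routine; rather, it is ensuring that the passage from the $L^1(A_1)$-weak endpoint for $\mathfrak{S}$ to the full $A_p$ range is executed without degradation of the exponent $5$. The quantitative extrapolation of Theorem \ref{thm:weakAp} is tailored exactly for this purpose, and the specific choice $p_0 = 1$ is what prevents the power $5$ from being inflated. Once that step is in place, the rest of the argument is bookkeeping of constants.
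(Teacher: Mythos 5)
Your proposal is correct and follows essentially the same route as the paper: the same splitting $\mathscr{U}f = \mathscr{D}f + \mathscr{E}f$ with $\mathscr{D}f = \{\phi_k*f - \mathbb{E}_k f\}_k$, the pointwise domination of the jump of $\mathscr{D}f$ by the square function $\mathfrak{S}f$ from \eqref{def:Sf}, the dyadic jump lemma for $\mathscr{E}f$, and the weak-$(1,1)$ bound of Lemma \ref{lem:SS} upgraded via Theorem \ref{thm:weakAp} with $p_0=1$ so that the exponent $5$ survives undegraded. The only cosmetic difference is that you spell out the $\lambda/2$ form of subadditivity and the elementary bound $\sup_\lambda \lambda\sqrt{N_\lambda(\{a_k\})} \lesssim (\sum_k |a_k|^2)^{1/2}$, which the paper leaves implicit.
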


\begin{proof}
Since $N_{\lambda}$ is subadditive, 
\begin{align}\label{NNN-1}
N_{\lambda}(\mathscr{U}f) 
\le N_{\lambda}(\mathscr{D}f) + N_{\lambda}(\mathscr{E}f), 
\end{align}
where $\mathscr{D}f :=\{\phi_k * f - \mathbb{E}_k f\}_{k \in \Z}$ and $\mathscr{E}f :=\{\mathbb{E}_k f\}_{k \in \Z}$. Recall the square function in \eqref{def:Sf} and observe that $\sup_{\lambda>0} \lambda\sqrt{N_{\lambda}(\mathscr{D}f)}
\le \mathfrak{S} f$, 
which together with \eqref{SS-weak} and Theorem \ref{thm:weakAp} applied to $p_0=1$ implies 
\begin{align*}
\Big\|\sup_{\lambda>0} \lambda\sqrt{N_{\lambda}(\mathscr{D}f)} \Big\|_{L^p(w)}
\lesssim [w]_{A_p}^5 \|f\|_{L^p(w)}. 
\end{align*}
In view of \eqref{eq:DN-1} and \eqref{NNN-1}, this gives at once the desired estimate.
\end{proof}

Now using Lemma \ref{lem:Uf} and \eqref{MOTO-2}, we obtain 
\begin{align*}
\Big\|\sup_{\lambda>0} \lambda\sqrt{N_{\lambda}(\mathscr{T}^1 f)} \Big\|_{L^p(w)}
&= \Big\|\sup_{\lambda>0} \lambda\sqrt{N_{\lambda}(\{\phi_k*(T_{\Omega}f)\}}) \Big\|_{L^p(w)}
\\ 
&\lesssim [w]_{A_p}^{\max\{5, \frac{1}{p-1}\}} \|T_{\Omega}f\|_{L^p(w)}
\lesssim [w]_{A_{p/q'}}^{7 \max\{1, \frac{1}{p/q'-1}\}} \|f\|_{L^p(w)}. 
\end{align*}
which shows \eqref{TTT} for $i=1$. 

For the term with $\mathscr{T}^2$, it was shown in \cite[p. 2453]{CDHL} that 
\begin{align}\label{Gss-1}
\sup_{\lambda>0} \lambda \sqrt{N_\lambda(\mathscr T^{2}f)} 
\le \sum_{s \ge 0} \Big(\sum_{k\in \Z} \big|(\delta_0 - \phi_k) \ast \nu_{k+s} \ast f\big|^2\Big)^{\frac12} 
=: \sum_{s \ge 0} G_s f, 
\end{align}
where 
\begin{align*}
G_s f
\le \sum_{l \in \Bbb Z}\bigg(\sum_{k\in \Z} |(\delta_0-\phi_k) \ast \nu_{s+k} \ast \Delta_{l-k}^2 f|^2\bigg)^{\frac12} 
=: \sum_{l\in \Z} G_s^l f, 
\end{align*}
with 
\begin{align}\label{Gs-L2}
\|G_s^l f\|_{L^2(\Rn)} 
\lesssim 2^{ -\gamma_0 s}  \min\{2^l, 2^{-\gamma_0 l}\}\|f\|_{L^2(\Rn)}.
\end{align}
It follows from Lemmas \ref{lem:phif} and \ref{lem:GH} that for any $v \in A_{p/q'}$, 
\begin{multline}\label{Gs-Lp}
\|G_s^l f\|_{L^p(v)} 
\lesssim [v]_{A_p}^{\frac12 \max\{1, \frac{2}{p-1}\}} 
\bigg\|\Big(\sum_{k \in \Z} |\nu_{s+k} \ast \Delta_{l-k}^2 f|^2\Big)^{\frac12}\bigg\|_{L^p(v)} 
\\ 
\lesssim [v]_{A_p}^{\frac12 \max\{1, \frac{2}{p-1}\} + \frac52\max\{1, \frac{2}{p/q'-1}\}}  \|f\|_{L^p(v)}
\lesssim [v]_{A_p}^{3\max\{1, \frac{2}{p/q'-1}\}}  \|f\|_{L^p(v)}.
\end{multline}
Then interpolating between \eqref{Gs-L2} and \eqref{Gs-Lp} with $v \equiv 1$ gives 
\begin{align}\label{Gs-Lpp}
\|G_s^l f\|_{L^p(\Rn)} 
\lesssim 2^{ -\alpha s}  2^{-\beta |l|} \|f\|_{L^p(\Rn)}, \quad\text{for some } \alpha, \beta>0. 
\end{align}
On the other hand, for $w \in A_{p/q'}$, by Lemma \ref{lem:open}, there exists $\gamma=\gamma_w \in (0, 1)$ such that 
\[
(1+\gamma)' = c_n [w]_{A_{p/q'}}^{\max\{1, \frac{1}{p/q'-1}\}} =: c_n B_0, 
\quad\text{and}\quad
[w^{1+\gamma}]_{A_p} \le [w^{1+\gamma}]_{A_{p/q'}} \lesssim [w]_{A_{p/q'}}^{1+\gamma},
\] 
which along with \eqref{Gs-Lp} implies 
\begin{align}\label{GsLpwg}
\|G_s^l f\|_{L^p(w^{1+\gamma})} 
\lesssim [w]_{A_{p/q'}}^{3(1+\gamma)\max\{1, \frac{2}{p/q'-1}\}} \|f\|_{L^p(w^{1+\gamma})}. 
\end{align}
Considering Theorem \ref{thm:SW} with $w_0 \equiv 1$, $w_1=w^{1+\gamma}$, and $\theta=\frac{1}{1+\gamma}$, we interpolate between \eqref{Gs-Lpp} and \eqref{GsLpwg} to arrive at 
\begin{align}\label{Gsaa}
\|G_s^l f\|_{L^p(w)} 
\lesssim 2^{ -\alpha s(1-\theta)}  2^{-\beta |l| (1-\theta)} 
[w]_{A_{p/q'}}^{3\max\{1, \frac{2}{p/q'-1}\}} \|f\|_{L^p(w)}. 
\end{align}
Note that $e^{-t}<2t^{-2}$ for any $t>0$, and 
\begin{align}\label{2as}
\sum_{s \ge 0} 2^{-\alpha s(1-\theta)} 
=\sum_{0 \le s \le B_0} 2^{-\frac{\alpha s}{c_n B_0}} + \sum_{s > B_0} 2^{-\frac{\alpha s}{c_n B_0}} 
\lesssim B_0 + \sum_{s>B_0} s^{-2} B_0^2
\lesssim B_0. 
\end{align}
Similarly, 
\begin{align}\label{2bs}
\sum_{l \in \Z} 2^{-\beta |l| (1-\theta)} \lesssim B_0. 
\end{align}
Hence, \eqref{Gss-1} and \eqref{Gsaa}--\eqref{2bs} imply 
\begin{align*}
\Big\|\sup_{\lambda>0} \lambda\sqrt{N_{\lambda}(\mathscr{T}^2 f)} \Big\|_{L^p(w)}
\lesssim [w]_{A_{p/q'}}^{4 \max\{1, \frac{2}{p/q'-1}\}} \|f\|_{L^p(w)}. 
\end{align*}
This shows \eqref{TTT} for $i=2$. 

To control the term with $\mathscr{T}^3$, we note that by \cite[p. 2456]{CDHL}, 
\begin{align*}
\sup_{\lambda>0} \lambda \sqrt{N_\lambda(\mathscr T^3f)}
\le \sum_{s<0} \Big(\sum_{k\in \Bbb Z}\big| \phi_k\ast\nu_{k+s} \ast f\big|^2 \Big)^{\frac12}
=: \sum_{s<0}H_{s}f, 
\end{align*}
where 
\begin{align*}
\|H_s f\|_{L^p(w)} 
\le \sum_{l \in \Z} \bigg\|\Big(\sum_{k\in \Z} 
|\phi_k \ast \nu_{k+s} \ast \Delta_{l-k}^2 f|^2 \Big)^{\frac12}\bigg\|_{L^p(w)} 
=: \sum_{l \in \Z} \|H_s^l f\|_{L^p(w)},
\end{align*}
with 
\begin{align*}
\|H_s^l f\|_{L^2(\Rn)} 
\lesssim 2^s  \min\{2^l, 2^{-\gamma l}\} \|f\|_{L^2(\Rn)}.
\end{align*}
Analogously to \eqref{Gsaa}, one has 
\begin{align*}
\|H_s^l f\|_{L^p(w)} 
\lesssim 2^{ -\alpha s(1-\theta)}  2^{-\beta |l| (1-\theta)} 
[w]_{A_{p/q'}}^{3\max\{1, \frac{2}{p/q'-1}\}} \|f\|_{L^p(w)}, 
\end{align*}
and eventually, 
\begin{align*}
\Big\|\sup_{\lambda>0} \lambda\sqrt{N_{\lambda}(\mathscr{T}^3 f)} \Big\|_{L^p(w)}
\lesssim [w]_{A_{p/q'}}^{4 \max\{1, \frac{2}{p/q'-1}\}} \|f\|_{L^p(w)}. 
\end{align*}
This shows \eqref{TTT} for $i=3$.

\subsubsection{\bf Short variation estimates}
We will prove \eqref{short-var} in this subsection. As did in \cite{CDHL}, 
\begin{align}
\label{S2k} & S_2(\mathcal{T} f) (x) 
\le \sum_{k \in \Z} S_{2, k}(\mathcal{T} f) (x), 
\\
\label{S2k-1} &\|S_{2, k}(\mathcal{T} f)\|_{L^p(\Rn)} 
\lesssim 2^{-\delta |k|} \|f\|_{L^p(\Rn)}, \quad \forall k \in \Z, 
\\
\label{S2k-2} &S_{2, k}(\mathcal{T} f) (x)
\lesssim \bigg(\sum_{j \in \Z} |M_{\Omega}(\Delta_{k-j}^2 f)(x)| \bigg)^{\frac12}, \quad \forall k \in \Z, 
\end{align}
and for $q<2$, 
\begin{align}\label{S2k-3}
\big\|S_{2,k}(\mathcal{T}f)\big\|_{L^p(w)}
\le \|I_{1, k}f\|_{L^p(w)}^{\frac12} \|I_{2,k}f\|_{L^p(w)}^{\frac12}, 
\end{align}
where 
\begin{align*}
\|I_{1,k} f\|_{L^p(w)}
&\le \bigg(\int_1^2 \bigg\|\Big(\sum_{j \in \Z} |\nu_{j, t} * 
\Delta_{k-j}^2 f|^2 \Big)^{\frac12}\bigg\|_{L^p(w)}^2 \frac{dt}{t} \bigg)^{\frac12}, 
\\
\|I_{2, k} f\|_{L^p(w)}
&\lesssim \bigg(\int_{\Rn} M_{\Omega^{2-q}}(g w)(x) \sum_{j \in \Z} |\Delta_{k-j}^2 f(x)|^2 \, dx \bigg)^{\frac12}, 
\end{align*}
where $\nu_{j, t}(x) := \frac{\Omega(x)}{|x|^n} \mathbf{1}_{\{2^j t \le |x| \le 2^{j+1}\}}(x)$. 

We claim that 
\begin{align}\label{S2k-6}
\big\|S_{2,k}(\mathcal{T}f)\big\|_{L^p(w)}
\lesssim [w]_{A_{p/q'}}^{3 \max\{1, \frac{2}{p/q'-1}\}} \|f\|_{L^p(w)}. 
\end{align}
Once \eqref{S2k-6} is obtained, we use \eqref{S2k}, \eqref{S2k-1}, and Stein-Weiss's interpolation Theorem \ref{thm:SW} as before to get 
\begin{align*}
\big\|S_2(\mathcal{T}f)\big\|_{L^p(w)}
\lesssim [w]_{A_{p/q'}}^{4 \max\{1, \frac{2}{p/q'-1}\}} \|f\|_{L^p(w)}, 
\end{align*}
which shows \eqref{short-var} as desired. 

It remains to demonstrate \eqref{S2k-6}. If $q>2$, we invoke \eqref{S2k-2}, \eqref{MOTO-3}, \eqref{eq:phik-3}, and \eqref{eq:phik-1} to deduce 
\begin{align*}
\big\|S_{2,k}(\mathcal{T}f)\big\|_{L^p(w)}
&\lesssim [w]_{A_{p/q'}}^{\frac{1}{p-q'}} \bigg\| \Big(\sum_{j \in \Z} |\Delta_{k-j}^2 f|^2 \Big)^{\frac12}\bigg\|_{L^p(w)}
\\ \nonumber
&\lesssim [w]_{A_{p/q'}}^{\frac{1}{p-q'}+\frac12\max\{1, \frac{2}{p-1}\}} \bigg\| \Big(\sum_{j \in \Z} |\Delta_{k-j} f|^2 \Big)^{\frac12}\bigg\|_{L^p(w)}
\\ \nonumber
&\lesssim [w]_{A_{p/q'}}^{\frac{1}{p-q'}+\max\{1, \frac{2}{p-1}\}} \|f\|_{L^p(w)} 
\lesssim [w]_{A_{p/q'}}^{\frac32\max\{1, \frac{2}{p-q'}\}} \|f\|_{L^p(w)}. 
\end{align*}
To treat the case $q<2$ (trivially, $p>2$), we observe that much as \eqref{vukk}, 
\begin{align}\label{Ikk-1}
\|I_{1, k} f\|_{L^p(w)}
\lesssim [w]_{A_{p/q'}}^{\frac72\max\{1, \frac{2}{p/q'-1}\}} \|f\|_{L^p(w)}, 
\end{align}
and 
\[
\Omega^{2-q} \in L^{\frac{q}{2-q}}(\Sn), \quad 
\big(w^{1-(p/2)'}\big)^{1-p/2} = w \in A_{p/q'} 
= A_{(p/2)/(\frac{q}{2-q})'}. 
\]
The latter, along with by H\"{o}lder's inequality, Theorem \ref{thm:MTT} applied to $(p/2)'$ and $\frac{q}{2-q}$ instead of $p$ and $q$, \eqref{eq:phik-3}, and \eqref{eq:phik-1}, gives 
\begin{align}\label{Ikk-2} 
\|I_{2, k} f\|_{L^p(w)}
&\lesssim \big\|M_{\Omega^{2-q}}(g w)\|_{L^{(p/2)'}(w^{1-(p/2)'})}^{\frac12} 
\bigg\| \Big(\sum_{j \in \Z} |\Delta_{k-j}^2 f|^2\Big)^{\frac12}\bigg\|_{L^p(w)}
\\ \nonumber
& \lesssim [w]_{A_{p/q'}}^{\max\{1, \frac{1}{p/q'-1}\} + \frac12\max\{1, \frac{2}{p-1}\}} 
\bigg\| \Big(\sum_{j \in \Z} |\Delta_{k-j} f|^2 \Big)^{\frac12}\bigg\|_{L^p(w)}
\\ \nonumber
& \lesssim [w]_{A_{p/q'}}^{\max\{1, \frac{1}{p/q'-1}\} + \max\{1, \frac{2}{p-1}\}} \|f\|_{L^p(w)}
\lesssim [w]_{A_{p/q'}}^{2 \max\{1, \frac{2}{p/q'-1}\}}  \|f\|_{L^p(w)}. 
\end{align}
Therefore, in the case $q<2$, \eqref{S2k-6} follows from \eqref{S2k-3}, \eqref{Ikk-1}, and \eqref{Ikk-2}. 
\qed

\subsection{Riesz transforms associated to Schr\"{o}dinger operators}
Consider a real vector potential $\vec{a}=(a_1,\ldots, a_n)$ and an electric potential $V$. Assume that
\begin{equation}\label{eq:V-ak}
0 \leq V \in L^1_{\loc}(\Rn) \quad\text{and}\quad a_k \in L^2_{\loc}(\Rn), \quad\, k=1, \ldots, n.
\end{equation}
Denote
\begin{align*}
L_0=V^{1/2} \quad\text{and} \quad L_k=\partial_k - i a_k, \quad  k=1, \ldots, n.
\end{align*}
We define the form $Q$ by
\begin{align*}
Q(f, g)=\sum_{k=1}^n \int L_k f(x) \overline{L_k g(x)} \, dx + \int_{\Rn} Vf(x) \overline{g(x)} \, dx
\end{align*}
with domain 
\begin{align*}
\mathcal{D}(Q) := \{f \in L^2(\Rn): L_k f \in L^2(\Rn), \, k=0, 1, \ldots, n \}.
\end{align*}
Let us denote by $A$ the self-adjoint operator associated with $Q$. Then $A$ is given by the expression
\begin{equation*}
A f = \sum_{k=1}^n L_k^{*} L_k f + V f,
\end{equation*}
and the domain of $A$ is given by
\begin{align*}
\mathcal{D}(A) =\Big\{f \in \mathcal{D}(Q), \exists g \in L^2(\Rn) \text { such that }
Q(f, \varphi)=\int_{\Rn} g \overline{\varphi} \, dx, \forall \varphi \in \mathcal{D}(Q) \Big\}.
\end{align*}
Formally, we write
\begin{equation*}
A = - (\nabla-i \vec{a}) \cdot(\nabla - i \vec{a})+V.
\end{equation*}
For convenience, denote
\begin{equation*}
\mathcal{R}_k := L_k A^{-1/2},\quad k = 0, 1, \ldots, n. 
\end{equation*}

Duong et al.  \cite{DOY, DY} consecutively established the $L^p$ boundedness of Riesz transform $\mathcal{R}_k$ and its commutator $[\mathcal{R}_k, b]$, $k=0, 1, \ldots, n$.  More specifically, under the assumption \eqref{eq:V-ak}, we have for any $1<p<2$
\begin{equation}\label{eq:Rk-bRk}
\mathcal{R}_k, [\mathcal{R}_k, b]: L^p(\Rn) \to L^p(\Rn),\quad k=0, 1, \ldots, n,
\end{equation}
provided by $b \in \BMO$.

We would like to establish weighted version of \eqref{eq:Rk-bRk} as follows. 
\begin{theorem}\label{thm:bRk-Lp}
Assume that $\vec{a}$ and $V$ satisfy \eqref{eq:V-ak}. Let $b \in \BMO$. Then for every $p \in (1, 2)$, for every weight $w^p \in A_p \cap RH_{(2/p)'}$, and for every $k=0, 1, \ldots, n$, both $\mathcal{R}_k$ and $[\mathcal{R}_k, b]$ are bounded on $L^p(w^p)$. 
\end{theorem}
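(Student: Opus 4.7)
My plan is to invoke the limited range extrapolation machinery of this paper: Theorem \ref{thm:lim} and Theorem \ref{thm:lim-Tb}, both with $m=1$, $\p_-=1$, $\p_+=2$. The target weight class $A_p \cap RH_{(2/p)'}$ in the statement is precisely $A_{p/\p_-} \cap RH_{(\p_+/p)'}$, so the abstract hypotheses are correctly parametrized. It therefore suffices to produce a single \emph{seed} weighted estimate
\begin{equation*}
\|\mathcal{R}_k f\|_{L^{q_0}(v^{q_0})}
\le \Phi\bigl([v^{q_0}]_{A_{q_0} \cap RH_{(2/q_0)'}}\bigr)\,\|f\|_{L^{q_0}(v^{q_0})},
\end{equation*}
valid for some (any) $q_0 \in (1,2)$ and every $v^{q_0} \in A_{q_0} \cap RH_{(2/q_0)'}$, with $\Phi$ an increasing function.

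For the seed, the key structural input is the diamagnetic inequality $|e^{-tA} f(x)| \le e^{t\Delta} |f|(x)$ (a consequence of the Feynman--Kac--It\^o formula), which transfers the Gaussian upper bound on the free heat kernel to the magnetic semigroup $\{e^{-tA}\}_{t>0}$. In particular $\{e^{-tA}\}$ enjoys $L^p$--$L^q$ off-diagonal estimates of Gaussian type in the full range $1 \le p \le q \le \infty$. Writing
\begin{equation*}
A^{-1/2} = \frac{1}{\sqrt{\pi}} \int_0^{\infty} e^{-tA}\, t^{-1/2}\, dt,
\end{equation*}
and composing with $L_k$, the operator $\mathcal{R}_k = L_k A^{-1/2}$ fits into the Auscher--Martell framework for operators enjoying $L^2$ off-diagonal estimates; combined with the unweighted $L^{q_0}$-boundedness from \eqref{eq:Rk-bRk}, this framework produces the seed estimate above for every $q_0 \in (1,2)$.

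With the seed in hand, Theorem \ref{thm:lim} (case $m=1$, $q_1=q_0$, $p_1=p$) directly yields the $L^p(w^p)$-boundedness of $\mathcal{R}_k$ for all $p \in (1,2)$ and $w^p \in A_p \cap RH_{(2/p)'}$, with a quantitative dependence on $[w^p]_{A_p \cap RH_{(2/p)'}}$ and explicit growth exponent $\gamma(p, q_0)$. For the commutator $[\mathcal{R}_k, b]$ we apply Theorem \ref{thm:lim-Tb} with the same data and multi-index $\alpha = 1$: the side condition $\tfrac{1}{s} = \tfrac{1}{s_1} \le 1$ reduces to $s_1 \ge 1$, and any $s_1 \in (1, 2) = (\p_-, \p_+)$ is admissible, so the hypothesis is trivially met.

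The main obstacle will be the first step, the seed estimate, since the operators $\mathcal{R}_k$ do not fall under the classical Calder\'on--Zygmund calculus: under the mere local-integrability hypotheses \eqref{eq:V-ak} on $\vec{a}$ and $V$, their integral kernels need not satisfy any standard H\"ormander-type regularity, so one cannot argue by a weighted $T1$ theorem. Passage through the heat semigroup, the diamagnetic bound, and $L^2$ off-diagonal estimates is unavoidable and is the technical heart of the argument; once accepted, the extrapolation step is purely mechanical.
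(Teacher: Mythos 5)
Your overall architecture (one seed weighted estimate at a single $q_0\in(1,2)$, then Theorem \ref{thm:lim} with $m=1$, $\p_-=1$, $\p_+=2$ for the full range, then Theorem \ref{thm:lim-Tb} for the commutator) is logically sound, and the commutator step coincides with what the paper does. The gap is that the seed estimate -- which you correctly identify as the technical heart -- is not actually proved: you appeal to ``the Auscher--Martell framework'' without verifying its hypotheses, and the reasons you give for why it applies are not the right ones. The diamagnetic inequality yields pointwise Gaussian bounds for $e^{-tA}$ itself, hence $L^p$--$L^q$ off-diagonal estimates for the \emph{semigroup}; but the object that must be controlled is $\mathcal{R}_k(I-\A_{r_B})=L_kA^{-1/2}(I-\A_{r_B})$, i.e.\ the composition with $L_k=\partial_k-ia_k$, for which no pointwise kernel bound is available under \eqref{eq:V-ak}. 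What one actually has is only the $L^2$ averaged bound $\big(\fint_{C_j(B)}|L_kp_t(x,y)|^2dx\big)^{1/2}\lesssim \frac{t^{-n/2}}{2^jr_B}\exp(-4^jr_B^2/(c_2t))$ from Duong--Yan, and this restriction to $L^2$ at the top is precisely why the theorem stops at $p<2$. Your proposal never engages with this.

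Concretely, the paper's proof consists of: (i) passing to the adjoint, i.e.\ bounding $\mathcal{R}_k^*$ on $L^{p'}(w^{-p'})$ with $w^{-p'}\in A_{p'/q_0'}\cap RH_{(p_0'/p')'}$ via Lemma \ref{lem:weights}\eqref{list:ApRH-3} -- the good-$\lambda$ Lemma \ref{lem:GGHH} is an ``upper range'' statement and must be run on the dual side; (ii) the decomposition $|\mathcal{R}_k^*f|^{q_0'}\le G_B+H_B$ with $\A_{r_B}=I-(I-e^{-r_B^2A})^m$, $m$ large; and (iii) Lemma \ref{lem:A-RIA}, which establishes the exponential decay of $\A_{r_B}$ on annuli (this part does follow from the Gaussian bound, as you suggest) and the decay $2^{-(n+1)j}$ of $\mathcal{R}_k(I-\A_{r_B})$ on annuli, the latter requiring the $L^2$ bound on $L_kp_t$ together with a careful estimate of $\int_0^\infty|g_{r_B}(t)|e^{-4^jr^2/(ct)}(r/\sqrt{t})^{n-1}t^{-1/2}dt$. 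None of this is replaceable by the unweighted boundedness \eqref{eq:Rk-bRk} plus semigroup Gaussian bounds alone. A minor further remark: once the Auscher--Martell machinery is set up it already delivers the conclusion for \emph{every} $p\in(1,2)$ and every admissible weight, so restricting to a single seed $q_0$ and then extrapolating is a detour rather than a simplification; extrapolation genuinely earns its keep only in the commutator step.
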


A particular case is the operator $\mathscr{L}_V = -\Delta+V$, where $V \in L^1_{\loc}(\Rn)$ is a non-negative function. The $L^2(\Rn)$ boundedness of $\mathcal{R}_V := \nabla \mathscr{L}_V^{-1/2}$ was given in \cite[Theorem 8.1]{Ouh}, while it was proved in \cite{DOY} that $\mathcal{R}_V$ is bounded from $H_L^1(\Rn)$ to $L^1(\Rn)$. Then the interpolation implies
\begin{align}\label{eq:Lp12}
\mathcal{R}_V \text{ is bounded on } L^p(\Rn), \quad\forall p \in (1, 2].
\end{align}
However, \eqref{eq:Lp12} fails for general potentials $V \in L^1_{\loc}(\Rn)$ when $p>2$, see \cite{Shen}. Now Theorem \ref{thm:bRk-Lp} immediately implies the following weighted inequalities. 

\begin{theorem}\label{thm:LV}
Let $\mathscr{L}_V=-\Delta+V$ with $0 \le V \in L^1_{\loc}(\Rn)$, and set $\mathcal{R}_V := \nabla \mathscr{L}_V^{-1/2}$. Then for any $p \in (1, 2)$, for any $w^p \in A_p \cap RH_{(2/p)'}$, and for any $b \in \BMO$, both $\mathcal{R}_V$ and $[\mathcal{R}_V, b]$ are bounded on $L^p(w^p)$.  
\end{theorem}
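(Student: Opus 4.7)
The plan is to observe that Theorem \ref{thm:LV} is simply the specialization of Theorem \ref{thm:bRk-Lp} to the case of vanishing magnetic potential $\vec{a}=0$. Under this choice one has $A=\mathscr{L}_V$, $L_k=\partial_k$ for $k=1,\ldots,n$, and hypothesis \eqref{eq:V-ak} collapses to the assumption $0\le V\in L^1_{\loc}(\Rn)$ already given. Thus $\mathcal{R}_V=\nabla\mathscr{L}_V^{-1/2}$ is precisely the vector whose components are the operators $\mathcal{R}_k=\partial_k\mathscr{L}_V^{-1/2}$ covered by Theorem \ref{thm:bRk-Lp}, and likewise $[\mathcal{R}_V,b]$ is the vector whose components are the commutators $[\mathcal{R}_k,b]$. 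The elementary pointwise control $|\mathcal{R}_V f|\le\sum_{k=1}^n|\mathcal{R}_k f|$ then lets one pass from scalar boundedness componentwise to boundedness of the vector-valued Riesz transform and its commutator, so the corollary follows as soon as Theorem \ref{thm:bRk-Lp} is in hand.

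Consequently, the real content is Theorem \ref{thm:bRk-Lp}, which I would deduce via the extrapolation machinery of Theorems \ref{thm:lim} and \ref{thm:lim-Tb}, both applied with $m=1$, $\p_-=1$, and $\p_+=2$. The crux is to establish, for a single fixed exponent $p_0\in(1,2)$, a weighted estimate of the form
\[
\|\mathcal{R}_k f\|_{L^{p_0}(v^{p_0})}
\le\Phi\big([v^{p_0}]_{A_{p_0}\cap RH_{(2/p_0)'}}\big)\,\|f\|_{L^{p_0}(v^{p_0})},
\qquad v^{p_0}\in A_{p_0}\cap RH_{(2/p_0)'},
\]
with $\Phi$ an increasing function. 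Granting this single-exponent weighted bound, Theorem \ref{thm:lim} automatically extrapolates it to every $p\in(1,2)$ and every $w^p\in A_p\cap RH_{(2/p)'}$, giving the first half of Theorem \ref{thm:bRk-Lp}. The commutator estimate then follows with no further work from Theorem \ref{thm:lim-Tb} applied with $s_1\in(1,2)$ (which trivially satisfies $1/s_1\le 1$ in the linear case), since the $\BMO$-hypothesis on $b$ is exactly what the theorem asks for.

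To produce the single-exponent weighted estimate I would exploit the diamagnetic inequality: under \eqref{eq:V-ak} the semigroup generated by $A$ is pointwise dominated by the free heat semigroup, $|e^{-tA}f(x)|\le e^{t\Delta}|f|(x)$, so the kernel of $e^{-tA}$ inherits the Gaussian upper bound of the Laplacian heat kernel. Combined with the $L^2$-boundedness of $\mathcal{R}_k$, which is built into the definition of $A$ via its quadratic form, this yields Davies-Gaffney type off-diagonal $L^2$ estimates for the family $\{tL_k e^{-tA}\}_{t>0}$ appearing in the subordination identity $\mathcal{R}_k=\pi^{-1/2}\int_0^\infty t^{-1/2}L_k e^{-tA}\,dt$. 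Plugging these estimates into the Auscher-Martell framework \cite{AM-1} furnishes the desired weighted $L^{p_0}(v^{p_0})$ boundedness of $\mathcal{R}_k$ for any chosen $p_0\in(1,2)$ and every $v^{p_0}\in A_{p_0}\cap RH_{(2/p_0)'}$, with operator norm growing polynomially in $[v^{p_0}]_{A_{p_0}\cap RH_{(2/p_0)'}}$.

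The principal obstacle is precisely this last step: verifying Davies-Gaffney off-diagonal bounds for $L_k e^{-tA}$ in the presence of a merely $L^2_{\loc}$ magnetic potential, and then running the Auscher-Martell good-$\lambda$/bump argument to extract the single-weight estimate at $p_0$. Once that input is secured, the remainder of the argument is mechanical: Theorem \ref{thm:lim} carries the estimate to the full class $p\in(1,2)$, $w^p\in A_p\cap RH_{(2/p)'}$; Theorem \ref{thm:lim-Tb} transfers this to $[\mathcal{R}_k,b]$ for every $b\in\BMO$; and the reduction described in the first paragraph converts the componentwise conclusion into Theorem \ref{thm:LV}.
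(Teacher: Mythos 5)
Your reduction of Theorem \ref{thm:LV} to Theorem \ref{thm:bRk-Lp} with $\vec{a}=0$ is exactly the paper's proof, which consists of the single observation that the magnetic case specializes to $\mathscr{L}_V=-\Delta+V$. Your accompanying sketch of Theorem \ref{thm:bRk-Lp} is also essentially the paper's argument; the only organizational difference is that the paper runs the Auscher--Martell good-$\lambda$ lemma (Lemma \ref{lem:GGHH}, fed by the off-diagonal estimates of Lemma \ref{lem:A-RIA}) directly for every $p\in(1,2)$ and every admissible weight, reserving extrapolation (Theorem \ref{thm:lim-Tb}) for the commutator, whereas you would prove a single-exponent weighted bound and invoke Theorem \ref{thm:lim} for the Riesz transform as well — both routes are valid, and the technical core (Gaussian bounds for $e^{-tA}$ plus off-diagonal decay for $\mathcal{R}_k(I-\A_{r_B})$) is the same.
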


The rest of this subsection is devoted to showing Theorem \ref{thm:bRk-Lp}. For this purpose, we present two useful lemmas below. 
\begin{lemma}[\cite{AM-1}]\label{lem:GGHH}
Fix $1<q \le \infty$, $a \geq 1$ and $w \in RH_{s'}$, $1 \le s<\infty$. Assume that $F$, $G$, $H_1$ and $H_2$ are non-negative measurable functions on $\Rn$ such that for each ball $B$ there exist non-negative functions $G_B$ and $H_B$ with $F(x) \leq G_B(x) + H_B(x)$ for a.e. $x \in B$ and for all $x, \bar{x} \in B$,
\begin{equation}\label{eq:GH-hypo}
\fint_B G_B \, dy \leq G(x) \quad\text{ and }\quad
\bigg(\fint_B H_B^q \, dy\bigg)^{\frac1q} \leq a \, \big(M F(x) + H_1(x) + H_2(\bar{x}) \big).
\end{equation}
Then for all $p \in (0, q/s)$,
\begin{equation}\label{eq:MFGH}
\|M F\|_{L^p(w)}
\leq C \, \big(\|G\|_{L^{p}(w)}+\|H_1\|_{L^p(w)}+\|H_2\|_{L^p(w)}\big), 
\end{equation}
where the constant $C$ depends only on $n$, $a$, $p$, $q$, and $[w]_{RH_{s'}}$. 
\end{lemma}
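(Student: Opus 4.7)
The plan is a standard Auscher--Martell good-$\lambda$ argument combined with a reverse H\"older conversion from Lebesgue to $w$-measure. After a routine truncation so that $\|MF\|_{L^p(w)}$ is a priori finite (which justifies the later absorption), I fix $\lambda>0$ and set $\Omega_\lambda:=\{MF>\lambda\}$ and $E_{\gamma,\lambda}:=\{G\le\gamma\lambda,\,H_1\le\gamma\lambda,\,H_2\le\gamma\lambda\}$. My target is the good-$\lambda$ inequality
\[
w(\Omega_{\alpha\lambda}\cap E_{\gamma,\lambda})\le C[w]_{RH_{s'}}\Big(\tfrac{\gamma}{\alpha}+\tfrac{\gamma^q}{\alpha^q}\Big)^{1/s}\,w(\Omega_\lambda)
\]
for suitably chosen $\alpha\gg 1$ and $\gamma\ll 1$.

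To prove it, I Whitney-decompose $\Omega_\lambda=\bigcup_j B_j$ into balls of bounded overlap such that a fixed dilate $cB_j$ meets $\Omega_\lambda^c$ at a point $y_j$ with $MF(y_j)\le\lambda$. On an enlarged ball $B_j^*\supset\{y_j\}\cup B_j$ the hypothesis supplies the splitting $F\le G_{B_j^*}+H_{B_j^*}$, and the usual localization gives, for $x\in B_j$,
\[
MF(x)\le K\lambda + M(G_{B_j^*}\mathbf 1_{B_j^*})(x) + M(H_{B_j^*}\mathbf 1_{B_j^*})(x),
\]
the $K\lambda$ coming from the large balls through $x$ that essentially capture $y_j$. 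Restricting attention to those $j$ with $B_j\cap E_{\gamma,\lambda}\neq\emptyset$ and choosing $z_j$ in that intersection, the first part of the hypothesis immediately gives $\fint_{B_j^*}G_{B_j^*}\le G(z_j)\le\gamma\lambda$, while evaluating the second at $x=y_j$ and $\bar x=z_j$ yields $\bigl(\fint_{B_j^*}H_{B_j^*}^q\bigr)^{1/q}\le Ca\gamma\lambda$ provided $H_1(y_j)$ is small. The weak-$(1,1)$ and strong-$(q,q)$ bounds for $M$ then deliver the Lebesgue-measure estimate $|\Omega_{\alpha\lambda}\cap B_j\cap E_{\gamma,\lambda}|\lesssim(\gamma/\alpha+\gamma^q/\alpha^q)|B_j|$ once $\alpha>2K$. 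The reverse H\"older inequality $w(E)/w(B)\le C[w]_{RH_{s'}}(|E|/|B|)^{1/s}$ converts this to $w$-measure, and summing by bounded overlap finishes the good-$\lambda$ bound.

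Integrating this against $p\lambda^{p-1}\,d\lambda$ and using the layer-cake formula together with $w(\Omega_{\alpha\lambda})\le w(\Omega_{\alpha\lambda}\cap E_{\gamma,\lambda})+w(E_{\gamma,\lambda}^c)$ produces $\alpha^{-p}\|MF\|_{L^p(w)}^p$ on the left against $C[w]_{RH_{s'}}(\gamma^{q/s}\alpha^{-q/s}+\gamma^{1/s}\alpha^{-1/s})\|MF\|_{L^p(w)}^p$ plus $C\gamma^{-p}(\|G\|_{L^p(w)}^p+\|H_1\|_{L^p(w)}^p+\|H_2\|_{L^p(w)}^p)$ on the right. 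Since $p<q/s$, I first pick $\alpha$ so large that $\alpha^{-q/s}\ll\alpha^{-p}$, then $\gamma$ small, absorbing the self-interaction into the left-hand side and yielding the claimed estimate \eqref{eq:MFGH}. The main obstacle I expect is the simultaneous control of $MF(y_j)$ and $H_1(y_j)$ at the Whitney good point, since only $MF(y_j)\le\lambda$ is automatic. The standard workaround is either to include $MH_1\le\gamma\lambda$ in $E_{\gamma,\lambda}$ (and then use the weighted $L^p(w)$-boundedness of $M$, available from $w\in RH_{s'}\subset A_\infty$ in conjunction with Lemma~\ref{lem:open} and the sharp maximal bound of Lemma~\ref{lem:sharp}, to restore $\|H_1\|_{L^p(w)}$ at the end), or to discard those Whitney balls on which no good $y_j$ with $H_1(y_j)\lesssim\gamma\lambda$ can be located in $cB_j\cap\Omega_\lambda^c$ and absorb the discarded contribution into a separate layer-cake tail for $H_1$. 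Either remedy is technical but routine.
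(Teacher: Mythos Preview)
The paper does not prove this lemma; it is quoted from Auscher--Martell \cite{AM-1}, and your good-$\lambda$ scheme---Whitney covering of $\Omega_\lambda$, the local splitting $F\le G_{B_j^*}+H_{B_j^*}$, weak-$(1,1)$ and strong-$(q,q)$ bounds for $M$, then the $RH_{s'}$ conversion $w(E)/w(B)\lesssim(|E|/|B|)^{1/s}$---is exactly the argument in that reference.

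The obstacle you isolate, that the hypothesis ties $MF$ and $H_1$ to the \emph{same} point $x$ while the Whitney point $y_j$ controls only $MF$ and the level-set point $z_j$ controls only $H_1$, is real and correctly diagnosed. Your first remedy, however, has a gap. Replacing $H_1$ by $MH_1$ in $E_{\gamma,\lambda}$ and then invoking the $L^p(w)$-boundedness of $M$ to pass from $\|MH_1\|_{L^p(w)}$ back to $\|H_1\|_{L^p(w)}$ requires $p>1$; the lemma allows $p\in(0,q/s)$, and for $p\le 1$ the maximal operator is simply unbounded on $L^p(w)$ for any weight, so neither Lemma~\ref{lem:sharp} nor Lemma~\ref{lem:open} rescues you. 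Even for $p>1$, the step is not as automatic as you suggest: $MH_1(z_j)\le\gamma\lambda$ bounds only \emph{averages} of $H_1$ over balls through $z_j$, not the pointwise value $H_1(y_j)$. One still needs the Whitney geometry $|cB_j\setminus\Omega_\lambda|\gtrsim|B_j|$ together with Chebyshev on $\fint_{B_j^*}H_1\le MH_1(z_j)\le\gamma\lambda$ to locate a point in $cB_j\setminus\Omega_\lambda$ where both $MF\le\lambda$ and $H_1\lesssim\gamma\lambda$.

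In Auscher--Martell's original statement the $H_1$-type term is evaluated at a point that can be chosen independently of the $MF$ point, which dissolves the difficulty (take $MF$ at $y_j$ and $H_1,H_2$ at $z_j$). Since the only application in the present paper (the proof of Theorem~\ref{thm:bRk-Lp}) uses the lemma with $H_1=H_2\equiv 0$, the issue is moot there; but for the lemma exactly as stated your sketch is incomplete in the range $p\le 1$.
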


To proceed, we introduce some notation. Given a ball $B$ we set $C_j(B) := 4B$ for $j=1$ and $C_j(B) := 2^{j+1}B \setminus 2^jB$ for $j \geq 2$, and
\begin{equation*}
\fint_{C_j(B)} f(x) \, dx := \frac{1}{|2^{j+1} B|} \int_{C_j(B)} f(x) \, dx.
\end{equation*}

\begin{lemma}\label{lem:A-RIA}
Let $1\leq q \leq 2$ and $B$ be a given ball and $f \in L^q(\Rn)$ with $\supp(f) \subseteq B$. Let $\A_{r_B}=I-(I-e^{-r_B^2 A})^m$ with a given integer $m \geq 1$. Then for all $j \geq 1$ and $k=0, 1, \ldots, n$,
\begin{align}
\label{eq:CjB-A} \bigg(\fint_{C_j(B)} |\A_{r_B} f(x)|^{q} dx \bigg)^{\frac1q}
&\lesssim e^{-4^j C_1} \bigg(\fint_{B} |f(x)|^q dx \bigg)^{\frac1q},
\\
\label{eq:CjB-RkIA} \bigg(\fint_{C_j(B)} |\mathcal{R}_k(I-\A_{r_B}) f(x)|^q dx\bigg)^{\frac1q}
& \lesssim 2^{-(n+1)j} \bigg(\fint_{B} |f(x)|^q dx\bigg)^{\frac1q},
\end{align}
where the implicit constants are independent of $B$, $f$, $j$ and $k$.
\end{lemma}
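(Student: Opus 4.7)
The plan is to reduce both estimates to off-diagonal (Davies--Gaffney / Gaussian) bounds for the magnetic heat semigroup $e^{-tA}$ and its gradient $L_k e^{-tA}$, exploiting the diamagnetic inequality $|e^{-tA} f(x)| \le e^{t\Delta} |f|(x)$, which is valid under \eqref{eq:V-ak} and provides the pointwise kernel bound
\[
|p_t^A(x,y)| \le (4\pi t)^{-n/2}\, e^{-|x-y|^2/(4t)}.
\]
The first preliminary step is the binomial identity
\[
\mathcal{A}_{r_B} \;=\; I-(I-e^{-r_B^2 A})^m \;=\; \sum_{k=1}^{m} \binom{m}{k}(-1)^{k+1}\, e^{-k r_B^2 A},
\]
so that both $\mathcal{A}_{r_B}$ and $I-\mathcal{A}_{r_B}$ are represented by (linear combinations of) heat operators at the scale $t=k r_B^2$, $1\le k\le m$.

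For \eqref{eq:CjB-A}, the strategy is direct. Fix $f$ supported in $B$ and $x\in C_j(B)$. For $j\ge 2$ one has $|x-y|\ge 2^{j-1} r_B$ whenever $y\in B$, and for $j=1$ the factor $e^{-4^j C_1}$ is comparable to $1$. Combining the Gaussian kernel estimate above at time $t=k r_B^2$ with $y\in B$, one obtains
\[
|e^{-k r_B^2 A} f(x)| \;\lesssim\; r_B^{-n}\, e^{-c\, 4^{j}}\, \int_B |f(y)|\,dy
\;\lesssim\; e^{-c\, 4^{j}}\, \fint_B |f(y)|\,dy,
\]
and Jensen's inequality (since $q\ge 1$) passes the $L^1$ average on the right to $L^q$. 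Summing over $k=1,\dots,m$ yields \eqref{eq:CjB-A}.

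For \eqref{eq:CjB-RkIA} the plan is more delicate. Writing
\[
\mathcal{R}_k(I-\mathcal{A}_{r_B}) \;=\; L_k A^{-1/2} (I-e^{-r_B^2 A})^m,
\]
I would invoke the subordination formula $A^{-1/2}=\tfrac{1}{\sqrt{\pi}}\int_0^\infty e^{-sA}\,s^{-1/2}\,ds$ and split the $s$-integral at $s=r_B^2$. For the ``large time'' regime $s\ge r_B^2$ I would use the iterated representation $(I-e^{-r_B^2 A})^m=\int_{[0,r_B^2]^m} A^m e^{-(s_1+\cdots+s_m)A}\,d\vec s$ combined with the heat-semigroup gradient estimate $\|L_k e^{-tA}\|_{L^2\to L^2}\lesssim t^{-1/2}$ (valid by the spectral theorem since $\|A^{1/2}e^{-tA}\|_{L^2\to L^2}\lesssim t^{-1/2}$) and the off-diagonal Gaussian bound from diamagnetic majoration. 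For the ``small time'' regime $s\le r_B^2$ I would use the off-diagonal estimates for $L_k e^{-sA}$ and iterate that the composed operator maps $L^q(B)$ into $L^q(C_j(B))$ with a decay of the form $s^{-1/2}(s/(2^j r_B)^2)^{N}$ for any prescribed $N$ (which is legitimate up to $L^2\to L^2$ level and, through the Gaussian envelope, up to $L^q\to L^q$ for $1\le q\le 2$). Balancing these two regimes, and performing the $s$-integration, converts the Gaussian decay $e^{-c\, 4^{j} r_B^2/s}$ into the claimed polynomial decay $2^{-(n+1)j}$: one extracts precisely $n+1$ negative powers of $2^j$, matching the Calder\'on--Zygmund kernel $|K_k(x,y)|\lesssim |x-y|^{-n}$ improved by one order through the cancellation built into $(I-e^{-r_B^2 A})^m$ at the scale $r_B$.

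The main obstacle is the second step: obtaining the polynomial rate $2^{-(n+1)j}$ rather than just $2^{-nj}$. The pure Calder\'on--Zygmund estimate on the kernel of $\mathcal{R}_k$ only produces $2^{-nj}$, and the extra factor $2^{-j}$ must be harvested from the vanishing of $(I-e^{-r_B^2 A})^m$ at low frequencies (equivalently, from the $m\ge 1$ vanishing moments at scale $r_B$ carried by $I-\mathcal{A}_{r_B}$). Technically this is where the subordination splitting and the balance of the Gaussian factor $e^{-c\,4^{j} r_B^2/s}$ against the polynomial weight $s^{-1/2}$ on $\{s\le r_B^2\}$ versus the power $(r_B^2/s)^m$ on $\{s\ge r_B^2\}$ play the decisive role; only $m\ge 1$ is needed for the single extra power, but the argument accommodates arbitrary $m\ge 1$ and gives correspondingly sharper decay if one iterates. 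Throughout, passage from $L^2$ Davies--Gaffney estimates to $L^q$ estimates ($1\le q\le 2$) is handled by the Gaussian envelope provided by diamagnetism together with Jensen's inequality, so no extra hypothesis on the weights $\vec a, V$ beyond \eqref{eq:V-ak} is required.
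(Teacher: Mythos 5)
Your plan follows essentially the same route as the paper: for \eqref{eq:CjB-A} the binomial expansion of $\mathcal{A}_{r_B}$ plus the Gaussian kernel bound for $e^{-tA}$ and Jensen's inequality is exactly the paper's argument, and for \eqref{eq:CjB-RkIA} the paper likewise uses the subordination formula for $A^{-1/2}$, off-diagonal estimates for $L_k e^{-tA}$, and the cancellation of $(I-e^{-r_B^2A})^m$ at large times (encoded there in the function $g_r(t)=\sum_{\ell=0}^m(-1)^\ell C_m^\ell \mathbf{1}_{\{t>\ell r^2\}}(t-\ell r^2)^{-1/2}$ and a Taylor-formula bound $|g_r(t)|\lesssim r^{2m}t^{-m-1/2}$ for $t>(m+1)r^2$, rather than your iterated integral representation of $(I-e^{-r_B^2A})^m$ --- a cosmetic difference). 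One point to correct: diamagnetism gives a Gaussian pointwise bound for the kernel of $e^{-tA}$ only, not for $L_k e^{-tA}$, so the ``Gaussian envelope'' cannot be what carries the gradient term from $L^2$ to $L^q$. What is actually used (and suffices) is the weighted $L^2$ off-diagonal estimate $\big(\fint_{C_j(B)}|L_kp_t(x,y)|^2dx\big)^{1/2}\lesssim \frac{t^{-n/2}}{2^jr_B}e^{-4^jr_B^2/(c t)}$ from [DY, Proposition 3.1], combined with Jensen on the finite-measure sets $B$ and $C_j(B)$ for $1\le q\le 2$; note also that the prefactor $(2^jr_B)^{-1}$ in that estimate is precisely where the paper's extra factor $2^{-j}$ beyond the Calder\'on--Zygmund rate is booked, with the $g_r$-integral supplying $2^{-nj}$. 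With that substitution your outline matches the paper's proof.
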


\begin{proof}
We begin with showing \eqref{eq:CjB-A}. It follows from (3.1) and (3.2) in \cite{DY} that
the kernel $p_t(x,y)$ of $e^{-tA}$ satisfies
\begin{align*}
|p_t(x, y)|
& \leq (4 \pi t)^{-\frac{n}{2}} \exp \bigg(-\frac{|x-y|^{2}}{4 t}\bigg), \quad \forall t>0 \text{ and a.e.}\  x,y \in \Rn,
\\
|\partial_t^k p_t(x, y)|
& \leq C_k t^{-(n/2+k)} \exp \bigg(-\frac{|x-y|^{2}}{c_k t}\bigg), \quad \forall t>0 \text{ and a.e. } x, y \in \Rn.
\end{align*}
Thus for all $x \in C_j(B)$ and $j \geq 2$, we have $|x-y| \simeq 2^j r_B$ for any $y \in B$ and
\begin{align}\label{eq:ekrB}
\big|e^{-k r_B^2 A} f(x)\big|
\lesssim \int_{B} r_B^{-n} \exp \bigg(-\frac{|x-y|^{2}}{4r^2_B}\bigg) |f(y)| \, dy
\lesssim e^{-4^j C_1} \fint_B |f| dy.
\end{align}
The above inequality also holds for $j=1$. The desired estimate \eqref{eq:CjB-A} immediately follows
from \eqref{eq:ekrB} and the expansion
\begin{align*}
\A_{r_B}=I - (I-e^{-r^2_B A})^m =\sum_{k=1}^m (-1)^{k+1} C_m^k e^{-k r_B^2 A}.
\end{align*}

Now we turn to the proof of \eqref{eq:CjB-RkIA}. Recalling that
\begin{align*}
A^{-1/2} = \frac{1}{\sqrt{\pi}} \int_{0}^{\infty}e^{-tA} \frac{dt}{\sqrt{t}}, 
\end{align*}
one has
\begin{align*}
\mathcal{R}_k(I-\A_{r_B}) f  =\int_{0}^{\infty} g_{r_B}(t) L_k e^{-tA}f \, dt,
\end{align*}
where $g_{r}(t) = \sum_{\ell=0}^m (-1)^{\ell} C_m^{\ell} \frac{\mathbf{1}_{\{t>\ell r^{2}\}}}{\sqrt{t-\ell r^{2}}}$. Now we claim that
\begin{equation}\label{eq:grt-bound}
\int_{0}^{\infty} |g_r(t)| e^{-\frac{4^j r^{2}}{c t}} \bigg(\frac{r}{t^{1/2}}\bigg)^{n-1}
\frac{dt}{\sqrt{t}} \leq C_m \, 2^{-nj}.
\end{equation}
Moreover, it was proved in \cite[Proposition 3.1]{DY} that for any $j \geq 2$, there exist positive constants $c_1$ and $c_2$ such that
 \begin{equation*}
\bigg(\fint_{C_j(B)} |L_k p_t(x, y)|^2 dx \bigg)^{\frac12}
\leq c_1 \frac{t^{-n/2}}{2^j r_B} \exp \bigg(-\frac{4^j r_B^2}{c_2 t} \bigg), \quad\forall t>0, \, y \in \Rn.
\end{equation*}
which along with \eqref{eq:grt-bound} gives 
\begin{align*}
&\bigg(\fint_{C_j(B)} |\mathcal{R}_k(I-\A_{r_B}) f(x)|^q dx\bigg)^{\frac1q}
\\
&\quad\leq \int_{0}^{\infty} |g_{r_B}(t)| \int_{B} |f(y)| \bigg(\fint_{C_j(B)} |L_k p_t(x, y)|^q dx \bigg)^{\frac1q} dy \, dt
\\
&\quad\lesssim 2^{-j} \int_{0}^{\infty} |g_{r_B}(t)| e^{-\frac{4^j r^{2}}{c t}}
\bigg(\frac{r_B}{t^{1/2}}\bigg)^{n-1} \frac{dt}{\sqrt{t}} \cdot \bigg(\fint_{B} |f(x)|^q dx\bigg)^{\frac1q}
\\
&\quad\leq C_1 2^{-(n+1)j} \bigg(\fint_{B} |f(x)|^q dx\bigg)^{\frac1q}.
\end{align*}

It remains to demonstrate \eqref{eq:grt-bound}. We will use the elementary estimates for $g_r(t)$:
\begin{align}
\label{eq:grt-1}|g_r(t)| &\leq \frac{C_m}{\sqrt{t-\ell r^2}}, \quad \ell r^2 < t \leq (\ell+1)r^2, \ell=0,1,\ldots,m,
\\
\label{eq:grt-2}|g_r(t)| &\leq C_m r^{2m} t^{-m-\frac12}, \quad t>(m+1)r^2.
\end{align}
The first one is easy. The second one is an application of Taylor's formula, see \cite[Sec. 3]{ACDH}.
Denote $\alpha=4^j/c$. Then the inequality \eqref{eq:grt-2} gives that
\begin{multline}\label{eq:large}
\int_{(m+1)r^2}^{\infty} |g_r(t)| e^{-\frac{4^j r^{2}}{c t}}
\bigg(\frac{r}{t^{1/2}}\bigg)^{n-1} \frac{dt}{\sqrt{t}}
\leq C_m \int_{(m+1)r^2}^{\infty} \bigg(\frac{r}{t^{1/2}}\bigg)^{2m+n-1}
e^{-\frac{4^j r^{2}}{c t}} \frac{dt}{t}
\\
=C_m \alpha^{-(m+\frac{n}{2}+\frac12)} \int_{0}^{\frac{\alpha}{m+1}} s^{m+\frac{n}{2}-\frac32} e^{-s} ds
\leq C_m 2^{-j(2m+n-1)} \Gamma \Big(m+\frac{n}{2}+\frac12 \Big).
\end{multline}
Write $\phi(s)=s^{-\frac{n}{2}} e^{-\frac{\alpha}{s}}$, $s>0$. It is easy to get
$\phi'(s)=s^{-\frac{n}{2}-2}e^{-\frac{\alpha}{s}}(\alpha-\frac{n}{2}s)$ and
\begin{equation}\label{eq:phi-bounds}
\phi(s) \leq \phi (2\alpha/n)
=(2\alpha/n)^{-\frac{n}{2}} e^{-\frac{n}{2}} \leq C_n 2^{-nj}, \quad \forall s>0.
\end{equation}
Thus, by \eqref{eq:grt-1}, changing variables and \eqref{eq:phi-bounds}, we have for any $0 \leq \ell \leq m$,
\begin{align}\label{eq:small}
\mathcal{I}_{\ell}
& := \int_{\ell r^2}^{(\ell+1)r^2} |g_r(t)| e^{-\frac{4^j r^{2}}{c t}}
\bigg(\frac{r}{t^{1/2}}\bigg)^{n-1} \frac{dt}{\sqrt{t}}
\\ \nonumber
&\leq C_m \int_{\ell r^2}^{(\ell+1)r^2} \frac{e^{-\frac{4^j r^{2}}{c t}}}{\sqrt{t-\ell r^2}}
\bigg(\frac{r}{t^{1/2}}\bigg)^{n-1} \frac{dt}{\sqrt{t}}
\\ \nonumber
&=C_m \int_{\ell}^{\ell+1} \frac{s^{-\frac{n}{2}} e^{-\frac{\alpha}{s}}}{\sqrt{s-\ell}}  ds
=C_m  \int_{\ell}^{\ell+1} \frac{\phi(s)}{\sqrt{s-\ell}} ds
\\ \nonumber
&=2C_m \phi(\ell+1) - 2 C_m \int_{\ell}^{\ell+1} (s-\ell)^{\frac12} \phi'(s) ds
\\ \nonumber
&\leq C_m 2^{-nj} + C_m 4^j \int_{0}^{\infty} s^{-\frac{n}{2}-2} e^{-\frac{\alpha}{s}} ds
\\ \nonumber
&= C_m 2^{-nj} + C_m 4^j \alpha^{-\frac{n}{2}-1} \int_{0}^{\infty} t^{\frac{n}{2}} e^{-t} dt
\\ \nonumber
&= C_m 2^{-nj} + C_m c^{\frac{n}{2}+1} 2^{-nj}  \Gamma \Big(\frac{n}{2}+1 \Big)
\leq C_m 2^{-nj},
\end{align}
where the constant $C_m$  depending only on $m$ and $n$ varies from line to line. Accordingly, the inequality \eqref{eq:grt-bound} follows from \eqref{eq:large} and \eqref{eq:small}. This completes the proof.
\end{proof}

\begin{proof}[{\bf Proof of Theorem \ref{thm:bRk-Lp}}]
Let $p \in (1, 2)$ and $w^p \in A_p \cap RH_{(2/p)'}$.We follow the ideas in \cite{AM-4}. Choose $p_0$ and $q_0$ such that
$1<p_0<p<q_0<2$ and $w^p \in A_{p/p_0} \cap RH_{(q_0/p)'}$. This together with Lemma \ref{lem:weights} part \eqref{list:ApRH-3} gives that $w^{-p'} \in A_r \cap RH_{s'}$, where $r=p'/q'_0$, $s=p'_0/p'$, and $\tau_p=\big(\frac{q_0}{p}\big)' \big(\frac{p}{p_0}-1\big)+1$. Note that $w^{-p'} \in \cap RH_{s'}$ implies $w^{-p'} \in RH_{s'_0}$ for some $s_0 \in (1, s)$. 

Fix $f \in L_c^{\infty}$ and a ball $B$ with the radius $r_B$. Write
\begin{align*}
F := |\mathcal{R}_k^* f|^{q'_0}\quad\text{and}\quad
\A_{r_B} := I-(I-e^{-r_B^2 A})^m,
\end{align*}
where $m \in \N$ is large enough. Observe that
\begin{align*}
F & \leq 2^{q'_0-1} \big|(I-\A_{r_B}^*) \mathcal{R}^*_k f\big|^{q'_0}
+ 2^{q'_0-1} \big|\A_{r_B}^* \mathcal{R}^*_k f\big|^{q'_0}
=: G_B + H_B.
\end{align*}
We first control $G_B$. By duality, there exists $g \in L^{q_0}(B, dx/|B|)$ with norm $1$ such that for all $x \in B$,
\begin{align}\label{eq:GB}
\bigg(\fint_B  G_B \, dy \bigg)^{\frac{1}{q'_0}}
&\simeq \bigg(\fint_B \big|(I-\A_{r_B}^*) \mathcal{R}^*_k f\big|^{q'_0} dy\bigg)^{\frac{1}{q'_0}}
\lesssim \frac{1}{|B|} \int_{\Rn} |f| |\mathcal{R}_k (I-\A_{r_B}) g| \, dy
\\ \nonumber
&\lesssim \sum_{j=1}^{\infty} 2^{jn}
\bigg(\fint_{C_j(B)} |f|^{q'_0}\bigg)^{\frac{1}{q'_0}}
\bigg(\fint_{C_j(B)} \big|\mathcal{R}_k (I-\A_{r_B}) g\big|^{q_0} \bigg)^{\frac{1}{q_0}}
\\ \nonumber
&\lesssim M (|f|^{q'_0})(x)^{\frac{1}{q'_0}}
\sum_{j=1}^{\infty}  2^{-j} \|g\|_{L^{q_0}(dx/|B|)}
\lesssim M (|f|^{q'_0})(x)^{\frac{1}{q'_0}},
\end{align}
where we have used \eqref{eq:CjB-RkIA}. To estimate $H_B$, we set $q:=p'_0/q'_0$ and observe that by duality there exists
$h \in L^{p_0}(B, dx/|B|)$ with norm $1$ such that for all $x \in B$,
\begin{align}\label{eq:HB}
\bigg(\fint_{B} H_B^q \, dy \bigg)^{\frac{1}{q q'_0}}
&\lesssim \frac{1}{|B|} \bigg|\int_{\Rn} \A_{r_B}^* \mathcal{R}^*_k f \cdot h \, dy \bigg|
\le \frac{1}{|B|} \int_{\Rn} |\mathcal{R}^*_k f| |\A_{r_B}h| \, dy
\\ \nonumber
&\lesssim \sum_{j=1}^{\infty} 2^{jn} \bigg(\fint_{C_j(B)} |\mathcal{R}^*_k f|^{q'_0} \bigg)^{\frac{1}{q'_0}}
\bigg(\fint_{C_j(B)} |\A_{r_B} h|^{q_0} \bigg)^{\frac{1}{q_0}}
\\ \nonumber
&\lesssim M F(x)^{\frac{1}{q'_0}}  \sum_{j=1}^{\infty} 2^{jn} e^{-4^j C_1}
\bigg(\fint_{B} |h|^{q_0} \bigg)^{\frac{1}{q_0}}
\lesssim M F(x)^{\frac{1}{q'_0}},
\end{align}
where \eqref{eq:CjB-A} was used in the last step.

Consequently, \eqref{eq:GB} and \eqref{eq:HB} verify the hypotheses \eqref{eq:GH-hypo} with $G(x) = M (|f|^{q'_0})(x)$ and $H_1=H_2 \equiv 0$.  Observe that $r=p'/q'_0=q/s<q/s_0$. Then,  invoking \eqref{eq:MFGH} applied to $r$, $s_0$, and $w^{-p'}$ in place of $p$, $s$, and $w$, respectively, we obtain
\begin{align*}
\|\mathcal{R}_k^{*} f \|_{L^{p'}(w^{-p'})}^{q'_0}
&=\|F\|_{L^r(w^{-p'})}
\leq \|M F\|_{L^r(w^{-p'})}
\lesssim \|M(|f|^{q'_0})\|_{L^r(w^{-p'})}
\lesssim \|f\|_{L^{p'}(w^{-p'})}^{q'_0}.
\end{align*}
which together with duality yields the $L^p(w^p)$-boundedness of $\mathcal{R}_k$. This along with Theorem \ref{thm:lim-Tb} implies the $L^p(w^p)$-boundedness of $[\mathcal{R}_k, b]$.
\end{proof}


\end{document}